\DeclareSymbolFont{timesoperators}{T1}{ptm}{m}{n}
\DeclareMathAlphabet{\mathbb}{U}{jkpsyb}{m}{n}
\SetMathAlphabet{\mathbb}{bold}{U}{jkpsyb}{bx}{n}
\def\N{\mathbb{N}}
\def\R{\mathbb{R}}
\def\C{\mathbb{C}}
\def\Z{\mathbb{Z}}
\def\T{\mathbb{T}}
\renewcommand{\operator@font}{\mathgroup\symtimesoperators}
\colorlet{symbolsgrey}{blue!30!black!50}
\colorlet{testcolor}{green!60!black}
\definecolor{purple}{rgb}{0.55,0.05,0.8}
\definecolor{symbols}{rgb}{0.55,0.05,0.8}
\newtheorem{example}[lemma]{Example}
\let\oldskull\skull
\def\skull{\mathord{\oldskull}}
\def\sol{{\mathop{\mathrm{sol}}}}
\newcommand{\hooklongrightarrow}{\lhook\joinrel\longrightarrow}
\def\restr{\mathbin{\upharpoonright}}
\def\wotimes{\mathbin{\widehat{{\otimes}}}}
\def\snabla{\slashed{\nabla}}
\DeclareMathAlphabet{\mathbbm}{U}{bbm}{m}{n}
\DeclareFontFamily{U}{BOONDOX-calo}{\skewchar\font=45 }
\DeclareFontShape{U}{BOONDOX-calo}{m}{n}{
  <-> s*[1.05] BOONDOX-r-calo}{}
\DeclareFontShape{U}{BOONDOX-calo}{b}{n}{
  <-> s*[1.05] BOONDOX-b-calo}{}
\DeclareMathAlphabet{\mcb}{U}{BOONDOX-calo}{m}{n}
\SetMathAlphabet{\mcb}{bold}{U}{BOONDOX-calo}{b}{n}
\setlist{noitemsep,topsep=4pt}
\newcommand*{\bigcdot}{}
\DeclareRobustCommand*{\bigcdot}{%
  \mathbin{\mathpalette\bigcdot@{}}%
}
\newcommand*{\bigcdot@scalefactor}{.5}
\newcommand*{\bigcdot@widthfactor}{1.15}
\newcommand*{\bigcdot@}[2]{%
  \sbox0{$#1\vcenter{}$}
  \sbox2{$#1\cdot\m@th$}%
  \hbox to \bigcdot@widthfactor\wd2{%
    \hfil
    \raise\ht0\hbox{%
      \scalebox{\bigcdot@scalefactor}{%
        \lower\ht0\hbox{$#1\bullet\m@th$}%
      }%
    }%
    \hfil
  }%
}
\def\symbol#1{\textcolor{symbolsgrey}{#1}}
\def\1{\mathbf{\symbol{1}}}
\def\bone{\mathbf{1}}
\def\bPsi{\boldsymbol{\Psi}}
\def\bpsi{\boldsymbol{\psi}}
\def\bbpsi{\boldsymbol{\bar{\psi}}}
\def\spacetime{\underline{\Lambda}}
\def\Gr{\mathrm{Gr}}
\newcommand{\mrd}{\mathrm{d}}
\newcommand\bilin[1]{B \big(#1 \big)}
\colorlet{darkblue}{blue!90!black}
\colorlet{darkgreen}{green!50!black}
\let\D\CD
\def\s{\mathfrak{s}}
\def\K{\mathfrak{K}}
\def\${|\!|\!|}
\def\Wick#1{\mathord{\kern0.1em{:}#1{:}\kern0.1em}}
\def\E{\mathbf{E}}
\def\d{\mbox{d}}
\def\-{\mbox{-}}
\def\f{\mathfrak{f}}
\def\reminit{ \CC^{\textnormal{\tiny{init}}}}
\def\treminit{ \widetilde{\CC}^{\textnormal{\tiny{init}}}}
\newcommand{\mfB}{\mathfrak{B}}
\newcommand{\mfI}{\mathfrak{I}}
\newcommand{\mfN}{\mathfrak{N}}
\newcommand{\mfL}{\mathfrak{L}}
\newcommand{\mfA}{\mathfrak{A}}
\newcommand{\mfC}{\mathfrak{C}}
\newcommand{\mfr}{\mathfrak{r}}
\newcommand{\mfm}{\mathfrak{m}}
\newcommand{\mfS}{\mathfrak{S}}
\newcommand{\mfp}{\mathfrak{p}}
\newcommand{\mfh}{\mathfrak{h}}
\newcommand{\mfq}{\mathfrak{q}}
\newcommand{\mfb}{\mathfrak{b}}
\newcommand{\mfG}{\mathfrak{G}}
\newcommand{\mfP}{\mathfrak{P}}
\newcommand{\mfZ}{\mathfrak{Z}}
\newcommand{\mfH}{\mathfrak{H}}
\newcommand{\mfK}{\mathfrak{K}}
\newcommand{\mcK}{\mathcal{K}}
\newcommand{\loc}{\mathrm{loc}}
\DeclareMathOperator{\Trace}{Tr}
\def\cA{\mathscr{A}}
\def\cC{\mathscr{C}}
\def\cA{\mathscr{A}}
\def\cG{\mathscr{G}}
\def\cD{\mathscr{D}}
\def\cL{\mathscr{L}}
\def\cM{\mathscr{M}}
\def\cN{\mathscr{N}}
\newcommand{\VERT}{\vert\!\vert\!\vert}
\DeclareMathOperator{\sgn}{sgn}
\def\emptyset{{\centernot\ocircle}}
\DeclareMathOperator{\Spin}{Spin}
\DeclareMathOperator{\GL}{GL}
\DeclareMathOperator{\dists}{dist_{\mathfrak{s}}}
\numberwithin{equation}{section}
\def\dash{\leavevmode\unskip\kern0.18em--\penalty\exhyphenpenalty\kern0.18em}
\def\slash{\leavevmode\unskip\kern0.15em/\penalty\exhyphenpenalty\kern0.15em}
\def\balpha{\boldsymbol{\alpha}}
\def\balphas{\boldsymbol{\alpha}^\dagger}
\def\bomega{\boldsymbol{\omega}}
\renewcommand{\digamma}{\text{\foreignlanguage{greek}{\ddigamma}}}
\let\f\frac
\let\F\relax
\tikzset{
	root/.style={circle,fill=testcolor,inner sep=0pt, minimum size=2mm},
	dot/.style={circle,fill=symbols,draw=symbols,inner sep=0pt,minimum size=0.5mm},
	bdot/.style={circle,fill=symbols,draw=symbols,inner sep=0pt,minimum size=1mm},
	bdotsml/.style={circle,fill=symbols,draw=symbols,inner sep=0pt,minimum size=0.75mm},
	square/.style={regular polygon,regular polygon sides=4,fill=black,draw=black,inner sep=0pt,minimum size=1.2mm},
	wsquare/.style={regular polygon,regular polygon sides=4,fill=white,draw=black, inner sep=0pt,minimum size=1.2mm},
	squaresml/.style={regular polygon,regular polygon sides=4,fill=black,draw=black,inner sep=0pt,minimum size=0.9mm},
	wsquaresml/.style={regular polygon,regular polygon sides=4,fill=white,draw=black, inner sep=0pt,minimum size=0.9mm},
	eps/.style={circle,fill=white,draw=symbols,inner sep=0pt,minimum size=1mm},
	int/.style={circle,fill=black,draw=black,inner sep=0pt,minimum size=0.7mm},
	var/.style={circle,fill=black!10,draw=black,inner sep=0pt, minimum size=2mm},
	dotred/.style={circle,fill=black!50,inner sep=0pt, minimum size=2mm},
	generic/.style={semithick,shorten >=1pt,shorten <=1pt},
	dist/.style={ultra thick,draw=testcolor,shorten >=1pt,shorten <=1pt},
	testfcn/.style={ultra thick,testcolor,shorten >=1pt,shorten <=1pt,<-},
	testfcnx/.style={ultra thick,testcolor,shorten >=1pt,shorten <=1pt,<-,
		postaction={decorate,decoration={markings,mark=at position 0.6 with {\drawx}}}},
	keps/.style={semithick,shorten >=1pt,shorten <=1pt,densely dashed,->},
	kprimex/.style={semithick,shorten >=1pt,shorten <=1pt,densely dashed,->,
		postaction={decorate,decoration={markings,mark=at position 0.4 with {\drawx}}}},
	kernel/.style={semithick,shorten >=1pt,shorten <=1pt,->},
	multx/.style={shorten >=1pt,shorten <=1pt,
		postaction={decorate,decoration={markings,mark=at position 0.5 with {\drawx}}}},
	kernelx/.style={semithick,shorten >=1pt,shorten <=1pt,->,
		postaction={decorate,decoration={markings,mark=at position 0.4 with {\drawx}}}},
	kernel1/.style={->,semithick,shorten >=1pt,shorten <=1pt,postaction={decorate,decoration={markings,mark=at position 0.45 with {\draw[-] (0,-0.1) -- (0,0.1);}}}},
	kernel2/.style={->,semithick,shorten >=1pt,shorten <=1pt,postaction={decorate,decoration={markings,mark=at position 0.45 with {\draw[-] (0.05,-0.1) -- (0.05,0.1);\draw[-] (-0.05,-0.1) -- (-0.05,0.1);}}}},
	kernelBig/.style={semithick,shorten >=1pt,shorten <=1pt,decorate, decoration={zigzag,amplitude=1.5pt,segment length = 3pt,pre length=2pt,post length=2pt}},
	rho/.style={dotted,semithick,shorten >=1pt,shorten <=1pt},
	renorm/.style={shape=circle,fill=white,inner sep=1pt},
	labl/.style={shape=rectangle,fill=white,inner sep=1pt},
	xi/.style={circle,fill=symbols!10,draw=symbols,inner sep=0pt,minimum size=1.2mm},
	xix/.style={crosscircle,fill=symbols!10,draw=symbols,inner sep=0pt,minimum size=1.2mm},
	xib/.style={circle,fill=symbols!10,draw=symbols,inner sep=0pt,minimum size=1.6mm},
	xibx/.style={crosscircle,fill=symbols!10,draw=symbols,inner sep=0pt,minimum size=1.6mm},
	not/.style={circle,fill=symbols,draw=symbols,inner sep=0pt,minimum size=0.5mm},
cumu2n/.style={inner sep=3pt},
cumu2/.style={draw=red!80,fill=red!40},
cumu2b/.style={draw=blue!80,fill=blue!40},
cumu2nv/.style={inner sep=3pt},
cumu2v/.style={draw=red!80,fill=white,very thick},
cumu3/.style={regular polygon, regular polygon sides=3,draw=red!80,rounded corners=3pt,fill=red!40,minimum size=5mm},
cumu4/.style={regular polygon, regular polygon sides=4,draw=red!80,rounded corners=3pt,fill=red!40,minimum size=7mm},
cumu5/.style={regular polygon, regular polygon sides=5,draw=red!80,rounded corners=3pt,fill=red!40,minimum size=7mm},
	>=stealth,
	}
\def\DeclareSymbol#1#2#3{%
	\expandafter\gdef\csname MH@symb@#1\endcsname{\tikzsetnextfilename{symbol#1}%
	\tikz[baseline=#2,scale=0.15,draw=symbols,line join=round]{#3}}%
	\expandafter\gdef\csname MH@symb@#1s\endcsname{\scalebox{0.75}{\tikzsetnextfilename{symbol#1s}%
	\tikz[baseline=#2,scale=0.15,draw=symbols,line join=round]{#3}}}%
	\expandafter\gdef\csname MH@symb@#1ss\endcsname{\scalebox{0.65}{\tikzsetnextfilename{symbol#1ss}%
	\tikz[baseline=#2,scale=0.15,draw=symbols,line join=round]{#3}}}%
	}
\def\<#1>{\ifmmode\mathchoice{\csname MH@symb@#1\endcsname}{\csname MH@symb@#1\endcsname}{\csname MH@symb@#1s\endcsname}{\csname MH@symb@#1ss\endcsname}\else\csname MH@symb@#1\endcsname\fi}
\DeclareRobustCommand{\TitleEquation}[2]{\texorpdfstring{\StrLeft{\f@series}{1}[\@firstchar]$\if%
b\@firstchar\boldsymbol{#1}\else#1\fi$}{#2}}
\begin{document}

\title{A Dynamical Yukawa\TitleEquation{_{2}}{2} Model}

\author{Ajay Chandra$^{1}$, Martin Hairer$^{1,2}$, and Martin Peev$^{1}$}

\institute{Imperial College London, UK 
\and EPFL, Lausanne, Switzerland\\[.5em]
\email{\{a.chandra,m.hairer,m.peev21\}@imperial.ac.uk}}

\maketitle

\begin{abstract}
We prove local (in space and time) well-posedness for a mildly regularised version of the stochastic
quantisation of the Yukawa$_{2}$ Euclidean field theory with a self-interacting boson. 
Our regularised dynamic is still singular
but avoids non-local divergences, allowing us to use a version of 
the Da Prato--Debussche argument \cite{DPD2}.
This model is a test case for a non-commutative probability framework for formulating the kind of singular SPDEs
arising in the stochastic quantisation of field theories mixing both bosons and fermions. \\[.4em]
\noindent {\scriptsize \textit{Keywords:} Stochastic quantisation, fermions, non-commutative probability}\\
\noindent {\scriptsize\textit{MSC classification:} 81T08, 46L53, 60H15}
\end{abstract}

\setcounter{tocdepth}{2}
\tableofcontents

\section{Introduction}\label{sec:intro}

One approach to constructing interacting quantum field theories satisfying the Wightman axioms
was pioneered by Osterwalder and Schrader \cite{OS73G1,OS75}. The idea is to consider ``imaginary time'', hence
turning the ill-posed Feynman path integrals into a formal expression that has at least a chance
of being interpretable as a probability measure on a space of tempered distributions over space-time.
Amazingly, this operation can be reversed: given such a probability measure satisfying a number
of rather natural properties, as well as reflection positivity, it is possible to recover a
Hilbert space of states as well as a unitary evolution on that space and natural field operators.

Still, the construction of these ``interacting'' probability measures is far from trivial
and one of the milestones in this area was the construction of the $\Phi^4_3$ measure in the 70's
\cite{Feldman,GlimmJaffe}. In the 80's, Parisi and Wu \cite{ParisiWu} proposed to interpret these
measures as the invariant measures of an associated Markovian dynamic (the ``time'' appearing there
has no relation to physical time, which is treated as one of the spatial coordinates in this picture).
While this is a very appealing perspective, it led to rather little progress for about three
decades (but see notably
\cite{MR815192,AlbRock91,DPD2}) mainly due to a lack of understanding of the workings of renormalisation
for the stochastic PDEs arising in this way.

All this changed in the early 2010's with the appearance of several techniques
including regularity structures \cite{Hai14,BHZ19,CH16,BCCH21} and paracontrolled
analysis \cite{GIP15,GP17} that provide
an unequivocal meaning to these stochastic PDEs which is consistent with previous
works in constructive field theory. 
However, nearly all of these works
deal with purely \textit{bosonic} quantum field theories, thus excluding any of the
more realistic theories that combine both bosons and fermions. From a purely formal perspective,
the situation is quite well-understood \cite{Wein95,Tic99}: a non-interacting pair of particle \slash antiparticle
fields $(\psi,\bar \psi)$ can be described by a type of centred ``Gaussian'' generalised random field
such that
\begin{equ}\label{eq:free_fermions}
\scal{\psi(f)\psi(g)} = \scal{\bar \psi(f)\bar \psi(g)} = 0\;,\quad
\scal{\bar \psi(f)\psi(g)} = C(f,g)
\end{equ}
for some bilinear form $C(\bigcdot,\bigcdot)$, and with higher-order correlations given by the Wick rule.
The difference with the bosonic case,
however, is that these fields \textit{anticommute} in the sense that
\begin{equ}
\bar \psi(f)\psi(g) = - \psi(g)\bar \psi(f)\;,
\end{equ}
so that they can only be realised as observables on a non-commutative probability space.
In this case, a natural realisation is the Grassmann algebra $\Lambda(\fH)$ over some\footnote{One can essentially make any choice of $\fH$ here, see Remark~\ref{rem:covariance}. } Hilbert space
$\fH$.
Namely, $\Lambda(\fH)$\label{p:Grassmann} is the free algebra generated by $ \fH$ subject to the relations
$F \wedge G = - G\wedge F$ for all $F,G \in \fH$.
The problem of constructing interacting field theories with bosons and fermions can then be framed as starting from a state $\scal{ \,\bigcdot\, }$ for a Gaussian bosonic field $\phi$ and an independent Gaussian fermionic field $(\psi,\bar\psi)$ and then building the Gibbsian state
\begin{equ}\label{interacting_state}
\scal{ \, \bigcdot \,}_{V} = \frac{1}{Z} \scal{ \, \bigcdot \,  \exp[ -V(\phi,\psi,\bar{\psi}) ] } \;, 
\end{equ}
where $V(\phi,\psi,\bar{\psi})$ is an interaction potential or action with a non-quadratic dependence on the fields and $Z$ is a normalising factor so that $\scal{\1}_{V} = 1$. 
However, in important cases of interest \eqref{interacting_state} is ill-defined. 
Our focus in this article is analysis of the ultraviolet \slash small scale problems with \eqref{interacting_state}. 
The state $\scal{ \, \bigcdot \, }$ will be supported on spaces of distributional bosons and fermions which makes the non-linear functional $V$ ill-defined \dash requiring the use of renormalisation. 
As mentioned earlier, methods from stochastic analysis have provided robust tools for attacking this problem in the purely bosonic setting. 
\subsection{Stochastic Analysis for Fermions}
The most frequently used formalism in probability for working with fermions is the Grassmann--Berezin calculus \cite{Berezin1987}. 
While it has been a fundamental tool defining and computing correlation functions for fermionic fields, it does not seem well-suited to describe fermions as objects of infinite dimensional probability theory and analysis, and therefore unsuitable for studying fermions using stochastic analysis. 

In the recent article \cite{Gub20}, the authors performed the first steps towards fermionic stochastic analysis using the framework of non-commutative probability. 
We quickly review how non-commutative probability can be seen as a generalisation of classical (commutative) probability.  
In the latter setting, given a probability space $(\Omega,\mu)$, we can define a corresponding ``algebra of bounded observables'' $L^{\infty}(\Omega,\mu)$, that is the algebra of (equivalence classes of) bounded measurable complex-valued functions. 
In particular,  $L^{\infty}(\Omega,\mu)$ is a (commutative) $C^{*}$-algebra equipped with a state $\omega_{\mu} \colon L^{\infty}(\Omega,\mu) \rightarrow \C$ given by $f \mapsto \int f\; \mathrm{d} \mu$. 
The non-commutative probability setting allows one to take non-commutative algebras of observables, the basic ingredients of a non-commutative probability space being a $C^{*}$-algebra $\CA$ (playing the 
role of $L^{\infty}(\Omega,\mu)$) equipped with a positive definite state $\omega \colon \CA \to \R$ 
(playing the role of expectation). 
In practice, one often realises $\CA$ as an algebra of \emph{bounded} operators on some Hilbert space and the state $\omega$ can be written in terms of this representation. 

In Section~\ref{sec:CAR} we will review how one can define a Gaussian fermionic process with reproducing kernel Hilbert space $\mfH$ by choosing our $C^{*}$-algebra to be a CAR\footnote{Canonical Anticommutation Relations} algebra $\CA\F = \CA(\mfH)$. 
One can realise $\CA\F$ as an algebra of bounded operators generated by creation and annihilation operators on the antisymmetric Fock space generated by $\mfH$ and the state $\omega\F$ is given by $A \mapsto \scal{\1,A \1}$ where $\1$ denotes the vacuum of the Fock space. 
The fermionic fields\footnote{The fields themselves are operator-valued distributions that map $\cD \ni f \mapsto  \psi(f), \bar{\psi}(f) \in \CA\F \otimes \C^2$ where $\cD$ is some space of test functions. The same is true of the regular white noise, where $\xi(z)$ is not a random variable but instead a distribution that takes test functions to random variables by mapping $f \mapsto \xi(f)$.} $\psi, \bar{\psi}$ will then be generators of a Grassmann subalgebra of $\CA\F$. 

\begin{remark}
One difference a reader may find between our approach and that of  \cite{Gub20} is that theirs is dual to the one we use in this article. 
Given a probability space $(\Omega,\mu)$ and a target space $V$, a $V$-valued 
random variable $X$ is usually seen as a measurable map $\Omega \to V$.
However, there is a dual perspective which identifies the random variable $X$
with the map $X^{\ast}$ taking
 bounded measurable functions on $V$ to bounded measurable functions 
on $\Omega$ by precomposing with $X$. 
In the fermionic setting the role of bounded measurable functions on $V$ is played by the Grassmann algebra $\Lambda(V)$ 
and that of the bounded measurable functions on $(\Omega,\mu)$ is played by some 
$C^*$-algebra $\CA$ endowed with a state $\omega\colon \CA \to \C$.
In \cite{Gub20}, the role of (totally anticommuting) random variables in $V$ is then
played by algebra homomorphisms $\Phi \colon \Lambda(V) \rightarrow \CA$.

However, we will not take up this dual approach, and work directly on our CAR algebra $\CA\F$ along with extensions of this algebra. 
We find this more convenient for field theory where $V$ is infinite dimensional, for combining probabilistic and analytic arguments, and for working with bosons and fermions simultaneously (without integrating out either). 
\end{remark}

Some of the key achievements of \cite{Gub20} are formulating stochastic differential equations and It\^{o} calculus for fermions which are used to formulate Langevin dynamics for a fermionic field theory, however, this field theory has been regularised at small scales to avoid the need for renormalisation. 

Our main objective in the present article is to investigate how to develop a local well-posedness theory for singular fermionic stochastic partial differential equations. 
An immediate obstacle we encounter is that non-commutative probability does not quite generalise all aspects of classical probability 
as the latter theory makes heavy use of the fact that there is underlying measure space of realisations $(\Omega,\mu)$. 
For algebras of bounded random variables, one can take $\CA = L^{\infty}(\Omega,\mu)$. 
However, most random variables appearing in classical probability are unbounded,
so one usually works in a much larger algebra $\CM(\Omega,\mu)$ of all (equivalence classes modulo 
$\mu$-null sets of) 
measurable maps from $\Omega$ to $\C$ equipped with the topology of convergence in probability, 
or in the spaces $L^{q}(\Omega,\mu) \subset \CM(\Omega,\mu)$ 
for $1 \leqslant p < \infty$. 
There are related notions in non-commutative probability, such as unbounded operators affiliated 
with $\CA$ or non-commutative $L^{q}$ spaces \cite{Tak03,dSil19} but these frameworks are 
currently lacking for our purposes and might be ill-suited to apply to non-linear problems.

When aiming for a robust local well-posedness theory for (commutative) non-linear SDEs \slash SPDEs it is natural to leverage $\Omega$ even more strongly and work ``pathwise'' \dash that is one performs all the analysis after freezing an arbitrary \text{point} $\mcb{p} \in \Omega$. 
This allows one to treat any stochastic driving noise as fixed and bounded and also bypasses the issue that non-linear operations don't behave well with $L^{q}(\Omega,\mu)$ spaces for $q < \infty$. 
This point of view is well-suited for the analysis of rough \slash singular equations, there fixing $\mcb{p} \in \Omega$ allows one to treat an enhanced noise (including renormalised products) as fixed and bounded.

The problem that appears in non-linear singular SPDEs is that, while fermionic Gaussians can be thought of as bounded random variables, this is typically not true for the higher order Gaussian chaoses which appear when renormalising non-linear expressions of rough Gaussian fields. 
While these objects can be constructed as affiliated unbounded operators or elements of a non-commutative $L^q$ space, one is no longer able to solve equations in which such objects appear. 
In particular, the approach to solving non-linear stochastic differential equations in \cite{Gub20} 
appears to depend crucially on the fact that $\CA$ is a Banach algebra, or at least an $m$-convex topological algebra.

Working ``pathwise'' in $\Omega$ in the commutative setting is analogous to the method of ``localisation'' in algebraic geometry.
The primary contribution of our article is the development of a ``localisation'' approach to non-commutative 
probability (drawing on \cite{DV75,DV77}) and its application to obtain a local (in space and time) well-posedness theory for 
an SPDE that includes fermions and requires the renormalisation of products involving fermions. 

Finally, we mention that \cite{FGV22} has used stochastic analysis methods to  construct purely fermionic models that are ill-posed at small scales, in particular they are able to completely remove small scale cut-offs (and also work in infinite volume). 
Their approach is not to use a Langevin dynamic but to instead use the Polchinski flow. 
In this flow, the role of time is played by scale and the evolving objects are effective actions $V_{s}$ and effective fields $\Psi_{s}$, these describe a marginal of the full interacting theory with small scales integrated out \dash recovering the full interacting field would then correspond to taking $s \rightarrow \infty$. 
Since the Polchinski flow is written in terms of effective actions and fields, one 
then does not need to explicitly construct any renormalised non-linearities in this approach. 

\subsection{Overview of Main Results}

\subsubsection{Localising Non-Commutative Probability}

Our localisation approach to non-commutative probability aims to replace $\mathcal{A}\F$ by a 
Fr\'echet space $\cA\F$ with a topology weak enough to include as elements the unbounded operators 
that arise when we renormalise products. 
We also do not want $\cA\F$ to be too big, we hope that is has a topology strong enough to help one prove the convergence of correlation functions if one can prove sufficiently strong bounds related to global well-posedness of our dynamic. 
Finally, to allow us to prove local well-posedness of our dynamic in $\cA\F$, we want this Fr\'echet space to be topologised by a family of \emph{multiplicative} seminorms. 
In non-commutative algebraic geometry, irreducible representations can serve as analogues of points \cite{KR00,Ros95}.
We also know that while renormalised polynomials of the fermion fields are unbounded operators on Fock space, each such operator can be controlled on subspaces of Fock space with finite particle number. 

With this in mind our ``points'' will be indexed by $\mfG(\mfH)$, that is the collection of finite dimensional subspaces  $b \subset \mfH$. 
For each such $b$ we will localise by working in a representation of infinite dimensional space on $\CA(b)$, the finite dimensional CAR algebra generated by $b$. 
An obstacle is that our infinite dimensional CAR algebra does not admit such representations because of its relations, but here we follow the approach of \cite{DV75} and replace $\mathcal{A}$ with the free $\star$-algebra $\widehat{\mfA}$ generated by $\mfH$ \dash this amounts to forgetting the canonical anticommutation relations.  
We then have ``projection'' representations $\pi_{b}: \widehat{\mfA} \rightarrow \CA(b)$ where $\pi_{b}(a)$ should be thought of as evaluating $a$ at $b$. 

We (partially) recover\footnote{This is made precise in Lemma~\ref{lem:commutation}.} the anticommutation relations by quotienting $\widehat{\mfA}$ by the ideal of elements that vanish everywhere (that is, those $a \in \widehat{\mfA}$ with $\pi_{b}(a) = 0$ for all $b$) and completing this quotient with respect to the seminorms $\big( \|\bigcdot\|_{n}  \big)_{n \in \N}$ where
\begin{equ}
	\|a\|_{n} \eqdef \sup \big\{ \| \pi_{b}(a)\|_{\CA(b)} \, \big| \, b \in \mfG(\mfH),\; \dim(b) \leqslant n \big\}\;.
\end{equ}
The resulting space $\cA$ is a locally $C^*$-algebra\footnote{A locally $C^*$-algebra is a complete topological $\star$-algebra whose topology is generated by a family of sub-multiplicative seminorms, each satisfying the $C^*$-identity.}  and serves as a non-commutative algebra of unbounded observables. 
It contains a subalgebra $\mfA_{\infty}$ of bounded observables which is itself a $C^*$-algebra and is an extension\footnote{In fact, this extension is ``central'' in the sense that the kernel of the 
map $\mfA_{\infty}\to \CA$ is an ideal generated by some elements in the center of $\mfA_{\infty}$.} 
of the original CAR algebra $\CA$.
The space $\cA$ carries the original fermionic fields $(\bpsi,\bbpsi)$ but unlike $\CA$ it also carries renormalised polynomials of $(\bpsi,\bbpsi)$. 
The following theorem summarises the key properties of our construction.

\begin{theorem}\label{thm:main_thm1}
There is a locally $C^{*}$-algebra $\cA\F$, equipped with $C^{*}$-seminorms $\big( \|\bigcdot\|_{n} \big)_{n=1}^{\infty}$ with the following properties. 
\begin{enumerate}
\item $\cA\F$ contains a dense $C^*$-algebra $\mfA_{\infty} = \left\{ a \in  \cA\F \, \big| \, \sup_{n} \|a\|_{n} < \infty \right\}$, which carries a surjective $C^{*}$-homomorphism $\digamma \colon \mfA_{\infty}  \rightarrow \CA\F$\footnote{The letter $\digamma$ is pronounced ``digamma''.}. 
Under $\digamma$ the vacuum state $\omega\F$ on $\CA\F$ pulls back to give a densely defined state $\bomega\F$ on $\cA\F$ and one has canonical isomorphisms\footnote{Note here that $\CL^{2}(\CA,\omega)$ denotes the non-commutative $\CL^{2}$ spaces on $\CA$ with state $\omega$, not a Lebesgue space of functions on $\CA$, see Section~\ref{sec:GNS} for definitions.} 
\begin{equ}
	\CL^{2}(\CA\F,\omega\F) \simeq \CL^{2}(\mfA_{\infty},\bomega\F)\;.
\end{equ}
\item Sequences of polynomials $P_{k}(\psi,\bar\psi) \in \CA\F$ of the fermionic fields $\psi,\bar\psi$ that converge in  $\CL^{2}(\CA\F,\omega\F)$ can be be pulled back to polynomials $\mathbf{P}_{k}(\bpsi,\bbpsi) \in \mfA_{\infty}$ that converge in $\cA\F$.
\end{enumerate}
\end{theorem}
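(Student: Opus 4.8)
Let $\widehat{\mfA}$ be the free unital $\star$-algebra generated by $\mfH$, let $\CA(b)$ denote the finite-dimensional CAR algebra generated by $b\in\mfG(\mfH)$, and let $\pi_{b}\colon\widehat{\mfA}\to\CA(b)$ be the unique $\star$-homomorphism sending a generator $f\in\mfH$ to the CAR operator associated with the orthogonal projection $P_{b}f$ of $f$ onto $b$; it exists by freeness. On generators $\|\pi_{b}(f)\|\le\|f\|$, so $\|a\|_{n}:=\sup\{\|\pi_{b}(a)\|_{\CA(b)}\colon\dim b\le n\}$ is finite on $\widehat{\mfA}$, and it is a submultiplicative $C^{*}$-seminorm as a supremum of such. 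I would take $\cA\F$ to be the Hausdorff completion of $\widehat{\mfA}$ for the topology generated by $(\|\bigcdot\|_{n})_{n}$; the general theory of completions of $\star$-algebras carrying directed families of $C^{*}$-seminorms then gives that $\cA\F$ is a locally $C^{*}$-algebra, each $\pi_{b}$ extends continuously with $\|\pi_{b}(a)\|_{\CA(b)}\le\|a\|_{\dim b}$, and the bounded part $\mfA_{\infty}$, with $\|\bigcdot\|_{\infty}:=\sup_{n}\|\bigcdot\|_{n}$, is a $C^{*}$-algebra: completeness holds since a $\|\bigcdot\|_{\infty}$-Cauchy sequence is $\|\bigcdot\|_{n}$-Cauchy for each $n$, so its $\cA\F$-limit $a$ satisfies $\|a\|_{n}\le\sup_{k}\|a_{k}\|_{\infty}$ uniformly in $n$; and $\mfA_{\infty}$ is dense in $\cA\F$ because it contains the dense image of $\widehat{\mfA}$.

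\textbf{The homomorphism $\digamma$ and the $\CL^{2}$-isomorphism.} For $\digamma$ I would use Lemma~\ref{lem:commutation}: the relators encoding $\psi(f)\psi(g)=-\psi(g)\psi(f)$ lie in $\bigcap_{n}\ker\|\bigcdot\|_{n}$, since each $\pi_{b}$ annihilates them (the CAR hold genuinely in $\CA(b)$), whereas a relator encoding $\{\psi(f),\bar\psi(g)\}=C(f,g)$ is mapped by $\pi_{b}$ to the scalar $\bigl(C(P_{b}f,P_{b}g)-C(f,g)\bigr)\1_{b}$, so in $\cA\F$ it is a central element lying in $\mfA_{\infty}$. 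Hence the canonical $\star$-homomorphism $\widehat{\mfA}\to\CA\F$ imposing the full CAR factors through $\widehat{\mfA}\to\cA\F$, and, because $P_{b}f\to f$ and multiplication is jointly continuous on bounded sets, for every word $w$ one has $\pi_{b}(w)\to(\text{its image in }\CA\F)$ in $\CA\F$-norm as $b\uparrow\mfH$; consequently the induced $\star$-homomorphism from the polynomial subalgebra of $\mfA_{\infty}$ into $\CA\F$ is $\|\bigcdot\|_{\infty}$-contractive and extends by continuity to the surjective $C^{*}$-homomorphism $\digamma$, whose kernel is the closed ideal generated by the central relators above. Setting $\bomega\F:=\omega\F\circ\digamma$, the identification of $\CL^{2}$-spaces is then formal: $\bomega\F(a^{*}a')=\omega\F\bigl(\digamma(a)^{*}\digamma(a')\bigr)$, so $\digamma$ induces a surjective isometry of the pre-GNS inner-product spaces, which extends to a unitary on the completions.

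\textbf{Part~2 (the substance).} Given polynomials $P_{k}(\psi,\bar\psi)\in\CA\F$ Cauchy in $\CL^{2}(\CA\F,\omega\F)$, I would set $\mathbf{P}_{k}:=P_{k}(\bpsi,\bbpsi)\in\mfA_{\infty}$ — the same formal expression in the generators of $\cA\F$, so $\digamma(\mathbf{P}_{k})=P_{k}$ — and reduce everything to the estimate
\begin{equ}
	\|\mathbf{P}\|_{n}\;\le\;\kappa_{n}\,\|P\|_{\CL^{2}(\CA\F,\omega\F)}\;,
\end{equ}
valid for every polynomial $P$ in the fermion fields with $\kappa_{n}$ depending only on $n$ and, crucially, not on the degree of $P$; applied to $P_{k}-P_{j}$ it gives $\|\mathbf{P}_{k}-\mathbf{P}_{j}\|_{n}\to0$ for each $n$, i.e.\ convergence in $\cA\F$ (the limit being the $\cA\F$-element representing the renormalised polynomial). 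To prove the estimate, fix $b$ with $\dim b\le n$; then $\pi_{b}(\mathbf{P})$ lies in the $2^{\dim b}$-dimensional algebra $\CA(b)$, whose operator norm is controlled — with constant depending only on $n$ — by the $\CL^{2}$-norm of a faithful state on $\CA(b)$ adapted to $\omega\F$; and that $\CL^{2}$-norm of $\pi_{b}(\mathbf{P})$ is in turn compared with $\|P\|_{\CL^{2}(\CA\F,\omega\F)}$ by expanding $P^{*}P$, applying the fermionic Wick rule to both sides, and observing that passage from $\CA\F$ to $\CA(b)$ replaces each two-point factor $C(f,g)$ from \eqref{eq:free_fermions} by $C(P_{b}f,P_{b}g)$, i.e.\ applies the contraction $\Lambda^{k}P_{b}$ to the underlying antisymmetric tensors, which, since $\|\Lambda^{k}P_{b}\|\le1$, cannot increase the relevant determinantal quantities.

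\textbf{Main obstacle.} The hard point is exactly the uniformity in this last step: producing a constant $\kappa_{n}$ independent of $b$ (over all $b$ with $\dim b\le n$) and of $P$ (over all degrees), with no degeneration; this is where the finite dimensionality of each $\CA(b)$, the contractivity of exterior powers of orthogonal projections, and the determinantal structure of fermionic Wick products all enter. Everything else — that $\cA\F$ is a locally $C^{*}$-algebra with the stated $C^{*}$-algebra $\mfA_{\infty}$, surjection $\digamma$ and $\CL^{2}$-isomorphism — follows from the general theory of locally $C^{*}$-algebras together with Lemma~\ref{lem:commutation}.
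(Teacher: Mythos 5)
Your construction of $\cA\F$, $\mfA_{\infty}$, and the $\CL^{2}$-isomorphism follows the paper's route (free algebra, representations $\pi_{b}$, $C^{*}$-seminorms, completion, pullback of $\omega\F$), and those parts are essentially correct. One slip in your treatment of $\digamma$: you claim that for every word $w$ one has $\pi_{b}(w)\to\digamma(w)$ in \emph{norm} as $b\uparrow\mfH$. This is false: under the identification $\CA\F(b)\ni A\mapsto A\widehat{P}_{b}\in\CB(\CF_{a}(\mfH))$ one has $\pi_{b}(w)=\digamma(w)\widehat{P}_{b}$ for $b$ large enough, and $\widehat{P}_{b}\to\bone$ only strongly, never in norm (test with $w=\bone$). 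The contractivity $\|\digamma(A)\|\leqslant\|A\|_{\infty}$ still holds, but only via strong convergence and lower semicontinuity of the operator norm under strong limits; this is exactly what the paper's Lemma~\ref{lem:proj_converge} and Theorem~\ref{thm:ContinuityRep} supply, and it is also what shows $\mfI_{\Gamma}\subset\ker\digamma$ so that $\digamma$ descends to the quotient in the first place.

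The genuine gap is in part~2, and you have correctly identified its location but not closed it. The estimate you reduce to, $\|\mathbf{P}\|_{n}\leqslant\kappa_{n}\|P\|_{\CL^{2}(\CA\F,\omega\F)}$ with $\kappa_{n}$ independent of the degree of $P$, is not established by your sketch and is moreover stronger than what the paper proves: Proposition~\ref{proposition:Wick_Product} gives $\|\bPsi^{\diamond m}(G)\|_{b}\leqslant C_{m}(1+\dim b)^{\frac{m-1}{2}}\|G\|_{\mfH^{\wedge m}}$ with a constant that grows with the Wick degree $m$, which suffices because the renormalised objects used later live in fixed chaoses. Your proposed mechanism also has a structural problem: the comparison of the operator norm on $\CA\F(b)$ with an $\CL^{2}$-norm requires a \emph{faithful} state on $\CA\F(b)$ (e.g.\ the trace), but the contraction argument via $\Lambda^{k}P_{b}$ and the fermionic Wick rule controls only the \emph{vacuum}-state $\CL^{2}$-norm of $\pi_{b}(\mathbf{P})$, and the vacuum state is not faithful on $\CA\F(b)$ (it annihilates $\alpha(f)^{\dagger}\alpha(f)$ in the $\CL^{2}$ sense); the two $\CL^{2}$-norms are not comparable with a degree-independent constant, so the chain of inequalities does not close. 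The paper's actual mechanism is entirely different and is the content of Theorem~\ref{thm:LocalUltraContr} (the Glimm--Jaffe $N$-contractive estimate, proved in Appendix~\ref{sec:UltrConProof}): one writes $\pi_{b}\circ\bPsi^{\diamond m}$ as a sum of $2^{m}$ normal-ordered terms of the form $W_{r,s}(\cdots)(N+1)^{\frac{m-1}{2}}\widehat{P}_{b}$, uses that $W_{r,s}(\cdots)(N+1)^{-\frac{r+s-1}{2}}$ is bounded by the Hilbert-space tensor norm of its kernel, and then exploits the operator inequality $(N+1)^{\frac{m-1}{2}}\widehat{P}_{b}\leqslant(\dim b+1)^{\frac{m-1}{2}}\widehat{P}_{b}$, i.e.\ the boundedness of the number operator on the finite-dimensional sector $\CF_{a}^{(b)}$. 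That number-operator input is absent from your proposal and cannot be replaced by the finite-dimensionality plus exterior-power-contractivity argument you outline.
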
 

\begin{proof}
The first statement is the content of Theorem~\ref{thm:ContinuityRep}, and Theorem~\ref{thm:local_L2}. 
The second statement is obtained in Proposition~\ref{proposition:Wick_Product}. 
\end{proof}

$(\cA\F,\bomega\F)$ is a non-commutative probability space with observables that are bounded locally (in every seminorm $\|\bigcdot\|_{n}$) but not globally ($\sup_{n} \|\bigcdot\|_{n}$ need not be finite). 
Statement 1 puts some limitation on how ``big'' $\cA\F$ is, while statement 2 tells us our topology is weak enough to allow the convergence of renormalised products. 
We localise by choosing to work with a fixed seminorm $\|\bigcdot\|_{n}$ and this serves as a substitute for the pathwise approach that is lost in going from commutative to non-commutative probability. 

\begin{remark}\label{rem:weak_topology}
We point out that the ``localised non-commutative probability'' formalism we try to develop in this article has a fair amount of freedom in how one chooses to localise. 
Theorem~\ref{thm:main_thm1} above (and also Theorem~\ref{thm:main_thm} below) hold for a very wide class of collections of $C^{*}$-seminorms $\big( \|\bigcdot\|_{n} \big)_{n=1}^{\infty}$ \dash see the discussion starting from Definition~\ref{def:filtration} and ending with \eqref{eq:seminorm_def}. 
 
Different allowable choices of families of seminorms give rise to different topologies and thus very different spaces $\cA$ \dash some of which are likely unreasonable.
In particular, some of these topologies make it unnecessary to perform renormalisation, and are therefore likely to be too weak to be of real interest.  

A natural question that is not fully answered in this work is what a reasonable choice of seminorms and corresponding space $\cA$ is. 

However, one thing we do show is that with a good choice of state, i.e.\ a faithful one, we can define a ``local'' non-commutative $\CL^2$-space, which imposes that one must renormalise appropriately. In particular, if one does not renormalise then the products diverge and are not in this $\CL^2$-space, see Remarks~\ref{rem:localL2} and \ref{rem:topology_renormalisation}.

\end{remark}

\subsubsection{The Higgs--Yukawa\TitleEquation{_2}{2} Model}

We now present the specific model we investigate in this article and summarise our main result 
in Theorem~\ref{thm:main_thm} \dash a more precise formulation of our model and main result can 
respectively be found in Section~\ref{sec:the_model} and Theorem~\ref{thm:precise_main_thm}. 

Our aim is to perform the first steps in the stochastic quantisation of the
full Higgs--Yukawa$_2$ model describing a simple interaction of two Dirac fermions mediated via a self-interacting (real)
scalar boson. Formally this is described by the Euclidean action
\begin{equ}\label{eq:Yukawa_Action}
	\int\limits_{\R^2} \Big(\frac{1}{2} |\nabla \phi|^2 + \frac{m^2}{2} \phi^2 + \bilin{\bar u, (-\snabla + M) u}+ g \phi \bilin{\bar u,u} + \frac{\lambda}{4} \phi^4 \Big)\,\d x\;,
\end{equ}
where $\phi$ is a real\footnote{In the context of stochastic quantisation below, this means that the bosonic noise driving the dynamics is a random variable on the space of real-valued distributions.} scalar field, $u$ and $\bar{u}$ are two independent, two-dimensional Euclidean Dirac spinor fields (again, corresponding to a fermionic particle and antiparticle), $\bilin{\bigcdot,\bigcdot}$ is the bilinear (not sesquilinear) extension of the usual scalar product on $\R^2$ to $\C^2$, $\snabla$ is the Dirac operator given by
\begin{equ}[eq:DiracOp]
	\snabla \eqdef \begin{pmatrix}
		 0 & -\partial_1+i \partial_2\\
		-\partial_1-i\partial_2 & 0
	\end{pmatrix} \; ,
\end{equ}
and the constants are $g \in \R$ and $m^{2}, M > 0$.

This model has been studied since the earliest days of constructive quantum field theory (QFT) and was first constructed in the Hamiltonian \slash Minkowski setting by Glimm--Jaffe  \cite{GJ70, GJ71} and Schrader \cite{Schrader72}.
It was later formulated in the language of Euclidean field theory by Osterwalder--Schrader in \cite{OS73} and has been heavily studied as a paradigmatic model mixing bosons and fermions \cite{Seiler74, SeilerSimon75, CooperRosen1977}.

From \eqref{eq:Yukawa_Action} one can formally derive the dynamical Yukawa$_2$
model given by the system of stochastic partial differential equations (SPDEs)
\begin{equs}
\label{eq:Model_V1}
		\partial_t \phi &= (\Delta-m^2) \phi - g  \bilin{\bar u,u} - \lambda \phi^3 + \xi	\\
		\partial_t u &= (\snabla-M) u - g \phi u + \chi\\
		\partial_t \bar u & =  \overline{ (\snabla-M)^\dagger} \bar u - g \phi \bar u + \bar\chi  \\
		& =  (-\overline{\snabla}-M) \bar u - g \phi \bar u + \bar\chi\;.
\end{equs}
Here $\overline{\snabla}$ denotes the complex conjugate (but not adjoint) of $\snabla$, $\dagger$ denotes the adjoint operation,
$\chi$ and $\bar{\chi}$ are two fermionic space-time white noises\footnote{Here we view the fermionic fields as $\mathcal{A}$-valued distributions over space-time.} for the particle and antiparticle \dash these are free fermions as in  \eqref{eq:free_fermions} where, for $f,g \in \cD(\R \times \T^2; \C^2)$,
\begin{equ}
 C(f,g) \eqdef \omega\big(\bar{\chi}(f) \chi(g)\big) = \int_{\R} \int_{\T^2} \scal{f(t,x),g(t,x)}_{\C^2} \; \mrd x\; \mrd t\;,
\end{equ}
and $\xi$ is a bosonic (i.e.\ ``standard'') space-time white noise.
In principle, the ``law'' of the ``stationary''
solutions of \eqref{eq:Model_V1} should be consistent with the Euclidean formulation of the Yukawa$_2$ model studied earlier.

The recent work \cite{Gub20} discussed in the first section of the introduction also considered the Yukawa$_2$ model (but in the smooth regime\footnote{With a fixed continuum regularisation (in field theory language, with a UV cut-off).}) via a stochastic quantisation approach.
We point out that \cite{Gub20}, following the approach of \cite{Les87}, integrate out the bosons in the field theory and then perform the stochastic quantisation of the resulting ``effective'' purely fermionic theory.\footnote{Note that \cite{Seiler74, SeilerSimon75, CooperRosen1977} went in the opposite direction and started their analysis with a bosonic field theory where the fermions had been integrated out.}
Our method has no problem dealing with the $\phi^4$ potential term in \eqref{eq:Yukawa_Action}, which prevents one from easily integrating out the bosons from the start.

In the present article we do not study the dynamic \eqref{eq:Model_V1} itself, but a similar equation with two main differences:
\begin{itemize}
\item The first difference is also found in \cite{Gub20}, this modification gives a different dynamic but, modulo a change of variable, gives the same invariant state \dash see Remark~\ref{rem:stoch_quantisation} for more details.
In order to have equations where all the components behave well under parabolic scaling we (i) change the bilinear form that is used to define both the fermionic white noise and the functional derivative of the action \eqref{eq:Yukawa_Action} in our stochastic quantisation and (ii) we change the fermionic variables, introducing $(\upsilon,\bar{\upsilon})$ where
\begin{equ}\label{eq:Replace_Spinors}
	u = (\snabla + M) \upsilon \quad \text{and} \quad \bar u = (-\overline{\snabla} + M) \bar\upsilon\;.
\end{equ}
\item The second change is more substantial.
We improve the regularity of the fermionic noises so that the equation remains singular but can be treated by a Da Prato--Debussche type perturbative ansatz.
We still have to carry out a renormalisation procedure for our equations, but all the renormalisations needed are Wick renormalisations.
With this change, our dynamic is \textit{not} a stochastic quantisation of the action \eqref{eq:Yukawa_Action}.
If we did not introduce this regularisation, our analysis would involve more difficult divergences\footnote{This would mean that we have to go beyond Wick renormalisation. In particular, without this regularisation the equation would look like bosonic $\Phi^4_3$ whereas with our regularisation it looks more like bosonic $\Phi^4_2$.} which would require us to employ a fermionic extension to the framework of \cite{Hai14} \dash this is left for future work.
\end{itemize}
The actual dynamic we then investigate is
\begin{equs}\label{eq:lin_sig_model}
    \partial_t \phi &= (\Delta-m^2) \phi - g  \bilin{(-\overline{\snabla}+M)\bar \upsilon , (\snabla+M)\upsilon} - \lambda \phi^3 +  \xi,	\\
    \partial_t \upsilon &=  (\Delta-M^2) \upsilon - g  \phi (\snabla+M) \upsilon  + \psi,\\
    \partial_t \bar\upsilon & =   (\Delta-M^2) \bar\upsilon- g \phi (-\overline{\snabla}+M)\bar\upsilon + \bar\psi\;,
\end{equs}
where  $\xi$ is again a bosonic space-time white noise, and $(\psi, \bar{\psi})$ are free Dirac fermions just like \eqref{eq:free_fermions}, where, for $f,g \in \cD(\R \times \T^2 ; \C^2)$ we set
\begin{equ}
	C(f,g) \eqdef \omega\big(\bar{\psi}(f) \psi(g)\big)=
	\int_{\R}
	\int_{\T^2}
	\scal{f(t,x) , (\CQ g)(t,x) }_{\C^2}\; \mrd x\; \mrd t \;,
\end{equ}
for $\CQ \eqdef (-\Delta+M^2)^{-\delta/2} ( -\snabla+M)^{-1}$ and $\delta > 0$ represents the regularisation described in the second bullet point above.
In particular, if $\delta = 0$, then one would formally expect $\bigl(\phi, \bigl( (\snabla +M)\upsilon,  (-\overline{\snabla}+M) \bar{\upsilon}\bigr) \bigr)$ to admit the Higgs--Yukawa$_2$ state as its equilibrium state.

For the rest of the article we fix a smooth, compactly supported function $\rho \colon \R\times \R^2  \rightarrow [0,1]$ integrating to $1$ and write $\rho_{\eps}(t,x) \eqdef \eps^{-4}\rho(\eps^{-2}t,\eps^{-1}x)$.
We define $\xi_{\eps} \eqdef \xi \ast \rho_{\eps}$, $\bar{\psi}_{\eps} \eqdef \bar{\psi} \ast \rho_{\eps}$, and $\psi_\eps \eqdef \psi \ast \rho_{\eps}$. Our main result is as follows.

\begin{theorem}\label{thm:main_thm}
	There exist constants $\big(C^1_{\eps}\big)_{ \eps \in (0,1]}$ and $\big(C^2_{\eps}\big)_{ \eps \in (0,1]}$, such that, for each $n \in \N$, the local in time solutions $\big(\phi_{\eps},( \upsilon_{\eps},\bar{\upsilon}_{\eps}) \big)$ to the regularised system of equations
\begin{equs}[eq:Yukawa_eps]
		\partial_t \phi_{\eps} &= (\Delta-m^2 + C^1_\eps) \phi_{\eps} - g\Big(  \bilin{\bigl(-\overline{\snabla}+M \bigr) \bar \upsilon_{\eps} \; , \bigl(\snabla + M \bigr)\upsilon_{\eps}} - C^2_\eps \Big) - \lambda \phi_\eps^3  +  \xi_{\eps},	\\
		\partial_t \upsilon_{\eps} &=  (\Delta-M) \upsilon_{\eps}- g  \phi_{\eps} \bigl( \snabla + M \bigr) \upsilon_{\eps}  + \psi_{\eps} \; ,\\
		\partial_t \bar\upsilon_{\eps} & =  (\Delta-M) \bar\upsilon_{\eps}- g \phi_{\eps} \bigl( -  \overline{\snabla} + M \bigr) \bar\upsilon_{\eps} + \bar\psi_{\eps}\;,
\end{equs}
seen as random fields with respect to the Bosonic space-time white noise $\xi$ and taking values in $\CA\F$, can be lifted to random local in time solutions in $\cA_{n}$ that converge in probability on $\cA_{n}$ as $\eps \downarrow 0$. 

Here $\cA_{n}$ is the Banach space obtained by quotienting $\cA\F$  by the kernel of $\|\bigcdot\|_{n}$ and completing with respect to $\|\bigcdot\|_{n}$. 
\end{theorem}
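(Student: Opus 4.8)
The plan is to treat \eqref{eq:Yukawa_eps} by a Da Prato--Debussche decomposition inside a fixed seminorm $\|\bigcdot\|_n$, so that the whole argument is run ``pathwise in $n$'' exactly as one would run a pathwise argument after freezing $\mcb{p}\in\Omega$ in the commutative setting. First I would introduce the stationary stochastic objects: let $\Phi_\eps$ be the solution of the linear bosonic SPDE $\partial_t\Phi_\eps=(\Delta-m^2)\Phi_\eps+\xi_\eps$, and let $\Upsilon_\eps,\bar\Upsilon_\eps$ solve the linear fermionic equations $\partial_t\Upsilon_\eps=(\Delta-M^2)\Upsilon_\eps+\psi_\eps$ and its conjugate. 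Because of the extra regularisation $\CQ=(-\Delta+M^2)^{-\delta/2}(-\snabla+M)^{-1}$ built into the covariance $C(f,g)$, the fields $\Upsilon_\eps$ and $(\snabla+M)\Upsilon_\eps$ are function-valued (not merely distributional) with uniform-in-$\eps$ bounds in the relevant parabolic H\"older/Besov spaces; $\Phi_\eps$ has the regularity of the two-dimensional stochastic heat equation. One then substitutes $\phi_\eps=\Phi_\eps+\varphi_\eps$, $\upsilon_\eps=\Upsilon_\eps+v_\eps$, $\bar\upsilon_\eps=\bar\Upsilon_\eps+\bar v_\eps$ into \eqref{eq:Yukawa_eps}. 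The only products that fail to converge naively are the ``resonant'' ones: the fermionic bilinear $\bilin{(-\overline{\snabla}+M)\bar\Upsilon_\eps,(\snabla+M)\Upsilon_\eps}$, which needs the counterterm $C^2_\eps$, and the cubic $\Phi_\eps^3$ (and the mixed term $\Phi_\eps(\snabla+M)\Upsilon_\eps$), which generate the mass counterterm $C^1_\eps$; all of these are \emph{Wick renormalisations}, in line with the remark that the regularised model ``looks like $\Phi^4_2$''.

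The second step is the probabilistic input: I would show that the Wick-renormalised stochastic data
\[
\Xi_\eps \eqdef \Bigl(\Phi_\eps,\ \Wick{\Phi_\eps^2},\ \Wick{\Phi_\eps^3},\ \Upsilon_\eps,\ \bar\Upsilon_\eps,\ \Wick{\bilin{(-\overline{\snabla}+M)\bar\Upsilon_\eps,(\snabla+M)\Upsilon_\eps}},\ \Wick{\Phi_\eps\,(\snabla+M)\Upsilon_\eps},\dots\Bigr)
\]
converges in probability, in the appropriate space of (parabolic-in-time) distributions valued in $\cA\F$, to a limit $\Xi$. For the purely bosonic components this is the standard second-moment/hypercontractivity computation; the new point is the fermionic and mixed components, which live in $\CA\F$ rather than in an ordinary $L^q(\Omega)$. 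Here I would invoke Theorem~\ref{thm:main_thm1}: the relevant Wick polynomials $P_k(\psi,\bar\psi)\in\CA\F$ converge in $\CL^2(\CA\F,\omega\F)$ by the fermionic Wick/It\^o calculus of the earlier sections (a Gaussian-chaos estimate using \eqref{eq:free_fermions} and the extra $\delta$-smoothing), and statement~2 of Theorem~\ref{thm:main_thm1} then lifts these to polynomials $\mathbf P_k(\bpsi,\bbpsi)\in\mfA_\infty$ converging in $\cA\F$, hence in every $\cA_n$. Bosonic randomness is handled by the usual $\Omega$, so one gets convergence in probability on $\cA_n$.

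The third step is the deterministic fixed-point argument in $\cA_n$. With $\Xi$ (and $\Xi_\eps$) now a fixed element of a Banach space of distributions valued in $\cA_n$, the remainder equations for $(\varphi_\eps,v_\eps,\bar v_\eps)$ are classical in the sense that every rough product has been absorbed into $\Xi_\eps$; the remaining nonlinearities are polynomial in $(\varphi_\eps,v_\eps,\bar v_\eps)$ with coefficients built from $\Xi_\eps$, and all the required products make sense by Young/Besov multiplication because the remainders are more regular. Crucially, $\cA_n$ is a \emph{Banach algebra} (it is $\cA\F/\ker\|\bigcdot\|_n$ completed, and $\|\bigcdot\|_n$ is a $C^*$-seminorm, hence submultiplicative), so the standard contraction-mapping / mild-solution machinery in parabolic H\"older or $C([0,T];\cdot)$ spaces applies verbatim with $\R$ replaced by $\cA_n$: I would set up the mild formulation using the heat semigroups for $\Delta-m^2$ and $\Delta-M^2$, choose $T=T(n,\omega)$ small depending on the size of $\Xi$ in $\cA_n$, and obtain a unique local solution together with continuous dependence on the data $\Xi_\eps\mapsto\Xi$. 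Combining continuity of the solution map with convergence in probability of $\Xi_\eps$ gives convergence in probability of $(\phi_\eps,\upsilon_\eps,\bar\upsilon_\eps)$ in $\cA_n$, and finally the identification that the $\CA\F$-valued solution of \eqref{eq:Yukawa_eps} (which exists since $\CA\F$ is a $C^*$-algebra, so a Banach algebra too) coincides with the image of the $\cA_n$-valued solution under the quotient map $\cA\F\to\cA_n$, using that $\digamma$ and the projections $\pi_b$ are algebra homomorphisms and that the noises and counterterms are the same on both sides.

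The main obstacle is the second step: proving that the \emph{fermionic} Wick-renormalised products converge in a topology strong enough to be useful, i.e. in $\CL^2(\CA\F,\omega\F)$ with quantitative $\eps$-bounds, and then correctly transporting this through the localisation isomorphism $\CL^2(\CA\F,\omega\F)\simeq\CL^2(\mfA_\infty,\bomega\F)$ of Theorem~\ref{thm:main_thm1} to a genuine convergence statement in the Fr\'echet topology of $\cA\F$ (equivalently in every $\cA_n$). This is where the non-commutative structure really enters — one must check that the fermionic Gaussian-chaos bounds, the extra $\delta$-regularisation, and the submultiplicativity of the seminorms $\|\bigcdot\|_n$ interact correctly — whereas the deterministic fixed point, once $\cA_n$ is recognised as a Banach algebra, is essentially a routine two-dimensional singular-SPDE argument.
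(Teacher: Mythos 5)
Your overall strategy is the same as the paper's: a Da Prato--Debussche decomposition around the stationary linear solutions, Wick renormalisation of the ill-defined products, lifting of the fermionic chaoses from $\CL^2(\CA\F,\omega\F)$ to $\cA\F$ via the localisation seminorms (this is exactly what Proposition~\ref{proposition:Wick_Product} and Theorem~\ref{thm:LocalUltraContr} accomplish, through the bound $\|\bPsi^{\diamond n}(G)\|_b\lesssim(1+\dim b)^{(n-1)/2}\|G\|_{\mfH^{\wedge n}}$), and a contraction argument in the Banach algebra $\cA_n$ followed by the consistency check with the $\CA\F$-valued solution. You also correctly identify the lifting of the fermionic Wick products as the genuinely new ingredient.

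Two points in your regularity and renormalisation bookkeeping are wrong, however. First, $(\snabla+M)\Upsilon_\eps$ is \emph{not} function-valued: $\psi$ has parabolic regularity $-\tfrac32+\delta$, so $\CI_F(\psi)$ has regularity $\tfrac12+\delta$ and applying the first-order operator $\snabla+M$ lands you at $-\tfrac12+\delta<0$ for $\delta<\tfrac12$. This is not cosmetic \dash it is precisely why the bilinear $\bilin{(-\overline\snabla+M)\bar\Upsilon,(\snabla+M)\Upsilon}$ (sum of regularities $-1+2\delta<0$) requires the counterterm $C^2_\eps$, and your text as written is internally inconsistent on this. Second, the mixed product $\Phi_\eps\,(\snabla+M)\Upsilon_\eps$ does \emph{not} contribute to $C^1_\eps$ and carries no counterterm at all: since $\xi$ and $\Psi$ are independent there is no zeroth-chaos (contraction) term, and the paper defines this object as the bare product of the mollified factors, its convergence following from the kernel estimates together with tensorised hypercontractivity (Theorem~\ref{thm:PMZ}). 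The mass counterterm $C^1_\eps$ comes solely from Wick-ordering the bosonic powers, $C^1_\eps=3\lambda C^\eps_{\<2>}$. With these two corrections your outline matches the paper's proof, which is carried out in Sections~\ref{sec:the_model}--\ref{sec:Renormalisation} and Theorem~\ref{thm:precise_main_thm}.
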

\begin{proof}
	This is the content of Theorem~\ref{thm:precise_main_thm}.
\end{proof}

\begin{remark}
	In the above theorem, fixing $n$ localises in the fermionic noise. 
	Note that the existence time of our solution may depend on our initial data, our realisation of the bosonic noise, and 
	on our choice of $n$. This is similar to the fact that the local existence time for a regular stochastic PDE may depend on a suitably chosen
	notion of ``size'' for the realisation of the driving noise.
	Obtaining local in time solutions in $\cA\F$ instead of $\cA_{n}$ likely requires making progress on global well-posedness of the dynamic, see Remark~\ref{rem:NotGlobSol} and Section~\ref{subsec:original_eqn}. 
\end{remark}

\subsection*{Acknowledgements}

{\small
MH gratefully acknowledges support by the Royal Society through a research professorship, RP\textbackslash R1\textbackslash 191065.
MP gratefully acknowledges support by the EPSRC through the ``Mathematics of Random Systems'' CDT EP/S023925/1. 
AC gratefully acknowledges partial support by the EPSRC through EP/S023925/1. 
}

\subsection{Notations and Conventions}\label{subsec:notation}
Unless we explicitly state otherwise, all of our vector spaces are \textit{complex}.
Accordingly, by ``scalar'' we will usually mean complex-valued.
All of our algebras will be associative. 

We use the notation $[n] \eqdef \{1,\dots, n\}$ for $n \in \N$.
Given a vector space $V$ we write $\bone = \bone_{V}$ to denote the identity map on $V$.
We overload notation, and for a unital algebra $A$ we also write $\bone = \bone_{A}$ for unit of $A$.
For a topological vector space $V$ we denote by $\CB(V)$ the space of bounded linear maps $V \to V$.

Given any measurable space $S$ we write $\cM(S)$ for the space of measurable scalar functions on $\Omega$
Given any measure space $(S,\mu)$ we write $\CM(S,\mu)$ for equivalence classes of elements of $\cM(S)$ which are equal $\mu$-almost everywhere. We view $\CM(S,\mu)$ is a topological vector space equipped with the topology of convergence in probability. 

For $q \in [1,\infty)$ we write $L^{q}(S,\mu)$ for the standard $L^{q}$ space of (equivalence classes) of scalar functions.
We define the vector space $L^{\infty -}(S,\mu) \eqdef \bigcap_{q\geqslant 1} L^{q}(S,\mu)$, which is a
locally convex topological vector space when equipped with the family of norms $\big( \| \bigcdot \|_{L^q} \big)_{q\geqslant 1}$. 
We also drop $\mu$ from the notation when it is clear from context.

Given a complete, locally convex space $E$ with a collection of seminorms $\mfP$, we write $\cL^{q}(S,\mu ; E)$ for the closure of the $E$-valued measurable step-functions on $S$  with respect to the topology induced by the seminorms
\begin{equ}[e:defLq]
	\| f \|_{\mfp, q} \eqdef \biggl( \int_S \bigl(\mfp(f(x))\bigr)^q \d \mu (x) \biggr)^{\frac{1}{q}}
\end{equ}
for all $\mfp \in \mfP$. We write $L^{q}(S,\mu ; E)$ for the set of equivalence classes of functions in $\cL^q(S,\mu;E)$ that are equal almost everywhere.
When $\mfP$ is countable (that is, $E$ is a Fr\'echet space) we have that $L^{q}(S,\mu ; E)$ is complete since one has to take into account only countably many seminorms and the usual measure theoretic constructions carry through, cf.\ \cite[Section~46.1]{Trev67}.\footnote{When $E$ is not separable, 
this is in general smaller than the space of all appropriately integrable $E$-valued measurable functions.
The functions we defined above are usually called strongly measurable, cf.\ \cite{Hyt16}.} 
We again write $L^{\infty - }(S,\mu; E) \eqdef \bigcap_{q\geqslant 1} L^q(S,\mu; E)$.
We analogously define $\CM(S,\mu;E)$. 

Given an algebra $\CA$ and state $\omega$ we will also use the notation $\CL^2(\CA,\omega)$, see Section~\ref{sec:GNS}. 

We write $\spacetime \eqdef \R \times \T^{2}$ for the primary space-time domain we work on in this article.
We sometimes also work on $\R^3$ as our space-time, where the first coordinate denotes time,
so the corresponding spatial domain $\T^2$ is replaced with $\R^2$. We also introduce the subsets $\spacetime_T \eqdef [0,T] \times \T^2$ and $\spacetime_+ \eqdef \R_+ \times \T^2$, where $\R_+ \eqdef [0,\infty)$.

Given an open domain $A$ in space(-time) and a topological vector space $E$, we write  $\cD(A;E)$ for the space of smooth, compactly
 supported $E$-valued functions on $A$.
We write $\cD'(A;E)$ for the corresponding space of $E$-valued distributions, that is continuous linear maps $\cD(A;\C) \to E$.
We write $\cC(A;E)$ for the space of continuous $E$-valued functions on $A$ and, for $r \in \N$, $\cC^r(A;E)$ for the space of $r$-times continuously differentiable $E$-valued functions on $A$. When $E = \C$ we drop $E$ from the notation.

When we need to quantify regularity on space(-time) we will use H\"older--Besov spaces which we quickly recall now.
In the space-time setting we scale space and time parabolically\footnote{In the language of regularity structures, we use the space-time scaling  $\s = (2,1,1)$}
that is for $(t,x_1,x_2) \in \R^3$ and  $\lambda > 0$ we set
\begin{equ}
	\lambda^\s (t,x_1,x_2) \eqdef  (\lambda^2 t, \lambda x_1, \lambda x_2)\;.
\end{equ}
We use an associated parabolic ``norm'' by setting, for $z = (t,x_1,x_2) \in \R^3$, 
\begin{equ}
	|z|_\s \eqdef \sqrt{|t|} + |x_1|+|x_2|\;,
\end{equ}
so that $|\lambda^\s z|_\s = \lambda |z|_\s$.
With this scaling, $d_{\s} \eqdef 2 + 1 + 1 = 4$ is the scaling dimension of our space-time, in particular the function $|z|_{\s}^{j}$ is locally integrable near $0$ if and only if $j > -d_{\s}$.

For $\alpha \in (0,1)$, we define $\CC^\alpha_{\s}(\R^3)$ to be the space of all functions $f$ on $\R^3$ such that, for every compact set $\mathfrak{K} \subset \R^3$,
\begin{equ}
\sup_{\substack{z,z' \in \mathfrak{K}\\ z \neq z'}} \frac{|f(z) - f(z')|}{|z-z'|_\s^{\alpha}} < \infty\;.
\end{equ}
For $\alpha  < 0$, we define $\CC^\alpha_{\s}(\R^3)$ to consist of all distributions $\xi \in \cD'(\R^3)$ such that, for every compact set $\K \subset \R^3$, one has
\begin{equ}
\sup
\left\{
\lambda^{- \alpha}  | \xi \big( \CS^\lambda_{\s,z} \eta \big)| \, 
\middle| \
\begin{array}{c}
\eta \in \cD(\R^3),\ \|\eta\|_{\cC^r} \leqslant 1,\; \\
z \in \K
\end{array}
\right\}
<
\infty\;,
\end{equ}
where we write $(\CS^\lambda_{\s,z} \eta)(w) \eqdef \lambda^{-d_{\s}} \eta((\lambda^{-1})^{\s} (w-z) )$ and $r = - \lfloor \alpha \rfloor $.
We write $\CC^\alpha_{\s}(\spacetime)$ for the space obtained when we also enforce periodicity in space.

We analogously write $\CC^{\alpha}(\T^{2}) \slash \CC^{\alpha}(\R^{2})$ for the corresponding H\"older--Besov spaces of functions \slash distributions of a spatial variable \dash here we use the regular Euclidean scaling $(1,1)$ for our two dimensions of space so we drop the scaling index $\s$.

In Appendix~\ref{sec:Holder_Space} we define vector-valued H\"older--Besov spaces for general $\alpha \in \R$ in more detail along with recalling the Schauder estimate in Theorem~\ref{SchdrThm} and the Young multiplication estimate Theorem~\ref{YngMultThm}.

We fix a smooth, radially symmetric, compactly supported function $\rho \colon \R^3 \to \R$ that integrates to $1$ as a mollifier, and define
\begin{equ}[eq:mollified_def]
	\rho_\eps \eqdef \CS^{\eps}_{\s, 0} \rho \; .
\end{equ}

Finally, we will use $\Omega$ to refer to the probability space for the bosonic space-time white noise.

\section{Operator Algebras and Fock Spaces}

\subsection{Basic Definitions of Operator Algebra Theory}
\label{section:AlgebraDefn}

Throughout this paper all of our algebras will be unital. For elements $a$, $b$ of an algebra $\CA$ we write 
\begin{equ}{}
	[a,b]_{\pm} \eqdef ab \pm ba \; .
\end{equ}

A topological algebra $\CA$ is an algebra that is a topological vector space with a multiplication that is jointly continuous in both arguments. 
We add the attributes Hausdorff, locally convex (LC), complete, Fr\'echet if the underlying topological vector space satisfies the corresponding conditions.

We say that a topological algebra is a topological $\star$-algebra if it is equipped with a continuous antilinear\footnote{Recall a map $M$ on a complex vector space $V$ is antilinear if, for every $\lambda \in \C$, $u,v \in V$, one has $M(\lambda u + v) = \bar{\lambda} M(u) + M(v)$.} involution $a \mapsto a^{\dagger}$ that satisfies $(ab)^{\dagger} = b^{\dagger} a^{\dagger}$.

A locally convex topological algebra $\CA$ is ``locally $m$-convex'' if and only if there exists a family of seminorms $\mfP$ inducing the topology of $\CA$, such that for all $a,b \in \CA$ and all $\mfp \in \mfP$
\begin{equ}[eq:m-convexity]
	\mfp(ab) \leqslant \mfp(a)\mfp(b) \; .
\end{equ}
Generically, in a LC algebra the continuity of multiplication only requires that for each $\mfp \in \mfP$, there exist $\mfq, \mfr \in \mfP$ such that $\mfp(ab) \leqslant  \mfq(a) \mfr(b)$. 
The stronger property \eqref{eq:m-convexity} will be crucial in allowing us to formulate our SPDE as a well-posed fixed-point problem.

A Banach algebra $\mathcal{A}$ is a locally $m$-convex algebra that is also a Banach space with a norm $\| \bigcdot \|$ for which one has, for all $a,b \in \CA$, 
\begin{equ}
	\|ab\| \leqslant \|a\| \|b\| \; .
\end{equ}
A $C^*$-algebra $\CA$ is a Banach $\star$-algebra where the $\star$-operation is an isometry and in addition satisfies the relation
\begin{equ}\label{cstar-identity}
	\| a^\dagger a\| = \|a\|^2
\end{equ}
for all $a \in \CA$. Equation~\eqref{cstar-identity} is called the $C^*$-identity.

A complete, locally $m$-convex $\star$-algebra $\cA$ is called a locally $C^*$-algebra when its topology can be induced by an $m$-convex family of seminorms $\mfP$, which satisfy for all $a \in \cA$ and all $\mfp \in \mfP$
\begin{equ}
	\mfp(a^\dagger ) = \mfp(a) \quad \text{and} \quad \mfp(a^\dagger a) = \mfp(a)^2 \; .
\end{equ}
Such seminorms are called $C^*$-seminorms. A locally $C^*$-algebra $\cA$ contains a dense $C^*$-algebra 
\begin{equ}
	\mfA_\infty \eqdef \Bigl\{ a \in \cA \, \Big| \, \|a\|_\infty \eqdef \sup_{\mfp \in \mfP} \mfp(a) < \infty \Bigr\}
\end{equ}
with $C^*$-norm $\|\bigcdot\|_\infty$, cf.\ \cite[Lemma~5]{DV75}.

A state on a topological $\star$-algebra $\CA$ is a continuous linear functional $\omega \colon \CA \to \C$ that satisfies for all $a \in \CA$
\begin{equ}
	\omega(a^\dagger a ) \geqslant 0  \quad \text{and} \quad \omega(\bone) = 1 \; .
\end{equ}


\begin{example}
	Let $(S, \mu)$ be a probability space. 
	The space $L^\infty(S, \mu)$ of essentially bounded complex-valued measurable functions is a $C^*$-algebra with the $\star$-operation being complex conjugation. 
	A state on this algebra is given by the integration functional, i.e.\
	\begin{equ}
		\omega_\mu (f) = \int f \d \mu \; .
	\end{equ}

	We can also define a space $L^\infty_{\loc}(S,\mu)$ of locally bounded measurable complex-valued functions, mirroring our main construction of the extended CAR algebra in Section~\ref{sec:extended_car}. Choose a family $\CU$ of measurable sets, s.t.\ $\mu\big( \bigcup \CU \big) = 1$ and $\mu(U)>0$ for all $U \in \CU$. For a measurable function $f$ and $U \in \CU$, let 
	\begin{equ}
		\| f \|_{U} \eqdef \| f \bone_U \|_{L^\infty}
	\end{equ}
	where $\bone_U$ is the indicator function of $U$. We let $L^\infty_{\loc}(S,\mu)$ be the space of measurable functions $f$, s.t.\ $\|f\|_U < \infty$ for all $U \in \CU$, equipped with the topology induced by the seminorms $\| \bigcdot \|_U$. One can easily check that this is a locally $C^*$-algebra. Furthermore, one can view multiplication by $\bone_U$ as an algebra representation $L^\infty_{\loc}(S,\mu) \to L^\infty(U, \mu\big|_U  )$.

	In the non-commutative probability setting we will not be able to view our random variables as functions on a space $S$, but we will have a canonical analogue of $L^\infty(S, \mu)$, that is an algebra of bounded random variables. 
	A key step will be finding a space to play the role of  $L^\infty_\loc(S,\mu)$ so that we can work ``locally'' with respect to unbounded random variables. 
\end{example}

\subsection{Non-Commutative \TitleEquation{\CL^2}{L2}-Space of \TitleEquation{C^*}{C*}-Algebra}

\label{sec:GNS}

For this section we fix an arbitrary (unital) $C^{*}$-algebra $\CA$ with involution denoted $\CA \ni A \mapsto A^{\dagger} \in \CA$. 

Let $\omega \colon \CA \to \C$ be a state on $\CA$,  we can the define an associated positive semi-definite inner product on $\CA$
\begin{equs}[e:scalarProd]
	\scal{\bigcdot, \bigcdot} \colon 	\CA \times \CA & \longrightarrow \C\\
		(A,B) & \longmapsto \omega(A^\dagger B)\;.
\end{equs}
By quotienting out the left ideal
\begin{equ}[e:def_of_null_ideal]
	\CN_\omega \eqdef \big\{ A \in \CA \, \big| \, \omega(A^\dagger A) = 0 \big\}
\end{equ}
and completing the resulting space under the norm determined by \eqref{e:scalarProd},
one obtains the Hilbert space
\begin{equ}
	\mathcal{L}^2(\CA, \omega) \eqdef \overline{\CA/\CN_\omega}\;.
\end{equ}
Writing $\CA \ni B \mapsto [B] \in \overline{\CA/\CN_\omega}$ for the quotient map, we note that the algebra $\CA$ naturally acts on this Hilbert space via $A[B] \eqdef [AB]$.

\subsection{CAR Algebras and Antisymmetric Fock Space}\label{sec:CAR}

A CAR algebra is a $C^*$-algebra that encodes a set of relations called the \textit{canonical anticommutation relations} (CAR).
We present a short overview here, but for an introduction see Chapter 5 of \cite{BR87} and for an in-depth analysis see~\cite{Der06}.

Let $\mfH$\label{p:mfH} be a separable, complex Hilbert space with antiunitary involution $\kappa$, i.e.\ $\kappa$ is antilinear and satisfies for all $f,g \in \mfH$
\begin{equ}
	\scal{\kappa f, \kappa g} = \scal{g,f} \; .
\end{equ}
We will always use the convention compatible with the physicists' bra-ket convention
that scalar products $\scal{g,f}$ are linear in $f$ and antilinear in $g$.
	We first define a complex (non-commutative) $\star$-algebra $\mathring{\mathcal{A}}\F(\mfH)$ generated by symbols $\alpha(f), \alpha^\dagger(f)$ for every $f \in \mfH$, such that the assignment $f \mapsto \alpha^\dagger(f)$ is linear, $f \mapsto \alpha(f)$ is antilinear, $\alpha(f)$ is the $\star$-adjoint of $\alpha^\dagger(f)$, and these elements satisfy the anticommutation relations
\begin{equs}[eq:CAR]{}
	[\alpha(f),\alpha^\dagger(g)]_+ &= \alpha(f) \alpha^\dagger(g) + \alpha^\dagger(g) \alpha(f) = \scal{f,g}_{\mfH} \bone\;,\\{}
[\alpha(f),\alpha(g)]_+ &= [\alpha^\dagger(f), \alpha^\dagger(g)]_+  = 0\;,
\end{equs}
where $\bone$ denotes the unit of $\mathring{\mathcal{A}}\F(\mfH)$.

Note that $\mathring{\mathcal{A}}\F(\mfH)$ is a special type of non-commutative algebra called a \textit{super-algebra} or a $\Z_{2}$-graded algebra.
That is, we have a decomposition $\mathring{\mathcal{A}}\F(\mfH) = \mathring{\mathcal{A}}^{0}\F(\mfH) \oplus \mathring{\mathcal{A}}^1\F(\mfH)$ into an \textit{even} subalgebra $\mathring{\mathcal{A}}^{0}\F(\mfH)$ and \textit{odd} subspace $\mathring{\mathcal{A}}^1\F(\mfH)$ 
\dash this is done by postulating that every instance of $\alpha(\bigcdot)$ or $\alpha^{\dagger}(\bigcdot)$ is odd and by using the natural product rule.

The CAR algebra corresponding to $\mfH$, denoted here by $\CA\F(\mfH)$\label{p:CAR}, is the unique $C^*$-algebra generated by $\mathring{\mathcal{A}}\F(\mfH)$.
Here, uniqueness comes from the fact that \eqref{eq:CAR} fixes the $C^*$-algebra norm.
The canonical anticommutation relations imply that
	\begin{equ}
		\big( \alpha^\dagger(f) \alpha(f) \big)^2 = \alpha^\dagger(f) \big[ \alpha^\dagger(f), \alpha(f) \big]_+ \alpha(f) =  \|f\|^2_{\mfH} \alpha^\dagger(f) \alpha(f)
	\end{equ}
	and thus, because $\alpha^\dagger(f) \alpha(f)$ is self-adjoint, the $C^*$-norm  $\| \bigcdot \|$ on $\mathring{\mathcal A}\F(\mfH)$ must satisfy (if it exists)
\begin{equs}
\|\alpha(f)\|^4 &= \|\alpha^\dagger(f) \alpha(f) \|^2 = \|(\alpha^\dagger(f) \alpha(f))^2 \| = \|f\|^2_{\mfH} \|\alpha^\dagger(f) \alpha(f) \| \\
{}&= \|f\|^2_{\mfH} \|\alpha(f)\|^2\;.
\end{equs}
Combining this with the fact that \eqref{eq:CAR} forces $f \not = 0 \Rightarrow \alpha(f) \not = 0$, we can conclude that $\|\alpha(f)\| = \|\alpha^\dagger(f)\| = \|f\|_{\mfH}$.
We again point the reader to~\cite[Chapter~5.2]{BR13} for a complete proof that the relations above do indeed uniquely determine a $C^*$-algebra (up to canonical isomorphism).
Note that $\CA\F(\mfH)$ inherits the super-algebra structure from $\mathring{\mathcal{A}}\F(\mfH)$.
We sometimes suppress $\mfH$ from the notation.

One way to realise $\CA\F(\mfH)$ is as a subalgebra of the bounded linear operators on $\CF_a (\mfH)$, the fermionic Fock space generated by $\mfH$, i.e.\
\begin{equ}[e:Fock]
	\CF_a(\mfH) \eqdef \bigoplus_{n = 0}^\infty \mfH^{\wedge n} \; , 
\end{equ}
where $\mfH^{\wedge n}$ denotes the $n$-fold antisymmetric (Hilbert space) tensor product of $\mfH$ with itself, and $\mfH^{\wedge 0} \eqdef \C$. 
In particular, $\mfH^{\wedge n}$ can be viewed as the closed subspace of the Hilbert space $\mfH^{\otimes n}$ left invariant by the antisymmetric action of the permutation group. Explicitly, given $f_1,\dots, f_n \in \mfH$ we define the element of $\mfH^{\wedge n}$ given by
\begin{equ}\label{eq:Def_Asym_Prod}
	f_1 \wedge \cdots \wedge f_n \eqdef  S_n^- \left( f_1 \otimes \cdots \otimes f_n\right) \eqdef \frac{1}{n!} \sum_{\sigma \in \mfS_n} \sgn(\sigma) f_{\sigma(1)} \otimes \cdots \otimes f_{\sigma(n)}
\end{equ}
where $S_n^- \colon \mfH^{\otimes n} \to \mfH^{\wedge n} \subset \mfH^{\otimes n}$ extends by linearity to a projection operator. For $F,G \in \mfH^{\wedge n}$ we define
\begin{equ}
	\scal{F,G}_{\mfH^{\wedge n}} \eqdef \scal{F,G}_{\mfH^{\otimes n}}
\end{equ}
For product vectors $f_1 \wedge \cdots \wedge f_n, g_1 \wedge \cdots \wedge g_n$ in $\mfH^{\wedge n}$, where all $f_i, g_i \in \mfH$, one furthermore has the expression
\begin{equs}
	\scal{f_1 \wedge \cdots \wedge f_n, g_1 \wedge \cdots \wedge g_n}_{\mfH^{\wedge n}} &= \frac{1}{n!} \sum_{\sigma \in \mfS_n} \sgn(\sigma) \prod_{i = 1}^n \scal{f_i, g_{\sigma(i)}}_{\mfH} =\\
		&= \frac{1}{n!}  \det \left( \left( \scal{f_i,g_j}_{\mfH} \right)_{1 \leqslant i,j \leqslant n} \right) \; .
\end{equs}

We call $\1 \in \mfH^{\wedge 0} \subset \CF_a(\mfH)$ the \textit{vacuum}, where $\1$ denotes the empty exterior product.

We then set, for any $f \in \mfH$, $\alpha^{\dagger}(f) \1 = f$ and $\alpha(f) \1 = 0$ along with
\begin{equs}[e:defaa*]
\alpha^{\dagger}(f) f_{1} \wedge \cdots \wedge f_{n} &\eqdef \sqrt{n+1} f \wedge f_{1} \wedge \cdots \wedge f_{n}\;,\\
\alpha(f) f_{1} \wedge \cdots \wedge f_{n} & \eqdef \sqrt{n}  \sum_{j=1}^{n} (-1)^{j-1} \scal{f,f_j}_{\mfH} \bigwedge_{\substack{1 \leqslant i \leqslant n \\ i \not = j}} f_{i}\;.
\end{equs}
One can easily check that $\alpha(f)$ and $\alpha^\dagger(f)$ are in fact adjoints as operators on $\CF_a(\mfH)$ and that they indeed satisfy \eqref{eq:CAR}.
We remark that the above construction also gives a corresponding vacuum \textit{state}
\begin{equs}[e:vacuum_state]
\omega\F \colon \CA\F(\mfH) &\longrightarrow \C\\
		A & \longmapsto  \scal{\1, A \1}_{\CF_{a}(\mfH)}\;.
\end{equs}

In the context of quantum field theory, $\mfH$ is called the single particle Hilbert space and $\alpha^\dagger(f)$ and $\alpha(f)$ are  called \textit{creation} and \textit{annihilation} operators on $\CF_a(\mfH)$ respectively.

Finally, an unbounded operator on $\CF_{a}(\mfH)$ that will be of importance to us will be the number operator $N$ given by
\begin{equ}\label{eq:num_operator}
N = \sum_{j}  \alpha^{\dagger}(e_{j}) \alpha(e_{j})\;,
\end{equ}
where $(e_{j})_{j=1}^{\infty}$ can be taken to be any orthonormal basis of $\mfH$. 
One can take the algebraic direct sum $\oplus_{n=0}^{\infty} \mfH^{\wedge n}$ as a core for the number operator and we note that on $\mfH^{\wedge n}$, the operator $N$ is just given by multiplication by $n$. In particular, the definition of $N$ does not depend on the choice of orthonormal basis.

\subsection{Euclidean Fermionic Field Operators}\label{subsec:fields}
In quantum field theories describing bosons (or Majorana fermions), the quantum field $\Phi$ is a self-adjoint operator-valued distribution given by an expression of the type $\Phi = \alpha + \alpha^{\dagger}$ on the bosonic\slash fermionic Fock space. However, for Dirac fields, which include pairs of particles and antiparticles, the field operators are generally not self-adjoint and mix the creation and annihilation operators of the particles and antiparticles. In particular, this means that we do not have functional calculus or something similar at our disposal, leading to our definition of non-commutative localisation below in Section~\ref{sec:extended_car}.

Furthermore, our algebra and state need to be defined so that Euclidean fermionic fields anticommute inside of expectations under this state. 

Following \cite{OS73} and \cite{Gub20}, we can achieve this by introducing a unitary operator $U$ on $\mfH$ that is $\kappa$-antisymmetric, i.e.\ $U^T \eqdef \kappa U^\dagger \kappa = -  U $ and then defining the fermionic noise to be the (complex-linear) operator $\Psi \colon \mfH \to \CA\F(\mfH)$
\begin{equ}\label{eq:fermionic_field}
	\Psi(f) \eqdef  \alpha(\kappa U f) + \alpha^\dagger(f) \; .
\end{equ}

\begin{remark}\label{rem:covariance}
	We note that the choice of $\scal{ \bigcdot, \bigcdot}_{\fH}$ and $U$ \textit{together} determine the ``covariance'' of our fermionic noise.
	A quick calculation shows that, for $f,g \in \mfH$,
	\begin{equ}
		\omega\F(\Psi(f)\Psi(g)) = \scal{\1, \Psi(f)\Psi(g) \1}_{\CF_{a}(\mfH)} = \scal{\kappa U f, g}_{\mfH} = - \scal{\kappa f, U g}_{\mfH} \;  .
	\end{equ}
In particular, we have the desired anticommutation in expectation. 

Also note that there are many choices of $\scal{ \bigcdot, \bigcdot}_{\mfH}$ and skew-symmetric $U$ that lead to fermionic fields $\Psi$ with the same covariance.
\end{remark}

\begin{remark}
	If one has a polynomial $F(\Psi(f_1), \dots, \Psi(f_n))$ for $f_i \in \mfH$, then \begin{equ}
		\omega\F\bigl(F(\Psi(f_1), \dots, \Psi(f_n))\bigr)
	\end{equ}
	is essentially a rigorous expression for the formal infinite-dimensional Berezin integral against a fermionic ``measure'' of the form
	\begin{equ}
		e^{\scal{\Psi,U\Psi}} \CD\Psi \; .
	\end{equ}
	In particular one has the well-known\footnote{The identity can be proven by using a straightforward induction argument using \eqref{eq:CAR}, see for instance \cite{Gub20}.} fermionic Wick rule,
	\begin{equ}
		\omega\F\big(\Psi(f_1) \cdots \Psi(f_{2n})\big) = \sum_{\sigma \in \mfS_{2n}} \sgn(\sigma) \prod_{i = 1}^n \omega\F\big(\Psi(f_{\sigma(2i-1)})\Psi(f_{\sigma(2i)})\big) \; .
	\end{equ}
\end{remark}

One can easily check that $[\Psi(f),\Psi(g)]_+ = 0$ holds for all $f,g \in \mfH$ from which we can conclude $\left\{ \Psi(f) \, \big| \, f \in \mfH\right\}$ generates a super-subalgebra $ \mathring{\mfG}\F(\mfH) \subset \CA\F(\mfH)$ \dash in particular it is super-commutative, i.e.\ for homogeneous (purely even or odd) elements $x,y$ of this subalgebra we have $x y = (-1)^{|x| \cdot |y|} y x$ where $|x|, |y| \in \Z_2$ are the degrees of the elements.
However,  $\mathring{\mfG}\F(\mfH)$ is not closed under adjoints so it is not a sub-$\star$-algebra of $\CA\F(\mfH)$. 
We denote by $\mfG\F(\mfH)$\label{p:fields} the closure of $\mathring{\mfG}\F(\mfH)$ with respect to the norm of $\CA\F(\mfH)$.

\begin{remark}
All the physical content of fermionic field theories is contained in observables built from $\Psi$, and much of the literature concerning Euclidean fermions pays little attention to the $C^{*}$-algebra $\CA\F$ and instead focuses exclusively on $\mathring{\mfG}\F$ (often called the Grassmann algebra) or some metric closure of  $\mathring{\mfG}\F$.

In fact, since it does not have a $\star$-structure, there exist many different reasonable choices of norms on $\mathring{\mfG}\F$ under which one may get different
completions; see for example \cite{FKT02} for a different approach.
Even though a $C^{*}$-algebra introduces many non-physical observables, we have the following motivations for focusing on them in our framework:

\begin{itemize}
				

\item As a subalgebra of $\CA\F$, $\mfG\F$ is a more concrete object than the abstractly defined Grassmann algebra usually found in the literature. 
In particular, one has an explicitly defined state $\omega\F$ on $\CA\F$ while defining some analogue of the state on an abstract Grassman algebra can lead to having to define infinite-dimensional Pfaffians of operators. 

\item Our extended CAR algebra framework given in the next section, which is needed for formulating SPDEs including unbounded renormalised physical observables built from $\Psi$, involves building a locally $C^{*}$-algebra\footnote{Locally $C^{*}$-algebras are defined in Section~\ref{section:AlgebraDefn}.} which enlarges the $C^{*}$-algebra $\CA\F$. 

\item It is unclear how to define renormalised fields in just the Grassmann setting as the norm of the counterterm is independent of the norm of the product because they live in different degrees of the $\N$-gradation of the Grassmann algebra.  However, such fields do make sense as unbounded operators affiliated with von Neumann closures of $\CA\F$, see Remark~\ref{rem:UnbndOp}. It would be interesting in the future to make a strong connection between the extended CAR algebra and such unbounded operators.

\item This approach is closer to that of \cite{OS73} and thus it is a bit clearer how one 
would have to proceed to implement the OS axioms.

\end{itemize}
\end{remark}

\subsection{Araki-Wyss Representations}
\label{sec:ArakiWyss}

This section will explore a slightly more intricately defined quasi-free state $\omega_\rho$ on $\CA(\mfH)$ that is, however, faithful, i.e.\ for all $A \in \CA(\mfH)$ 
\begin{equ}
	\omega_\rho(A^\dagger A) > 0 \; ,
\end{equ}
and thus the corresponding non-commutative $\CL^2$-space is a completion of $\CA(\mfH)$\footnote{After submitting this article, we noticed the possibility of using this state within our framework after seeing the recent article \cite{Gub23}}. Recall that the this is a Hilbert space with inner product $(A,B) \mapsto \omega(A^\dagger B)$. In particular, with this state one has $\CN_{\omega_\rho} = \{0\}$. 

However, we will not use this in the rest of the article except for isolated remarks, \ref{rem:ItoUnitary}, \ref{rem:localL2} and \ref{rem:topology_renormalisation}. 

Given a $C^*$-algebra there are typically (infinitely) many different ways to complete it to a von Neumann algebra. This is readily seen when one considers the different spaces $L^\infty(\Omega;\mu)$ for some probability measures $\mu$ that are all von Neumann completions of $\cC(\Omega)$, cf.\ \cite{Tak02}. The fermionic CAR algebra $\mfA(\mfH)$ constructed in Section~\ref{sec:CAR} is an example of a $C^*$-algebra that admits many such completions. In particular, we shall describe the so-called Araki-Wyss representations, a class of quasi-free representations, i.e.\ representations defined via a state that satisfies Wick's rule. 

\begin{definition}	
	Let $\rho \in \mfL(\mfH)$ be a positive self-adjoint operator satisfying $0 \leqslant \rho \leqslant \bone$. Let $\alpha_2, \alpha_2^\dagger$ denote the annihilation and creation operators of $\mfA(\mfH \oplus \mfH)$. The (left) Araki-Wyss representation of $\mfA(\mfH)$ is the homomorphism $\pi_\rho \colon \mfA(\mfH) \to \mfA(\mfH \oplus \mfH)$  defined via 
	\begin{equs}
		\alpha^\dagger(f) & \longmapsto \alpha_2^\dagger\left( \sqrt{\bone-\rho} f , 0 \right) + \alpha_2\left( 0 , \kappa \sqrt{\rho} f \right)\\
		\alpha(f) & \longmapsto \alpha_2^\dagger \left( 0 , \kappa \sqrt{\rho} f \right) + \alpha_2\left( \sqrt{\bone-\rho} f , 0 \right) 
	\end{equs}

	We shall denote by $\omega_2$ the Fock vacuum state on $\mfA(\mfH \oplus \mfH)$.
\end{definition}

\begin{proposition}
	$\pi_\rho$ is a $C^*$-algebra homomorphism.
\end{proposition}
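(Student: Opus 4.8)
The plan is to verify directly that the images of the generators under $\pi_\rho$ satisfy the defining relations \eqref{eq:CAR} of the CAR algebra $\mfA(\mfH)$, since by the universal property (uniqueness of the $C^*$-algebra generated by these relations) any $\star$-homomorphism is determined by, and exists as soon as one has, a representation of the relations inside the target $C^*$-algebra $\mfA(\mfH \oplus \mfH)$. So the first step is to check that $\pi_\rho$ is compatible with the $\star$-structure: the formula for $\alpha(f)$ should be the $\dagger$-adjoint of the formula for $\alpha^\dagger(f)$. This is immediate from $\alpha_2(\bigcdot)^\dagger = \alpha_2^\dagger(\bigcdot)$ together with the fact that $f \mapsto \sqrt{\bone-\rho}f$ is linear and $f \mapsto \kappa\sqrt{\rho}f$ is antilinear (using that $\rho$, hence $\sqrt{\rho}$, is self-adjoint and commutes with nothing problematic, and that $\kappa$ is the antiunitary involution), so $\pi_\rho(\alpha(f)) = \pi_\rho(\alpha^\dagger(f))^\dagger$. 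One should also note the linearity of $f \mapsto \pi_\rho(\alpha^\dagger(f))$ and antilinearity of $f \mapsto \pi_\rho(\alpha(f))$, which again follows from the corresponding (anti)linearity of $\alpha_2^\dagger$, $\alpha_2$ and of the maps $\sqrt{\bone-\rho}$, $\kappa\sqrt{\rho}$.

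Next I would compute the three anticommutators. Write $a(f) \eqdef \alpha_2(\sqrt{\bone-\rho}f,0)$ and $b(f) \eqdef \alpha_2^\dagger(0,\kappa\sqrt{\rho}f)$, so that $\pi_\rho(\alpha^\dagger(f)) = a(f)^\dagger + b(f)$ wait—more cleanly, $\pi_\rho(\alpha^\dagger(f)) = \alpha_2^\dagger(\sqrt{\bone-\rho}f,0) + \alpha_2(0,\kappa\sqrt{\rho}f)$ and $\pi_\rho(\alpha(f)) = \alpha_2^\dagger(0,\kappa\sqrt{\rho}f) + \alpha_2(\sqrt{\bone-\rho}f,0)$. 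Expanding $[\pi_\rho(\alpha(f)),\pi_\rho(\alpha^\dagger(g))]_+$ produces four anticommutators of elementary creation/annihilation operators on $\mfH\oplus\mfH$. The ``first-component with second-component'' cross terms vanish because $(\sqrt{\bone-\rho}g,0)$ and $(0,\kappa\sqrt{\rho}f)$ are orthogonal in $\mfH\oplus\mfH$; the ``first with first'' term contributes $\scal{\sqrt{\bone-\rho}f,\sqrt{\bone-\rho}g}_\mfH\bone = \scal{f,(\bone-\rho)g}_\mfH\bone$; and the ``second with second'' term contributes $\scal{\kappa\sqrt{\rho}g,\kappa\sqrt{\rho}f}_\mfH\bone = \scal{\sqrt{\rho}f,\sqrt{\rho}g}_\mfH\bone = \scal{f,\rho g}_\mfH\bone$, where I used the defining property $\scal{\kappa u,\kappa v}=\scal{v,u}$ of the antiunitary $\kappa$. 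Adding these gives $\scal{f,(\bone-\rho)g}_\mfH\bone + \scal{f,\rho g}_\mfH\bone = \scal{f,g}_\mfH\bone$, exactly the first relation in \eqref{eq:CAR}. The remaining two relations $[\pi_\rho(\alpha(f)),\pi_\rho(\alpha(g))]_+ = [\pi_\rho(\alpha^\dagger(f)),\pi_\rho(\alpha^\dagger(g))]_+ = 0$ follow the same way: each expands into anticommutators of the form $[\alpha_2^\sharp,\alpha_2^\sharp]_+$ where either both are creation operators, both are annihilation operators, or they act on orthogonal components of $\mfH\oplus\mfH$ — all of which vanish by \eqref{eq:CAR} applied in $\mfA(\mfH\oplus\mfH)$.

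Having checked that $f\mapsto\pi_\rho(\alpha^\dagger(f))$, $f\mapsto\pi_\rho(\alpha(f))$ define a (anti)linear, $\star$-compatible family inside $\mfA(\mfH\oplus\mfH)$ satisfying \eqref{eq:CAR}, the universal characterization of $\mfA(\mfH)$ — recalled in Section~\ref{sec:CAR}, where it is noted that the CAR relations uniquely determine a $C^*$-algebra up to canonical isomorphism — yields a unique $C^*$-algebra homomorphism $\pi_\rho\colon\mfA(\mfH)\to\mfA(\mfH\oplus\mfH)$ extending these assignments. I do not expect any serious obstacle here; the only points requiring a little care are bookkeeping ones: making sure the antilinearity of $f\mapsto\kappa\sqrt{\rho}f$ is tracked correctly (in particular that it makes $f\mapsto\pi_\rho(\alpha^\dagger(f))$ genuinely linear, since $\alpha_2$ is antilinear and composing two antilinear maps gives a linear one), and correctly invoking $\scal{\kappa u,\kappa v}_\mfH = \scal{v,u}_\mfH$ rather than its conjugate when collecting the ``second with second'' contribution. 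One could alternatively verify positivity of the associated state and build the homomorphism via a GNS construction, but the direct route through the CAR relations is cleaner and avoids redundancy with the next proposition (which presumably identifies $\omega_\rho = \omega_2\circ\pi_\rho$ as a faithful quasi-free state).
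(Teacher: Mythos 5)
Your proposal is correct and follows essentially the same route as the paper's proof: expand each anticommutator of the images of the generators into elementary anticommutators in $\mfA(\mfH\oplus\mfH)$, discard the vanishing cross terms, and combine the two surviving diagonal contributions via $\scal{\kappa u,\kappa v}=\scal{v,u}$ so that $(\bone-\rho)$ and $\rho$ sum to $\bone$. The paper only records the anticommutator computation and refers to \cite{Der06} for the rest, whereas you also spell out the $\star$-compatibility, the (anti)linearity bookkeeping, and the appeal to the universal property of the CAR algebra — all of which are correct and merely make explicit what the paper leaves implicit.
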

\begin{proof}
	We only check that $\pi_\rho$ preserves the anticommutation relations, for more details see \cite{Der06}.
	\begin{equs}
		\left[ \pi_\rho(\alpha^\dagger(f)) , \pi_\rho(\alpha(g)) \right]_+ & = \left[ \alpha_2^\dagger( \sqrt{\bone-\rho} f,0) , \alpha_2( \sqrt{\bone-\rho} g,0) \right]_+ + \\
		& \qquad  + \left[ \alpha_2(0, \kappa \sqrt{\rho} f) , \alpha_2^\dagger(0, \kappa \sqrt{\rho} g) \right]_+ = \\
		&= \scal{\sqrt{\bone-\rho} g, \sqrt{\bone-\rho} f}_{\mfH} \bone + \scal{\kappa\sqrt{\rho} f, \kappa\sqrt{\rho} g}_{\mfH} \bone = \scal{g,f}_{\mfH} \bone \\
		\left[ \pi_\rho(\alpha^\dagger(f)) , \pi_\rho(\alpha^\dagger(g)) \right]_+ &= \left[ \alpha_2^\dagger( \sqrt{\bone-\rho} f,0) , \alpha^\dagger_2( \sqrt{\bone-\rho} g,0) \right]_+ +\\
		& \qquad  +  \left[ \alpha_2(0, \kappa \sqrt{\rho} f) , \alpha_2(0, \kappa \sqrt{\rho} g) \right]_+ = 0
	\end{equs}
	Here we have only included terms with both input vectors belonging either to the first or second factor of $\mfH \oplus \mfH$ as the other ones are necessarily $0$ by orthogonality.

\end{proof}

\begin{proposition}
	The state $\omega_\rho \eqdef \omega_2 \circ \pi_\rho$ is faithful if and only if the kernel of $\rho$ and $1-\rho$ are trivial.
\end{proposition}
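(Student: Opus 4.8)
The plan is to reduce faithfulness of $\omega_\rho$ to the statement that the GNS vector $\1_{\mathrm{Fock}} \in \CF_a(\mfH \oplus \mfH)$ is cyclic and separating for the von Neumann algebra $\pi_\rho(\CA(\mfH))''$, and then to characterise this in terms of the spectral data of $\rho$. First I would compute, using the formula defining $\pi_\rho$ together with the CAR and the fact that the two copies of $\mfH$ inside $\mfH \oplus \mfH$ are orthogonal, the two-point function
\begin{equ}
\omega_\rho\bigl(\alpha^\dagger(f)\alpha(g)\bigr) = \scal{\sqrt{\bone-\rho}\,g,\sqrt{\bone-\rho}\,f}_{\mfH}\;,\qquad
\omega_\rho\bigl(\alpha(f)\alpha^\dagger(g)\bigr) = \scal{\sqrt{\rho}\,g,\sqrt{\rho}\,f}_{\mfH}\;,
\end{equ}
together with the vanishing of $\omega_\rho(\alpha(f)\alpha(g))$ and $\omega_\rho(\alpha^\dagger(f)\alpha^\dagger(g))$. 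Since $\omega_\rho = \omega_2 \circ \pi_\rho$ is quasi-free (Wick's rule is inherited from $\omega_2$ through the homomorphism $\pi_\rho$), all higher correlations are determined by these, so $\omega_\rho$ is exactly the gauge-invariant quasi-free state with two-point operator $\bone - \rho$ in the usual normalisation. This reduces the problem to a known dichotomy, but I would prove it directly rather than quoting it.

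Next I would treat the ``only if'' direction by exhibiting explicit null elements when $\rho$ or $\bone - \rho$ has nontrivial kernel. If $f \neq 0$ lies in $\ker\rho$, then $\pi_\rho(\alpha(f)) = \alpha_2(\sqrt{\bone-\rho}f,0)$ (the creation term drops out since $\sqrt{\rho}f = 0$), so $\omega_\rho(\alpha(f)^\dagger \alpha(f)) = \omega_2\bigl(\alpha_2^\dagger(\sqrt{\bone-\rho}f,0)\alpha_2(\sqrt{\bone-\rho}f,0)\bigr) = 0$ because annihilation operators kill the Fock vacuum; hence $\alpha(f) \in \CN_{\omega_\rho}$ is a nonzero null element. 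Symmetrically, if $f \neq 0$ lies in $\ker(\bone-\rho)$, then $\pi_\rho(\alpha^\dagger(f)) = \alpha_2(0,\kappa\sqrt{\rho}f)$ annihilates the vacuum, so $\alpha^\dagger(f)$ is a nonzero null element. In either case $\omega_\rho$ is not faithful. I would note $\ker\rho$ and $\ker(\bone-\rho)$ can be nontrivial at the same time (different spectral subspaces), so both must be excluded.

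For the ``if'' direction, assuming $\ker\rho = \ker(\bone-\rho) = \{0\}$, I would show $\omega_\rho(A^\dagger A) > 0$ for every nonzero $A \in \CA(\mfH)$. The cleanest route is via the Araki--Wyss picture itself: the map $\mfH \ni f \mapsto (\sqrt{\bone-\rho}f, \kappa\sqrt{\rho}f) \in \mfH \oplus \mfH$ is injective with dense range precisely when both kernels are trivial (injectivity is immediate; density of the range follows because the orthogonal complement of the range would force a vector killed by both $\sqrt{\bone-\rho}$ and $\sqrt{\rho}$, impossible here), and an analogous statement holds for the ``annihilation'' leg $f \mapsto (\sqrt{\bone-\rho}f, \kappa\sqrt{\rho}f)$ appearing in $\pi_\rho(\alpha(f))$. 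Under these nondegeneracy conditions one shows the Fock vacuum of $\CF_a(\mfH\oplus\mfH)$ is cyclic for $\pi_\rho(\CA(\mfH))$ — by producing arbitrary one-particle vectors in each leg from $\pi_\rho(\alpha^\dagger(f))\1$ and $\pi_\rho(\alpha(f))\1$ and then filling out higher particle sectors by repeated application of the algebra, using the CAR to separate even and odd parts — and likewise cyclic for the commutant, which by a standard Araki--Wyss computation is generated by the ``tilded'' operators $f \mapsto \alpha_2^\dagger(0,\kappa\sqrt{\bone-\rho}f) + \alpha_2(\sqrt{\rho}f,0)$ and their adjoints. Cyclicity for the commutant is equivalent to the vacuum being separating for $\pi_\rho(\CA(\mfH))''$, which gives $\omega_\rho(A^\dagger A) = \|\pi_\rho(A)\1\|^2 > 0$ for $A \neq 0$ once we know $\pi_\rho$ is injective (it is, being a $C^*$-homomorphism out of the simple algebra $\CA(\mfH)$, or directly because it preserves the CAR which fix the norm).

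The main obstacle I anticipate is the cyclicity/separating argument in the ``if'' direction: one must carefully track how the antisymmetric Fock structure interacts with the two-legged embedding, in particular verifying that repeated application of $\pi_\rho(\alpha^\dagger)$ and $\pi_\rho(\alpha)$ generates a dense subspace of $\CF_a(\mfH\oplus\mfH)$ rather than some proper invariant subspace, and identifying the commutant precisely enough to invoke the separating property. If a self-contained treatment of the commutant becomes too long, the fallback is to cite the standard reference \cite{Der06} for the identification of the Araki--Wyss commutant and the equivalence ``both kernels trivial $\iff$ vacuum separating,'' and keep only the explicit null-vector computation for the ``only if'' direction as the genuinely new content.
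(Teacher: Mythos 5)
The paper does not actually prove this proposition: its ``proof'' is the single line ``See \cite[Theorem~43]{Der06}''. Your proposal therefore supplies an argument where the paper supplies only a citation, and on the whole it is the right argument. The ``only if'' half is complete and correct as written: for $0\neq f\in\ker\rho$ one has $\pi_\rho(\alpha(f))=\alpha_2(\sqrt{\bone-\rho}f,0)$, so $\omega_\rho(\alpha^\dagger(f)\alpha(f))=0$ while $\alpha(f)\neq 0$, and symmetrically for $\ker(\bone-\rho)$; this is exactly the computation one would want to see. The ``if'' half correctly identifies the standard route (vacuum separating for $\pi_\rho(\CA(\mfH))$, equivalently cyclic for the commutant), and correctly locates where the kernel hypotheses enter: $\pi_\rho(\alpha^\dagger(f))\1=(\sqrt{\bone-\rho}f,0)$ and $\pi_\rho(\alpha(f))\1=(0,\kappa\sqrt{\rho}f)$, so filling out the one-particle space of $\CF_a(\mfH\oplus\mfH)$ requires precisely that $\sqrt{\bone-\rho}$ and $\sqrt{\rho}$ have dense range, i.e.\ trivial kernels. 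What remains genuinely unproved in your sketch is the commutant step: for fermions the ``right-handed'' operators you write down do not commute with $\pi_\rho(\CA(\mfH))$ as stated --- they anticommute with its odd part --- and the true Araki--Wyss commutant is obtained only after composing with the parity operator $(-1)^N$ (a Klein/Jordan--Wigner twist). Since $(-1)^N$ fixes the vacuum and preserves particle-number sectors, the cyclicity computation survives the twist, but this is the detail that must be supplied (or cited from \cite{Der06}, as you propose as a fallback) before the ``if'' direction is closed.

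Two smaller corrections. First, your two-point functions are swapped: a direct vacuum computation gives $\omega_\rho(\alpha^\dagger(f)\alpha(g))=\scal{\sqrt{\rho}\,g,\sqrt{\rho}\,f}=\scal{g,\rho f}$ and $\omega_\rho(\alpha(f)\alpha^\dagger(g))=\scal{f,(\bone-\rho)g}$, so $\omega_\rho$ is the gauge-invariant quasi-free state with one-particle density $\rho$, not $\bone-\rho$; this does not affect the logic, since your null elements are computed directly. Second, the combined map $f\mapsto(\sqrt{\bone-\rho}f,\kappa\sqrt{\rho}f)$ is \emph{always} injective (its kernel is $\ker\rho\cap\ker(\bone-\rho)=\{0\}$) and essentially never has dense range in $\mfH\oplus\mfH$ --- its closure is a graph-type subspace; the statement that actually carries the kernel hypotheses is that each leg separately has dense range in its own copy of $\mfH$, which is what you in fact use in the next sentence.
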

\begin{proof}
	See \cite[Theorem~43]{Der06}.
\end{proof}
Note that the state depends on the choice of representation $\pi_\rho$ and thus the operator $\rho$. From now on we shall assume that $\rho$ induces a faithful state. In particular this means that $\CL^2(\CA(\mfH),\omega_\rho) \supset \CA(\mfH)$.

\begin{remark}
	This contrasts with the vacuum state where many elements of $\CA(\mfH)$ are identified with one another in $\CL^2(\CA,\omega)$. For example, $\|\alpha(f)\|_{\CL^2(\omega)} = 0$ but $\|\alpha(f)\|_{\CL^2(\omega_\rho)} = \|\sqrt{\rho} f \|_{\mfH}$.
\end{remark}

The image of the fermionic field $\Psi(f)$ under the representation $\pi_\rho$ is given 
\begin{equ}
	\pi_\rho(\Psi(f)) = \alpha_2^\dagger\left( \sqrt{\bone-\rho} f, \kappa \sqrt{\rho} \kappa U f \right) + \alpha_2 \left( \sqrt{\bone-\rho} \kappa U f, \kappa \sqrt{\rho} f \right)
\end{equ}

Now suppose that $\kappa U$ commutes with the one-particle density $\rho$. 
Then
\begin{equ}
	\omega_\rho \left( \Psi(f) \Psi(g) \right) = \scal{\kappa U f, (\bone-2\rho)g} \; .
\end{equ}
If $\bone-2\rho$ is invertible, e.g.\ if $\rho < \frac{1}{2}\bone$, then we can modify $U$ in such a way that the resulting fermionic fields $\Psi_\rho(f)$ are still Gaussian and have covariance $U$ w.r.t.\ $\omega_\rho$. In particular,
\begin{equ}
	\Psi_{\rho}(f) \eqdef \alpha^\dagger(f) + \alpha(\kappa \widetilde{U} f)
\end{equ}
Where
\begin{equ}
	\widetilde{U}\eqdef U(\bone-2\rho)^{-1} \; .
\end{equ}
One easily checks, that $\widetilde{U}^T = - \widetilde{U}$. One easy way to ensure that $\rho$ commutes with $\kappa U$ is to set $\rho = \lambda \bone$ with $\lambda \in (0,\frac{1}{2})$.

\subsection{Wick Products and Fock Space}\label{sec:fock_renorm}
In much of the singular SPDE literature, given a multilinear functional of singular Gaussian (bosonic) noises, in order to prescribe the renormalisation of this functional and prove its probabilistic convergence (for instance, in some $L^{q}\big(\Omega,\mu ; \CC_{\s}^{\alpha}(\spacetime) \big) $ topology), the standard way to proceed is to rewrite objects as chaos expansions \slash sums of Hermite polynomials and then remove divergent terms in these sums.

In comparison, our renormalisation prescription for multilinear functionals of the fermionic noise will involve super-commutative analogues of these Hermite polynomials of the fermionic noise. 

We start by introducing our joint field operator\slash noise process.
For this section we again view $\mfH$ as generic complex Hilbert spaces and introduce a second bosonic Hilbert space $\mfB$\label{p:boson}.
As before, given a $U$ on $\mfH$, we then have a fermionic noise which is a continuous map $\bPsi \colon \mfH \to \mfA_\infty$.

Fixing a Gaussian measure $\mu$ on some real topological vector space $S$ with reproducing kernel Hilbert space\footnote{That is the Hilbert space generated by the scalar product $\E_\mu(\kappa(f^*(\xi))g^*(\xi))$ in the complexification of $S'$, the dual space of $S$, after quotienting out vectors of norm $0$ and taking its completion.} $\mathfrak{B}$, we have a corresponding bosonic noise field (process)\footnote{Note that there is an asymmetry in how we define $\Psi$ and $\xi$ here since $\xi$ does not take values in some algebra of observables $\cA_{B}$ (see Remark~\ref{rem:BosFermProb}) that is defined independently of the state \slash measure, so $\xi(\bigcdot)$ is more analogous to $[\Psi(\bigcdot)]$ where $[\bigcdot]$ is defined as in Section~\ref{sec:GNS}.} $\xi$ which is a continuous map $\xi\colon \mfB \rightarrow  L^{\infty - }(S,\mu)$. 

\subsubsection{It\^{o}--Segal--Wiener  Isomorphisms}

We will review standard facts about the relationship between Gaussian measures\slash states and bosonic and fermionic Fock spaces \dash for more details we refer the reader to \cite{Nualart} for the bosonic case and \cite{MeyerQuantum} discussing both fermions and bosons.

For any $n \in \N$, we denote by $\mfB^{\otimes_s n}$\label{p:symPow} the $n$-fold symmetric tensor power of $\mfB$ and $\mfB^{\otimes_s 0} \eqdef \C$. 
Here we write, in analogy to \eqref{eq:Def_Asym_Prod}, for $f_1,\dots, f_n \in \mfB$
\begin{equ}
	f_1 \otimes_s \cdots \otimes_s f_n \eqdef \frac{1}{n!} \sum_{\sigma \in \mfS_n} f_{\sigma(1)} \otimes \cdots \otimes f_{\sigma(n)}
\end{equ}
and for $g_1 \otimes_s \cdots \otimes_s g_n \in \mfB^{\otimes_s n}$ we have the scalar product
\begin{equ}
	\scal{f_1 \otimes_s \cdots \otimes_s f_n, g_1 \otimes_s \cdots \otimes_s g_n}_{\mfB^{\otimes_s n}} =\frac{1}{n!} \sum_{\sigma\in \mfS_n} \prod_{i = 1}^n \scal{f_i, g_{\sigma(i)}}.
\end{equ}
As before, $\mfB$ is called the single particle Hilbert space, and $\CF_{s}(\mfB)$\label{p:FockB} is the corresponding bosonic or symmetric Fock space.

The It\^{o}--Segal--Wiener isomorphism
\begin{equ}[eq:bosonic_iso]
	\iota_{s}\colon  \mathcal F_s(\mfB) \eqdef \bigoplus_{n = 0}^\infty \mfB^{\otimes_s n} \rightarrow  L^2(S, \mu)\;,
\end{equ}
is then given by 
\begin{equ}
	\iota_{s} \eqdef \bigoplus_{n=0}^{\infty} \frac{1}{\sqrt{n!}} \xi^{\diamond n}
\end{equ}
where, for $n \in \N$, the maps $\xi^{\diamond n} \colon \mfB^{\otimes_s n} \rightarrow L^2(S, \mu)$, the Wick\footnote{Another common notation for $\xi^{\diamond n}(f_1, \dots, f_n)$ is $\colon\xi(f_1) \cdots \xi(f_n)\colon$.} or normal ordered products, are defined by the inductive relation 
\begin{equs}
	\xi^{\diamond n}\big(f_{1} \otimes_{s}& \cdots \otimes_{s} f_{n}\big)
	\eqdef \xi(f_1) \xi^{\diamond (n-1)} \big(f_{2} \otimes_{s} \cdots \otimes_{s} f_{n}\big) - \label{eq:bosonic_wick}\\
	&- \sum_{i = 2}^n \E_{\mu}[ \xi(f_1) \xi( f_i)]\ \xi^{\diamond (n-2)} \big(f_{2} \otimes_{s} \cdots \otimes_{s} \widehat{ f_{i} } \otimes_{s} \cdots \otimes_{s} f_{n} \big)\;,
\end{equs}
where we write $\widehat{f_i}$ above to denote the omission of the factor $f_i$ from the tensor product. 
This induction
``starts'' with $\xi^{\diamond 1} \eqdef \xi$ and $\xi^{\diamond 0}(c) \eqdef c \1$ where $\1$ is the function that is identically $1$ on $\Omega$.
It is not hard to check that $\xi^{\diamond n}(\bigcdot)$ satisfies the appropriate symmetry properties so that it is well-defined on the algebraic $n$-fold symmetric tensor product and that moreover it preserves the $\mfB^{\otimes_{s} n}$ norm so that it indeed extends from the algebraic $n$-fold symmetric tensor product to all of $\mfB^{\otimes_{s} n}$.


We now turn to discussing the fermionic analogue $\iota_{a}$ of \eqref{eq:bosonic_iso}. 
Recall Section~\ref{sec:CAR} and our fermionic noise $\Psi$ as defined in \eqref{eq:fermionic_field}. As we shall see shortly, there is an isomorphism
\begin{equ}\label{eq:fermionic_isomorphism}
\iota_a \colon\CF_{a}(\mfH) \rightarrow  \CL^2(\CA\F(\mfH), \omega\F)\;,
\end{equ}
where the space $\CL^2(\CA\F(\mfH), \omega\F)$ is defined as in Section~\ref{sec:GNS}. 


We first inductively define for $n \in \N$ the maps
\begin{equ}[e:defPsiBasic]
	\Psi^{\diamond n}\colon \Lambda_{n}(\mfH) \rightarrow \CA\F(\mfH)
\end{equ}
where $\Lambda_{n}(\mfH) \subset \mfH^{\wedge n}$ is the algebraic $n$-fold antisymmetric tensor product. (This inclusion is strict unless $\dim \mfH < \infty$ since the 
Hilbert space tensor product appearing in $\mfH^{\wedge n}$ is strictly larger than the algebraic
tensor product.)
We set
\begin{equs}[eq:fermionic_wick]
	\Psi^{\diamond n}\big(f_{1} \wedge & \cdots \wedge f_{n}\big)
	\eqdef \Psi(f_1) \Psi^{\diamond (n-1)} \big(f_{2} \wedge \cdots \wedge f_{n}\big)- \\
	&- \sum_{i = 2}^n (-1)^{i} \omega\F \big( \Psi(f_1) \Psi(f_i) \big) \Psi^{\diamond (n-2)} \big(f_{2} \wedge \cdots \wedge \widehat{ f_{i} } \wedge \cdots \wedge f_{n} \big)\;,
\end{equs}
where we use the same conventions as in \eqref{eq:bosonic_wick} namely $\Psi^{\diamond 0}(c) = c \bone$ and $\Psi^{\diamond 1} = \Psi$.
It is not hard to check that $\Psi^{\diamond n}(\bigcdot)$ satisfies the appropriate antisymmetry properties so it is indeed well-defined on $\Lambda_{n}(\mfH)$, but for $n \geqslant 2$ one cannot expect to extend $\Psi^{\diamond n}$ to a map from $\mfH^{\wedge n}$ to $\CA\F(\mfH)$ \dash see Theorem~\ref{thm:otte} below. 
However, writing $[\bigcdot] \colon \CA\F(\mfH) \rightarrow \CL^{2}(\CA\F(\mfH),\omega\F)$ for the map taking algebra elements to their equivalence classes, one can\footnote{This is an easy consequence of calculating the $\CL^{2}$ norm of the right-hand side of \eqref{eq:L2_wick}.}
 extend $[\Psi^{\diamond n}(\bigcdot)]$ to an isometry from $\mfH^{\wedge n}$ to $\CL^{2}(\CA\F(\mfH),\omega\F)$.
We can then set
\begin{equ}
	\iota_{a}(\bigcdot) \eqdef \bigoplus_{n=0}^{\infty} \frac{1}{\sqrt{n!}} [\Psi^{\diamond n}(\bigcdot)]\;.
\end{equ}

\begin{remark}
\label{rem:ItoUnitary}
	In the context of the faithful state $\omega_\rho$, there is no need to take equivalence classes when defining $\CL^2(\CA(\mfH),\omega_\rho)$. In that case $\Psi^{\diamond n}(F)$ is directly an element of $\CL^2(\CA(\mfH),\omega_\rho)$ and thus also an unbounded operator affiliated (with the von Neumann closure) of $\CA(\mfH)$. Furthermore, $\iota_a$ is again a unitary map in this case. 
\end{remark}

\subsubsection{Wick Powers as Unbounded Operators}
We remark that, for $f_{1} \wedge \cdots \wedge f_{n} \in \mfH^{\wedge n}$ 
\begin{equ}\label{eq:L2_wick}
[\Psi^{\diamond n}(f_1 \wedge \cdots \wedge f_n)] = [ \alpha^{\dagger}(f_1) \cdots \alpha^{\dagger}(f_n)]\;,
\end{equ} 
as elements of $\mathcal{L}^{2}(\CA\F(\mfH),\omega\F)$. 
To see this, we first note that \eqref{eq:L2_wick}
is trivial for $n \in \{0,1\}$, so it suffices to show that it is stable under the induction
\eqref{eq:fermionic_wick}. 
Assuming \eqref{eq:L2_wick} holds up to $n-1$, it follows from the fact
that  $\CN_{\omega\F}  = \left\{ A \in \CA\F(\mfH) \, \big| \, A \Omega = 0\right\}$ 
(cf.\ \eqref{e:def_of_null_ideal} and \eqref{e:vacuum_state}) is a left ideal that 
\begin{equs}{}
[\Psi^{\diamond n}\big(f_{1} \wedge & \cdots \wedge f_{n}\big)]
= \big[\Psi(f_1) \alpha^\dagger(f_{2}) \cdots \alpha^\dagger(f_{n})\big] - \\
	&- \sum_{i = 2}^n (-1)^{i} \omega\F \big( \Psi(f_1) \Psi(f_i) \big) \big[\alpha^\dagger(f_{2})\cdots  \widehat{ \alpha^\dagger(f_{i})} \cdots \alpha^\dagger(f_{n})\big]\;.
\end{equs}
As a consequence of \eqref{eq:fermionic_field}, we then see that 
\begin{equs}
\big[\Psi(f_1) &\alpha^\dagger(f_{2}) \cdots \alpha^\dagger(f_{n})\big]
- \big[\alpha^\dagger(f_{1}) \cdots \alpha^\dagger(f_{n})\big]
= \big[\alpha(\kappa U f_1) \alpha^\dagger(f_{2}) \cdots \alpha^\dagger(f_{n})\big]=\\
&= \scal{\kappa U f_1,f_2}\big[\alpha^\dagger(f_{3}) \cdots \alpha^\dagger(f_{n})\big] -\big[\alpha^\dagger(f_{2})\alpha(\kappa U f_1)\alpha^\dagger(f_{3}) \cdots \alpha^\dagger(f_{n})\big]=\\
&= \scal{\kappa U f_1,f_2}\big[\alpha^\dagger(f_{3}) \cdots \alpha^\dagger(f_{n})\big]
- \scal{\kappa U f_1,f_3}\big[\alpha^\dagger(f_{2})\alpha^\dagger(f_{4}) \cdots \alpha^\dagger(f_{n})\big] +\\ &\qquad +\big[\alpha^\dagger(f_{2})\alpha^\dagger(f_{3})\alpha(\kappa Uf_1)\alpha^\dagger(f_{4}) \cdots \alpha^\dagger(f_{n})\big] =\\
&= \sum_{i=2}^n (-1)^i \scal{\kappa U f_1,f_i}\big[\alpha^\dagger(f_{2})\cdots  \widehat{ \alpha^\dagger(f_{i})} \cdots \alpha^\dagger(f_{n})\big] -\\
&\qquad - (-1)^n\big[\alpha^\dagger(f_{2}) \cdots \alpha^\dagger(f_{n})\alpha(\kappa U f_1)\big] =\\
&= \sum_{i=2}^n (-1)^i \omega\F \big( \Psi(f_1) \Psi(f_i) \big)\big[\alpha^\dagger(f_{2})\cdots  \widehat{ \alpha^\dagger(f_{i})} \cdots \alpha^\dagger(f_{n})\big]\;,
\end{equs}
and the claim follows.
It is important to note that we do \textit{not} have $\Psi^{\diamond n}(f_1 \wedge \cdots \wedge f_n) = \alpha^{\dagger}(f_1) \cdots \alpha^{\dagger}(f_n)$ in $\CA\F(\mfH)$. 
However, they are equivalent in  $\mathcal{L}^{2}(\CA\F(\mfH),\omega\F)$, see Section~\ref{sec:extended_car}. 

As is the case for $\Psi^{\diamond n}$, one does not expect the map $\Lambda_{n}(\mfH) \ni f_{1} \wedge \cdots \wedge f_{n} \mapsto \alpha^{\dagger}(f_1) \cdots \alpha^{\dagger}(f_n) \in \CA\F(\mfH)$ to extend to $\mfH^{\wedge n}$. 
However, we can extend this mapping to $\mfH^{\wedge n}$ if we are willing to map to unbounded operators bounded by the number operator. 
This is the content of the estimate \cite[Proposition~1.2.3]{GJ85} which will be crucial for us, we restate this as follows. 
\begin{theorem}
\label{thm:LocalUltraContr}
	For all $r,s \in \N$ the multilinear map $W_{r,s} \colon \mfH^{r+s} \to \CB\big(\CF_a(\mfH)\big)$
	\begin{equ}
		(f_1,\dots, f_{r+s}) \longmapsto \alpha^\dagger(f_1) \cdots  \alpha(f_r)^{\dagger}  \alpha(\kappa f_{r+1}) \cdots \alpha(\kappa f_{r+s}) (1+N)^{-\frac{r+s-1}{2}}
	\end{equ}
extends to a continuous map $\mfH^{\otimes(r+s)} \to\CB\big(\CF_a(\mfH)\big)$. 
Here, $N$ is the number operator defined in \eqref{eq:num_operator}. 

Furthermore, there exists a constant $C>0$ independent of $r$, $s$ such that
	\begin{equ}
		\|W_{r,s}(G)\| \leqslant C (|r-s|+1)^{\frac{r+s}{2}} \|G\|_{\mfH^{\otimes (r+s)}}
	\end{equ}
\end{theorem}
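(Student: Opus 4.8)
The plan is to prove the bound first on algebraic tensors, with operator norm controlled uniformly over the particle-number sectors of $\CF_a(\mfH)$, and then extend $W_{r,s}$ by continuity. This is essentially \cite[Proposition~1.2.3]{GJ85}, and the writeup may simply invoke that result after matching conventions (identifying the test tensor $G$ with the kernel there and absorbing $\kappa$ and the operator $U$ of \eqref{eq:fermionic_field} into the identification of $\mfH$ with the one-particle space); below I sketch a self-contained route and indicate the one genuinely hard point.

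\emph{Reductions.} By multilinearity it suffices to bound $W_{r,s}$ on algebraic tensors. Since $\alpha^\dagger(f_1)\cdots\alpha^\dagger(f_r)$ is totally antisymmetric in $(f_1,\dots,f_r)$ and $\alpha(\kappa f_{r+1})\cdots\alpha(\kappa f_{r+s})$ in $(f_{r+1},\dots,f_{r+s})$ by \eqref{eq:CAR}, one has $W_{r,s}=W_{r,s}\circ(S_r^-\otimes S_s^-)$, so $W_{r,s}(G)$ depends on $G$ only through its orthogonal projection onto $\mfH^{\wedge r}\otimes\mfH^{\wedge s}$; we may thus take $G\in\mfH^{\wedge r}\otimes\mfH^{\wedge s}$ and a (finite) Schmidt decomposition $G=\sum_m\lambda_m\,a_m\otimes b_m$ with $(a_m)$ orthonormal in $\mfH^{\wedge r}$, $(b_m)$ orthonormal in $\mfH^{\wedge s}$ and $\sum_m\lambda_m^2=\|G\|_{\mfH^{\otimes(r+s)}}^2$. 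Finally we may assume $r\geqslant s$: in the opposite case one passes to the adjoint of $W_{r,s}(G)$, a monomial of $s$ creations followed by $r$ annihilations with $(1+N)^{-(r+s-1)/2}$ on the left; the pull-through relations $\alpha^\dagger(f)\,g(N)=g(N-1)\,\alpha^\dagger(f)$ and $\alpha(f)\,g(N)=g(N+1)\,\alpha(f)$ move it to the right and turn it into $(1+N+|r-s|)^{-(r+s-1)/2}$ on the range of the annihilation part, which compares with $(1+N)^{-(r+s-1)/2}$ at the cost of a factor at most $(|r-s|+1)^{(r+s-1)/2}$ (up to relabelling and an application of the isometry $\kappa$).

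\emph{Elementary estimates.} On the $k$-particle sector $\mfH^{\wedge k}$, iterating \eqref{e:defaa*} gives $\alpha^\dagger(f_1)\cdots\alpha^\dagger(f_r)\,\xi=\sqrt{(k+1)\cdots(k+r)}\,(f_1\wedge\cdots\wedge f_r)\wedge\xi$; combined with the contraction bound $\|x\wedge\xi\|_{\mfH^{\wedge(k+r)}}\leqslant\binom{k+r}{r}^{-1/2}\|x\|_{\mfH^{\wedge r}}\|\xi\|_{\mfH^{\wedge k}}$ (in the normalisation of \eqref{eq:Def_Asym_Prod}) and the identity $\sqrt{(k+1)\cdots(k+r)}\,\binom{k+r}{r}^{-1/2}=\sqrt{r!}$, this shows that the creation monomial $\mathrm{Cr}(a)$ built from $a\in\mfH^{\wedge r}$ (the multilinear extension of $f_1\otimes\cdots\otimes f_r\mapsto\alpha^\dagger(f_1)\cdots\alpha^\dagger(f_r)$, which factors through $S_r^-$) has operator norm $\leqslant\sqrt{r!}\,\|a\|_{\mfH^{\wedge r}}$, uniformly over sectors; the analogous bound for the annihilation monomial $\mathrm{An}(b)$ follows by duality. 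We also record $\sum_j\alpha^\dagger(\kappa e_j)\alpha(\kappa e_j)=N$ for any orthonormal basis $(e_j)$ (basis-independence of \eqref{eq:num_operator}, using that $(\kappa e_j)$ is again an orthonormal basis), which evaluates $\ell^2$-sums of annihilation monomials over an orthonormal family as falling factorials of $N$, e.g.\ $\sum_m\|\mathrm{An}(b_m)\,\chi\|^2=\langle\chi,\,N(N-1)\cdots(N-s+1)\,\chi\rangle$ for $\chi\in\CF_a(\mfH)$.

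\emph{The core estimate, and the obstacle.} It remains to bound, uniformly in $k$, the restriction to $\mfH^{\wedge k}$ of $V_G\eqdef\sum_m\lambda_m\,\mathrm{Cr}(a_m)\,\mathrm{An}(b_m)\,(1+N)^{-(r+s-1)/2}$ by a universal constant times $(|r-s|+1)^{(r+s)/2}\,\|G\|$. Neither crude route works: the triangle inequality over $m$ gives the nuclear norm $\sum_m\lambda_m$ (too large when $\operatorname{rank}G$ is large), while Cauchy--Schwarz splitting the multi-index sum into its creation and annihilation halves produces a constant that grows with $k$ (it discards the cross terms, which carry cancellation). Instead one expands $\|V_G\,\psi\|^2=\sum_{m,m'}\lambda_m\lambda_{m'}\langle\mathrm{Cr}(a_m)\mathrm{An}(b_m)\chi,\ \mathrm{Cr}(a_{m'})\mathrm{An}(b_{m'})\chi\rangle$ with $\chi=(1+N)^{-(r+s-1)/2}\psi$: the diagonal is harmless, each summand being $\leqslant r!\,s!\,(1+k)^{-(r+s-1)}\|\psi\|^2$ so the diagonal contributes $\leqslant r!\,s!\,\|G\|^2\|\psi\|^2$ with no growth in $k$; the off-diagonal terms are the actual content, where one normal-orders $\mathrm{Cr}(a_m)^\dagger\mathrm{Cr}(a_{m'})$ via \eqref{eq:CAR}, observes that the fully contracted piece is proportional to $\langle a_m,a_{m'}\rangle_{\mfH^{\wedge r}}=\delta_{mm'}$ and hence drops out off-diagonal, and recombines the remaining pieces (fewer contractions, leftover $\alpha,\alpha^\dagger$) using $\sum_j\alpha^\dagger(\kappa e_j)\alpha(\kappa e_j)=N$ to see that they are again of order $(1+k)^{-(r+s-1)}\|G\|^2\|\psi\|^2$, with a combinatorial count producing exactly the weight $(|r-s|+1)^{r+s}$. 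This near-orthogonality bookkeeping — isolating the diagonal contractions and showing the residual off-diagonal terms neither grow in the particle number nor spoil the constant — is the main obstacle and is precisely the combinatorial heart of \cite[Proposition~1.2.3]{GJ85}. Granting it, one gets $\|V_G\,\psi\|\leqslant C\,(|r-s|+1)^{(r+s)/2}\|G\|\,\|\psi\|$ uniformly in $k$, hence $\|W_{r,s}(G)\|\leqslant C\,(|r-s|+1)^{(r+s)/2}\|G\|_{\mfH^{\otimes(r+s)}}$ on algebraic tensors; continuity of $W_{r,s}$ on $\mfH^{\otimes(r+s)}$, and thus the stated extension, follows by density.
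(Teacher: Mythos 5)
Your proposal does not close the argument: the entire content of the theorem is concentrated in what you call ``the core estimate, and the obstacle'', and at that point you explicitly write ``granting it'' and defer to \cite[Proposition~1.2.3]{GJ85}. The diagonal\slash off-diagonal expansion you sketch is genuinely delicate and you have not verified that it produces a constant of the claimed form: your diagonal bound, as stated, is $r!\,s!\,\|G\|^2\|\psi\|^2$, which for $r=s$ is far larger than the claimed $C\cdot 1$ unless one reinstates the discarded factor $(1+k)^{-(r+s-1)}$ and evaluates it at the minimal non-trivial sector $k\geqslant s$ (a Stirling computation you do not perform); and for the off-diagonal terms you assert, without any computation, that normal-ordering plus the identity $\sum_j\alpha^\dagger(e_j)\alpha(e_j)=N$ yields ``exactly the weight $(|r-s|+1)^{r+s}$''. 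On top of this, your reduction to $r\geqslant s$ already costs a factor $(|r-s|+1)^{(r+s-1)/2}$, which would then multiply whatever the core estimate produces, so the bookkeeping of the final constant is not consistent as written. In short, the hard step is named but not proved.

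It is worth noting that the route taken in Appendix~\ref{sec:UltrConProof} makes this obstacle disappear rather than confronting it. One introduces the operators $\beta^\dagger(f)F = f\wedge F$ and their adjoints; since $\bigl((\beta^\dagger)^{\otimes r}(G)\bigr)F = P_-(G\otimes F)$ with $P_-$ an orthogonal projection, the maps $(\beta^\dagger)^{\otimes r}\colon \mfH^{\otimes r}\otimes\CF_a(\mfH)\to\CF_a(\mfH)$ and $\beta^{\otimes s}\colon\CF_a(\mfH)\to(\mfH^*)^{\otimes s}\otimes\CF_a(\mfH)$ are contractions, so $(\beta^\dagger)^{\otimes r}\circ(B\otimes\bone)\circ\beta^{\otimes s}$ is bounded by the operator norm of $B$, which is in turn dominated by its Hilbert--Schmidt norm $\|G\|_{\mfH^{\otimes(r+s)}}$. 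No Schmidt decomposition, no cross-term cancellation, and no sector-by-sector combinatorics are needed: the near-orthogonality you struggle with is exactly the statement that antisymmetrisation is a projection. The passage from $\beta$ to $\alpha$ via $(\alpha^\dagger)^{\otimes r}=\sqrt{(N)_{(r)}}\,(\beta^\dagger)^{\otimes r}$ and $\alpha^{\otimes s}=\beta^{\otimes s}\sqrt{(N)_{(s)}}$, together with the pull-through relations for the number operator, is what produces both the power $(1+N)^{-(r+s)/2}$ and the constant $(|r-s|+1)^{(r+s)/2}$; the improvement to $(1+N)^{-(r+s-1)/2}$ comes from absorbing one annihilation operator into the bounded middle factor. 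If you want a complete proof along your lines, you must either carry out the off-diagonal combinatorics in full or restructure the argument around the contraction property of $P_-$ as above.
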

We prove this in Appendix~\ref{sec:UltrConProof}.

\begin{remark}
	\label{rem:UnbndOp}
	In particular, we note that this means that for all $F\in \mfH^{\wedge n}$ the closure of $\Psi^{\diamond n}(F)$, which exists by Lemma~\ref{lemma:ClosableOp}, can always be interpreted as an unbounded operator affiliated with $\CB(\CF_a(\mfH))$, the von Neumann closure of $\CA(\mfH)$ w.r.t.\ $\omega$.
\end{remark}

Although we will not use these results here, one can show similar lower bounds, cf.\ \cite{Otte09}, which show that renormalised products in this algebra necessarily yield unbounded operator-valued distributions. Explicitly, we have the following. 
\begin{theorem}\label{thm:otte}
	Let $(e_i)_i$ be an orthonormal basis of $\mfH$ and $A \in \mathrm{HS}(\mfH)$ where $\mathrm{HS}(\mfH)$ is the space of Hilbert-Schmidt operators from $\mfH$ to itself. 	
	Suppose that $\sum_i \alpha^\dagger(e_i) \alpha^\dagger(Ae_i)$ is a bounded operator. 

	It then follows that, for all $\eps > 0$,
	\begin{equ}[eq:LowerBndEst]
		\Trace\Big( \big( A^\dagger A \big)^\frac{1+\eps}{2} \Big) < \infty \; .
	\end{equ}
	The same holds for $\sum_i \alpha(e_i) \alpha(Ae_i)$.
\end{theorem}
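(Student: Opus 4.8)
The plan is to deduce, from boundedness of $T \eqdef \sum_i \alpha^\dagger(e_i)\alpha^\dagger(A e_i)$, a quantitative decay $\sigma_k \leqslant \|T\|/k$ of the singular values of $A$, by testing $T$ against a suitable family of vectors in $\CF_{a}(\mfH)$; the estimate \eqref{eq:LowerBndEst} then reduces to the convergence of $\sum_k k^{-(1+\eps)}$.

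We first reduce to a normal form. Since $\alpha^\dagger(e_i)\alpha^\dagger(e_j) = -\alpha^\dagger(e_j)\alpha^\dagger(e_i)$, the identity $T = \sum_{i,j}\scal{e_j, A e_i}\,\alpha^\dagger(e_i)\alpha^\dagger(e_j)$ shows that $T$ depends on $A$ only through the antisymmetric part of its matrix in the basis $(e_i)$, so we may and do assume $\scal{e_j, A e_i} = -\scal{e_i, A e_j}$. A direct computation with the singular value decomposition of the compact operator $A$ — the antisymmetry forcing every nonzero singular value to occur with even multiplicity — then produces an orthonormal system $\{u_k, v_k\}_k$ such that
\begin{equ}
T \;=\; 2\sum_k \sigma_k\, p_k\;, \qquad p_k \eqdef \alpha^\dagger(u_k)\alpha^\dagger(v_k)\;,
\end{equ}
where $\sigma_1 \geqslant \sigma_2 \geqslant \cdots \geqslant 0$ are the singular values of $A$ counted once per pair (so that, with multiplicity, these singular values are $\{\sigma_k,\sigma_k\}_k$); note that the $p_k$ are even, hence pairwise commuting, and satisfy $p_k^2 = 0$.

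The heart of the argument is the resulting lower bound on $\|T\|$. Fix $N \geqslant 1$ and $0 \leqslant m \leqslant N-1$, and set $\Phi_{N,m} \eqdef \sum_{S}(\prod_{k\in S}p_k)\1$, the sum running over $S \subseteq \{1,\dots,N\}$ with $|S| = m$. For distinct $S$ the vectors $(\prod_{k\in S}p_k)\1$ are orthonormal (they occupy distinct one-particle modes among the $2N$ orthonormal vectors $\{u_k,v_k\}_{k\leqslant N}$), so $\|\Phi_{N,m}\|^2 = \binom{N}{m}$. Since $p_k(\prod_{j\in S}p_j)\1$ vanishes if $k\in S$ and equals $(\prod_{j\in S\cup\{k\}}p_j)\1$ otherwise, with no sign (the $p_k$ being even), one finds that in $T\Phi_{N,m}$ the coefficient of $(\prod_{k\in S'}p_k)\1$, for $S' \subseteq \{1,\dots,N\}$ of size $m+1$, equals $2\sum_{k\in S'}\sigma_k \geqslant 2(m+1)\sigma_N$, the remaining contributions (those involving a mode $>N$) being orthogonal to all of these. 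Hence
\begin{equ}
\|T\|^2 \;\geqslant\; \frac{\|T\Phi_{N,m}\|^2}{\|\Phi_{N,m}\|^2} \;\geqslant\; \frac{4\binom{N}{m+1}(m+1)^2\sigma_N^2}{\binom{N}{m}} \;=\; 4(m+1)(N-m)\,\sigma_N^2\;,
\end{equ}
and the choice $m = \lfloor N/2\rfloor$ gives $\|T\|^2 \geqslant N^2\sigma_N^2$, i.e. $\sigma_N \leqslant \|T\|/N$ for every $N \geqslant 1$.

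To conclude, note that $|A| = (A^\dagger A)^{1/2}$ has each $\sigma_k$ as a double eigenvalue and vanishes on $\ker A$, so that for every $\eps>0$
\begin{equ}
\Trace\bigl((A^\dagger A)^{\frac{1+\eps}{2}}\bigr) \;=\; \Trace\bigl(|A|^{1+\eps}\bigr) \;=\; 2\sum_{k\geqslant 1}\sigma_k^{1+\eps} \;\leqslant\; 2\|T\|^{1+\eps}\sum_{k\geqslant 1}k^{-(1+\eps)} \;<\; \infty\;,
\end{equ}
which is \eqref{eq:LowerBndEst}. For $\sum_i \alpha(e_i)\alpha(A e_i)$ one uses that $\alpha$ is antilinear and $\alpha(f)^\dagger = \alpha^\dagger(f)$ to see that its adjoint equals $\sum_i \alpha^\dagger(A e_i)\alpha^\dagger(e_i) = -T$; it is therefore bounded precisely when $T$ is, with the same norm, and the same conclusion follows.

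I expect the main work to be the key estimate: the form of $\Phi_{N,m}$ is dictated by the need for the contributions to $T\Phi_{N,m}$ to interfere constructively — which is exactly why the ``pair'' normal form is required — and one must keep track of the coefficient $2\sum_{k\in S'}\sigma_k$ and its (here trivial) signs before optimising over $m$. The normal-form reduction, though standard, is the one step demanding some care with the $\ast$-structure.
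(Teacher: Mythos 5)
The paper never proves Theorem~\ref{thm:otte} itself \dash it is quoted from Otte's work \dash so there is no internal proof to compare against, and I assess your argument on its own merits. The core of it is correct: the vectors $\bigl(\prod_{k\in S}p_k\bigr)\1$ are orthonormal for distinct $S$ (they occupy disjoint modes of a single orthonormal system, and the paper's normalisation of $\wedge$ makes each one a unit vector), the $p_k$ commute and square to zero, the contributions to $T\Phi_{N,m}$ involving a mode $>N$ are orthogonal to those indexed by $S'\subseteq\{1,\dots,N\}$, and the ratio $\binom{N}{m+1}(m+1)^2/\binom{N}{m}=(m+1)(N-m)$ together with $m=\lfloor N/2\rfloor$ does give $\sigma_N\leqslant \|T\|/N$, from which \eqref{eq:LowerBndEst} follows by summing $N^{-(1+\eps)}$. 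The treatment of the annihilation case via adjoints is also fine.

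The step you cannot present as a harmless normalisation is the opening reduction. Replacing $A$ by its antisymmetric part $A_-\eqdef\frac12(A-A^T)$ leaves $T$ unchanged but changes $A^\dagger A$, so what your argument actually establishes is $\Trace\bigl((A_-^\dagger A_-)^{(1+\eps)/2}\bigr)<\infty$, not \eqref{eq:LowerBndEst} for the original $A$. This loss is not repairable, because the theorem as literally stated is false for general $A\in\mathrm{HS}(\mfH)$: take $A$ diagonal in the basis $(e_i)$ with entries $m_i=i^{-1/2}(\log i)^{-1}$; then every term $\alpha^\dagger(e_i)\alpha^\dagger(Ae_i)=m_i\,\alpha^\dagger(e_i)^2$ vanishes, so $T=0$ is bounded, while $\sum_i m_i^{1+\eps}=\infty$ for $\eps<1$. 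The statement must therefore be read with $A$ antisymmetric (the case relevant to $\mfH^{\wedge 2}$ and to the remark following the theorem), and your proof establishes exactly that version; you should say so explicitly rather than writing ``we may and do assume''. Two smaller points deserve a sentence each: (i) the identity $T\Phi_{N,m}=2\sum_k\sigma_k p_k\Phi_{N,m}$ requires identifying the bounded operator $T$, defined as a limit of partial sums over the basis $(e_i)$, with the rearranged sum over the canonical pairs; on finite-particle vectors this is justified because $\sum_i e_i\wedge Ae_i$ converges in $\mfH^{\wedge 2}$ to $2\sum_k\sigma_k\,u_k\wedge v_k$, but since the hypothesis does not specify the mode of convergence of the operator sum, this merits a remark; (ii) in the annihilation case, taking adjoints term by term should likewise be justified at the level of matrix elements against finite-particle vectors.
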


\begin{remark}
Note that the map 
\begin{equ}\label{eq:operators-to-tensors}
A \mapsto \sum_{i,j=1}^{\infty} e_{i} \otimes A e_{j}\;
\end{equ}
 is a Hilbert space isomorphism between $\mathrm{HS}(\mfH)$ and the Hilbert space tensor product $\mfH \otimes \mfH$. 
The condition \eqref{eq:LowerBndEst} for $\eps = 1$ is equivalent to $A$ being Hilbert-Schmidt, but for $\eps = 0$ it is equivalent to $A$ being trace-class. 
The map \eqref{eq:operators-to-tensors} is also a continuous isomorphism from the Banach space of trace class operators to the much smaller
projective tensor product $\mfH \wotimes_\pi \mfH \subsetneq \mfH \otimes \mfH$. 
Theorem~\ref{thm:otte} then tells us that if $v = \sum_{i,j} f_{i,j} e_{i} \otimes e_{j} \in \mfH \otimes \mfH$ has the property that the sum $\sum_{i,j} f_{i,j} \alpha^{\dagger}(e_i) \alpha^{\dagger}(e_j)$ converges in $\CA\F(\mfH)$ then $v$ almost belongs to $\mfH \wotimes_\pi \mfH$.
\end{remark}

\subsection{Extended CAR Algebra and Local Convergence}\label{sec:extended_car}

One of our aims is to develop notions of ``pointwise''\slash ``local'' convergence for the CAR algebra with the properties that (i) regularised Hermite polynomials of the free Dirac fields converge pointwise as their regularisation is removed\footnote{We want a codomain which is an $m$-convex algebra for which $\Psi^{\diamond n}$ extends to $\mfH^{\wedge n}$.} and (ii) pointwise convergence and sufficiently strong uniform bounds should imply the convergence of expectations of observables. 

In non-commutative algebraic geometry, irreducible representations are sometimes interpreted as non-commutative analogues of points, just as classical (commutative) points can be defined by the irreducible (one-dimensional) representations of the commutative function algebra over a space \cite{KR00,Ros95}.
For statement (i) above, we will see it makes sense for our ``points'' to correspond to \emph{finite}-dimensional representations. 
We would then want to define representations\footnote{Recall that $\CB(V)$ denotes bounded linear operators on $V$.}
 $\Gamma \ni \pi \colon \CA\F  \to \CB \left( \C^{N(\pi)} \right)$ for some $N(\pi) \in \N$,  and the topology of pointwise convergence would then be encoded by the corresponding seminorms $\|a\|_\pi = \|\pi(a)\|$ for $\pi \in \Gamma$. 
 
However, it is well-known that for an infinite-dimensional Hilbert space $\mfH$ the CAR algebra $\CA\F(\mfH)$ does not admit any non-trivial finite-dimensional representations\footnote{Assume that $\pi \colon \CA\F(\mfH) \to \CB(\C^N)$ is a non-trivial representation for some $N \in \N$. Then there must exist $f \in \mfH$ s.t.\ $\alpha^\dagger(f) \in \ker \pi$, contradicting the anticommutation relations. Indeed, for the same reason a finite-dimensional CAR algebra does not admit a non-trivial representation of dimension lower than itself.}, this is a consequence of the anticommutation relations that are enforced in $\CA\F(\mfH)$.
To overcome this we follow the approach of \cite{DV75} and temporarily drop the anticommutation relations giving us an algebra $\mfA\F(\mfH)$ with fewer relations than $\mathring{\mathcal{A}}\F(\mfH)$, allowing it to admit many different finite-dimensional representations $\pi$. 

We then fix a particular collection of finite dimensional representations $\Gamma$ and partially reintroduce the anticommutation relations by quotienting out the ideal $\mfI_\Gamma$ of all elements of  $\mfA\F(\mfH)$ that vanish in every representation in $\Gamma$ (that is, vanish on all points).  

We topologise and complete $\mfA\F(\mfH) / \mfI_\Gamma$  by using $m$-convex $C^*$-seminorms\footnote{$m$-convexity and $C^*$-seminorms are discussed in Section~\ref{section:AlgebraDefn}.}  $(\|a\|_\pi )_{\pi \in \Gamma}$ mentioned earlier (that is, we topologise and complete using seminorms derived from our ``points'' ).

We now make this construction precise. 
Let $\mfH$ again be a separable, complex Hilbert space with antiunitary involution $\kappa$. 
Let $\mfA\F(\mfH)$\label{p:free} be the free $\star$-algebra generated by $\mfH$, that is $\mfA\F(\mfH)$ is generated by symbols $\balpha(f), \balpha^\dagger(f)$ for every $f \in \mfH$, such that the assignment $f \mapsto \balpha^\dagger(f)$ is linear, $f \mapsto \balpha(f)$ is antilinear, $\balpha(f)$ is the $\star$-adjoint of $\balpha^\dagger(f)$.
Indeed, one should think of $\mfA\F(\mfH)$ as an analogue of $\mathring{\mathcal{A}}\F(\mfH)$ where one does not enforce the anticommutation relations. 

In contrast with $\mathring{\mathcal{A}}\F(\mfH)$, the algebra $\mfA\F(\mfH)$ admits many finite-dimensional representations. 
Given a finite-dimensional subspace $b \subset \mfH$,
and writing $P_b\colon \mfH \to b$ for the associated orthogonal projection, we define $\pi_b\colon \mfA\F(\mfH) \to \CA\F(b)$ to be the unique morphism of $\star$-algebras such that 
\begin{equ}
	\pi_b(\balphas(f)) = \alpha^\dagger(P_b f)\;,
\end{equ}
where $\alpha^{\dagger}$ is the corresponding generator of $\CA\F(b)$.

This allows us to define on $\mfA\F(\mfH)$ the $C^*$-seminorm 
\begin{equ}
	\| \phi \|_{b} \eqdef \| \pi_{b}(\phi) \| \;.
\end{equ}
That this is indeed a $C^*$-seminorm follows from the fact that $\| \bigcdot\|$ is a $C^*$-algebra norm and that $\pi_b$ is a $\star$-algebra morphism.

Thus, for each $b \in \Gr(\mfH)$ we have a $C^*$-seminorm on $\mfA\F(\mfH)$, where $\Gr(\mfH)$ is the Grassmannian of $\mfH$, that is the set of finite-dimensional subspaces of $\mfH$. 
We could therefore use $\Gr(\mfH)$ as an underlying set of ``points'' for our non-commutative algebra $\mfA\F(\mfH)$.

\begin{remark}
It will be convenient to identify $\Im(\pi_{b}) = \CA\F(b)$ with a particular subalgebra of $\CB(\CF_{a}(\mfH))$.
 
Let $\widehat{P}_b$ be the orthogonal projection onto the finite dimensional subspace $\CF_a^{(b)} \subset \CF_a(\mfH)$ given by polynomials of $\left\{ \alpha^\dagger(f) \, \big| \, f \in b \right\}$ acting on $\1$. 
Note that $\dim(\CF_a^{(b)}) = 2^{\dim(b)}$ and that there is a canonical isomorphism $\CF_{a}^{(b)} \simeq \CF_{a}(b)$. 
In particular, this allows us to identify $\CA\F(b)$ with a subalgebra $\CB(\CF_{a}(\mfH))$ by mapping $A \mapsto \widehat{P}_b A \widehat{P}_b = A \widehat{P}_b$. Note that this is an algebra morphism because $A$ and $\widehat{P}_b$ commute.
With this identification, one has
	\begin{equ}
		\pi_{b} (\balpha^\dagger(f) + \balpha(g) ) =  \left( \alpha^\dagger(P_b f) + \alpha(P_b g)  \right) \widehat{P}_b \;,
	\end{equ}
where the $\alpha^{\dagger}$ and $\alpha$ appearing above are the generators of $\mathcal{A}\F(\mfH)$. 
The second equality follows from the fact that $\alpha^\dagger(P_b f) + \alpha(P_b g) $ preserves the subspace $\CF^{(b)}_a$.
\end{remark}

We will find it useful to restrict our set of points so that we can continue to work with super-commuting fields\footnote{See Section~\ref{subsec:extended_field} below.}.
Recall that we have fixed a unitary, $\kappa$-antisymmetric an operator $U\colon \mfH \to \mfH$ in Section~\ref{subsec:fields}. 
We define $\Gr^{U}(\mfH) \subset \Gr(\mfH)$ by setting
\begin{equ}
	\Gr^{U}(\mfH) = \left\{ b + \kappa U b \, \big| \, b \in \Gr(\mfH) \right\}\;.
\end{equ}
Since 
\begin{equ}
	(\kappa U)^2 = \kappa U \kappa U = - U^\dagger U = - \bone_{\mfH}\;,
\end{equ}
it follows that $\Gr^{U}(\mfH)$ is simply the collection of all elements of $\Gr(\mfH)$ which are $\kappa U$ invariant.
We then take our set of points to be given by $\Gamma \eqdef \Gr^{U}(\mfH)$.

\begin{remark}
The set $\Gr(\mfH)$ is a directed set with respect to the order $b \leqslant b'$ if and only if $b \subset b'$. 
In other sections we will sometimes write $\lim_{b \in \Gr(\mfH)}$, it should then be 
understood we are using this directed set structure. 

Also note that $\Gr^{U}(\mfH)$ is cofinal\footnote{That is for every $b \in \Gr(\mfH)$ there exists a $b' \in \Gr^{U}(\mfH)$, s.t.\ $b\leqslant b'$.} in $\Gr(\mfH)$, and therefore any statements we prove regarding limits over $\Gr(\mfH)$ also hold for limits over $\Gr^{U}(\mfH)$. 
\end{remark}

We now wish to  quotient out the elements of $\mfA\F(\mfH)$ that vanish on $\Gamma$, so we set 
\begin{equ}[e:defmfI]
\mfI_\Gamma \eqdef \left\{\phi \in \mfA\F(\mfH)\, \middle| \, \forall b \in \Gr^{U}(\mfH) :  \pi_b(\phi) = 0\right\}\;.
\end{equ}
Since the maps $\pi_b$ are $\star$-algebra morphisms, this is a two-sided $\star$-ideal in $\mfA\F(\mfH)$ so that $\widehat{\mfA}\F(\mfH) \eqdef \mfA\F(\mfH)/\mfI_\Gamma$\label{p:quotient} is a $\star$-algebra. 
In our notation we often do not distinguish between equivalence classes in $\widehat\mfA\F(\mfH)$ and elements in $\mfA\F(\mfH)$. 

As mentioned before, convergence with respect to the seminorms $\bigl( \| \bigcdot \|_{b} \bigr)_{b \in \Gamma}$ should be thought of as pointwise convergence. 
However, it will be convenient to end up with a Fr\'echet space so we will want to work with a countable family of seminorms. 

\begin{definition}\label{def:filtration}
We say that a sequence $(\Gamma_{n})_{n=0}^{\infty}$, with $\Gamma_{n} \subset \Gamma$, is a filtration of $\Gamma$ if the following conditions hold.
\begin{enumerate}
	\item For all $n \in \N$, $\Gamma_n \subset \Gr^U(\mfH)$.
	\item For any $n \geqslant m$, one has $\Gamma_{n} \supset \Gamma_{m}$. 
	\item For any $n \in \N$, $\sup_{b \in \Gamma_{n}} \dim(b) = n$. 
	\item $\bigcup_{n = 0}^{\infty} \bigcup_{b \in \Gamma_{n}} b$  is dense in $\mfH$. 
\end{enumerate}
\end{definition}
The first condition above enforces the compatibility with the conjugation $\kappa U$. 
The second condition ensures that our countable family of seminorms is totally ordered which will make our completion a projective limit \dash see Lemma~\ref{lem:proj_limit}. 
The third condition ensures that our topologies are weak enough to allow the convergence of renormalised Wick products \dash see Proposition~\ref{proposition:Wick_Product}. The specific choice that $\sup_{b \in \Gamma_{n}} \dim(b) = n$ was made for the sake of convenience, in principle it would be enough to simply require $\sup_{b \in \Gamma_{n}} \dim(b) < \infty$.
The last condition guarantees that an algebra element being uniformly bounded in our seminorms means that it corresponds to an operator in $\mathcal{A}(\mfH)$ \dash see Theorem~\ref{thm:ContinuityRep}. 

\begin{example}
An example of a filtration is setting, for each $n \in \N$, 
\begin{equ}\label{eq:strong_filtration}
\Gamma_{n} =
\left\{ b \in \Gamma \, \big| \, \dim(b) \leqslant n \right\}\;.
\end{equ}
An alternative choice is to fix some choice of orthonormal basis $(e_{i})_{i=1}^{\infty}$ of $\mfH$ and then set, for each $n \in \N$, 
\begin{equ}\label{eq:weak_filtration}
\Gamma_{n} 
=
\left\{ \mathrm{span} \big( \{e_{1},\dots,e_{j}\} \big) \, \Big| \, 1 \leqslant j \leqslant n \right\}\;.
\end{equ}
\end{example}
We fix some choice of filtration for the rest of the paper.
Many of our constructions do depend on this choice of filtration, but all of the statements we prove hold independently of the chosen filtration. 

For $n \in \N$ we define $\| \bigcdot \|_n$ to be the $C^*$-seminorm on $\mfA\F(\mfH)$ given by
\begin{equ}\label{eq:seminorm_def}
	\|\phi\|_n \eqdef \sup_{b \in \Gamma_{n}} \|\phi\|_b \; .
\end{equ}
We write $\mfP = \bigl(\| \bigcdot \|_{n}\bigr)_{ n \in \N }$ for our directed family of $C^{*}$-seminorms on $\mfA\F(\mfH)$.

We can now define a 
so-called locally $C^*$-algebra $\cA\F(\mfH)$\label{p:quotientComplete}, cf.\ \cite{Ino71}, as the completion of $\widehat{\mfA}\F(\mfH)$ with respect to the collection of seminorms in $\mfP$. 

The following lemma describes how the original anticommutation relations manifest in  $\cA\F(\mfH)$. 
\begin{lemma}\label{lem:commutation}
For any $f,g \in \mfH$ we have, as elements of $\cA\F(\mfH)$, 
\begin{equs}[eq:ECAR]{}
		[ \balpha^\dagger(f) ,\balpha(g)]_+ &  \in Z(\cA\F(\mfH)) \;,\\{}
		[ \balpha^\dagger(f) ,\balpha^\dagger(g)]_+ &= [ \balpha(f) ,\balpha(g)]_+  = 0\;.
	\end{equs}
Here $Z(\cA\F(\mfH))$ denotes the center of the algebra $\cA\F(\mfH)$. 
\end{lemma}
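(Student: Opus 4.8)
The plan is to prove all three relations by pushing them through the finite-dimensional representations $\pi_b$, $b\in\Gr^U(\mfH)$, which by construction generate the topology of $\cA\F(\mfH)$ via \eqref{eq:seminorm_def} and inside which the genuine canonical anticommutation relations \eqref{eq:CAR} of $\CA\F(b)$ hold. The preliminary observation I would record is twofold: first, an element $x\in\cA\F(\mfH)$ vanishes if and only if $\|x\|_n=0$ for all $n$, which holds provided $\pi_b(x)=0$ for every $b$ occurring in the fixed filtration — in particular if $\pi_b(x)=0$ for all $b\in\Gr^U(\mfH)$; second, for each $b\in\Gamma_n$ the map $\pi_b$ extends to a continuous $\star$-homomorphism on all of $\cA\F(\mfH)$, since $\|\pi_b(\bigcdot)\|\leqslant\|\bigcdot\|_n$ and multiplication in a locally $C^*$-algebra is jointly continuous. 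Granting this, the whole lemma reduces to one-line computations inside the $\CA\F(b)$.

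For the second line of \eqref{eq:ECAR} I would simply apply $\pi_b$: since $\pi_b(\balpha^\dagger(h))=\alpha^\dagger(P_bh)$ and $\pi_b(\balpha(h))=\alpha(P_bh)$, the relations $[\alpha^\dagger(\bigcdot),\alpha^\dagger(\bigcdot)]_+=[\alpha(\bigcdot),\alpha(\bigcdot)]_+=0$ valid in $\CA\F(b)$ give $\pi_b\big([\balpha^\dagger(f),\balpha^\dagger(g)]_+\big)=\pi_b\big([\balpha(f),\balpha(g)]_+\big)=0$ for every $b\in\Gr^U(\mfH)$. Hence both anticommutators lie in $\mfI_\Gamma$ and vanish in $\cA\F(\mfH)$.

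For the first line, the same step yields $\pi_b\big([\balpha^\dagger(f),\balpha(g)]_+\big)=[\alpha^\dagger(P_bf),\alpha(P_bg)]_+=\scal{P_bg,P_bf}_{\mfH}\,\bone$, a scalar multiple of the unit of $\CA\F(b)$. Writing $z\eqdef[\balpha^\dagger(f),\balpha(g)]_+$, it then follows that for any $a\in\cA\F(\mfH)$ and any $b$ in the filtration, $\pi_b([z,a])=[\pi_b(z),\pi_b(a)]=\scal{P_bg,P_bf}_{\mfH}\,[\bone,\pi_b(a)]=0$; since $[z,a]$ has all seminorms zero it vanishes in $\cA\F(\mfH)$, and as $a$ was arbitrary this puts $z$ in $Z(\cA\F(\mfH))$.

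I do not expect a serious obstacle here: the substance is the defining relations of the $\CA\F(b)$ together with the triviality that a scalar multiple of the identity is central. The one point needing genuine care is the first paragraph — legitimising the passage between ``$x=0$ in $\cA\F(\mfH)$'' and ``$\pi_b(x)=0$ for all $b$ in the filtration'', and checking that each $\pi_b$ survives the completion as a $\star$-homomorphism. Both are routine for locally $C^*$-algebras and can be dispatched in a sentence, or cited from the projective-limit description of $\cA\F(\mfH)$ in Lemma~\ref{lem:proj_limit} and from the bound $\|\pi_b(\bigcdot)\|\leqslant\|\bigcdot\|_n$.
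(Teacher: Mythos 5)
Your proposal is correct and follows essentially the same route as the paper's proof: verify the second line by applying the representations $\pi_b$ and invoking the CAR in $\CA\F(b)$, and verify centrality of $[\balpha^\dagger(f),\balpha(g)]_+$ by noting that its image under each $\pi_b$ is a scalar multiple of $\bone_b$, so that its commutator with any element has all seminorms zero. The only cosmetic difference is that you phrase the centrality step for arbitrary $a$ in the completion via continuity of the extended $\pi_b$, whereas the paper computes with $a$ in the free algebra and then quotients and takes limits; both dispositions of this routine point are fine.
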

\begin{proof}
We perform computations in $\mfA\F(\mfH)$ which give us our desired results after quotienting by $\mfI_{\Gamma}$ and taking limits. 

For the second statement, we note that, for every $b \in \Gamma$, 
\[
\pi_{b} \big( [ \balpha^\dagger(f) ,\balpha^\dagger(g)]_+ \big)
=
 [ \alpha^\dagger(P_{b} f) ,\alpha^\dagger(P_{b} g)]_+ = 0\;,
\]
where $\alpha^{\dagger}$ and $\alpha$ are the generators of $\CA\F(b)$.
Therefore, $[ \balpha^\dagger(P_{b} f) ,\balpha^\dagger(P_{b} g)]_+  \in \mfI_{\Gamma}$. 
The assertion for $[ \balpha(f) ,\balpha(g)]_+$ is proven analogously. 

Turning to the first statement, we note that 
\[
\pi_{b} 
\bigl( 
[ \balpha^\dagger(f) ,\balpha(g)]_+ \bigr)
=
[ \alpha^\dagger(P_{b}f) ,\alpha(P_{b}g)]_+ = 
\scal{P_{b}f,P_{b}g}_{b} \bone_{b}\;.
\] 
It follows that, for any $a \in \mfA\F(\mfH)$, 
\begin{equ}
\pi_{b} \Bigl( \bigl[ a , [ \balpha^\dagger(f) ,\balpha(g)]_+ \bigr]_- \Bigr)
=
\scal{P_{b}f,P_{b}g}_{b} 
\bigl[
\pi_{b}(a) , \bone_{b} \bigr]_-
=0\;.\end{equ}
Therefore, $\bigl[a , [ \balpha^\dagger(f) ,\balpha(g)]_+ \bigr]_-  \in \mfI_{\Gamma}$. 
\end{proof}
We emphasize that as a locally convex algebra $\cA\F(\mfH)$ is locally $m$-convex, since all the seminorms are $C^*$-seminorms.

An important $\star$-subalgebra of $\cA\F(\mfH)$ is given by the closure of $\widehat{\mfA}\F(\mfH)$ in $\cA\F(\mfH)$, w.r.t.\ the norm $\|\phi\|_{\infty} = \sup_{n \in \N} \|\phi\|_n$.\footnote{We note here that $\|\phi\|_\infty < \infty$ necessarily holds for all $\phi \in \widehat{\mfA}(\mfH)$. For the generators this holds as we have the uniform bound $\|\balpha^\dagger(f)\|_b =\|\balpha(f)\|_b = \| P_b f\|_{\mfH} \leqslant \|f\|_{\mfH}$ and all othere elments of $\widehat{\mfA}(\mfH)$ are finite sums of finite products of creation and annihilation operators of these generators.} That is we define
\begin{equ}[e:boundedOps]
	\mfA_\infty(\mfH) \eqdef \overline{\widehat{\mfA}\F(\mfH)}^{\|\bigcdot\|_{\infty}} \; .
\end{equ}
In fact, $\mfA_{\infty}(\mfH)$ is a $C^*$-algebra with the norm $\|\bigcdot\|_{\infty}$, see  \cite[Lemma~5]{DV75}.

All four algebras $\mfA\F(\mfH)$, $\widehat{\mfA}\F(\mfH)$, $\cA\F(\mfH)$, and $\mfA_\infty(\mfH)$ naturally carry a $\Z^2$-grading, defined analogously to the grading of $\CA\F(\mfH)$, and we denote the even subalgebra by a superscript $0$ and the odd subspace by a superscript $1$.

Returning to our analogy with algebras of functions, one should think of $\cA\F(\mfH)$ as our
analogue of the locally $C^{*}$-algebra of all continuous functions and $\mfA_{\infty}(\mfH)$ 
as the analogue of the $C^*$-algebra of  bounded continuous functions.

We can return to our original $C^{*}$-algebra via the $\star$-homomorphism $\digamma\colon \mfA\F(\mfH) \to \CA\F(\mfH)$\label{p:digamma} given by mapping
\begin{equ}
	 \balpha^\dagger(f) \longmapsto \alpha^\dagger(f) 
\end{equ}
The next theorem, proven in Appendix~\ref{appendix:theoremproof}, states that $\digamma$ descends to the quotient space $\widehat{\mfA}\F(\mfH)$ and extends to a continuous $C^*$-homomorphism on $\mfA_{\infty}(\mfH)$. 
We abuse notation and also denote the quotient map $\digamma \colon \widehat{\mfA}\F(\mfH) \to \CA\F(\mfH) $ by $\digamma$.
\begin{theorem}
\label{thm:ContinuityRep}
	The $\star$-homomorphism $\digamma \colon \widehat{\mfA}\F(\mfH) \to \CA\F(\mfH)$ is well-defined and extends uniquely to a surjective $C^*$-homomorphism $\mfA_{\infty}(\mfH) \to \CA\F(\mfH)$.
\end{theorem}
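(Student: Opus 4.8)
The plan is to deal with $\digamma$ first on the free $\star$-algebra $\mfA\F(\mfH)$, prove one norm estimate that simultaneously yields well-definedness on the quotient and $\|\bigcdot\|_\infty$-continuity, and then extend by density and read off surjectivity. For the first part: since $\mfA\F(\mfH)$ is the free unital $\star$-algebra on the symbols $\balpha^\dagger(f)$ subject only to $f\mapsto\balpha^\dagger(f)$ being linear (with $\balpha(f)\coloneqq\balpha^\dagger(f)^\dagger$), and the elements $\alpha^\dagger(f)\in\CA\F(\mfH)$ enjoy exactly these structural properties, there is a unique $\star$-homomorphism $\digamma\colon\mfA\F(\mfH)\to\CA\F(\mfH)$ with $\digamma(\balpha^\dagger(f))=\alpha^\dagger(f)$, and its image is the dense $\star$-subalgebra $\mathring{\mathcal A}\F(\mfH)\subseteq\CA\F(\mfH)$.

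The key estimate is that for every $\phi\in\mfA\F(\mfH)$ one has
\[
\|\digamma(\phi)\|_{\CA\F(\mfH)}\;\le\;\sup_{b\in\bigcup_n\Gamma_n}\|\pi_b(\phi)\|_{\CA\F(b)}\;.
\]
Fix $\phi$; it is built from finitely many vectors $f_1,\dots,f_k$. For any $b\in\Gr^U(\mfH)$, composing $\pi_b$ with the canonical isometric inclusion $\CA\F(b)\hookrightarrow\CA\F(\mfH)$ (available since $\CA\F(\mfH)$ is the inductive limit of the finite-dimensional CAR algebras $\CA\F(b)$) identifies $\|\pi_b(\phi)\|_{\CA\F(b)}$ with $\|\digamma_b(\phi)\|_{\CA\F(\mfH)}$, where $\digamma_b(\phi)$ is obtained from $\digamma(\phi)$ by replacing each $f_i$ by $P_bf_i$. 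The corresponding substitution map $\mfH^k\to\CA\F(\mfH)$ is a polynomial in the bounded operators $\alpha^\dagger(\bigcdot),\alpha(\bigcdot)$, hence norm-continuous (using $\|\alpha^\dagger(h)\|=\|h\|$). Invoking the axioms of Definition~\ref{def:filtration} — chiefly the density property~4 together with the nesting property~2 — one then produces $b_j\in\bigcup_n\Gamma_n$ with $P_{b_j}f_i\to f_i$ for all $i$ simultaneously, so that $\digamma_{b_j}(\phi)\to\digamma(\phi)$ and therefore $\|\digamma(\phi)\|=\lim_j\|\pi_{b_j}(\phi)\|\le\sup_{b\in\bigcup_n\Gamma_n}\|\pi_b(\phi)\|$. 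This has two payoffs. First, if $\phi\in\mfI_\Gamma$ then $\pi_b(\phi)=0$ for all $b$, whence $\digamma(\phi)=0$; thus $\mfI_\Gamma\subseteq\ker\digamma$ and $\digamma$ descends to a $\star$-homomorphism $\widehat{\mfA}\F(\mfH)\to\CA\F(\mfH)$. Second, on $\widehat{\mfA}\F(\mfH)$ the estimate reads $\|\digamma(\phi)\|_{\CA\F(\mfH)}\le\|\phi\|_\infty$; in particular $\digamma$ annihilates the kernel of the seminorm $\|\bigcdot\|_\infty$.

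Since $\digamma$ is thus $\|\bigcdot\|_\infty$-contractive and $\mfA_\infty(\mfH)$ is the $\|\bigcdot\|_\infty$-completion of $\widehat{\mfA}\F(\mfH)$, it extends uniquely to a continuous $\star$-homomorphism on $\mfA_\infty(\mfH)$, which between $C^*$-algebras is automatically a $C^*$-homomorphism; uniqueness is immediate from density. Its image contains every $\alpha^\dagger(f)$ and $\alpha(f)$, hence the dense subalgebra $\mathring{\mathcal A}\F(\mfH)$, and the image of a $\star$-homomorphism of $C^*$-algebras is norm-closed, so the extension is onto $\CA\F(\mfH)$.

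The main obstacle is the estimate above, and within it the point that finitely many vectors $f_1,\dots,f_k$ can be approximated by their projections $P_bf_i$ for a \emph{single} $b$ lying in the filtration: this is exactly what the density property~4 (together with the directedness of $\bigcup_n\Gamma_n$) provides, and it is the reason the filtration is not permitted to be too sparse. Setting up the identification of $\|\pi_b(\bigcdot)\|$ with a norm computed inside $\CA\F(\mfH)$ via the inductive-limit structure of CAR algebras is the other ingredient that needs care, but is otherwise routine.
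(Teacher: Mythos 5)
Your proposal is correct, but it reaches the crucial estimate $\|\digamma(\phi)\|\leqslant\|\phi\|_\infty$ by a genuinely different route than the paper. The paper's proof works in the strong operator topology on $\CB(\CF_a(\mfH))$: it first proves (Lemma~\ref{lem:proj_converge}, via a commutant/vacuum-cyclicity argument in the spirit of Lemma~\ref{lemma:center} and the monotone convergence theorem for increasing nets of projections) that $\widehat{P}_b\uparrow\bone$ strongly over $b\in\Gamma_\infty$, then uses the identity $\pi_b(A)=\digamma(A)\widehat{P}_b$ for $b$ large enough to write $\|\digamma(A)\|=\sup_v\lim_b\|\pi_b(A)v\|\leqslant\sup_b\|A\|_b$. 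You instead exploit the inductive-limit structure of the CAR algebra: the isometric identification $\|\pi_b(\phi)\|_{\CA\F(b)}=\|\digamma_b(\phi)\|_{\CA\F(\mfH)}$ (valid because the CAR relations determine the $C^*$-norm, so the subalgebra of $\CA\F(\mfH)$ generated by $b$ is canonically isometrically isomorphic to $\CA\F(b)$), together with norm-continuity of the substitution $f_i\mapsto P_bf_i$, yields \emph{norm} convergence $\digamma_{b_j}(\phi)\to\digamma(\phi)$ and hence the bound; this is more elementary, avoiding the von Neumann--algebraic machinery entirely, and it handles well-definedness on the quotient and $\|\bigcdot\|_\infty$-contractivity in one stroke. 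The one point to flag: your argument needs a single $b\in\Gamma_\infty$ whose projection approximates all of $f_1,\dots,f_k$ simultaneously, which requires $\Gamma_\infty=\bigcup_n\Gamma_n$ to be directed under inclusion \dash this does not follow from property~2 of Definition~\ref{def:filtration} (which only nests the sets $\Gamma_n$, not the subspaces they contain). That said, the paper's own Lemma~\ref{lem:proj_converge} implicitly makes the same assumption by treating $(\widehat{P}_b)_{b\in\Gamma_\infty}$ as a monotone net and by choosing a single $b\in\Gamma_\infty$ containing finitely many prescribed vectors, and both example filtrations \eqref{eq:strong_filtration} and \eqref{eq:weak_filtration} are directed, so this is a shared (and harmless in practice) elision rather than a defect of your argument. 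The descent, unique continuous extension to $\mfA_\infty(\mfH)$, and the surjectivity argument via closedness of the range of a $C^*$-homomorphism coincide with the paper's.
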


Thanks to Theorem~\ref{thm:ContinuityRep}, we can pull back the state $\omega\F \colon \CA\F(\mfH) \to \C$ to a state $\bomega\F \colon \mfA_\infty(\mfH) \to \C$ by setting $\bomega\F \eqdef \omega\F \circ \digamma$ on $\mfA_{\infty}(\mfH)$. For this state we have the following isomorphism of non-commutative $\CL^2$-spaces.

\begin{theorem}\label{thm:local_L2}
	Define $\overline{\digamma}:  \mfA_{\infty}(\mfH) \rightarrow \CL^2(\CA(\mfH), \omega)$ to be given by 
	\begin{equ}
		\overline{\digamma}(a) = \big[ \digamma( a ) \big]\;.
	\end{equ}
	We then have that $\overline{\digamma}$ descends to $\mfA_{\infty}(\mfH) / \cN_{\bomega}$\footnote{For the definitions of $\CN_\omega$ and $\CL^2$ see Section~\ref{sec:GNS}} and then extends to an isomorphism 
		\begin{equ}
			\overline{\digamma} \colon \CL^2(\mfA_\infty(\mfH), \bomega) \to \CL^2(\CA\F(\mfH), \omega) \; .
		\end{equ}
	\end{theorem}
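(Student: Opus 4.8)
The plan is to read the statement as an immediate instance of the general GNS machinery of Section~\ref{sec:GNS} applied to the surjective $C^{*}$-homomorphism $\digamma$ furnished by Theorem~\ref{thm:ContinuityRep}, so that no new analytic input is required. First I would record the pullback identity: since $\digamma\colon \mfA_\infty(\mfH)\to\CA\F(\mfH)$ is a $\star$-homomorphism and $\bomega = \omega\circ\digamma$, one has
\begin{equ}
	\bomega(a^{\dagger}b) = \omega\bigl(\digamma(a)^{\dagger}\digamma(b)\bigr)\qquad\text{for all }a,b\in\mfA_\infty(\mfH)\;,
\end{equ}
i.e.\ the pre-inner product on $\mfA_\infty(\mfH)$ attached to $\bomega$ is precisely the pullback along $\digamma$ of the one on $\CA\F(\mfH)$ attached to $\omega$. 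Specialising to $b=a$ shows that $\overline{\digamma}\colon a\mapsto[\digamma(a)]$ is norm-preserving for the associated $\CL^{2}$-seminorms, and in particular $\CN_{\bomega} = \digamma^{-1}(\CN_{\omega})$. This last equality is exactly what is needed for $\overline{\digamma}$ to descend to a well-defined \emph{injective isometry} $\mfA_\infty(\mfH)/\CN_{\bomega}\hookrightarrow\CL^{2}(\CA\F(\mfH),\omega)$.

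Next I would extend this isometry to the completion $\CL^{2}(\mfA_\infty(\mfH),\bomega)$ in the canonical way and then prove surjectivity. The range of an isometry whose domain is complete is itself complete, hence closed; on the other hand, by the surjectivity of $\digamma$ (the nontrivial half of Theorem~\ref{thm:ContinuityRep}) this range contains $\{[\digamma(a)]\mid a\in\mfA_\infty(\mfH)\} = \{[A]\mid A\in\CA\F(\mfH)\}$, which is dense in $\CL^{2}(\CA\F(\mfH),\omega)$ by the very definition of that space in Section~\ref{sec:GNS}. A closed dense subspace is the whole space, so $\overline{\digamma}$ is onto, and being an isometry it is injective; thus it is a bijective isometry. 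Finally, since a complex-linear isometry between Hilbert spaces automatically preserves inner products (polarisation), $\overline{\digamma}$ is an isomorphism of Hilbert spaces; alternatively one reads $\scal{\overline{\digamma}(a),\overline{\digamma}(b)} = \bomega(a^{\dagger}b)$ straight off the displayed identity.

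I do not expect a genuine obstacle inside this argument: all of the substantive work has been absorbed into Theorem~\ref{thm:ContinuityRep}, and what makes the statement nontrivial is precisely that $\digamma$ surjects onto the \emph{entire} CAR algebra $\CA\F(\mfH)$ rather than onto a proper subalgebra \dash without that, the image of $\overline{\digamma}$ could fail to be dense. The only points demanding mild care are the bookkeeping distinguishing the left ideal $\CN_{\bomega}\subset\mfA_\infty(\mfH)$ from $\CN_{\omega}\subset\CA\F(\mfH)$, and the elementary observation that passing to the $\CL^{2}$-completion of a $\star$-algebra-with-state commutes with an isometric $\star$-homomorphism; both become immediate once the pullback identity above is in place.
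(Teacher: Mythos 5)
Your proposal is correct and follows essentially the same route as the paper: the isometry identity $\bomega(a^{\dagger}a)=\omega(\digamma(a)^{\dagger}\digamma(a))$ coming from $\bomega=\omega\circ\digamma$ and the $\star$-homomorphism property, followed by surjectivity of $\overline{\digamma}$ deduced from surjectivity of $\digamma$ via density of the image in the completion. The paper's proof is just a terser version of the same argument, leaving the closed-range/dense-image step implicit.
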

	\begin{proof}
	The fact that the map $\overline{\digamma}$ descends and extends to an isometry follows from the fact that, for any $a \in \mfA_{\infty}(\mfH)$, 
	\begin{equ}
		\|\overline{\digamma}(a)\|_{\CL^2(\CA(\mfH), \omega)}^2 = \omega\big( \digamma(a)^*  \digamma(a ) \big) = \omega \big( \digamma( a^*a) \big) = \| a\|_{\CL^2(\mfA_\infty(\mfH), \bomega)}^2 \; .
	\end{equ}
	Surjectivity of $\bar{\digamma}$ follows from the surjectivity of $\digamma: \mfA_{\infty}(\mfH) \rightarrow \mathcal{A}(\mfH)$. 
	The statement that $\CL^2(\cA(\mfH), \bomega) = \CL^2(\mfA_\infty(\mfH), \bomega)$ follows trivially. 
	\end{proof}

\begin{remark}
\label{rem:localL2}
	If we instead used the faithful state $\omega_\rho$ from Section~\ref{sec:ArakiWyss}, then we could define the following norm on $\widehat{\mfA}(\mfH)$
	\begin{equ}
		\|A\|_{\CL^{2,\infty}(\cA(\mfH),\bomega_\rho)} \eqdef \sup_{b \in \Gamma} \| \pi_b(A)\|_{\CL^2(\CA(\mfH), \omega_\rho)} \; . 
	\end{equ}
	This is a norm, rather than just a seminorm, as $\| \pi_b(\, \bigcdot \,)\|_{\CL^2(\CA(\mfH), \omega_\rho)}$ is equivalent to the operator norm on $\CB(b)$, and thus $\|A\|_{\CL^{2,\infty}(\cA(\mfH),\bomega_\rho)} = 0$ if and only if $\pi_b(A) = 0$ for all $b \in \Gamma$. If we define the non-commutative $\CL^2$-space of $\cA(\mfH)$ to be 
	\begin{equ}
		\CL^{2,\infty}(\cA(\mfH), \bomega_\rho) \eqdef \overline{\widehat{\mfA}(\mfH)}^{ \| \, \bigcdot \, \|_{\CL^{2, \infty} (\cA(\mfH), \bomega_\rho)  } } \subset \cA(\mfH) \; ,
	\end{equ}
	then $\digamma$ naturally extends to a continuous map $\CL^{2, \infty} (\cA(\mfH), \bomega_\rho) \to \CL^{2} (\CA(\mfH), \omega_\rho)$. We note here that $\CL^{2,\infty}(\mfA_\infty(\mfH), \bomega_{\rho})$ is a strictly larger space than $\CL^2(\mfA_\infty(\mfH), \bomega_{\rho})$ as any non-zero element of $\ker \digamma$ is equivalent to $0$ in $\CL^2(\mfA_\infty(\mfH), \bomega_{\rho})$ but will be different from $0$ in $\CL^{2,\infty}(\mfA_\infty(\mfH), \bomega_{\rho})$.
\end{remark}



\subsubsection{Extended Fermionic Fields}\label{subsec:extended_field}


We now turn to building an analogue of our fermionic field $\Psi$ in $\mfA_{\infty}(\mfH)$. 
Recall that we have fixed a unitary, $\kappa$-antisymmetric conjugation operator $U\colon \mfH \to \mfH$ so that 
$\Psi(f) = \alpha^\dagger(f) + \alpha(\kappa U f)$. 
We define a corresponding object in $\mfA_{\infty}(\mfH) \subset \cA\F(\mfH)$
\begin{equ}[eq:ExtfermionicField]
	\bPsi(f) \eqdef \balpha^\dagger(f) + \balpha(\kappa U f) \in \mfA_{\infty}(\mfH)\;,
\end{equ}
which, by the same abuse of notation, we will later on again write simply as $\Psi(f)$ when no confusion 
can arise. 
One obviously has $\digamma \bigl(\bPsi(f) \bigr) = \Psi(f)$.

We claim that $\bPsi$ supercommutes when seen as an element of $\widehat{\mfA}\F(\mfH)$. 
For $b \in \Gr(\mfH)$ and $f,g \in \mfH$ we have, at the level of elements of $\mfA\F(\mfH)$, 
\begin{equ}
	\pi_b\left( [ \bPsi(f), \bPsi(g)]_+ \right) = \scal{ \kappa U f, P_b g} \bone +  \scal{ \kappa U g, P_b f} \bone \; .
\end{equ}
Now we observe that for $b \in \Gamma = \Gr^{U}(\mfH)$, we have that $P_{b}$ commutes with $\kappa U$ from which it follows that $[ \bPsi(f), \bPsi(g)]_+  \in \mfI_{\Gamma}$. 

 We denote by $\cG\F(\mfH)$ the closure of the algebra generated by $\bPsi(f)$ and $Z(\cA\F(\mfH))$ in $\cA\F(\mfH)$.

\begin{remark}
	\label{rem:POVSwitching}
		We have defined $\Psi$ here as a linear functional from some indexing Hilbert space $\mfH$ into our algebra of bounded fermionic observables $\mfA_\infty(\mfH)$ and later on we will in fact see that $\bPsi \in \cD'(\R^3 ; \mfA_\infty(\mfH))$. This is akin to considering the (bosonic) space-time white noise $\xi$ as an element of $\cD'(\R^3 ; \CM(\Omega , \mu))$ although in stochastic analysis one typically thinks of $\xi$ as an element of $\CM(\Omega,\mu ; \cD'(\R^3))$.
 When performing our stochastic estimates we shall take the former point of view, whilst switching to the latter when obtaining local in time existence for our non-linear PDE pathwise in $\Omega$. 
 \end{remark}


%

\subsubsection{Extended Fermionic Wick Product}\label{sec:extendedwickproduct}
Our convergence for renormalised products will occur in the $\CC_{\s}^{\alpha}(\spacetime ; \cA\F )$ topology.
In order to renormalise products of integrated fermionic noises we would need to be able to extend $\Psi^{\diamond n}$ to the full (antisymmetric) Hilbert space tensor product $\mfH^{\wedge n}$, rather than just the algebraic one, with a codomain of observables that is larger than $\CA\F(\mfH)$. 
Again, note that this algebra has to be larger since,  for arbitrary $F \in \mfH^{\wedge n}$, one expects $\Psi^{\diamond n}(F)$ could be an unbounded operator.  

In this section we define a map $\bPsi^{\diamond n} \colon \Lambda_{n}(\mfH) \rightarrow \widehat{\mfA}(\mfH)$ and then show 
that it extends to a map  $\bPsi^{\diamond n} \colon \mfH^{\wedge n} \rightarrow \cA\F(\mfH)$. 
We saw that this is impossible when working solely with $\CA\F(\mfH)$,  however, we can do this if we lift the Wick product to $\widehat{\mfA}\F(\mfH)$ and then extend it to $\cA\F(\mfH)$.

We use an analogue of the prescription given in \eqref{eq:fermionic_isomorphism}.
We set $\bPsi^{\diamond 0}(c) \eqdef c \bone$, $\bPsi^{\diamond 1}(f) \eqdef \bPsi(f)$, and, for $n>1$ and $f_i \in \mfH$ with $i \in [n]$, we  recursively define
\begin{equs}\label{eq:extended_fermionic_wick}
	\bPsi^{\diamond n}\big(f_{1} &\wedge  \cdots \wedge f_{n}\big)
	\eqdef \bPsi(f_1) \bPsi^{\diamond (n-1)} \big(f_{2} \wedge \cdots \wedge f_{n}\big)- \\
		&- \sum_{i = 2}^n (-1)^{i} \bigl[\balpha(\kappa U f_1), \balpha^\dagger( f_i) \bigr]_+ \bPsi^{\diamond (n-2)} \big(f_{2} \wedge \cdots \wedge \widehat{ f_{i} } \wedge \cdots \wedge f_{n} \big)\;,
\end{equs}
where we note that $\bigl[ \balpha(\kappa U  f_1), \balpha^\dagger(f_i) \bigr]_+ $ belongs to the centre of
the algebra and hence our definition is independent of its position in the products. 
This essentially reproduces the
classical definition since, in $\CA\F(\mfH)$, one has
\begin{equ}
	\big[ \alpha(\kappa U  f_1), \alpha^\dagger(f_i) \big]_+ = \omega(\Psi(f_1)\Psi(f_i)) \bone \; .
\end{equ}
In fact, $\digamma(\bPsi^{\diamond n}) = \Psi^{\diamond n}$.

\begin{proposition}
\label{proposition:Wick_Product}
For any $n \in \N\setminus\{0\}$ there exists a constant $C_n < \infty$, such that for all $G \in \mfH^{\wedge n}$ and all $b \in \Gr(\mfH)$
\begin{equ}
	\bigl\| \bPsi^{\diamond n}(G)\bigr\|_b  \leqslant C_n (1+\dim b)^{\frac{n-1}{2}} \|G\|_{\mathfrak {H}^{\wedge n}} \; .
	\end{equ}
In particular, $\bPsi^{\diamond n}$ extends to a continuous map $\mfH^{\wedge n} \to \cA\F(\mfH)$. Furthermore, one also has the inequality
\begin{equ}
	\|G\|_{ \mfH^{\wedge n}} \leqslant \bigl\| \bPsi^{\diamond n}(G)\bigr\|_\infty \; .
\end{equ}

\end{proposition}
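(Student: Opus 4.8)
The plan is to establish the two displayed inequalities in turn; the continuity statement is then immediate, since the first inequality exhibits $\bPsi^{\diamond n}$ as $\|\bigcdot\|_b$-Lipschitz on the dense algebraic subspace $\Lambda_n(\mfH)\subset\mfH^{\wedge n}$ for every $b\in\Gr(\mfH)$, hence extendable to all of $\mfH^{\wedge n}$ with values in the completion $\cA\F(\mfH)$.

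\emph{The upper bound.} Fix $b\in\Gr(\mfH)$ and use the identification of $\Im(\pi_b)=\CA\F(b)$ with the subalgebra $\{A\widehat P_b\}$ of $\CB(\CF_a(\mfH))$, where $\widehat P_b$ is the projection onto the $2^{\dim b}$-dimensional, $N$-invariant subspace $\CF_a^{(b)}$, on which $N\leqslant\dim b$. Unfolding the recursion \eqref{eq:extended_fermionic_wick} shows that $\bPsi^{\diamond n}(f_1\wedge\cdots\wedge f_n)$ is the Wick-ordered product of the $\bPsi(f_i)$, which is a finite sum, over $r+s=n$, of terms obtained by applying $r$ creation operators $\balpha^\dagger$ and $s$ annihilation operators $\balpha(\kappa U\,\bigcdot)$ to the image of $f_1\wedge\cdots\wedge f_n$ under the $(r,s)$-component $\Delta_{r,s}\colon\mfH^{\wedge n}\to\mfH^{\wedge r}\otimes\mfH^{\wedge s}$ of the exterior coproduct; passing through the coproduct, rather than through a term-by-term expansion, is what keeps the resulting map bounded with norm depending only on $n$, the relevant estimate being a Gram-determinant bound (Fischer's inequality) that controls $\|F\wedge H\|_{\mfH^{\wedge n}}$ by $\|F\|_{\mfH^{\wedge r}}\|H\|_{\mfH^{\wedge s}}$ and hence, by duality, the norm of $\Delta_{r,s}$. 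Applying $\pi_b$ replaces each $\balpha^\dagger(f),\balpha(\kappa Uf)$ by the honest operators $\alpha^\dagger(P_bf),\alpha(P_b\kappa Uf)$ on $\CF_a(\mfH)$ followed by $\widehat P_b$; since $P_b$ and $\kappa U$ are contractions, inserting $(1+N)^{-\frac{n-1}{2}}(1+N)^{\frac{n-1}{2}}$ to the right of the annihilators and invoking Theorem~\ref{thm:LocalUltraContr} bounds each summand by a constant (depending only on $n$) times $(1+\dim b)^{\frac{n-1}{2}}\|f_1\wedge\cdots\wedge f_n\|_{\mfH^{\wedge n}}$, using that on $\CF_a^{(b)}$ one has $\|(1+N)^{\frac{n-1}{2}}\|\leqslant(1+\dim b)^{\frac{n-1}{2}}$. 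Summing over the $n+1$ values of $(r,s)$ gives the first bound with $C_n$ depending only on $n$.

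\emph{The lower bound.} For $\|\bigcdot\|_\infty=\sup_n\|\bigcdot\|_n$ it suffices to consider $b\in\Gamma=\Gr^U(\mfH)$, for which $P_b$ commutes with $\kappa U$, so that $\pi_b(\bPsi(f))=\Psi_b(P_bf)\widehat P_b$ with $\Psi_b$ the field operator of the finite-dimensional CAR algebra $\CA\F(b)$. An induction on \eqref{eq:extended_fermionic_wick}, using $\pi_b\big([\balpha(\kappa Uf_1),\balpha^\dagger(f_i)]_+\big)=\omega_b\big(\Psi_b(P_bf_1)\Psi_b(P_bf_i)\big)\widehat P_b$, identifies $\pi_b(\bPsi^{\diamond n}(G))$ with $\Psi_b^{\diamond n}(P_b^{\wedge n}G)\widehat P_b$ for all $G\in\mfH^{\wedge n}$ (by continuity). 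The finite-dimensional instance of \eqref{eq:L2_wick} inside $\CA\F(b)$ — valid since the null ideal there equals $\{A\colon A\1=0\}$ — gives $\Psi_b^{\diamond n}(h_1\wedge\cdots\wedge h_n)\1=\alpha^\dagger(h_1)\cdots\alpha^\dagger(h_n)\1=\sqrt{n!}\,(h_1\wedge\cdots\wedge h_n)$, whence $\pi_b(\bPsi^{\diamond n}(G))\1=\sqrt{n!}\,P_b^{\wedge n}G$. Since $\|\1\|_{\CF_a(\mfH)}=1$,
\[
\|\bPsi^{\diamond n}(G)\|_b=\|\pi_b(\bPsi^{\diamond n}(G))\|\geqslant\big\|\pi_b(\bPsi^{\diamond n}(G))\1\big\|_{\CF_a(\mfH)}=\sqrt{n!}\,\|P_b^{\wedge n}G\|_{\mfH^{\wedge n}}\geqslant\|P_b^{\wedge n}G\|_{\mfH^{\wedge n}}\;,
\]
and taking the supremum over the subspaces $b$ appearing in the fixed filtration, which exhaust $\mfH$ by Definition~\ref{def:filtration}, so that $\sup_b\|P_b^{\wedge n}G\|_{\mfH^{\wedge n}}=\|G\|_{\mfH^{\wedge n}}$, yields $\|G\|_{\mfH^{\wedge n}}\leqslant\|\bPsi^{\diamond n}(G)\|_\infty$.

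\emph{Main obstacle.} The delicate part is the upper bound: one must keep the right-hand side proportional to $\|G\|_{\mfH^{\wedge n}}$ — rather than to $\prod_i\|f_i\|_{\mfH}$, which the naive unfolding of \eqref{eq:extended_fermionic_wick} would produce — while at the same time extracting precisely the power $(1+\dim b)^{\frac{n-1}{2}}$ with a constant independent of $\dim b$. This is exactly what forces the two structural inputs above: repackaging the Wick monomials through the exterior coproduct (to respect the antisymmetry of the arguments), and the sharp Glimm--Jaffe-type estimate of Theorem~\ref{thm:LocalUltraContr} (a crude bound inside the finite-dimensional $\CA\F(b)$ is hopeless, its Hilbert-space dimension being $2^{\dim b}$). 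The lower bound is comparatively soft: its only real content is the reduction to the vacuum vector and the exhaustion of $\mfH$ by the filtration.
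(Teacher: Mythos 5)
Your proof is correct. For the first inequality you follow essentially the paper's route: in each representation $\pi_b$ you put the Wick monomial into normal-ordered form, feed it to Theorem~\ref{thm:LocalUltraContr}, and use $(N+1)^{\frac{n-1}{2}}\widehat P_b\leqslant(1+\dim b)^{\frac{n-1}{2}}\widehat P_b$. The only difference is organisational: the paper writes $\pi_b\circ\bPsi^{\diamond n}$ as a sum of $2^n$ terms $W_{r,s}\bigl((I^{\otimes r}\otimes\kappa^{\otimes s})P_b^{\otimes n}(I^{\otimes r}\otimes(\kappa U)^{\otimes s})G\bigr)(N+1)^{\frac{n-1}{2}}\widehat P_b$, exploiting directly that $W_{r,s}$ is continuous on the \emph{full} tensor power $\mfH^{\otimes n}\supset\mfH^{\wedge n}$, whereas you route the same sum through the exterior coproduct $\Delta_{r,s}$; that is a mildly heavier piece of bookkeeping which buys nothing extra, but it is sound since $\Delta_{r,s}$ is bounded with a constant depending only on $n$. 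For the second inequality your argument genuinely differs: the paper identifies $\|G\|_{\mfH^{\wedge n}}$ with $\bigl\|[\Psi^{\diamond n}(G)]\bigr\|_{\CL^2(\CA\F,\omega\F)}$ via \eqref{eq:L2_wick} and Theorem~\ref{thm:local_L2} and then uses $\|[A]\|_{\CL^2}\leqslant\|A\|_\infty$, while you evaluate $\pi_b(\bPsi^{\diamond n}(G))$ on the vacuum of $\CF_a^{(b)}$ and take a supremum over the filtration. Your version is more concrete (and even produces the sharper constant $\sqrt{n!}$), but be aware that the final step $\sup_{b\in\Gamma_\infty}\|P_b^{\wedge n}G\|_{\mfH^{\wedge n}}=\|G\|_{\mfH^{\wedge n}}$ needs slightly more than condition~4 of Definition~\ref{def:filtration}: density of $\bigcup_{b\in\Gamma_\infty}b$ gives approximants of each one-particle vector in possibly \emph{different} subspaces, whereas you need a single $b$ in the filtration approximating all $n$ of them at once, i.e.\ some directedness or an exhausting increasing sequence inside $\Gamma_\infty$. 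The paper makes the same implicit assumption in Lemma~\ref{lem:proj_converge}, so this is not a defect peculiar to your argument, but it deserves one explicit sentence; the paper's $\CL^2$ detour pushes the issue into Theorem~\ref{thm:ContinuityRep} rather than confronting it here.
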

\begin{proof}
The first inequality follows directly by applying Theorem~\ref{thm:LocalUltraContr}, since $\pi_b \circ \Psi^{\diamond n}$, viewed as an operator on $\CF_a^{(b)} \subset \CF_a(\mfH)$, is a sum of $2^n$ terms of the type
\begin{equ}
	W_{r,s} \Bigl( \bigl( I^{\otimes r} \otimes \kappa ^{\otimes s} \bigr) P_b^{\otimes n} \bigl( I^{\otimes r} \otimes (\kappa U)^{\otimes s} \bigr) G	\Bigr) (N+1)^{\frac{n-1}{2}} \widehat{P}_b \; ,
\end{equ}
that $\left\| P_b^{\otimes n} \right\| \leqslant 1 $, that $ I^{\otimes r} \otimes \kappa^{\otimes s}$ and $ I^{\otimes r} \otimes (\kappa U)^{\otimes s}$ are isometric operators,
and one has the operator estimate
\begin{equ}
	(N+1)^{\frac{n-1}{2}} \widehat{P}_b \leqslant (\dim b +1)^{\frac{n-1}{2}}\widehat{P}_b\;.
\end{equ}

For the second inequality we note that, using the definitions in Section~\ref{sec:GNS}, we have
	\begin{equ}
		\|G\|_{\mfH^{\wedge n}} = \left\|[\Psi^{\diamond n}(G)]\right\|_{\CL^2(\CA\F(\mfH), \omega\F)}= \bigl\|[\bPsi^{\diamond n}(G)]\bigr\|_{\CL^2(\mfA_\infty(\mfH), \bomega\F)} \leqslant 	\bigl\|\bPsi^{\diamond n}(G)\bigr\|_{\infty}\;,
	\end{equ}
where, to obtain the inequality, we used the fact that for any $A \in \mfA_{\infty}(\mfH)$, one has $|\bomega\F(A^{\dagger}A)| \leqslant \|A^{\dagger}A\|_{\infty} = \|A\|_\infty^{2}$.
\end{proof}

This theorem allows us to analyse the behaviour of regularised singular products at the level of the Fock space and perform renormalisation by removing diverging contractions, analogously to bosonic renormalisation.

\begin{remark}
	In the context of the faithful state $\omega_\rho$ and Remark~\ref{rem:localL2}, we note that this means that $\bPsi^{\diamond n}$ maps into $\CL^{2,\infty}(\cA(\mfH), \bomega_\rho)$.
\end{remark}

\begin{remark}
One may ask what the computation above would look like in the bosonic setting.
The key difference is that $\CF_a^{(b)}$ is finite-dimensional in the fermionic case but not in the bosonic case \dash in particular $N$ is necessarily a bounded operator on $\CF_a^{(b)}$ in the fermionic case but not the bosonic one.
\end{remark}

\subsubsection{Localised Spaces}
As mentioned earlier what we mean by localising is fixing a choice of $n \in \N$ and doing our analysis with respect to the multiplicative seminorm $\| \bigcdot \|_{n}$. 
We will want to perform our localised analysis in Banach spaces.
For $n \in \N$ we define
\begin{equ}
\mfI_{n}	 = \left\{ \phi \in \cA(\mfH) \, \big| \, \forall b \in \Gamma_{n} : \dim(b) \leqslant n \implies \pi_b(\phi) = 0 \right\} \; ,
\end{equ}
and let $\cA_n(\mfH)$ denote the Banach space closure of the quotient space $\cA(\mfH) / \mfI_{n} $ with norm
\begin{equ}
	\| [\phi] \|_{\cA_n(\mfH)} \eqdef \min_{ \chi \in \mfI_n} \| \phi + \chi\|_n = \| \phi \|_{n}\; .
\end{equ} 
For the last equality above recall that, for $\chi, \chi' \in \mfI_{n}$ and $\phi \in \cA$,
\begin{equ}
	\Big| \|\phi + \chi\|_{n} - \|\phi + \chi'\|_{n}   \Big| \leqslant \| \chi - \chi'\|_{n} = 0\;.
\end{equ} 
Since $\mfI_n$ is a two-sided $\star$-ideal of $\cA\F(\mfH)$ and $\|\bigcdot\|_{n}$ is a $C^{*}$-seminorm, it is well-known that $\cA_n(\mfH)$ is a $C^*$-algebra, see also \cite[Section~2]{DV75}.

Since our norms are directed, we have for every $n \geqslant m$ a canonical $C^*$-homomorphism $\pi_{m,n}: \cA_{n}(\mfH) \rightarrow \cA_{m}(\mfH)$. 
In fact, we have the following lemma. 

\def\myref{\cite[Lemma~1]{DV75}}
\begin{lemma}[\myref]\label{lem:proj_limit}
$\cA(\mfH)$ is canonically isomorphic to the projective limit of 
\begin{equ}
	\Big( \big( \cA_{n}(\mfH) \big)_{n=0}^{\infty}, \big(\pi_{m,n} \big)_{m \leqslant n \in \N} \Big)
\end{equ}
with the isomorphism given by 
\begin{equ}
	\cA(\mfH) \ni x \mapsto \big( \pi_{n}(x) \big)_{n=0}^{\infty} \in \varprojlim_{n \in \N} \cA_{n}(\mfH)\;.
\end{equ}
\end{lemma}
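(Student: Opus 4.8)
\emph{Proof plan.}
This is the standard fact that a complete locally $m$-convex $\star$-algebra whose topology is generated by a countable increasing family of $C^{*}$-seminorms is canonically, and topologically, the projective limit of the associated quotient $C^{*}$-algebras (see \cite[Lemma~1]{DV75}); I will only indicate how to run the argument in our setting. First I would record the preliminaries. By property~(2) of Definition~\ref{def:filtration} the family $\mfP = \bigl( \| \bigcdot \|_{n} \bigr)_{n \in \N}$ is increasing, so $\ker \| \bigcdot \|_{n} \subseteq \ker \| \bigcdot \|_{m}$ and hence $\mfI_{n} \subseteq \mfI_{m}$ whenever $m \leqslant n$; this is exactly what makes the quotient maps $\pi_{m,n} \colon \cA_{n}(\mfH) \to \cA_{m}(\mfH)$ well defined, and they are norm-decreasing since $\| \bigcdot \|_{m} \leqslant \| \bigcdot \|_{n}$. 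The algebra $\cA(\mfH)$ is Hausdorff, indeed Fr\'echet, by construction. Finally, $\varprojlim_{n} \cA_{n}(\mfH)$ is the closed $\star$-subalgebra of the product $\prod_{n} \cA_{n}(\mfH)$ consisting of threads $(x_{n})_{n}$ with $\pi_{m,n}(x_{n}) = x_{m}$ for all $m \leqslant n$, and as such it is itself a complete locally $m$-convex $\star$-algebra carrying the product topology.

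Next I would check that $\iota \colon x \mapsto \bigl( \pi_{n}(x) \bigr)_{n}$ is a well-defined continuous injective $\star$-homomorphism into this projective limit. The compatibility $\pi_{m,n} \circ \pi_{n} = \pi_{m}$ holds already on $\widehat{\mfA}\F(\mfH)$ and extends by density to $\cA(\mfH)$, so the image of $\iota$ lands in $\varprojlim_{n} \cA_{n}(\mfH)$; continuity is clear since the $n$-th component of $\iota$ is $\pi_{n}$ and $\| \pi_{n}(x) \|_{\cA_{n}(\mfH)} = \| x \|_{n}$; and $\iota$ is a $\star$-homomorphism because each $\pi_{n}$ is. Injectivity is immediate: if $\iota(x) = 0$ then $\| x \|_{n} = 0$ for every $n$, whence $x = 0$ by the Hausdorff property.

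The step with real content is surjectivity. Given a thread $(a_{n})_{n}$, use that $\widehat{\mfA}\F(\mfH)$ is dense in each $\cA_{n}(\mfH)$ (via $\pi_{n}$) to choose $b_{n} \in \widehat{\mfA}\F(\mfH)$ with $\| a_{n} - \pi_{n}(b_{n}) \|_{\cA_{n}(\mfH)} \leqslant 2^{-n}$. For a fixed $m$ and any $n \geqslant m$, applying the norm-decreasing map $\pi_{m,n}$ and using $\pi_{m,n}(a_{n}) = a_{m}$ and $\pi_{m,n}\circ\pi_{n} = \pi_{m}$ gives $\| a_{m} - \pi_{m}(b_{n}) \|_{\cA_{m}(\mfH)} \leqslant 2^{-n}$, hence for $n, n' \geqslant m$
\begin{equs}
	\| \pi_{m}(b_{n}) - \pi_{m}(b_{n'}) \|_{\cA_{m}(\mfH)} & \leqslant \| \pi_{m}(b_{n}) - a_{m} \|_{\cA_{m}(\mfH)} + \| a_{m} - \pi_{m}(b_{n'}) \|_{\cA_{m}(\mfH)} \\
	& \leqslant 2^{-n} + 2^{-n'}\;.
\end{equs}
Since $\| b_{n} - b_{n'} \|_{m} = \| \pi_{m}(b_{n}) - \pi_{m}(b_{n'}) \|_{\cA_{m}(\mfH)}$, the sequence $(b_{n})_{n}$ is Cauchy in every seminorm $\| \bigcdot \|_{m}$, hence Cauchy in the Fr\'echet space $\cA(\mfH)$; let $x$ be its limit. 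Then $\| \pi_{m}(x) - a_{m} \|_{\cA_{m}(\mfH)} = \lim_{n} \| \pi_{m}(b_{n}) - a_{m} \|_{\cA_{m}(\mfH)} = 0$ for each $m$, i.e.\ $\iota(x) = (a_{m})_{m}$. To finish, $\iota$ is open onto the projective limit because the basic $0$-neighbourhoods $\{ x : \| x \|_{n} < \delta \}$ of $\cA(\mfH)$ are exactly the $\iota$-preimages of the basic $0$-neighbourhoods of $\varprojlim_{n} \cA_{n}(\mfH)$ (preimages under the $n$-th projection of $\delta$-balls), using surjectivity; thus $\iota$ is a topological $\star$-isomorphism. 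I expect the only genuinely delicate point to be the bookkeeping in the surjectivity argument --- that approximants chosen level by level can be assembled into a single Cauchy sequence in $\cA(\mfH)$ --- and this becomes routine once the maps $\pi_{m,n}$ are known to be norm-decreasing.
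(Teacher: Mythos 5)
Your proof is correct and is essentially the standard argument for identifying a complete Hausdorff locally $m$-convex algebra with directed seminorms as the projective limit of its quotient Banach algebras; the paper does not reproduce a proof but simply cites \cite[Lemma~1]{DV75}, which runs the same way. All the steps check out, including the key surjectivity argument via approximants $b_n$ chosen in the dense subalgebra $\widehat{\mfA}\F(\mfH)$ and the fact that the quotient maps $\pi_{m,n}$ are contractive because the seminorms are increasing.
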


\begin{remark}
	To make an analogy with ``classical'' commutative probability, the projection $\pi_n \colon \cA\F(\mfH) \to \cA_n(\mfH)$ 
can be compared to the restriction map of measurable functions $\cM(X) \to \cM(A_{n})$, $f \mapsto f\restr{A_n}$ for some measurable space $X$ and some measurable sets $A_{n} \subset X$ with $A_{n} \uparrow X$ as $n \rightarrow \infty$. 

\end{remark}


\subsection{Mixed Products}

The following Paley--Marcinkiewicz--Zygmund theorem allows us to extend hypercontractivity to combinations of bosonic and $\cA\F(\mfH)$-valued noises. 
\def\myrefhyt{\cite[Theorem~2.1.9]{Hyt16}}
\begin{theorem}[\myrefhyt]\label{thm:PMZ}
	Let $(S_1, \mu_1)$ and $(S_2, \mu_2)$ be measure spaces, $q_1,q_2 \in [1,\infty)$, and $T \colon L^{q_1}(S_1, \mu_1) \to L^{q_2}(S_2, \mu_2)$ be a bounded operator.

Given a Hilbert space $H$, $T \otimes \bone_{H}$ uniquely extends to a bounded operator $ L^{q_1}(S_1, \mu_1;H) \to L^{q_2}(S_2, \mu_2;H)$.
\end{theorem}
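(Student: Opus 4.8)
The plan is to reduce the statement to the scalar case by a density-plus-approximation argument, exploiting the fact that $T$ is already bounded on the scalar $L^{q_1}$--$L^{q_2}$ spaces. First I would fix a finite-dimensional subspace $H_0 \subset H$ with orthonormal basis $(h_1,\dots,h_d)$; any simple $H$-valued function supported in $H_0$ can be written as $f = \sum_{j=1}^d f_j h_j$ with $f_j \in L^{q_1}(S_1,\mu_1)$ scalar, and one simply sets $(T\otimes\bone_H)f \eqdef \sum_{j=1}^d (Tf_j) h_j$. The content of the theorem is the norm bound $\|(T\otimes\bone_H)f\|_{L^{q_2}(S_2,\mu_2;H)} \leqslant \|T\| \, \|f\|_{L^{q_1}(S_1,\mu_1;H)}$, uniformly in $d$, after which one extends by density from simple functions to all of $L^{q_1}(S_1,\mu_1;H)$ (recalling that simple functions are dense by the definition of these spaces as closures of step-functions) and checks that the extension is independent of the chosen basis, since the bound is basis-independent and two extensions agreeing on a dense set coincide.

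The key analytic step is therefore the uniform bound, and the natural tool is a Gaussian (or Rademacher) randomisation identity. For a vector $v = \sum_j c_j h_j \in H$ one has, with $(\gamma_j)$ i.i.d.\ standard Gaussians, $\|v\|_H^{q} \asymp \E\big| \sum_j \gamma_j c_j \big|^{q}$ up to a constant depending only on $q$ (the Khintchine--Kahane equivalence of moments); more precisely $\big(\E|\sum_j \gamma_j c_j|^{q}\big)^{1/q} = \kappa_q \|v\|_H$ for a dimension-independent constant $\kappa_q$. Applying this pointwise in $S_1$ and in $S_2$, and using Fubini to exchange the $\mu_i$-integrals with the expectation over the $\gamma_j$'s, one gets
\begin{equs}
\kappa_{q_2}^{q_2}\,\|(T\otimes\bone_H)f\|_{L^{q_2}(\mu_2;H)}^{q_2}
&= \int_{S_2} \E\Big|\textstyle\sum_j \gamma_j (Tf_j)(s)\Big|^{q_2}\,\mrd\mu_2(s)
= \E\,\Big\| T\big(\textstyle\sum_j \gamma_j f_j\big)\Big\|_{L^{q_2}(\mu_2)}^{q_2}\\
&\leqslant \|T\|^{q_2}\,\E\,\Big\|\textstyle\sum_j \gamma_j f_j\Big\|_{L^{q_1}(\mu_1)}^{q_2},
\end{equs}
where the middle equality uses linearity of $T$ in the realisation $\sum_j \gamma_j f_j \in L^{q_1}(\mu_1)$. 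If $q_1 = q_2$ the right-hand side is $\|T\|^{q_1}\kappa_{q_1}^{q_1}\|f\|_{L^{q_1}(\mu_1;H)}^{q_1}$ by the same randomisation identity run backwards, and we are done. For $q_1 \neq q_2$ one first moves to a common exponent using the Kahane--Khintchine inequality (all moments of $\sum_j \gamma_j f_j(s)$ are comparable with constants depending only on the exponents, not on $d$ or on the $f_j$), which turns the $q_2$-th moment on the right into a $q_1$-th moment at the cost of a universal constant; this is exactly the Paley--Marcinkiewicz--Zygmund refinement.

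The main obstacle is making the constant genuinely independent of $\dim H$, which is precisely what the Kahane--Khintchine inequality delivers but which a naive triangle-inequality argument would not; a secondary point requiring care is that $L^{q_i}(S_i,\mu_i;H)$ here means the strongly measurable (Bochner) functions as defined in the Notations section, so one must confirm that $H$-valued simple functions are dense — this is built into the definition via closure of step-functions — and that the Fubini exchange above is legitimate, which it is since the integrand is nonnegative and measurable. Since all of this is standard and is exactly \cite[Theorem~2.1.9]{Hyt16}, I would in fact simply cite that reference rather than reproduce the argument, which is what the statement already does; the sketch above records how one would reconstruct it if needed.
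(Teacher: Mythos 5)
Your proposal is correct and is consistent with the paper, which offers no proof of its own and simply cites \cite[Theorem~2.1.9]{Hyt16}; your sketch is the standard argument behind that reference (reduction to finite\dash dimensional $H$ and simple functions, the exact Gaussian randomisation identity $\bigl(\E|\sum_j \gamma_j c_j|^q\bigr)^{1/q}=\kappa_q\|v\|_H$, Fubini, and density). The one point worth sharpening is that in the step handling $q_1\neq q_2$, the Kahane--Khintchine inequality must be applied to the $L^{q_1}(S_1,\mu_1)$-valued Gaussian sum $\sum_j\gamma_j f_j$ — i.e.\ to the comparability of $\bigl(\E\|\sum_j\gamma_j f_j\|_{L^{q_1}}^{q_2}\bigr)^{1/q_2}$ and $\bigl(\E\|\sum_j\gamma_j f_j\|_{L^{q_1}}^{q_1}\bigr)^{1/q_1}$ — rather than pointwise in $s$ as your parenthetical ``all moments of $\sum_j\gamma_j f_j(s)$'' suggests, since the pointwise scalar statement does not interchange with the $\mu_1$-integral when the exponents differ (alternatively, when $q_2\leqslant q_1$ Jensen's inequality suffices and Kahane is only needed for $q_2>q_1$).
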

The next proposition introduces joint bosonic-fermionic Wick products.
\begin{proposition}
\label{ProdFieldOpProp}
Defining, for $F \in \mfB^{\otimes_s n}$ and $G \in \mfH^{\wedge m}$,
\begin{equ}[e:mixed_product]
\bigl(\xi^{\diamond n} \otimes \bPsi^{\diamond m}\bigr)(F \otimes G) = \xi^{\diamond  n}(F) \bPsi^{\diamond m}(G) \in L^{2}(\Omega, \mu) \otimes \cA\F(\mfH) \; ,
\end{equ}
on the algebraic tensor product of  $\mfB^{\otimes_s n}$ and $\mfH^{\wedge m}$, this map continuously extends to a linear map 
\begin{equ}
	\xi^{\diamond n} \otimes \bPsi^{\diamond m} \colon \mfB^{\otimes_s n} \otimes \mfH^{\wedge m} \longrightarrow L^{2}(\Omega, \mu ; \cA\F(\mfH))\;
\end{equ}
on the Hilbert space tensor product of $\mfB^{\otimes_s n}$ and $\mfH^{\wedge m}$. 
Furthermore, 
\begin{equ}[e:mixed_product_continuity]
	\xi^{\diamond n} \otimes \bPsi^{\diamond m} \colon \mfB^{\otimes_s n} \otimes \mfH^{\wedge m} \longrightarrow  L^{\infty-}(\Omega, \mu ; \cA\F(\mfH))\;
	\end{equ}
	is continuous.
\end{proposition}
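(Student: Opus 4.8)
The plan is to fix the seminorm index $n\in\N$, show that the bilinear map $(F,G)\mapsto\xi^{\diamond n}(F)\bPsi^{\diamond m}(G)$ extends to a bounded linear map from the Hilbert space tensor product into $L^q\bigl(\Omega,\mu;\cA_n(\mfH)\bigr)$ for every $q\in[1,\infty)$, and then reassemble over $n$ using that $\cA\F(\mfH)$ is the projective limit of the Banach algebras $\cA_n(\mfH)$ (Lemma~\ref{lem:proj_limit}) and that $L^q$- and $L^{\infty-}$-spaces valued in a Fréchet space are complete.

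First I would isolate the two boundedness inputs to be combined. On the fermionic side, Proposition~\ref{proposition:Wick_Product} together with $\sup_{b\in\Gamma_n}\dim b=n$ shows that for each fixed $n$ the map $\bPsi^{\diamond m}$ restricts to a bounded linear map $\mfH^{\wedge m}\to\cA_n(\mfH)$, with operator norm at most $C_m(1+n)^{(m-1)/2}$. On the bosonic side, $\xi^{\diamond n}\colon\mfB^{\otimes_s n}\to L^2(\Omega,\mu)$ is bounded (up to normalisation it is an isometry onto the $n$-th homogeneous Wiener chaos), and since a fixed Wiener chaos has equivalent $L^p$-norms for all finite $p$ (Nelson's hypercontractivity, see \cite{Nualart}) and $\mu$ is a probability measure, $\xi^{\diamond n}$ is in fact bounded as a map $\mfB^{\otimes_s n}\to L^q(\Omega,\mu)$ for every $q\in[1,\infty)$.

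The core step is to combine these through the Paley--Marcinkiewicz--Zygmund theorem. Realising the Hilbert space $\mfB^{\otimes_s n}$ isometrically as $L^2(S_1,\mu_1)$ for a suitable measure space (e.g.\ counting measure on an orthonormal basis), so that the Hilbert space tensor product becomes $\mfB^{\otimes_s n}\otimes\mfH^{\wedge m}\cong L^2(S_1,\mu_1;\mfH^{\wedge m})$, Theorem~\ref{thm:PMZ} applied to $\xi^{\diamond n}$, viewed as an operator $L^2(S_1,\mu_1)\to L^q(\Omega,\mu)$, with Hilbert space $H=\mfH^{\wedge m}$ yields a bounded operator
\begin{equ}
	\xi^{\diamond n}\otimes\bone_{\mfH^{\wedge m}}\colon\mfB^{\otimes_s n}\otimes\mfH^{\wedge m}\longrightarrow L^q\bigl(\Omega,\mu;\mfH^{\wedge m}\bigr)\;,
\end{equ}
sending the simple tensor $F\otimes G$ to $\omega\mapsto\xi^{\diamond n}(F)(\omega)\,G$. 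I would then postcompose pointwise with $\bPsi^{\diamond m}$: since $\bPsi^{\diamond m}\colon\mfH^{\wedge m}\to\cA_n(\mfH)$ is continuous and linear, $f\mapsto\bPsi^{\diamond m}\circ f$ sends $\mfH^{\wedge m}$-valued step functions to $\cA_n(\mfH)$-valued step functions, is bounded on them by $\|\bPsi^{\diamond m}\|_{\mfH^{\wedge m}\to\cA_n(\mfH)}$, and hence extends to a bounded map $L^q(\Omega,\mu;\mfH^{\wedge m})\to L^q(\Omega,\mu;\cA_n(\mfH))$. The composite takes $F\otimes G$ to $\omega\mapsto\xi^{\diamond n}(F)(\omega)\,\bPsi^{\diamond m}(G)$, so it agrees with the map of \eqref{e:mixed_product} on the algebraic tensor product, and is bounded with a constant controlled by $C_m(1+n)^{(m-1)/2}\,\|\xi^{\diamond n}\|_{L^2\to L^q}$.

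To finish, I would assemble over $n$ and $q$. The extensions obtained for different pairs $(n,q)$ all restrict to \eqref{e:mixed_product} on the dense algebraic tensor product, hence are mutually consistent. Taking $q=2$ and letting $n$ run over $\N$, the estimates above say exactly that $\xi^{\diamond n}\otimes\bPsi^{\diamond m}$ is continuous into $L^2\bigl(\Omega,\mu;\cA\F(\mfH)\bigr)$, whose topology is generated by the seminorms $\|\bigcdot\|_{L^2(\Omega,\mu;\cA_n(\mfH))}$ and which is complete; the unique continuous extension then exists. Keeping all finite $q$ gives continuity into $L^{\infty-}\bigl(\Omega,\mu;\cA\F(\mfH)\bigr)=\bigcap_{q}L^q\bigl(\Omega,\mu;\cA\F(\mfH)\bigr)$, which is again complete since, $\mu$ being a probability measure, its topology is generated by the countable increasing family indexed by $(n,q)\in\N^2$. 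I expect the main obstacle to be organisational rather than conceptual: one must keep track of the fact that boundedness only holds one seminorm at a time — the constant $C_m(1+n)^{(m-1)/2}$ genuinely blows up as $n\to\infty$, which is precisely the reflection of $\bPsi^{\diamond m}(G)$ being an unbounded operator — and be careful that the pointwise postcomposition with $\bPsi^{\diamond m}$, and the identification of $\mfB^{\otimes_s n}$ with an honest $L^2$-space so that Theorem~\ref{thm:PMZ} applies, fit consistently within the strongly-measurable vector-valued integration framework of Section~\ref{subsec:notation}.
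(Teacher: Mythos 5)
Your proposal is correct and follows essentially the same route as the paper: first apply Nelson's hypercontractivity together with Theorem~\ref{thm:PMZ} to obtain continuity of $\xi^{\diamond n}\otimes\bone_{\mfH^{\wedge m}}$ into $L^{\infty-}(\Omega,\mu;\mfH^{\wedge m})$, then postcompose with $\bPsi^{\diamond m}$ using Proposition~\ref{proposition:Wick_Product}. Your explicit seminorm-by-seminorm bookkeeping and the identification of $\mfB^{\otimes_s n}$ with an honest $L^2$-space are just unwindings of what the paper leaves implicit (continuity into the projective limit $\cA\F(\mfH)$ is exactly continuity into each $\cA_n(\mfH)$).
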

\begin{proof}
	From the continuity of $\xi^{\diamond n}$ and the fact that the spaces $\mfB^{\otimes_s n} \otimes \mfH^{\wedge m}$ and $L^{2}(\Omega, \mu ; \mfH^{\wedge m})$ are the completions of the corresponding algebraic tensor products with respect to the Hilbert space structure it follows that $\xi^{\diamond n} \otimes \bone_{ \mfH^{\wedge m}}$ is a continuous map
	\begin{equ}
		\xi^{\diamond n} \otimes \bone_{ \mfH^{\wedge m}}  \colon \mfB^{\otimes_s n} \otimes \mfH^{\wedge m} \longrightarrow L^{2}(\Omega, \mu ; \mfH^{\wedge m})
	\end{equ}
	and therefore, by Nelson's hypercontraction property and Theorem~\ref{thm:PMZ}, also a continuous map
	\begin{equ}
	\label{eq:HalfWickProd}
		\xi^{\diamond n} \otimes \bone_{ \mfH^{\wedge m}} \colon \mfB^{\otimes_s n} \otimes \mfH^{\wedge m} \longrightarrow L^{\infty -}(\Omega, \mu ; \mfH^{\wedge m}) \; .
	\end{equ}
	Finally, by composing the operators in \eqref{eq:HalfWickProd} with $\bPsi^{\diamond m}$, which is continuous by Proposition~\ref{proposition:Wick_Product}, we obtain the desired result.
\end{proof}

\begin{remark}
	Note that the statement above is not making any reference to non-commutative $\CL^q$-spaces. We are only using the bosonic hypercontractivity and Proposition~\ref{proposition:Wick_Product}.
\end{remark}

\section{Fixing the Fermionic Noise and Remainder Equation}
\label{sec:the_model}

We now turn to more carefully defining the fermionic noises $\psi$, $\bar{\psi}$ appearing in \eqref{eq:lin_sig_model}, which will also fix the extended CAR algebra $\cA\F$ that our solutions take
their values in. From now on we write $\balpha$, $\balpha^\dagger$, and $\bPsi$ as $\alpha$, $\alpha^\dagger$, and $\Psi$.

We write $\mfh$ for the Hilbert space obtained by completing $\cD(\spacetime ; \C^4)$ (the set of all smooth, compactly supported $f \colon \spacetime \to \C^4$) with respect to $\| \bigcdot \|_{\mfh} = \scal{ \bigcdot, \bigcdot}_{\mfh}^{1/2}$ where, for any $f,g \in \cD(\spacetime; \C^4)$, we set
\begin{equ}\label{eq:h_inner_product}
\scal{f,g}_{\mfh} \eqdef
\int_{\R}
\int_{\T^{2}}   \Bigl\langle\Bigl((-\Delta+M^2)^{-(1+2\delta)/2} f\Bigr)(t,x),g(t,x) \Bigr\rangle_{\C^4}\ \mrd x \ \mrd t\;.
\end{equ}

Note that $\mfh = L^2(\R) \otimes H^{-\frac{1+2\delta}{2}}(\T^2) \otimes \C^4$, where $H^{-\frac{1+2\delta}{2}}(\T^2)$ is the standard Sobolev space with exponent $-\frac{1+2\delta}{2}$.
We emphasise again that unless otherwise mentioned, all vector spaces are assumed to be
complex and the tensor products are also taken over $\C$. The involution map $\kappa \colon \mfh \to \mfh$ will be the usual complex conjugation (defined as the continuous antilinear
extension of complex conjugation on $\cD$).

The role of the parameter $\delta > 0$ above is to improve the power counting of our problem, allowing us to use the Da Prato--Debussche argument \cite{DPD2}.  The true Yukawa$_{2}$ model corresponds to taking $\delta = 0$ and would require us to employ the framework of \cite{Hai14} as we already mentioned.
The upper bound, $\delta < \frac{1}{2}$, is there to ensure that the model remains singular and thus that one has to perform a renormalisation procedure.

We write $\cA\F \eqdef \cA\F(\mfh)$ for the extended CAR algebra determined by $\mfh$ and $\mfA_\infty \eqdef \mfA_\infty(\mfh)$ for the $C^*$-algebra it contains. 
We shall view its creation and annihilation operators $\alpha, \alpha^{\dagger} \colon \cD(\R^3; \C^4) \stackrel{\iota}{\hookrightarrow} \mfh \to \cA\F$ as $\cA\F$-valued, spatially periodic distributions on $\R^3$, i.e.\ as an element of the spatially periodic subspace of $ \mathfrak{L} (\cD(\R^3; \C^4), \cA\F)$,\footnote{Throughout the article, given two complex topological vector spaces $V$ and $W$, we write  $L(V,W)$ (resp.\ $\bar{L}(V,W)$) for the collection of all continuous linear (resp.\ antilinear) maps from $V$ to $W$. We then set $\mathfrak{L}(V,W) \eqdef L(V,W) \oplus \bar{L}(V,W)$.} using the canonical spatial periodisation and inclusion $\iota \colon \cD(\R^3; \C^4) \hookrightarrow \mfh$. That is, $f \in \cD(\R^3;\C^4)$ gets mapped to
\begin{equ}
	\iota(f)(t,[x]) = \sum_{k \in \Z^2} f(t,x+k) \in \cD(\spacetime; \C^4) \subset \mfh \; .
\end{equ}
Furthermore, we shall also view them as spatially periodic distributions on $\R^3$.

Switching to a more PDE-theoretic viewpoint we make use of the usual abuse of notation viewing
distributions as generalised functions,  writing for instance
\begin{equ}[e:formalNotation]
\alpha^\dagger(f) = \int f(z)  \alpha^\dagger(z)\,\mrd  z\;,\qquad
\alpha(f) = \int (\kappa f)(z) \alpha(z)  \, \mrd  z\;.
\end{equ}
This should only be taken as formal expressions, $\alpha^\dagger(z)$ itself can in general not
be realised as an element of $\cA\F$. It is, however, consistent with \eqref{e:defaa*}
which shows that $f \mapsto \alpha^\dagger(f)$ is linear while $f \mapsto \alpha(f)$ is antilinear.

When given a continuous linear or antilinear operator $A$ on $\cD(\R^3; \C^4)$ that commutes with spatial translations then its adjoint $A^*$, with respect to the $L^2$-inner product, yields again a continuous (anti)linear operator on $\cD(\R^3;\C^4)$, and we can define
\begin{equs}\label{eq:operator_on_CAR}
( A \beta)(f) \eqdef \beta(\kappa A^* \kappa  f) \quad
\text{ and }
\quad
( A \bar\beta)(f) \eqdef \bar\beta(\kappa A^* \kappa f)\;,
\end{equs}
for $\beta \in L(\cD(\R^3; \C^4);\cA\F)$ and $\bar\beta \in \bar L(\cD(\R^3; \C^4);\cA\F)$.
Note that $A^T \eqdef \kappa A^* \kappa$ is essentially a real transpose instead of a complex adjoint. This extends $A$ to a linear map on $\mfL(\cD(\R^3;\C^4), \cA\F)$.

\begin{remark}
\label{remark:Adjoint_Notation}
Although the requirement that $A$ commute with all spatial translation is not necessary for the existence of $A^*$ as a map $\cD(\R^3; \C^4) \to \cD(\R^3;\C^4)$ it is sufficient for our needs. Furthermore, this also allows us to restrict $A$ to a map on the spatially periodic distributions in $\mfL(\cD(\R^3;\C^4), \cA\F)$.
\end{remark}

We also fix a choice of decomposition $\C^4 = \C^2 \oplus \C^2$, this induces a pair of isomorphisms, one being $L( \cD(\R^3; \C^4),\cA\F) \simeq L( \cD (\R^3; \C^2),\cA\F)^2$ and the other being the analogue for the space of antilinear operators.
Then our operator-valued distributions $\alpha,\ \alpha^{\dagger}$ can be written as
\begin{equ}\label{eq:pairs_of_fields}
\alpha(z)  =
\begin{pmatrix}
a(z)\\
\bar{a}(z)
\end{pmatrix}
\quad
\text{and}
\quad
\alpha^{\dagger}(z) =
\begin{pmatrix}
a^{\dagger}(z)\\
\bar{a}^{\dagger}(z)
\end{pmatrix}\;,
\end{equ}
for the spatially periodic, operator-valued distributions
\begin{equ}
	a, \bar{a} \in \bar{L}( \cD(\R^3; \C^2), \cA\F)
	\quad \text{and} \quad a^{\dagger}, \bar{a}^{\dagger} \in L( \cD(\R^3; \C^2), \cA\F)\;.
\end{equ}

Our fermionic noise will be realised as a pair of two-dimensional Dirac spinor fields. For the present work this simply means that it takes values in $\C^2 \oplus \C^2$, however, for completeness we note that these two subspaces correspond to two separate representations of the spin group, namely the first spinor (i.e.\ the first copy of $\C^2$) transforms 
under $\Spin(2) \simeq U(1)$  by $\rho(\theta) (u,v) = (e^{-i\theta}u,e^{i\theta}v)$
and the second according to the complex conjugate representation
$\bar \rho(\theta) (u,v) = (e^{i\theta}u,e^{-i\theta}v)$.

In particular, $a, a^{\dagger}$ and $\bar{a}, \bar{a}^{\dagger}$ can be seen as two pairs of creation and annihilation operators that correspond to two different copies of the extended CAR algebra generated by the Hilbert space $\mathring{\mfh} \eqdef L^2(\R) \otimes H^{-\frac{1+2\delta}{2}}(\T^2) \otimes \C^2$ (with inner product as in \eqref{eq:h_inner_product}) sitting inside the extended CAR algebra $\cA\F$.

Using the generalised function notation, we write our fermionic noise $\Psi \in L(\cD(\R^3; \C^4),\mfA_{\infty})$ from \eqref{eq:ExtfermionicField} as
\begin{equ}\label{eq:noise_field}
\Psi(z) =
(U^* \kappa \alpha )(z)
+
\alpha^{\dagger}(z)\;,
\end{equ}
where we have made the choice 
\begin{equ}[def:chargeconj]
U \eqdef \sqrt{-\Delta+M^2} \begin{pmatrix}
			0 & \big( -\snabla+M \big)^{-1}\\
			-\bigl(\overline{\snabla}+M\bigr)^{-1} & 0
		\end{pmatrix}\;.
\end{equ}
Here we use the two-dimensional Dirac operator defined in \eqref{eq:DiracOp}. Note that $U$ is a unitary operator on $L^2(\R^3;\C^4)$, as well as $\mfh = \mathring{\mfh} \oplus \mathring{\mfh}$, and satisfies $U^T = -U$.

We also view $\Psi$ as a map $L^2(\R) \otimes H^{-\frac{1+2\delta}{2}}(\T^2) \to \mfA_{\infty} \otimes \C^4$
and an element of $\cD'(\spacetime; \mfA_{\infty} \otimes \C^4)$.\footnote{We use the notation
$\cD'(\spacetime;E)$ for subspace of spatially periodic elements of $L(\cD(\R^3),E)$ for any topological vector space $E$ throughout the rest of the article.} Analogously to \eqref{eq:pairs_of_fields} we can decompose the noise $\Psi$ into its two spinor components
\begin{equ}
\Psi(z) =
\begin{pmatrix}
\psi(z)\\
\bar{\psi}(z)
\end{pmatrix}\;,
\end{equ}
where $\psi, \bar{\psi} \in \cD'(\spacetime;  \mfA_{\infty} \otimes \C^2 )$ are given by
\begin{equs}
\psi(z) = \bar{A}^* \kappa \bar{a}(z) + a^{\dagger}(z)
&
\quad
\text{and}
\quad
\bar{\psi}(z) = A^* \kappa a(z) + \bar{a}^{\dagger}(z)\;,\\
\text{with} \enskip
A =  -\bigl(\overline\snabla+M\bigr)^{-1}  \sqrt{-\Delta+M^2}
&
\quad
\text{and}
\quad
\bar{A} = (-\snabla+M)^{-1} \sqrt{-\Delta+M^2}\;.
\end{equs}

\begin{remark}\label{rem:white_noise}
The noises $\chi,\bar\chi$ in \eqref{eq:Model_V1} could in principle be defined analogously
to $\psi$, $\bar\psi$, the difference being that one takes $ \mfh = L^2(\spacetime; \C^4)$ and $
		U  = \begin{psmallmatrix}
			0 & 1 \\
			-1 & 0
\end{psmallmatrix}$.
\end{remark}

If we fix a specific realisation of the bosonic noise $\xi$, then \eqref{eq:lin_sig_model} can be thought
of as a system of singular PDEs taking values in an infinite dimensional vector space which happens to
have some further algebraic properties. 
In fact we will adopt this ``pathwise'' approach, well-known
from $\cite{DPD2}$, rough paths theory, and regularity structures, when actually solving it.\footnote{Here our approach diverges from the one explored in \cite{Gub20} as mentioned in the introduction. There one viewed the dynamic object as a time-dependent algebra homomorphism, akin to considering the push-forward dynamic in a regular SPDE.} The solutions are then distributions with values in $\cA\F$.
We further discuss the issue of combining bosonic and fermionic probability  in Remark~\ref{rem:BosFermProb}.

\begin{remark}
\label{remark:Grassmann_Preserved}
	We note that the bosonic part of \eqref{eq:lin_sig_model} (the first equation) remains in the even part $\cA\F^0$ of $\cA\F$ whereas the other two equations remain in the odd part $\cA\F^1$. 
Furthermore, if the initial conditions of \eqref{eq:lin_sig_model} take their values in $\cG\F$, defined in Section~\ref{subsec:extended_field}, then so do the solutions of this equation, since the non-linearity of \eqref{eq:lin_sig_model} leaves $\cG\F$ invariant. This is important since the physical observables must be contained in $\cG\F$.
\end{remark}


\begin{remark}\label{rem:stoch_quantisation}
	As promised in the introduction we give more detail on how, if one took $\delta = 0$, \eqref{eq:lin_sig_model} is a stochastic quantisation of the state with action \eqref{eq:Yukawa_Action}.
	We recall a simpler situation first.

Let $\mu$ be a probability measure on $\R^d$  of the form
	$\mrd \mu(x) = Z^{-1} e^{-V(x)} \mrd x$,
	determined by the some potential (or Euclidean action) $V \colon \R^d \to \R$ and where $Z$ is a normalising constant.

We describe how, employing various ``changes of coordinates'' for SDEs, one can obtain various dynamics whose equilibrium measures can be used to recover $\mu$.

The simplest case is when $X$ is the solution to the SDE
	\begin{equ}
		\partial_t X(t) = F (X(t)) + \xi(t)
	\end{equ}
	where $F = -\nabla V$, and $\nabla$ is the gradient with respect to the usual Euclidean inner product, and $\xi(t)$ is a temporal white noise, i.e.\ \begin{equ}
		\mathbb{E}[\xi^i(t)\xi^j(s)] =  2 \delta^{ij} \delta(t-s).
	\end{equ}
In this situation, at least formally, the Fokker-Planck equation tells us that $\mu$ is an invariant measure for $X$.

However, we can replace $X$ with $X = A \widetilde X$, where $A \in \GL(n;\R) $ and $S$ with $\widetilde{V} = V \circ A$ and instead solve
	\begin{equ}[eq:Part_Func_SDE]
		\partial_t \widetilde X(t) = -\nabla \widetilde{V} (\widetilde{X}(t)) + \xi(t) = F(A \widetilde{X}(t)) + \xi(t)
	\end{equ}
	with the two stationary solutions being equivalent in the sense that for any measurable function $f$ on $\R^d$
	\begin{equ}
		\E_\mu [f]	= \E_{\widetilde{\mu}}[f \circ A ]
	\end{equ}
	where $\mu$ and $\widetilde{\mu}$ are the corresponding stationary measures.

	However, this would not quite yield \eqref{eq:lin_sig_model}. For this we note that, if instead of the white noise $\xi$ we use a noise $\tilde{\xi} = B \xi $ with $\Sigma = B B^T$, we then have to solve
	\begin{equ}
		\partial_t \widetilde X(t) = \Sigma F ( A \widetilde X(t)) + \tilde{\xi}(t) \; .
	\end{equ}
	By carefully choosing $B$ we can thus ``repair'' the equation.

	The same procedure with $B B^T$ replaced by $B \left( \begin{smallmatrix}
		0 & 1 \\
		-1 & 0
	\end{smallmatrix} \right) B^T$ applies for fermionic systems as well because they satisfy an It\^o formula, see \cite{Gub20}. In this case $A$ is given by
	\begin{equ}
		\begin{pmatrix}
			\big(\snabla + M\big)^{-1} & 0\\
			0 & \big(-\overline{\snabla} + M \big)^{-1}
		\end{pmatrix}
	\end{equ}
	yielding \eqref{eq:Replace_Spinors}, and $B$ is chosen such that
	\begin{equ}
		B \begin{pmatrix}
			0 & 1 \\
			-1 & 0
		\end{pmatrix} B^T =   U	 \; ,
	\end{equ}
	where we defined $U$ in \eqref{def:chargeconj}.

	Finally, we note that we can replace the standard scalar product $\delta_{ij}$ on $\R^d$ with an arbitrary one, $g_{ij}$, without changing \eqref{eq:Part_Func_SDE}, because all the objects transform correctly although the definitions of $\nabla V$ and $B^T$ depend on the inner product. This is important because we have made the change to $\mfh$ from $L^2(\spacetime;\C^4)$.
\end{remark}

\subsection{Heat Kernels and Power Counting}
\label{sec:kernel_not}

Most of the power counting that appears in our arguments comes from estimating the heat kernel and its derivatives by kernels of the form $\|z\|_{\s}^{\zeta}$ and applying basic rules for the pointwise products and convolution of such kernels \dash we follow \cite[Section~10.3]{Hai14} in our presentation here.

We make the following \textit{ad hoc} definition of a space of singular integration kernels that correspond to the action of heat kernels with mass and other kernels derived from these.

\begin{definition}
	Let $\mfN^{\zeta,m}$ denote the space of smooth, compactly supported kernels $K \colon \R^3 \setminus \{0\}\to \C$ such that
	\begin{equ}
		\VERT K \VERT_{\zeta; m} \eqdef \sup_{|k|_{\s} \leqslant m}	\sup_{z \in \R^3 \setminus \{0\} } \|z\|_{\s}^{|k|_{\s}-\zeta} \left| D^k K(z) \right|  < \infty \;.
	\end{equ}
	Furthermore, let $\mfN^{\zeta} \eqdef \bigcap_{m \in \N} \mfN^{\zeta,m}$.
\end{definition}
\begin{remark}
	If $\zeta' < \zeta$ then we have
$		\VERT K \VERT_{\zeta'; m} \lesssim \VERT K  \VERT_{\zeta; m}$,
	where the proportionality constant only depends on the size of the support of $K$.
\end{remark}

The basic power-counting estimate for us is \cite[Lemma~10.14]{Hai14} which we state without proof below.
\begin{lemma}
	\label{lemma:kernel_mult}
	Let $K_1 \in \mfN^{\zeta_1,m}$ and $K_2 \in \mfN^{\zeta_2,m}$. Then $K_1 K_2 \in \mfN^{\zeta_1+\zeta_2,m}$ and
	\begin{equ}
		\VERT K_1 K_2 \VERT_{\zeta_1+\zeta_2;m} \lesssim \VERT K_1  \VERT_{\zeta_1;m} \VERT K_2 \VERT_{\zeta_2;m} \; .
	\end{equ}
	If $\zeta_1 \wedge \zeta_2 > -d_{\s}$ and $\zeta\eqdef \zeta_1+\zeta_2 + d_{\s} < 0$, then $K_1 \ast K_2 \in \mfN^{\zeta,m}$ and
	\begin{equ}
		\VERT K_1 \ast K_2 \VERT_{\zeta;m} \lesssim \VERT K_1  \VERT_{\zeta_1;m} \VERT K_2 \VERT_{\zeta_2;m} \; .
	\end{equ}
\end{lemma}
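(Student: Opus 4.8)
The plan is to establish both parts following \cite[Section~10.3]{Hai14}. The bound for the pointwise product $K_1 K_2$ is routine: for a multi-index $k$ with $|k|_\s \leqslant m$ the Leibniz rule writes $D^k(K_1 K_2)$ as a finite sum of terms $(D^j K_1)(D^{k-j}K_2)$ with $j \leqslant k$, and each summand is bounded in absolute value, via the defining seminorms, by $\VERT K_1\VERT_{\zeta_1;m}\VERT K_2\VERT_{\zeta_2;m}\,\|z\|_\s^{\zeta_1-|j|_\s}\|z\|_\s^{\zeta_2-|k-j|_\s}$; since $|j|_\s+|k-j|_\s=|k|_\s$ this is exactly of order $\|z\|_\s^{\zeta_1+\zeta_2-|k|_\s}$, and $K_1K_2$ is clearly smooth and compactly supported away from the origin.

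The substance is the convolution bound. Here I would fix $z \neq 0$, write $\lambda = \|z\|_\s$, assume both kernels supported in $\{\|w\|_\s \leqslant R\}$, and split the convolution integral over three regions: $A_1 = \{\|y\|_\s \leqslant \lambda/2\}$ (on which $\|z-y\|_\s \sim \lambda$), $A_2 = \{\|z-y\|_\s \leqslant \lambda/2\}$ (on which $\|y\|_\s \sim \lambda$, and which is disjoint from $A_1$ by the triangle inequality), and the remainder $A_3$. I would exploit the two representations $D^k(K_1\ast K_2)(z) = \int K_1(y)(D^kK_2)(z-y)\,\d y = \int (D^kK_1)(y)K_2(z-y)\,\d y$ — valid near any $z \neq 0$ after the standard splitting of each $K_i$ into a part supported in a tiny ball around $0$ and a globally smooth remainder — by using on each region whichever of them has an integrable integrand. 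On $A_1$ this is the first: one bounds $|D^kK_2(z-y)| \lesssim \lambda^{\zeta_2-|k|_\s}$ and $\int_{\|y\|_\s\leqslant\lambda/2}\|y\|_\s^{\zeta_1}\,\d y \lesssim \lambda^{\zeta_1+d_\s}$, the latter converging precisely because $\zeta_1 > -d_\s$, which yields the target order $\lambda^{\zeta-|k|_\s}$. On $A_2$ the situation is symmetric, via the second representation and $\zeta_2 > -d_\s$.

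The hard part is the far region $A_3$, where $\|y\|_\s$ and $\|z-y\|_\s$ exceed $\lambda/2$ simultaneously. I would split it according to which of the two is larger and, on the larger, dyadically decompose into shells $S_j = \{\|y\|_\s \sim 2^j\lambda\}$ for $j \geqslant 0$, the sum being finite because compact support forces $2^j\lambda \lesssim R$. On $S_j$ one has $|K_1(y)| \lesssim (2^j\lambda)^{\zeta_1}$ and, after the substitution $w = z-y$,
\begin{equ}
\int_{S_j}\big|(D^kK_2)(z-y)\big|\,\d y \;\lesssim\; \int_{\lambda/2<\|w\|_\s\lesssim 2^j\lambda}\|w\|_\s^{\zeta_2-|k|_\s}\,\d w\;,
\end{equ}
so the shell contribution is of order $(2^j\lambda)^{\zeta-|k|_\s}$ when $\zeta_2-|k|_\s+d_\s>0$, and of order $(2^j\lambda)^{\zeta_1}\lambda^{\zeta_2-|k|_\s+d_\s}$ (times a logarithm in the borderline equality case) otherwise. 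I expect these borderline sub-cases to be the only real technical nuisance: handling them cleanly requires the auxiliary observation that $\zeta_1\wedge\zeta_2 > -d_\s$ together with $\zeta_1+\zeta_2 < -d_\s$ forces $\zeta_1, \zeta_2 < 0$ individually, after which summing the geometric-type series over $j \geqslant 0$ — which is precisely where the hypothesis $\zeta < 0$, hence $\zeta - |k|_\s < 0$, is essential — gives $\lesssim \lambda^{\zeta-|k|_\s}$. Collecting the three regions yields $\VERT K_1\ast K_2\VERT_{\zeta;m} \lesssim \VERT K_1\VERT_{\zeta_1;m}\VERT K_2\VERT_{\zeta_2;m}$, and since $K_1\ast K_2$ is smooth and compactly supported away from $0$, it lies in $\mfN^{\zeta,m}$.
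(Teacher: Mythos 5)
The paper does not actually prove this lemma: it is stated verbatim as \cite[Lemma~10.14]{Hai14} ``which we state without proof below'', so there is no in-paper argument to compare against. Your proposal reconstructs what is essentially the standard proof of that result, and it is correct in substance: the Leibniz computation for the pointwise product, the three-region split of the convolution at scale $\lambda=\|z\|_\s$ with the derivative placed on whichever factor is evaluated away from its singularity, the use of $\zeta_1\wedge\zeta_2>-d_\s$ to integrate each kernel across its own singularity, the dyadic shells in the far region summed via $\zeta<0$, and the observation that the hypotheses force $\zeta_1,\zeta_2<0$ (which, note, you also need on $A_1$ and $A_2$ to bound $\|z-y\|_\s^{\zeta_2-|k|_\s}$ by $\lambda^{\zeta_2-|k|_\s}$ from above, not only in the borderline far-region cases).

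The one point I would tighten is the passage where you use different integral representations of $D^k(K_1\ast K_2)$ on the different regions $A_1$, $A_2$, $A_3$. With sharp indicator functions this identity is not literal \dash moving derivatives from one factor to the other region by region produces boundary contributions. The clean fix is either the cutoff decomposition you allude to, carried out so that each of the three convolutions is globally of the form ``($L^1$ kernel) $\ast$ (kernel smooth on the relevant domain)'', or a dyadic decomposition of each $K_i$ into smooth annular pieces with derivatives placed on the piece of larger scale; in both versions the derivatives falling on the cutoffs cost a factor $\lambda^{-|j|_\s}$ on a region where $\|y\|_\s\sim\lambda$, which is exactly absorbed by the same power counting. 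This is a presentational rather than substantive gap, and the resulting estimates are the ones you state.
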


\begin{lemma}\label{lemma:kernel_conv}
	If $K \in \mfN^\zeta$ with $-d_{\s}<\zeta < 0$, then, writing $K_{\eps} = K \ast \rho_{\eps}$, one has, for all $m \in \N$ and all $\bar\zeta \in [\zeta -1, \zeta)$, the bound
	\begin{equ}
		\VERT K - K_\eps \VERT_{\bar\zeta;m } \lesssim \eps^{\zeta-\bar\zeta} \VERT K \VERT_{\zeta ; m + 1} \;.
\end{equ}
\end{lemma}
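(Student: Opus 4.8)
The plan is to reduce the seminorm bound to a pointwise estimate: for every $z \in \R^3 \setminus \{0\}$ and every multi-index $k$ with $|k|_\s \leqslant m$ one wants
\[
\bigl| D^k(K - K_\eps)(z) \bigr| \lesssim \eps^{\zeta - \bar\zeta}\, |z|_\s^{\bar\zeta - |k|_\s}\, \VERT K \VERT_{\zeta; m+1}\;,
\]
which is exactly the unpacking of $\VERT K - K_\eps \VERT_{\bar\zeta; m} \lesssim \eps^{\zeta-\bar\zeta}\VERT K\VERT_{\zeta;m+1}$. Let $R>0$ be such that $\rho$, and hence each $\rho_\eps$, is supported in the parabolic ball $\{|w|_\s \leqslant R\eps\}$; recall $\int \rho_\eps = 1$ and $|D^\ell \rho_\eps| \lesssim \eps^{-d_\s - |\ell|_\s}$. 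Since $\zeta > -d_\s$, the bound $|K(w)| \lesssim |w|_\s^\zeta$ shows $K$ is locally integrable near $0$, so $K$ is a compactly supported distribution and $K_\eps = K \ast \rho_\eps$ is a well-defined smooth compactly supported function. I would then split according to the size of $|z|_\s$ relative to $\eps$.

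\emph{Close regime $|z|_\s \leqslant 2R\eps$.} Estimate the two terms of $D^kK - D^kK_\eps$ separately. For the first, $|D^kK(z)| \leqslant \VERT K\VERT_{\zeta;m}|z|_\s^{\zeta-|k|_\s}$, and since $\bar\zeta < \zeta$ one has $|z|_\s^{\zeta-\bar\zeta} = (|z|_\s/\eps)^{\zeta-\bar\zeta}\eps^{\zeta-\bar\zeta} \lesssim \eps^{\zeta-\bar\zeta}$ on this range, giving a bound of the required shape. For $D^kK_\eps(z)$ one transfers the derivatives onto the mollifier, $D^kK_\eps(z) = \int K(w)(D^k\rho_\eps)(z-w)\,\mrd w$ — this step is essential because $D^kK$ itself need not be locally integrable once $|k|_\s$ is large. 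The integrand is supported in $\{|z-w|_\s \leqslant R\eps\} \subset \{|w|_\s \leqslant 3R\eps\}$, so using $|K(w)| \lesssim |w|_\s^\zeta$, the bound $|D^k\rho_\eps| \lesssim \eps^{-d_\s-|k|_\s}$, and $\int_{|w|_\s \leqslant r}|w|_\s^\zeta\,\mrd w \lesssim r^{d_\s+\zeta}$ (finite precisely because $\zeta > -d_\s$), one gets $|D^kK_\eps(z)| \lesssim \eps^{\zeta-|k|_\s}\VERT K\VERT_{\zeta;0}$; since $|k|_\s - \bar\zeta > 0$ and $|z|_\s \leqslant 2R\eps$, this is again $\lesssim \eps^{\zeta-\bar\zeta}|z|_\s^{\bar\zeta-|k|_\s}\VERT K\VERT_{\zeta;0}$.

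\emph{Far regime $|z|_\s > 2R\eps$.} In $K_\eps(z) = \int K(z-w)\rho_\eps(w)\,\mrd w$ the argument $z-w$ stays in $\{|z-w|_\s \geqslant |z|_\s/2\}$, where $K$ is smooth, so one differentiates under the integral and, using $\int\rho_\eps = 1$, obtains $D^k(K-K_\eps)(z) = \int \bigl(D^kK(z) - D^kK(z-w)\bigr)\rho_\eps(w)\,\mrd w$. Expanding $D^kK(z-w)$ by the fundamental theorem of calculus along $t \mapsto z-tw$ gives $\sum_{i=0}^{2} w_i \int_0^1 (D^{k+e_i}K)(z-tw)\,\mrd t$, where on the whole segment $|z-tw|_\s \geqslant |z|_\s/2$, hence $|(D^{k+e_i}K)(z-tw)| \lesssim |z|_\s^{\zeta-|k|_\s-|e_i|_\s}\VERT K\VERT_{\zeta;m+|e_i|_\s}$. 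For the spatial directions $i \in \{1,2\}$ one has $|e_i|_\s = 1$ and $|w_i| \leqslant R\eps$, so the contribution is $\lesssim \eps\,|z|_\s^{\zeta-|k|_\s-1}\VERT K\VERT_{\zeta;m+1}$; rewriting $\eps\,|z|_\s^{\zeta-|k|_\s-1} = \eps^{\zeta-\bar\zeta}|z|_\s^{\bar\zeta-|k|_\s}(\eps/|z|_\s)^{1-(\zeta-\bar\zeta)}$ and using $\zeta - \bar\zeta \leqslant 1$ (this is exactly where $\bar\zeta \geqslant \zeta - 1$ enters) together with $\eps/|z|_\s < 1$, the last factor is $\leqslant 1$ and the claimed bound follows. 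The time direction $i=0$ carries $|w_0| \leqslant (R\eps)^2$, hence an even larger power of $\eps$, and is negligible by the same estimate. Integrating against $\rho_\eps$, which has total mass $1$, preserves these bounds and assembles the claim.

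The main obstacle is bookkeeping rather than conceptual: one must keep the parabolic scaling straight, since a time increment of magnitude $|w_0| \lesssim \eps^2$ counts, for these weighted bounds, like a spatial increment of size $\eps$, and one has to verify that the gain produced by a single Taylor step (an honest $\eps$ in the spatial directions, $\eps^2$ in time) is never weaker than the required $\eps^{\zeta-\bar\zeta}$ — which is precisely the role of the hypothesis $\bar\zeta \in [\zeta-1,\zeta)$ together with $\eps/|z|_\s < 1$ in the far regime. The other point requiring care is the close regime, where $D^kK$ cannot be manipulated as a function and one must instead estimate $K_\eps$ directly after transferring the derivatives onto $\rho_\eps$; uniformity in $\eps$ of the resulting bounds then rests on the compact support of $K$ and on $\zeta > -d_\s$.
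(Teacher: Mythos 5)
The paper states this lemma without proof (it is the parabolic analogue of \cite[Lemma~10.17]{Hai14}), and the argument you give — splitting into the regimes $\|z\|_\s \lesssim \eps$ and $\|z\|_\s \gtrsim \eps$, estimating $D^kK$ and $D^kK_\eps$ separately in the near regime after moving all derivatives onto $\rho_\eps$ (using $\zeta > -d_{\s}$ for integrability at the origin), and Taylor-expanding the increment $D^kK(z)-D^kK(z-w)$ in the far regime — is exactly the standard one. Your identification of where the two hypotheses $\zeta > -d_{\s}$ and $\bar\zeta \geqslant \zeta-1$ enter is correct, and the algebra in both regimes checks out.

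The one step that does not deliver the constant as stated is the time direction of the far-regime Taylor expansion. You bound the $i=0$ contribution by $|w_0|\sup|D^{k+e_0}K|$, and since $|e_0|_\s = 2$ the derivative $D^{k+e_0}K$ has parabolic order $|k|_\s + 2 \leqslant m+2$; so the argument as written yields the bound with $\VERT K \VERT_{\zeta;m+2}$ on the right-hand side rather than $\VERT K \VERT_{\zeta;m+1}$ (the extra power of $\eps$ coming from $|w_0|\leqslant (R\eps)^2$ helps the $\eps$-count, not the derivative count). This is harmless for every application in the paper, since the hypothesis $K \in \mfN^{\zeta} = \bigcap_m \mfN^{\zeta,m}$ makes all seminorms finite and only the rate $\eps^{\zeta-\bar\zeta}$ is ever used; but it is precisely the parabolic trap you warn about in your last paragraph, and it should not be waved away as ``negligible by the same estimate''. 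If one wants a genuinely first-order cost in the time variable one can interpolate,
\begin{equ}
\bigl|D^kK(z') - D^kK(z'-(w_0,0,0))\bigr| \leqslant \bigl(2\sup|D^kK|\bigr)^{1-\theta}\bigl(|w_0|\sup|\partial_t D^kK|\bigr)^{\theta}
\end{equ}
with $2\theta = \zeta-\bar\zeta \leqslant 1$, which recovers the correct powers of $\eps$ and $\|z\|_\s$ but still involves the $(m+2)$-seminorm to a fractional power. Either restate the conclusion with $\VERT K\VERT_{\zeta;m+2}$ or record the interpolated, mixed-seminorm bound; with that amendment the proof is complete.
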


We now begin by fixing a compactly supported kernel $\widetilde{\mcK}  \colon \R^3 \to \R$ that near the origin in $\R^3$ is given by the standard heat kernel
\begin{equ}
	\mcK (t,x) \eqdef \frac{\bone_{\{t>0\}}}{4\pi t} e^{- \frac{|x|^2}{4t}}
\end{equ}
but is compactly supported in space and such that for any spatially periodic distribution $u$ supported in $[0,\infty) \times \R^{2}$ one has
\begin{equ}
	\widetilde{\mcK} \ast u (z) = \mcK \ast u(z) \; .
\end{equ}
for all $z \in (-\infty,1] \times \R^2$. This is most easily achieved by taking a smooth bump function $\phi \colon \R^2 \to [0,1]$ supported in a ball around the origin such that for all $x \in \R^2$
\begin{equ}
	\sum_{k \in \Z^2} \phi(x+k) = 1 \; ,
\end{equ}
a smooth function $\chi \colon \R \to [0,1]$ such that $\chi\restr{(-\infty,1]} \equiv 1$ and $\chi\restr{[2,\infty)} \equiv 0$,
and then defining for any $(t,x) \in \R^3$
\begin{equ}
	\widetilde{\mcK}(t,x) \eqdef \sum_{k \in \Z^2} \chi(t) \phi(x) \mcK(t,x+k) \; .
\end{equ}

We also wish to incorporate the $\mfh$-inner product into the kernel picture. For this purpose, let $\CQ \colon \R^2 \setminus \{0\} \to \R$ denote the kernel of the operator $(-\Delta+M^2)^{-\frac{1+2\delta}{2}} \colon L^2(\R^2) \to L^2(\R^2)$. Explicitly it can be calculated, cf.\ \cite{MS22}, that
\begin{equ}
	\CQ(x)  = \frac{1}{2\pi \Gamma\left(\frac{1}{2}+\delta\right)} \left(\frac{2M}{|x|}\right)^{\frac{1}{2}-\delta} K_{\frac{1}{2}-\delta}(M|x|) \; ,
\end{equ}
where $K_{\frac{1}{2}-\delta}$ denotes the modified Bessel function of the second kind of order $\frac{1}{2}-\delta$. From the asymptotic behaviour of the Bessel functions we find that
\begin{equs}[2]
	\CQ(x) &\sim |x|^{2\delta-1}& \qquad & \text{when } |x| \longrightarrow 0 \; ,\\
	\CQ(x) &\sim |x|^{\delta-1} e^{-M |x|}& \qquad   &\text{when } |x| \longrightarrow \infty \; .
\end{equs}
From the exponential decay it follows that we can define the compactly supported kernel $Q \colon \R^2 \setminus \{0\} \to \R$
\begin{equ}
	Q(x) \eqdef \sum_{k \in \Z^2} \phi(x) \CQ(x+k) \; ,
\end{equ}
with singularity of order $1-2\delta$ at the origin. For all periodic distributions on $\R^2$ the convolution with $Q$ agrees with the action of $(-\Delta+M^2)^{-\frac{1+2\delta}{2}} $.

Using $\widetilde{\mcK}$ and $Q$ we list here the relevant kernels we will use:
\begin{itemize}
	\item $\CI_{B} $ will denote the linear operator mapping $\cD(\R^3) \to \cD(\R^3)$ given by convolution with the integral kernel $K \colon \R^3 \setminus \{0\} \to \R$ where
	\begin{equ}
		K(t,x) \eqdef e^{-tm^2} \widetilde{\CK}(t,x)
	\end{equ}
	corresponding to a truncated version of $(\partial_t-\Delta + m^2)^{-1}$. Its $L^2$-transpose $\CI_B^T$ is a truncation of $(-\partial_t-\Delta + m^2)^{-1}$ again mapping $\cD(\R^3) \to \cD(\R^3)$ by convolution with $K(-t,x)$.


	We also introduce the shorthand notation $K_z \eqdef K(\bigcdot - z)$ for $z \in \R^3$. 
	We note that $K \in \mfN^{-2}$. The kernel of $\CI_B \CI^T_B$ truncating $(-\partial_t^2 + (-\Delta +m^2)^2)^{-1} $ will be denoted by $K^{\star 2}$ which belongs to $\mfN^{0-}$ by Lemma~\ref{lemma:kernel_mult}) and we define $K^{\star 2}_z \eqdef K^{\star 2}(\bigcdot - z)$. 
	We use these kernels when proving the regularity properties of the renormalised products as well as in applying Theorem~\ref{SchdrThm}.

	Finally, for $u \in \CC^{\alpha}(\T^2)$ let $G_Bu(t,x)\eqdef e^{(\Delta-m^2) t}u(x)$.

	\item Analogously to above we let $\CI_F$ and $\CI_F^T$ denote integration against the truncation of $(\partial_t-\Delta+M^2 )^{-1}$ and its $L^2$-transpose, and write the kernel of $\CI^T_F$ as $G$ which belongs to $\mfN^{-2}$. The kernel of $\CI_F \CI_F^T$ will be denoted by $G^{\star 2} \in \mfN^{0-}$ and its translate by $G^{\star 2}_z$. 
	\item We also need the derivative of $\CI_F$. Let $\CI_F^{\snabla}$ and $\bigl(\CI^{\snabla}_F\bigr)^T$ denote $(\snabla+M)\CI_F$ and its $L^2$-transpose respectively. The  kernel of $\bigl(\CI_F^{\snabla}\bigr)^T$ will be denoted by $G^{\snabla} \in \mfN^{-3}$, and its translate by $G_z^{\snabla}$. This is a slight abuse of notation as $G^{\snabla}$ is in fact $\C^2$-valued. Explicitly,
	\begin{equ}
		G^{\snabla} = \snabla \left( \begin{smallmatrix}
			G \\
			G
		\end{smallmatrix} \right) \; .
	\end{equ}

	The truncated kernel of
	\begin{equ}
		\frac{ \bigl(\CI_F^{\snabla}\bigr)^{*} \bigl(\CI_F^{\snabla}\bigr)^{T} }{\bigl(\sqrt{-\Delta+M^2}\bigr)^{1+2\delta}} \approx \left(\sqrt{-\Delta+ M^2 }\right)^{1-2\delta} \big(-\partial_t^2 + (-\Delta+M^2)^2\big)^{-1}
	\end{equ}
	is denoted by $G^{\snabla,\star 2} \in \mfN^{-1+2\delta}$\footnote{Although, strictly speaking, $G^{\snabla,\star 2}$ does not fall under the auspices of Lemma~\ref{lemma:kernel_mult} one can easily check that $G^{\snabla,\star 2}$ is smooth away from $0$, as $\delta(t)Q(x)$ is being convolved with both the usual heat kernel and its time-reversed version, and that it has the correct scaling behaviour at $0$.} and its translate by $G^{\snabla, \star 2}_z$. Here $\bigl(\CI_F^{\snabla}\bigr)^{*}$ denotes $\kappa \circ \CI_F^{\snabla} \circ \kappa$, i.e.\ the complex conjugate of $\CI_F^{\snabla}$. For the sake of convenience, we have also included the kernel $Q$ coming from the $\mfh$ inner product.

	\item The notation for the terms containing $-\overline{\snabla}$ instead of $\snabla $ are analogous to the above with $\snabla$ replaced by $\overline{\snabla}$.
\end{itemize}

Convolution with $\rho_\eps$ will be denoted by an index $\eps$, e.g.\ the mollified version of $G_{z}^\snabla$ will be denoted by $G_{z,\eps}^{\snabla}$.

\subsection{Da Prato--Debussche Argument}
\label{sec:PDTrick}

The problem that arises when attempting to solve \eqref{eq:lin_sig_model} is that the driving noises have negative regularities, in the sense of the H\"older--Besov spaces defined above.
A scaling argument shows that $\xi \in \CC_{\s}^{-2-\kappa}$ almost everywhere, for all $\kappa > 0$, and $\psi,\overline{\psi} \in \CC_{\s}^{-\frac{3}{2}+\delta}$.
(Note that as usual there is no loss of regularity for the fermionic noise.)

Since the action of the heat kernel improves regularity by $2$ (this is a formulation of our Schauder estimate in parabolic H\"older--Besov spaces), we expect the solutions to be at most in $\CC^{0-\kappa}_\s$ for the bosonic part $\phi$ and in $\CC^{\frac{1}{2}+\delta}_\s$ for the fermionic parts $\upsilon,\bar\upsilon$. Taking into account the derivative we see that $(\snabla + M)\upsilon$ and $(-\overline{\snabla}+M)\bar\upsilon$ have regularity $\delta -\f12$.
This means we are dealing with a non-linear equation where the solutions are in fact genuine distributions and not functions, especially when $\delta < \f12$. It is well-known, see Theorem~\ref{YngMultThm}, that a product of a function of regularity $\theta > 0$ with a distribution of regularity $\theta' < 0$ is canonically well-defined if and only if  $\theta + \theta' > 0$ \dash the resulting product will have regularity $\theta'$.
This means that all the products in \eqref{eq:lin_sig_model} are ill-defined.

To overcome this problem we adapt the argument of \cite{DPD2}. 
Let $(\<1>, \<1IF>,\<1IA> )$ be the stationary solutions to the linear part of \eqref{eq:lin_sig_model}, i.e.\ we set
\begin{equ}
		\partial_t \<1> = (\Delta-m^2) \<1>  + \xi,	\quad
		\partial_t \<1IF> =  (\Delta-M^2) \<1IF>  + \psi,\quad
		\partial_t \<1IA>  =  (\Delta-M^2) \<1IA> + \bar\psi.
\end{equ}
We then introduce the ansatz
\begin{equs}[e:perturbative_ansatz]
		\phi = U + \<1>, \qquad
		\upsilon =  Y+\<1IF>, \qquad
		\bar\upsilon  =\overline{Y} + \<1IA>.
\end{equs}
Our hope\footnote{The fact that this turns out to be the case is called subcriticality in singular SPDEs, or UV super-renormalisability in field theory.} is that $(U,Y, \overline{Y})$ will have better regularity than $(\<1>, \<1IF>,\<1IA> )$ and that one can define a well-posed equation for them.

We also introduce $\<1F> \eqdef (\snabla + M) \<1IF>$ and $\<1A> \eqdef (-\overline{\snabla}+M) \<1IA>$, which are $\mfA_\infty \otimes \C^2$-valued distributions of regularity $\delta-\f12$.
Using the notation $Y^{\snabla} \eqdef (\snabla + M) Y $ and $\overline{Y}^{\snabla}$, we can rewrite \eqref{eq:lin_sig_model}, at least formally, as follows
\begin{equs}\label{eq:lin_sig_model_DPD1}
	\partial_t U &= (\Delta-m^2) U -\\
	& \qquad - g \Big(  \bilin{\overline{Y}^{\snabla} , Y^\snabla}  + \bilin{\overline{Y}^{\snabla} ,\<1F> } + \bilin{\<1A>, Y^\snabla} + \bilin{\<1A>,\<1F>} \Big)  - \\
	& \qquad - \lambda \left( U^3 + 3 U^2 \<1> + 3 U \<1>^2 + \<1>^3  \right) \; ,	\\
	\partial_t Y &=  (\Delta-M^2) Y  - g \Big(  U  Y^{\snabla}  + \<1>  Y^{\snabla} + U \<1F>  + \<1> \ \<1F> \Big) \; ,\\
	\partial_t \overline{Y} &=  (\Delta-M^2) \overline{Y} - g \Bigl(  U  \overline{Y}^{\snabla}  + \<1> \overline{Y}^{\snabla} + U \<1A>  + \<1> \ \<1A> \Bigr) \; ,
\end{equs}
with initial conditions $U_0 = \phi_0 - \<1>(0)$, $Y_0 = \upsilon_{0} - \<1IF>(0)$, and $\overline{Y}_{0} = \bar\upsilon_{0} - \<1IA>(0)$.
While the products $\bilin{\<1A>,\<1F>}$,  $\<1> \ \<1F> $, $\<1> \ \<1A>$, $\<1>^2$, and $\<1>^3$ are not canonically defined (since the sum of their regularities is not positive), we can define these products by using stochastic methods and renormalising, obtaining
\begin{equ}[e:trees]
	\<2AF> \in \CC_\s^{-1+2\delta}(\spacetime ; \cA\F^0), \qquad \<2F>,\<2A> \in \CC_\s^{-\frac{1}{2}+\delta-\kappa}(\spacetime ; \mfA_\infty^1)\;, \qquad \<2>, \<3> \in \CC_\s^{-\kappa}(\spacetime) \; ,
\end{equ}
see Subsection~\ref{sec:fermionic_Tree} and Subsection~\ref{sec:Mixed_Tree} \dash the  trees $\<2AF>$, $\<2>$, and $\<3>$ above require the insertion of diverging renormalisations $C^1_{\eps}$ and $C^2_{\eps }$ \footnote{One does not see this particular renormalisation in some treatments of Yukawa$_2$ as it is implicitly removed by normal ordering creation and annihilation operators in $\<2AF>$.} in the bosonic equation while the other trees above do not introduce renormalisation constants.

The assumption that the objects in \eqref{e:trees} are the most irregular terms in their respective equations turns out to be a self-consistent one, in fact one can work with the assumption that  $U$ and $Y,\overline{Y}$ have regularity $1+2\delta$ and $\frac{3}{2}+\delta-$ respectively\footnote{Here, we just write ``$-$'' in the regularity to mean minus $n\kappa$ for some $n \in \N$. Recall that $\kappa$ can be taken arbitrarily small so the precise value of $n$ is unimportant.} \dash all the relevant products involving $U$ and $Y,\overline{Y}$ will be canonically defined and will have sufficient regularity so that after they are acted on by the heat kernel they will exceed our regularity assumptions.

\begin{remark}\label{rem:non-local}
If we remove our regularisation (choose $\delta = 0)$ then some of the products involving $U$, $Y$, and $\overline{Y}$ become ill-defined.
One can try to make the remainders $U$, $Y$, and $\overline{Y}$ more regular by including more trees in the ansatz \eqref{e:perturbative_ansatz}, but regardless of how far one goes it will not be possible to make a self-consistent assumption on the regularities of our remainders that lets us close the argument.\footnote{This is the same obstruction encountered when trying to apply the argument of \cite{DPD2} to the stochastic quantisation of $\Phi^4_3$.}

The source of the difficulty can be linked to the need to construct new trees on the right hand side of the $U$ equation, for instance $\<2F2A>$ .
The stochastic construction of $\<2F2A>$ is different from that of \eqref{eq:lin_sig_model_DPD1} in that it involves a ``non-local'' renormalisation, for the tree $\<2F2A>$ this is due to the divergent subtree $\<1F2A>$.
All of these new divergences requires the insertion of a $\tilde{C}_{\eps} \phi$ renormalisation counterterm to the bosonic equation which, at the level of the field theory, would correspond to a ``mass renormalisation'' of the boson. We expect that the usual methods used to go beyond the Da Prato--Debussche regime in the bosonic case can be adapted to this setting to overcome this difficulty. 
\end{remark}

\begin{remark}
\label{rem:BosFermProb}
	Note that we take an ``asymmetric'' approach where we work with the bosons pointwise on $\Omega$ but treat the fermions as algebra elements, that is we work on $\CM(\Omega, \mu; \cD'(\spacetime_+ ; \cA\F))$.

A more symmetric approach would be to view $\CM(\Omega, \mu)$ as a (commutative) topological $\star$-algebra $\cA_B$ that we may consider to be the ``bosonic algebra''.
Then heuristically the solution could be considered as an element of $\cD'(\spacetime_+ ; \cA_B \otimes \cA\F)$. 
However, completing the algebraic tensor product $\cA_B \otimes \cA\F$ using the topology of 
 convergence in probability on  $\cA_{B}$ 
would again give us the space $\CM(\Omega, \mu; \cD'(\spacetime_+ ; \cA\F))$.
\end{remark}


\section{Stochastic Estimates}
\label{sec:Renormalisation}

\subsection{Regularity of the Noises and Linear Solutions}\label{subsec:noise_reg}

We start with regularity estimates for the fermionic noise.
From the definition of the extended CAR algebra, we have the following bound on the fermionic driving noise.
Below, we define, for $\eps > 0$, $\Psi_{\eps} \eqdef \Psi \ast \rho_{\eps}$ and analogously for $\psi, \bar\psi$, and $\xi$. Here $\rho_{\eps}$ is again the scaled mollifier introduced in Section~\ref{subsec:notation}.

\begin{lemma}
	\label{lem:NoiseEst}
For any $f \in \mfh$, one has
\begin{equ}\label{eq:noise_bound}
\|\Psi(f)\|_\infty
\leqslant
\sqrt{2}
 \|f\|_{\mfh}\;.
\end{equ}
It follows that, as an operator-valued distribution, $\Psi \in \CC_{\s}^{-3/2+\delta}(\spacetime ;\mfA_{\infty} \otimes \C^{4})\;$.
Moreover, $\Psi_{\eps} \rightarrow \Psi$ in  $\CC_{\s}^{-3/2+\delta-\nu}(\spacetime ;\mfA_{\infty} \otimes \C^{4})$ as $\eps \downarrow 0$ for any $\nu \in (0,1]$.
\end{lemma}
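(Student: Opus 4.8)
The plan is to first establish the pointwise bound \eqref{eq:noise_bound}, then bootstrap it into the claimed H\"older--Besov regularity by a duality/scaling argument, and finally deduce the mollified convergence by the same mechanism applied to $\Psi - \Psi_\eps$.

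\emph{Step 1: the bound $\|\Psi(f)\|_\infty \leqslant \sqrt 2 \|f\|_{\mfh}$.} Recall $\Psi(f) = \balpha^\dagger(f) + \balpha(\kappa U f)$ in $\mfA_\infty(\mfh)$, and that $\|\bigcdot\|_\infty = \sup_{b \in \Gamma} \|\pi_b(\bigcdot)\|$. For each finite-dimensional $b$, $\pi_b(\Psi(f)) = (\alpha^\dagger(P_b f) + \alpha(P_b \kappa U f))\widehat P_b$ is a bounded operator on Fock space, and using $\|\alpha^\dagger(g)\| = \|\alpha(g)\| = \|g\|_{\mfh}$ together with $\|P_b\|\leqslant 1$ and the unitarity of $U$ (so $\|\kappa U f\|_{\mfh} = \|f\|_{\mfh}$), the triangle inequality gives $\|\pi_b(\Psi(f))\| \leqslant 2\|f\|_{\mfh}$. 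The improvement from $2$ to $\sqrt 2$ comes from observing that $\Psi(f)^\dagger\Psi(f)$, once expanded using the CAR \eqref{eq:CAR} inside each $\CA\F(b)$, has a leading self-adjoint piece controlled by $\|f\|_{\mfh}^2 + \|\kappa U f\|_{\mfh}^2 = 2\|f\|_{\mfh}^2$; more directly, $\Psi(f)$ is (a projection of) a sum of a creation and an annihilation operator on orthogonal-enough data, and a short computation with \eqref{e:defaa*} shows $\|\pi_b(\Psi(f))\pi_b(\Psi(f))^\dagger + \pi_b(\Psi(f))^\dagger \pi_b(\Psi(f))\| \leqslant 2\|f\|_{\mfh}^2$, whence $\|\pi_b(\Psi(f))\|^2 \leqslant 2\|f\|_{\mfh}^2$. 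Taking the supremum over $b$ yields \eqref{eq:noise_bound}.

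\emph{Step 2: regularity $\Psi \in \CC_{\s}^{-3/2+\delta}$.} We use the distributional characterisation of negative H\"older--Besov spaces from Section~\ref{subsec:notation}: it suffices to bound $\|\Psi(\CS^\lambda_{\s,z}\eta)\|_\infty$ by $\lambda^{-3/2+\delta}$ uniformly over $z$ in a compact set and $\eta \in \cD(\R^3;\C^4)$ with $\|\eta\|_{\cC^r}\leqslant 1$ (here $r = \lceil 3/2 - \delta\rceil = 2$), where $\Psi$ acts componentwise in $\C^4$ and $\Psi(g)$ for $g\in\cD(\R^3;\C^4)$ means $\Psi(\iota(g))$. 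By Step~1 this reduces to the Hilbert-space estimate $\|\iota(\CS^\lambda_{\s,z}\eta)\|_{\mfh} \lesssim \lambda^{-3/2+\delta}$. Since $\mfh = L^2(\R)\otimes H^{-\frac{1+2\delta}{2}}(\T^2)\otimes\C^4$, this is a routine scaling computation: the rescaled bump $\CS^\lambda_{\s,z}\eta$ has $L^2(\R)\otimes L^2(\T^2)$ norm of order $\lambda^{-d_{\s}}\cdot\lambda^{d_{\s}/2} = \lambda^{-2}$, and passing to the $H^{-\frac{1+2\delta}{2}}$ norm in the two spatial variables gains a factor $\lambda^{\frac{1+2\delta}{2}}$; combining, $\|\iota(\CS^\lambda_{\s,z}\eta)\|_{\mfh}\lesssim \lambda^{-2+\frac{1+2\delta}{2}} = \lambda^{-\frac32+\delta}$, uniformly in $z$ over compacts (the periodisation $\iota$ only contributes a bounded number of lattice translates since $\eta$ has fixed compact support). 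This is exactly the bound needed; note no logarithmic correction appears, consistent with ``no loss of regularity for the fermionic noise''.

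\emph{Step 3: mollified convergence.} For $\nu \in (0,1]$ write $\Psi - \Psi_\eps = \Psi(\delta_0 - \rho_\eps)\ast(\bigcdot)$, i.e.\ test against $(\delta_0 - \rho_\eps)\ast\eta$. By Step~1 again, $\|(\Psi - \Psi_\eps)(\CS^\lambda_{\s,z}\eta)\|_\infty \leqslant \sqrt 2\,\|\iota\bigl((\delta_0-\rho_\eps)\ast\CS^\lambda_{\s,z}\eta\bigr)\|_{\mfh}$, so it suffices to show this Hilbert-space quantity is $\lesssim \eps^\nu\lambda^{-3/2+\delta-\nu}$ uniformly in $z$. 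This follows from the standard mollifier estimate: $(\delta_0-\rho_\eps)\ast$ maps $\cC^r$ into functions small in $\cC^{r-\nu}$, quantitatively gaining $\eps^\nu$ at the cost of shifting the scaling exponent by $\nu$; concretely one splits into the regimes $\lambda \gtrsim \eps$ (where one uses $\|\rho_\eps\ast g - g\|\lesssim \eps^\nu\|g\|_{\cC^\nu}$ after rescaling) and $\lambda \lesssim \eps$ (where one uses the crude bound from Step~2 on both terms separately together with $\eps^\nu\lambda^{-\nu}\gtrsim 1$). This gives $\Psi_\eps\to\Psi$ in $\CC_{\s}^{-3/2+\delta-\nu}(\spacetime;\mfA_\infty\otimes\C^4)$.

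The main obstacle is Step~2: one must be careful that the norm $\|\bigcdot\|_\infty$ on $\mfA_\infty$ really does reduce the problem to the single scalar estimate $\|\iota(\CS^\lambda_{\s,z}\eta)\|_{\mfh}\lesssim \lambda^{-3/2+\delta}$ with no hidden $b$-dependence — this is precisely where the uniformity of the bound in Step~1 over all $b\in\Gamma$ is essential — and that the parabolic scaling $\s=(2,1,1)$ interacts correctly with the anisotropic structure $L^2(\R)\otimes H^{s}(\T^2)$ of $\mfh$, which is what produces the exponent $-\tfrac32+\delta$ rather than something involving the full space-time dimension.
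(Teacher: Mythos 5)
Your proposal is correct and follows essentially the same route as the paper: the $\sqrt 2$ bound is obtained exactly as in the paper's proof, by computing the anticommutator $[\pi_b(\Psi(f)),\pi_b(\Psi(f))^\dagger]_+ \leqslant 2\|f\|_{\mfh}^2\,\bone$ uniformly in $b$ and deducing $\|\pi_b(\Psi(f))u\|^2 \leqslant \langle u,[\cdot,\cdot]_+u\rangle$; the regularity then reduces to the scaling estimate $\|\iota(\CS^\lambda_{\s,z}\eta)\|_{\mfh}\lesssim\lambda^{-3/2+\delta}$ for the anisotropic norm $L^2(\R)\otimes H^{-\frac{1+2\delta}{2}}(\T^2)$, and the convergence follows by interpolating the two mollifier bounds exactly as you describe. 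The only cosmetic difference is that the paper bounds the periodised $\mfh$-norm via $\|\tilde f\|_{\mfh}^2\leqslant\|f\|_{L^2}\|Q\ast f\|_{L^2}$ and the scaling of the kernel $Q$, rather than invoking the Sobolev norm directly, but the computation is the same.
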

\begin{proof}
From \eqref{eq:CAR} one has, for all $f \in \cD(\R^3;\C^{4})$ and all $b \in \Gr(\mfH)$,
\begin{equs}
	\big[ \pi_b(\Psi(f)),\pi_b(\Psi(f))^\dagger \big]_+ & = \big[ \alpha(P_b(U \kappa f)), \alpha^\dagger(P_b(U\kappa f)) \big]_+ +\\
	& \qquad + \big[ \alpha^\dagger(P_b f), \alpha(P_b f) \big]_+ = \\
	&= \scal{P_b U\kappa f, P_b U \kappa f}_{\mfh} \, \bone + \scal{P_b f,P_b f}_{\mfh} \, \bone \leqslant\\
	& \leqslant 2 \|f\|_{\mfh}^2 \, \bone\;.
\end{equs}
Therefore, for any $u \in \CF^{(b)}_a$,
\begin{equs}
	\|\pi_b(\Psi(f))u\|^2 & \leqslant \|\pi_b(\Psi(f))u\|^2  + \|\pi_b(\Psi(f))^\dagger u\|^2 = \\
	& =  \big\langle u, \big[ \pi_b(\Psi(f)),\pi_b(\Psi(f))^\dagger \big]_+ u \big\rangle_{\CF} \leqslant\\
	& \leqslant 2 \|f\|^2_{\mfh} \|u\|_{\CF}^2\;.
\end{equs}

The second statement of the lemma follows from \eqref{eq:noise_bound} by estimating the given H\"older--Besov norm \eqref{eq:besov_norm} using the scaling properties of the norm $\| \bigcdot \|_{\mfh}$. Explicitly given $f \in \cD(\R^3; \C^4)$, let
$
	\tilde{f}(t,x) \eqdef \sum_{k \in \Z^2} f(t,x+k)
$
be the corresponding periodised element in $\cC^\infty(\spacetime; \C^4) \subset \mfh$. By linearity we find that
$
	Q \ast \tilde{f} = \widetilde{Q \ast f}
$
and, therefore,
\begin{equ}
	\| \tilde{f} \|_{\mfh}^2 \leqslant \|f\|_{L^2} \big\|Q \ast f\big\|_{L^2} \; .
\end{equ}
By slight abuse of notation, $Q \ast f$ only denotes the convolution along the spatial variable. 
Using the scaling property of $Q$ at the origin, a straighforward calculation then yields
\begin{equ}
	\| \widetilde{\CS^\lambda_{\s, z} f } \|_{\mfh} \lesssim \lambda^{-\frac{4/2}{2}} \lambda^{\frac{1+2\delta- 4/2}{2}} = \lambda^{-\frac{3}{2}+\delta} \; ,
\end{equ}
where the constant only depends on the support of $f$ and its $\cC$-norm. This directly implies that $\Psi \in \CC_{\s}^{-3/2+\delta}(\spacetime ;\mfA_{\infty} \otimes \C^{4})$.

The final statement regarding convergence is a simple consequence of the fact that
\begin{equ}
	\left\|\CS^\lambda_{\s, z} \left(\rho_\eps \ast f \right) - \CS^\lambda_{\s, z} f\right\|_{L^2} \lesssim \left( \lambda^{-\frac{4}{2}- 1} \eps \| f \|_{\cC^{1}} \right) \land \left( \lambda^{-\frac{4}{2}} \|f\|_{\cC} \right)
\end{equ}
and interpolating between the two bounds, i.e.\ for any $\nu \in [0,1]$\;,
\begin{equ}
	\lambda^{-2- 1} \eps \land \lambda^{-2} \leqslant \left( \lambda^{-2-1} \eps \right)^{\nu} \left( \lambda^{-2} \right)^{1-\nu} = \lambda^{-2- \nu} \eps^\nu \; .
\end{equ}
We are allowed to use the $\cC^1$-norm of $f$ thanks to the definition of the space $\CC^{-\frac{3}{2}+ \delta}_{\s}$ (cf.\ Appendix~\ref{sec:Holder_Space}).
Analogously we find that
\begin{equ}
	\left\|\CS^\lambda_{\s, z} \left(\rho_\eps \ast Q \ast f \right) - \CS^\lambda_{\s, z}( Q \ast f)\right\|_{L^2} \lesssim \lambda^{-1+2\delta-\nu} \eps^\nu \| f \|_{\cC^{1}}\;,
\end{equ}
where the constants above depend only on the size of the support of $f$.
\end{proof}

\begin{lemma}
\label{prop:fermionic_Noise_Properties}

One has $\CI_F(\Psi) \in \CC_\s^{\frac{1}{2}+\delta}(\spacetime; \mfA_\infty \otimes \C^4)$.
Moreover, for any $\theta \in \left(0,\frac{1}{2}\right)$, $\CI_F(\Psi) \in \CC^{\theta} \big( [0,1] ; \CC^{\frac{1}{2}+\delta-2\theta} \big( \T^2 ; \mfA_\infty \otimes \C^4 \big)\big)$.

Additionally, for $\nu \in (0,1]$, $\CI_F(\Psi_{\eps}) \rightarrow \CI_F(\Psi)$ as $\eps \downarrow 0$ in the spaces $\CC_\s^{\frac{1}{2}+\delta-\nu}(\spacetime; \mfA_\infty \otimes \C^4)$ and $\cC(\R ; \CC^{\frac{1}{2}+\delta-\nu} (\T^2 ; \mfA_\infty \otimes \C^4) )$.
\end{lemma}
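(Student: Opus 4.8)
The plan is to deduce all three statements from the pointwise bound $\|\Psi(f)\|_\infty \leqslant \sqrt{2}\,\|f\|_{\mfh}$ of Lemma~\ref{lem:NoiseEst} together with the power-counting calculus of Section~\ref{sec:kernel_not}. The key observation is that, just as $\CI_F$ acting on a scalar distribution is convolution against the truncated heat kernel $G \in \mfN^{-2}$, its action on the $\mfA_\infty \otimes \C^4$-valued distribution $\Psi$ can be written, at each space-time point $z$, as $\CI_F(\Psi)(z) = \Psi(G_z)$, where $G_z$ is the appropriate (periodised) translate of $G$ and where a harmless complex conjugation built into \eqref{eq:operator_on_CAR} has been suppressed. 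Consequently
\begin{equ}
\bigl\| \CI_F(\Psi)(z) - \CI_F(\Psi)(z') \bigr\|_\infty \leqslant \sqrt{2}\,\bigl\| G_z - G_{z'} \bigr\|_{\mfh}\;,
\end{equ}
and, more generally, testing against a mollifier at scale $\lambda$ reduces every Hölder--Besov bound we need to an estimate of the $\mfh$-norm of a (difference of translates of a) convolution of $G$ with a rescaled test function. Since $\mfA_\infty \otimes \C^4$ is a Banach space and the resulting estimates are scalar, the operator-valued nature of $\Psi$ plays no further role. The first assertion is then immediate: since $-\tfrac32+\delta \in (-2,0)$ (recall $0<\delta<\tfrac12$), the Banach-space-valued Schauder estimate Theorem~\ref{SchdrThm} applied to $\Psi \in \CC_\s^{-3/2+\delta}$ gains two units of parabolic regularity and yields $\CI_F(\Psi) \in \CC_\s^{1/2+\delta}(\spacetime;\mfA_\infty\otimes\C^4)$; equivalently one checks directly that $\|G_z - G_{z'}\|_{\mfh} \lesssim |z-z'|_\s^{1/2+\delta}$ by expressing the $\mfh$-norm through the kernel $Q$ and applying Lemma~\ref{lemma:kernel_mult}.

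For the second assertion one uses the same representation but now splits the time increment. Fixing $x \in \T^2$ and $t,t' \in [0,1]$, one bounds the $\CC^{1/2+\delta-2\theta}(\T^2)$-norm of $\CI_F(\Psi)(t,\cdot) - \CI_F(\Psi)(t',\cdot)$ by testing against a spatial mollifier at scale $\mu$, reducing it to $\|\eta^\mu \ast_x (G_{(t,x)} - G_{(t',x)})\|_{\mfh}$; decomposing $G_{(t,x)} - G_{(t',x)}$ into the part of $G$ supported in the time slab between $t'$ and $t$ and the part where the two translated kernels are driven further apart, a routine power-counting computation shows that trading $2\theta$ units of the spatial exponent $\tfrac12+\delta$ against a factor $|t-t'|^\theta$ is exactly balanced, giving a bound of the form $\mu^{-(1/2+\delta-2\theta)}|t-t'|^\theta$ valid for $\theta \in (0,\tfrac12)$. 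The threshold $\theta<\tfrac12$ is the one familiar from the time-Hölder regularity of an Ornstein--Uhlenbeck process; I emphasise that, over this full range, the assertion is \emph{not} a formal consequence of the first one (a generic element of $\CC_\s^{1/2+\delta}$ is only $(\tfrac12+\delta)/2$-Hölder in time with values in $L^\infty$) and genuinely exploits the heat-flow smoothing encoded in $G \in \mfN^{-2}$. For $\theta$ small enough that $\tfrac12+\delta-2\theta>0$ it does also follow from the first assertion and the standard embedding of parabolic Hölder--Besov spaces into mixed time--space Hölder spaces.

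For the convergence statements, note that $\CI_F(\Psi_\eps)(z) = \Psi\bigl((G_\eps)_z\bigr)$ with $G_\eps \eqdef G \ast \rho_\eps$, so the reduction above gives
\begin{equ}
\bigl\| (\CI_F(\Psi_\eps) - \CI_F(\Psi))(z) - (\CI_F(\Psi_\eps) - \CI_F(\Psi))(z') \bigr\|_\infty \leqslant \sqrt{2}\, \bigl\| (G_\eps - G)_z - (G_\eps - G)_{z'} \bigr\|_{\mfh}\;,
\end{equ}
and by Lemma~\ref{lemma:kernel_conv} one has $G - G_\eps \in \mfN^{\bar\zeta}$ with norm $O(\eps^{-2-\bar\zeta})$ for every $\bar\zeta \in [-3,-2)$. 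Rerunning the power-counting of the first two assertions with $G$ replaced by $G - G_\eps$ then yields $\CI_F(\Psi_\eps) \to \CI_F(\Psi)$ in $\CC_\s^{1/2+\delta-\nu}(\spacetime;\mfA_\infty\otimes\C^4)$ (with a rate $O(\eps^{\nu})$, up to an arbitrarily small loss in the exponent) for every $\nu \in (0,1]$; the convergence in $\cC(\R;\CC^{1/2+\delta-\nu}(\T^2;\mfA_\infty\otimes\C^4))$ follows by combining this with the time-regularity bounds of the second assertion, exactly as in the bosonic case.

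The only place where real work is needed is the time-regularity estimate of the second assertion: once Lemma~\ref{lem:NoiseEst} is in hand, the first and third assertions are bookkeeping with Lemmas~\ref{lemma:kernel_mult} and~\ref{lemma:kernel_conv}, whereas pushing the time-Hölder exponent all the way up to $\tfrac12$ is not recoverable from the parabolic Schauder estimate alone and forces the kernel-increment computation to be carried out by hand.
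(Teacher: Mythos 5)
Your proposal is correct in substance and follows the paper's skeleton for the first and third assertions: everything is funnelled through $\|\Psi(f)\|_\infty \leqslant \sqrt{2}\|f\|_{\mfh}$ from Lemma~\ref{lem:NoiseEst}, and the regularity/convergence statements then come from the Schauder estimate applied to the noise bounds (your alternative via $\|G_z - G_{z'}\|_{\mfh}$ and Lemmas~\ref{lemma:kernel_mult}, \ref{lemma:kernel_conv} is an equivalent reformulation). Where you genuinely diverge is the second assertion, which you also correctly identify as the only place requiring real work and as not being a consequence of the first assertion once $2\theta > \frac12+\delta$. You propose a real-space decomposition of the kernel increment $G_{(t,x)}-G_{(t',x)}$ into a near-slab and a far part, followed by power counting on $G^{-}\ast Q\ast G$. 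The paper instead rewrites the time increment as $-\partial_t\CI_F(\Psi)\bigl(\one_{[s,t]}\otimes\CS^\lambda_x\eta\bigr)$, reduces to the quadratic form of $-\partial_t^2\bigl(-\partial_t^2+(-\Delta+M^2)^2\bigr)^{-1}$, and then splits space from time by the functional-calculus consequence of Young's inequality, $-\partial_t^2\bigl(-\partial_t^2+(-\Delta+M^2)^2\bigr)^{-1}\lesssim |\partial_t|^{1+2\theta}(-\Delta+M^2)^{-(1-2\theta)}$, which delivers $|t-s|^\theta\lambda^{\frac12+\delta-2\theta}$ in one stroke and avoids any case analysis on the kernel. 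Your route is viable and more elementary, but the "routine power-counting" you defer is precisely the content of the lemma; if you carry it out, note two small points: the bound you state should read $\mu^{\frac12+\delta-2\theta}|t-t'|^\theta$ (your exponent of $\mu$ has the wrong sign — as written the bound degenerates as $\mu\downarrow 0$ exactly when the target regularity is positive), and for $2\theta<\frac12+\delta$ the spatial test function must be taken with vanishing mean (as in the paper) rather than an $L^1$-normalised mollifier, since one is then measuring a positive Hölder exponent.
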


\begin{proof}
	The first and third assertions follow from an application of the Schauder estimate Theorem~\ref{SchdrThm}.

	Concerning the second, observe that we can rewrite the difference needed for the H\"older estimate as 
\begin{equs}
\int \scal{ \big( \CI_F&(\Psi)(t, y)- \CI_F(\Psi)(s,y) \big),  \big( \CS^\lambda_{x} \eta \big) (y)}_{\C^4} \d y \\
{}& = \CI_F(\Psi) \big( ( \delta_t - \delta_s ) \otimes \CS^\lambda_{x} \eta \big)
= - \partial_t \CI_F(\Psi) \big( \one_{[s,t]} \otimes \CS^\lambda_x \eta \big)\;,
\end{equs}
where $\eta \in \cD(\T^2;\C^4)$ such that $\int_{\T^2} \eta = 0$.

The evaluation at points $t$, $s$, and the following calculations are admissible as $\CI_F(\Psi)$ is a continuous function.
	The $\| \bigcdot\|_\infty$-norm of this expression can be calculated by applying $\partial_t \CI_F^T$ to the argument of $\Psi$ and calculating its $\mathfrak h$ norm squared, see Lemma~\ref{lem:NoiseEst}.
	We get
	\begin{align*}
		\left\| \partial_t \CI_F^T \left(\one_{[s,t]} \otimes \widetilde{\CS^\lambda_x \eta} \right) \right\|^2_{L^2 \otimes H^{-\frac{1+2\delta}{2}}} = 4 \left\langle \one_{[s,t]} \otimes \widetilde{\CS^\lambda_x \eta} , - \partial_t^2 \CI_F^T\CI_F \left( \one_{[s,t]} \otimes \widetilde{\CS^\lambda_x \eta} \right) \right\rangle \; ,
	\end{align*}
	where we are using the $L^2 \otimes H^{-\frac{1+2\delta}{2}}$ on the right-hand side. 
	Now, since $s,t \in [0,1]$ and $ \widetilde{\CS^\lambda_x \eta}$ is spatially periodic, we may replace $- \partial_t^2 \CI_F^T\CI_F$ with $- \partial_t^2 (-\partial_t^2 + (-\Delta+M^2)^2)^{-1}$. In order to separately estimate the space and time regularities we
	convert the sum in the denominator into a product which can be done via functional calculus using Young's inequality for products, i.e.\
	\begin{equ}
		|\partial_t|^{1+2 \theta} (-\Delta+ M^2)^{1-2\theta} \leqslant \frac{1+2\theta}{2} (-\partial_t^2) + \frac{1-2\theta}{2} (-\Delta+M^2)^2
	\end{equ}
	giving us
	\begin{equ}
		\frac{- \partial_t^2}{-\partial_t^2 + (-\Delta+M^2)^2} \lesssim \frac{- \partial_t^2}{|\partial_t|^{1+2 \theta} (-\Delta + M^2)^{1-2\theta}}  = \frac{|\partial_t|^{1+2\theta}}{(-\Delta+M^2)^{1-2\theta}}
	\end{equ}
	which has two separate scaling degrees for space and time. We find
	\begin{equ}
		\left\| \partial_t \CI_F^T \left(\one_{[s,t]} \otimes \widetilde{\CS^\lambda_x \eta} \right) \right\|_{L^2 \otimes H^{-\frac{1+2\delta}{2}}} \lesssim |t-s|^{\theta} \lambda^{\frac{1}{2}+\delta-2\theta}
	\end{equ}
	proving the claim. 
	The last claim follows analogously from the third one. \end{proof}


We now turn to the corresponding bosonic objects.

\begin{lemma}
\label{prop:bosonic_Noise_Properties}
	For every $\kappa> 0$ and $\theta \in \left(0,\frac{1}{2}\right)$ we have, almost everywhere (in the probability space of the bosonic noise)
\begin{equ}
	\xi \in \CC^{-2-\kappa}_{\s}(\spacetime )\;,
	\enskip
	\CI_B(\xi) \in \CC^{-\kappa}_{\s}(\spacetime)\;,
	\enskip \text{and}
	\enskip
	\CI_B(\xi) \in \CC^{\theta}\left( \R; \CC^{-\kappa - 2 \theta}\left(\T^2\right) \right)\;.
\end{equ}
Moreover, as $\eps \downarrow 0$, one has, in probability, $\xi_{\eps} \rightarrow \xi$ in $\CC^{-2-\kappa}_{\s}(\spacetime )$ and $\CI_B(\xi_{\eps}) \rightarrow \CI_B(\xi)$ in $\CC^{-\kappa}_{\s}(\spacetime)$ and $ \cC\left( \R; \CC^{-\kappa - \theta}\left(\T^2\right) \right)$.
\end{lemma}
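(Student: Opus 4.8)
The overall strategy is to reduce all of these statements to second-moment bounds. Since $\xi$, $\CI_B(\xi)$, their mollifications, and all the relevant differences are elements of the first (homogeneous) Wiener chaos, the equivalence of $L^q(\Omega)$-moments on a fixed chaos together with the standard Kolmogorov-type criterion characterising H\"older--Besov regularity via testing against rescaled test functions (cf.\ \cite[Sec.~10]{Hai14} and Appendix~\ref{sec:Holder_Space}) reduces the a.s.\ regularity claims to uniform bounds on $\E\,|\zeta(\CS^\lambda_{\s,z}\eta)|^2$ (or its spatial analogue), and the convergence claims to the analogous bounds with an extra factor $\eps^{\gamma}$ for some $\gamma>0$; such bounds upgrade to $L^q(\Omega)$-convergence of the relevant random norms, hence to convergence in probability.

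For $\xi$ itself one simply computes $\E\,|\xi(\CS^\lambda_{\s,z}\eta)|^2 = \|\CS^\lambda_{\s,z}\eta\|_{L^2}^2 = \lambda^{-d_\s}\|\eta\|_{L^2}^2 = \lambda^{-4}\|\eta\|_{L^2}^2$, so that $\xi\in\CC^{-2-\kappa}_\s(\spacetime)$ for every $\kappa>0$. For $\CI_B(\xi)$ I would invoke the Schauder estimate Theorem~\ref{SchdrThm}: since $K\in\mfN^{-2}$, convolution against $K$ sends $\CC^{-2-\kappa}_\s$ into $\CC^{-\kappa}_\s$, giving $\CI_B(\xi)\in\CC^{-\kappa}_\s(\spacetime)$. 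Equivalently one may estimate $\E\,|\CI_B(\xi)(\CS^\lambda_{\s,z}\eta)|^2 = \langle \CS^\lambda_{\s,z}\eta,\, K^{\star2}\ast\CS^\lambda_{\s,z}\eta\rangle_{L^2} \lesssim \lambda^{0-}$ directly, using $K^{\star2}\in\mfN^{0-}$ and Lemma~\ref{lemma:kernel_mult}.

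For the mixed space-time regularity $\CI_B(\xi)\in\CC^{\theta}(\R;\CC^{-\kappa-2\theta}(\T^2))$, write the stationary solution as $\CI_B(\xi)(t,x)=\int_\R\int_{\T^2}K(t-r,x-y)\,\xi(dr,dy)$ and, for $t>s$ and a spatial test function $\eta$, split the increment $\CI_B(\xi)(t,\cdot)-\CI_B(\xi)(s,\cdot)$ into the contribution of the noise on $[s,t]$ and the contribution of $(e^{(\Delta-m^2)(t-s)}-\id)$ acting on the noise on $(-\infty,s]$. A Plancherel computation in space, using $\int_\R|\hat K(t-r,k)-\hat K(s-r,k)|^2\,dr\lesssim |t-s|^{2\theta}(|k|^2+m^2)^{2\theta-1}$ for $\theta\in(0,\tfrac12)$, yields $\E\,|(\CI_B(\xi)(t,\cdot)-\CI_B(\xi)(s,\cdot))(\CS^\lambda_x\eta)|^2\lesssim |t-s|^{2\theta}\,\|\CS^\lambda_x\eta\|_{H^{2\theta-1}(\T^2)}^2\lesssim |t-s|^{2\theta}\,\lambda^{-4\theta}\,\|\eta\|_{H^{2\theta-1}}^2$, where the last step uses the scaling of $\CS^\lambda_x$ together with $2\theta-1<0$. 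Feeding this into the moment equivalence and Kolmogorov's criterion in the one-dimensional time variable (the spatial modulus of continuity being already controlled uniformly in $t$ by the previous step) gives the claim after an arbitrarily small relabelling of $\theta$ and $\kappa$.

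For the convergence statements, mollification commutes with testing, so $\E\,|(\xi_\eps-\xi)(\CS^\lambda_{\s,z}\eta)|^2=\|(\rho_\eps-\delta_0)\ast\CS^\lambda_{\s,z}\eta\|_{L^2}^2$, and a Taylor expansion for $\lambda\gtrsim\eps$, a crude bound for $\lambda\lesssim\eps$, and interpolation give $\lesssim\eps^{2\nu}\lambda^{-2(2+\nu)}\|\eta\|_{\cC^1}^2$ for any $\nu\in[0,1]$; taking $\nu>0$ small gives $\xi_\eps\to\xi$ in $\CC^{-2-\kappa}_\s$. For $\CI_B(\xi_\eps)-\CI_B(\xi)=(K-K_\eps)\ast\xi$ I would apply Lemma~\ref{lemma:kernel_conv} with $\zeta=-2$, $\bar\zeta=-2-\sigma$ for small $\sigma>0$, so that convolution against $K-K_\eps$ maps $\CC^{-2-\kappa}_\s$ into $\CC^{-\kappa}_\s$ with operator norm $\lesssim\eps^{\sigma}$; combined with the a.s.\ bound on $\xi$ this gives convergence in $\CC^{-\kappa}_\s$, and running the same argument while also tracking the time variable as above gives convergence in $\cC(\R;\CC^{-\kappa-\theta}(\T^2))$. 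I expect the genuinely delicate point to be the mixed space-time estimate of the third paragraph and its quantitative $\eps$-dependent counterpart, in particular checking that the time-integral bound $\int_\R|\hat K(t-r,k)-\hat K(s-r,k)|^2\,dr\lesssim|t-s|^{2\theta}(|k|^2+m^2)^{2\theta-1}$ interacts correctly with the spatial rescaling, while the remaining steps are routine given the Schauder estimate and the kernel lemmas already in place.
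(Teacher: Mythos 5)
Your proposal is correct and follows essentially the same route as the paper, which simply invokes a Kolmogorov-type moment argument for $\xi$, the Schauder estimate (Theorem~\ref{SchdrThm}) for $\CI_B(\xi)$, and the analogue of the fermionic time-regularity computation of Lemma~\ref{prop:fermionic_Noise_Properties} for the mixed space--time statement — your Plancherel bound $\int_\R|\hat K(t-r,k)-\hat K(s-r,k)|^2\,\mrd r\lesssim|t-s|^{2\theta}(|k|^2+m^2)^{2\theta-1}$ is exactly that analogue and does check out for $\theta\in(0,\tfrac12)$. The only (minor) divergence is in the convergence statements, where the paper argues via compactness while you supply explicit $\eps^{\gamma}$ rates through Lemma~\ref{lemma:kernel_conv}; both are standard and valid.
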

\begin{proof} The first statement is standard, proven by a Kolmogorov type argument.
The second follows from the first and Theorem~\ref{SchdrThm}.
The third can also be proven by a standard Kolmogorov type estimate (analogously to the time regularity estimate in Lemma~\ref{prop:fermionic_Noise_Properties}).
Finally, the statement regarding convergence can be argued via compactness.\end{proof}

As in Section~\ref{sec:PDTrick}, we frequently use graphical tree notations in what follows.
Note that Lemma~\ref{prop:fermionic_Noise_Properties} immediately gives $\<1F>  \in \CC_\s^{-\frac{1}{2}+\delta}(\spacetime; \mfA_\infty \otimes \C^2)$ and $\<1A> \in \CC_\s^{-\frac{1}{2}+\delta}(\spacetime; \mfA_\infty \otimes \C^2)$.
We use subscripts to encode regularisation, writing $\<1>_{\eps} \eqdef \CI_B(\xi_{\eps})$, $\<1IF>_{\eps} \eqdef \CI_F(\psi_{\eps})$ and $\<1IA>_\eps \eqdef \CI_F(\overline\psi_{\eps})$, and similarly for  $\<1F>_{\eps}$ and $\<1A>_{\eps}$.

\subsection{Convergence of Products}\label{RenormSec}

We now turn to constructing the fermionic Wick product $\<2AF>$ along with the mixed boson \slash fermion products $ \<2F>, \<2A>$. For the bosonic Wick products $\<2>, \<3>$ we simply cite the corresponding results as these objects are well-known in the literature. 

\subsubsection{Fermionic Wick Square}
\label{sec:fermionic_Tree}
We start by rewriting $\bilin{\CI_F^{\overline{\snabla}} \big( \overline\psi_\eps\big), \CI^{\snabla}_F \big(\psi_\eps\big) }(z)$ as a sum in zeroth order and second order chaoses. We will then show that the projection to the second chaos gives a well-defined operator-valued distribution in the limit and our renormalisation prescription will be the subtraction of the zeroth order chaos (which is just a constant).
For $f \wedge g \in \mfh^{\wedge 2}$, \eqref{eq:extended_fermionic_wick} simplifies to
\begin{equ}[e:wick_square]
\Psi^{\diamond 2}(f \wedge g)
=
\Psi(f) \Psi(g) - \bigl[ \alpha(\kappa U f), \alpha^\dagger(g) \bigr]_+ \;.
\end{equ}
The key estimate for us from Proposition~\ref{proposition:Wick_Product} is the following: one has for all $n \in \N$, uniformly in  $F \in \mfh^{\wedge 2}$, the estimate
\begin{equ}[e:ferm_wick_bound]
\| \Psi^{\diamond 2}(F)  \|_n
\lesssim \|F\|_{\mfh^{\wedge 2}}\;,
\end{equ}
with the constant only depending $n$.
For writing out various objects explicitly it will be helpful to refer to individual (vector) components of the fermionic noise $\Psi$.
For $i \in [4]$ let $\pi_i \colon \mfh \to \mfh$ be the projection onto the subspace of $\mfh$ where only the $i^{\text{th}}$ component of the functions are non-zero.
Then we set
\begin{equ}
\Psi_i \eqdef \Psi \circ \pi_i \; ,  \quad \alpha^\dagger_i \eqdef \alpha^\dagger \circ \pi_i \; , \quad \alpha_i \eqdef \alpha \circ \pi_i \; .
\end{equ}
Abusing notation, we also view $\Psi_i$, $\alpha_i^\dagger$, and $\alpha_i$ as maps $L^2(\R) \otimes H^{-\frac{1+2\delta}{2}}(\T^2)  \to \mfA_\infty$ and elements of $ \cD'(\spacetime; \mfA_\infty)$.
We also consider the component maps $\Psi_i \diamond \Psi_j $ of $\Psi^{\diamond 2}$ given by setting, for $F \in \mfh^{\otimes 2}$,
\begin{equ}
	(\Psi_i \diamond \Psi_j)(F) \eqdef \Psi^{\diamond 2} \Big(  (\pi_i \otimes \pi_j )  (F) \Big)\;.
\end{equ}
which are well-defined maps on the full Hilbert space tensor product by Proposition~\ref{proposition:Wick_Product}.

We then rewrite the product distribution $\bilin{ \overline{\psi}, \psi}$ on $\mathring{\mfh} \times \mathring{\mfh}$ as
\begin{equs}
	\bilin{ \overline{\psi}(f), \psi(g)}&= \Psi_3(f) \Psi_1(g) + \Psi_4(f) \Psi_2 (g)  \\
	& =  (\Psi_3  \diamond \Psi_1)(f \wedge g) + (\Psi_4 \diamond \Psi_2) ( f \wedge g)  +\\
	& \qquad +\left[ \alpha_3(\kappa U ( 0,f ) ) ,  \alpha_1^\dagger ( g,0 ) \right]_+  + \left[ \alpha_4(\kappa U ( 0,f ) ) ,  \alpha_2^\dagger ( g,0 ) \right]_+ \; . 
\end{equs}
As a shorthand for the first two terms in the above expression we define the linear map
\begin{equs}
\bilin{ \overline{\psi}, \psi}^{\diamond} \colon
		\mfh^{\wedge 2} & \longrightarrow \cA\F\\
		F & \longmapsto (\Psi_3   \diamond \Psi_1)(F) + (\Psi_4  \diamond \Psi_2) (F)\;.
\end{equs}

We define the maps $\mfZ_{\eps}[\<2AF>], \mfZ[\<2AF>] \in \cD'(\spacetime; \mfh^{\wedge 2})$ by setting
\begin{equ}
	\mfZ_\eps[\<2AF>](z)\eqdef \left(0, G^{\overline{\snabla}}_{z,\eps} \right) \wedge \left(G^{\snabla}_{z,\eps},0 \right)
	\quad
	\text{and}
	\quad
	\mfZ[\<2AF>](z) \eqdef \left(0, G^{\overline{\snabla}}_{z} \right) \wedge \left(G^{\snabla}_{z},0 \right)
	\;.
\end{equ}
While it is clear that $\mfZ_{\eps}[\<2AF>]$ belongs to $\cD'(\spacetime; \mfh^{\wedge 2})$, this needs to be verified for $\mfZ[\<2AF>]$ but, to avoid breaking up our exposition here, we leave that to the first statement of Lemma~\ref{lem:conv_fermionic_kernels}.
With these definitions we have
\begin{equs}[e:ferm_wick_decomp]
	\bilin{ \<1A>_\eps, \<1F>_\eps }(z)&\eqdef
	\bilin{\CI^{\overline{\snabla}}_F \left( \overline\psi_\eps\right), \CI^{\snabla}_F \left(\psi_\eps\right)}(z) = \\
	{}&=
	\bilin{ \overline{\psi}, \psi}^{\diamond}
	\Big( \mfZ_{\eps}[\<2AF>](z) \Big) + \mathbf{C}_{\<2AF>}^{\eps}(z)\;,
\end{equs}
where
\begin{equ}[eq:def_RenormElem]
	\mathbf{C}_{\<2AF>}^{\eps}(z) \eqdef \sum_{i = 1}^2 \left[ \alpha_{i+2}\left( \kappa U  \bigl(0, G^{\overline{\snabla}}_{z,\eps} \bigr) \right), \alpha_{i}^\dagger \bigl(G^{\snabla}_{z,\eps}, 0 \bigr) \right]_+ \in Z(\cA\F) \;.
\end{equ}
We then define $\<2AF>_{\eps}, \<2AF> \in \cD'(\spacetime ; \cA\F)$ by setting
\begin{equs}[e:def_of_fermionic_square]
	\<2AF>_{\eps}(z) &\eqdef
	\bilin{ \<1A>_\eps, \<1F>_\eps }(z)
	-
	\mathbf{C}_{\<2AF>}^{\eps} (z)\;
	=
	\bilin{ \overline{\psi}, \psi}^{\diamond}
	\Big( \mfZ_{\eps}[\<2AF>](z) \Big)\;,\\
	\<2AF>(z)
	&\eqdef
	\bilin{ \overline{\psi}, \psi}^{\diamond}
	\Big( \mfZ[\<2AF>](z) \Big)\;.
\end{equs}
Again, the definition of $\<2AF>$ only makes sense once we have proven the first statement of Lemma~\ref{lem:conv_fermionic_kernels}, which we proceed to do now.

\begin{lemma}\label{lem:conv_fermionic_kernels}
One has $\mfZ[\<2AF>] \in \CC_{\s}^{-1+2\delta}(\spacetime ; \mfh^{\wedge 2})$, and one has $\mfZ_{\eps}[\<2AF>] \rightarrow \mfZ[\<2AF>]$ in $\CC_{\s}^{-1+2\delta-\nu}(\spacetime ; \mfh^{\wedge 2})$ as $\eps \downarrow 0$ for any $\nu \in (0,1]$.
\end{lemma}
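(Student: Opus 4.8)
The plan is to prove both statements simultaneously, working at the level of the $\mfh^{\wedge 2}$-valued test-function pairings. Write $\mfZ_\eps[\<2AF>](z) = a_\eps(z)\wedge b_\eps(z)$ with $a_\eps(z)\eqdef (0,G^{\overline\snabla}_{z,\eps})$ and $b_\eps(z)\eqdef (G^\snabla_{z,\eps},0)$, and note that $\mfZ_\eps[\<2AF>]$ is a genuine $\mfh^{\wedge 2}$-valued distribution since the mollified kernels lie in $\mathring{\mfh}$. The crucial structural observation is that $a_\eps(z)$ and $b_\eps(z)$ — and all of their spatial translates — lie respectively in the second and the first summand of the orthogonal decomposition $\mfh = \mathring{\mfh}\oplus\mathring{\mfh}$, since they occupy complementary Dirac components of $\C^4 = \C^2\oplus\C^2$. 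Hence $\scal{a_\eps(w_1),b_\eps(w_2)}_\mfh = 0$ for all $w_1,w_2$, so the determinant formula for the inner product on $\mfh^{\wedge 2}$ (with $n=2$, hence the factor $\tfrac12$) collapses to a single product of $\mathring{\mfh}$-pairings, giving for any $\varphi\in\cD(\spacetime)$
\begin{equ}[e:pairing_plan]
	\bigl|\mfZ_\eps[\<2AF>](\varphi)\bigr|_{\mfh^{\wedge 2}}^2 = \frac12 \int\!\!\int \varphi(w_1)\varphi(w_2)\,\scal{G^{\overline\snabla}_{w_1,\eps},G^{\overline\snabla}_{w_2,\eps}}_{\mathring{\mfh}}\,\scal{G^{\snabla}_{w_1,\eps},G^{\snabla}_{w_2,\eps}}_{\mathring{\mfh}}\,\mrd w_1\,\mrd w_2\;,
\end{equ}
with an identical formula for any difference $\mfZ_\eps[\<2AF>]-\mfZ_{\eps'}[\<2AF>]$ after expanding the wedge by bilinearity.

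Next I would identify the pairings with the kernels of Section~\ref{sec:kernel_not}: $\scal{G^\snabla_{w_1},G^\snabla_{w_2}}_{\mathring{\mfh}}$ equals $G^{\snabla,\star 2}(w_1-w_2)$ up to a remainder that is smooth and bounded on compacts (coming from the spatial periodisation and the compact truncation of $\widetilde{\mcK}$, harmless near the diagonal and treated identically but more easily), where $G^{\snabla,\star 2}\in\mfN^{-1+2\delta}$; likewise for $G^{\overline\snabla}$, and the mollified pairings are the corresponding double mollifications of $G^{\snabla,\star 2}$, bounded in $\mfN^{-1+2\delta}$ uniformly in $\eps\in(0,1]$ (mollification being bounded on $\mfN^\zeta$ for $\zeta\in(-d_{\s},0)$). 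By Lemma~\ref{lemma:kernel_mult} the product kernel in \eqref{e:pairing_plan} is then bounded in $\mfN^{-2+4\delta}$ uniformly in $\eps$, and $-2+4\delta>-d_{\s}=-4$, so it is locally integrable. Inserting $\varphi=\CS^\lambda_{\s,z}\eta$, substituting $w_i=z+\lambda^\s u_i$, and using the $\lambda$-homogeneity of the product kernel on the support of the rescaled test function, one obtains $\bigl|\mfZ_\eps[\<2AF>](\CS^\lambda_{\s,z}\eta)\bigr|_{\mfh^{\wedge 2}}^2\lesssim\lambda^{-2+4\delta}$, uniformly in $\eps\in(0,1]$, $\lambda\in(0,1]$, $z$, and $\eta$ bounded in the relevant $\cC^r$. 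Hence $\sup_{\eps\in(0,1]}\|\mfZ_\eps[\<2AF>]\|_{\CC_{\s}^{-1+2\delta}(\spacetime;\mfh^{\wedge 2})}<\infty$.

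For the convergence I would write $\mfZ_\eps[\<2AF>](z)-\mfZ_{\eps'}[\<2AF>](z) = \bigl(a_\eps(z)-a_{\eps'}(z)\bigr)\wedge b_\eps(z) + a_{\eps'}(z)\wedge\bigl(b_\eps(z)-b_{\eps'}(z)\bigr)$ and apply \eqref{e:pairing_plan} to each term. For the first term the relevant factor is $\scal{G^{\overline\snabla}_{w_1,\eps}-G^{\overline\snabla}_{w_1,\eps'},\,G^{\overline\snabla}_{w_2,\eps}-G^{\overline\snabla}_{w_2,\eps'}}_{\mathring{\mfh}}$, which equals (up to the same smoother remainder) $G^{\overline\snabla,\star 2}\ast(\rho_\eps-\rho_{\eps'})\ast(\rho_\eps-\rho_{\eps'})$; applying Lemma~\ref{lemma:kernel_conv} twice — once to each factor $\rho_\eps-\rho_{\eps'}=(\rho_\eps-\delta_0)-(\rho_{\eps'}-\delta_0)$ — this is bounded in $\mfN^{-1+2\delta-2\nu}$ by $C(\eps\vee\eps')^{2\nu}$ for every $\nu\in(0,1]$ (all intermediate orders lying in $(-d_{\s},0)$ since $\delta>0$). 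Multiplying by the uniformly $\mfN^{-1+2\delta}$-bounded factor $\scal{G^\snabla_{w_1,\eps},G^\snabla_{w_2,\eps}}_{\mathring{\mfh}}$ gives, via Lemma~\ref{lemma:kernel_mult}, a kernel bounded in $\mfN^{-2+4\delta-2\nu}$ by $C(\eps\vee\eps')^{2\nu}$, still of order $>-4$ since $\delta>0$ and $\nu\le1$. Rerunning the scaling substitution then yields $\|\mfZ_\eps[\<2AF>]-\mfZ_{\eps'}[\<2AF>]\|_{\CC_{\s}^{-1+2\delta-\nu}(\spacetime;\mfh^{\wedge 2})}\lesssim(\eps\vee\eps')^{\nu}$, so $(\mfZ_\eps[\<2AF>])_\eps$ is Cauchy in the Banach space $\CC_{\s}^{-1+2\delta-\nu}(\spacetime;\mfh^{\wedge 2})$; I would then define $\mfZ[\<2AF>]$ as its limit, which is the convergence claim. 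Finally, since the $\CC_{\s}^{-1+2\delta}$-norm is a supremum of functionals each continuous under $\CC_{\s}^{-1+2\delta-\nu}$-convergence, it is lower semicontinuous, so $\|\mfZ[\<2AF>]\|_{\CC_{\s}^{-1+2\delta}}\le\liminf_\eps\|\mfZ_\eps[\<2AF>]\|_{\CC_{\s}^{-1+2\delta}}<\infty$, giving the first statement.

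The main obstacle — really the only delicate point — is that one must never estimate the two $\mathring{\mfh}$-pairings in \eqref{e:pairing_plan} separately: each of them is infinite on the diagonal (indeed $G^\snabla_w\notin\mathring{\mfh}$ when $\delta<\tfrac12$, this being exactly the divergence absorbed by the counterterm $\mathbf{C}_{\<2AF>}^{\eps}$ of \eqref{eq:def_RenormElem}), and it is only their product, of order $-2+4\delta>-d_{\s}$, that is locally integrable. Everything else is routine power counting with Lemmas~\ref{lemma:kernel_mult} and~\ref{lemma:kernel_conv}, together with the scaling behaviour of $\|\bigcdot\|_{\mfh}$ already recorded in the proof of Lemma~\ref{lem:NoiseEst}.
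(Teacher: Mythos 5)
Your proof is correct and follows essentially the same route as the paper's: both reduce the $\mfh^{\wedge 2}$-norm squared of the test-function pairing to a pairing of the test function against the product kernel $G^{\overline{\snabla},\star 2}\, G^{\snabla,\star 2} \in \mfN^{-2+4\delta}$ and then conclude by scaling (you re-derive Proposition~\ref{prop:CCalphaCrit} by hand) together with Lemma~\ref{lemma:kernel_conv} for the mollification error. The only cosmetic differences are that you obtain the limit via a Cauchy/telescoping argument rather than comparing directly to the unmollified kernel, and your orthogonality observation upgrades the paper's inequality $\|\bigcdot\|_{\mfh^{\wedge 2}}\leqslant\|\bigcdot\|_{\mfh^{\otimes 2}}$ to an equality, which is correct but not needed.
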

\begin{proof}
For the first statement note that, for every $f \in \cD(\R^3)$,
\begin{equs}\label{eq:kernel_bound_ferm_square}
\| \mfZ[\<2AF>](f) \|_{\mfh^{\wedge 2}}^2
&\leqslant \iint\limits_{\spacetime \times \R^3} \widetilde{(\kappa f)}(z) G^{\overline{\snabla},\star 2}_z(y) G^{\snabla,\star 2}_z(y) \tilde{f}(y) \d y \d z = \\
{}&=
\scal{\tilde{f}, \bigl(G^{\overline{\snabla},\star 2}  G^{\snabla,\star 2} \bigr) \ast \tilde{f} }\;,
\end{equs}
where $G_z^{\snabla, \star 2}, G_z^{\overline{\snabla}, \star 2}$ and $G^{\snabla, \star 2}, G^{\overline{\snabla}, \star 2}$ are as defined in Section~\ref{sec:kernel_not}, $\tilde{f}$ again denotes the partial periodisation of $f$, and we have estimated the antisymmetrised inner product using the normal one. Since $G^{\overline{\snabla},\star 2} G^{\snabla,\star 2} \in \mfN^{-2+4\delta}$, the first statement directly follows from Proposition~\ref{prop:CCalphaCrit}.
The second statement follows from combining Proposition~\ref{prop:CCalphaCrit} and Lemma~\ref{lemma:kernel_conv} for the convergence of $G_\eps^{\snabla, \star 2}, G_\eps^{\overline{\snabla}}$ to $G^{\snabla, \star 2}, G^{\overline{\snabla}}$. \end{proof}
Note that the renormalisation counterterm in \eqref{eq:def_RenormElem} 
is a lift of the of the counterterm one would expect from Wick renormalisation in $\CA$. Indeed, a straightforward calculation gives
\begin{equ}\label{eq:def_RenormElem2} 
\digamma\big( \mathbf{C}_{\<2AF>}^{\eps} (z) \big)
=
\omega \Big(\bilin{ \overline{\psi}(G^{\overline{\snabla}}_\eps), \psi(G^{\snabla}_{\eps})} \Big) \bone 
\eqdef
C_{\<2AF>}^{\eps} \bone\;.
\end{equ}

Our main result on the fermionic Wick square is given by the following proposition.
Here we use the notation $\|\bigcdot \|_{\alpha;\mfp}$ described in Appendix~\ref{sec:spaces_of_dist}, where $\alpha$ is the regularity exponent for the H\"{o}lder--Besov space and $\mfp$ denotes the seminorm. 
We do not reference a compact set $\K$ here since this estimate is actually uniform over $\spacetime$. 

\begin{proposition}\label{prop:fermionic_wick_conv}
One has $\<2AF> \in \CC_{\s}^{-1+2\delta}(\spacetime ; \cA\F)$. Moreover, for any $\nu \in (0,1]$ and any $n \in \N$
\begin{equ}
	\left\| \<2AF> - \<2AF>_\eps \right\|_{-1+2\delta-\nu ; n} \lesssim \eps^{\nu/{4}} \;,
\end{equ}
so that  $\<2AF>_{\eps} \rightarrow \<2AF>$ in $\CC_{\s}^{-1+2\delta-\nu}(\spacetime ; \cA\F)$ as $\eps \downarrow 0$. The constant of proportionality here only depends on $n$.
\end{proposition}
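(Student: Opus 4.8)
The plan is to deduce the whole statement from the kernel facts in Lemma~\ref{lem:conv_fermionic_kernels} together with the uniform bound \eqref{e:ferm_wick_bound}. The starting observation is that, by Proposition~\ref{proposition:Wick_Product} applied to the two component maps $\Psi_{3}\diamond\Psi_{1}$ and $\Psi_{4}\diamond\Psi_{2}$, the linear map $\bilin{\overline{\psi},\psi}^{\diamond}$ is bounded from $\mfh^{\wedge 2}$ into $\cA\F$, with $\bigl\|\bilin{\overline{\psi},\psi}^{\diamond}(F)\bigr\|_{n}\lesssim_{n}\|F\|_{\mfh^{\wedge 2}}$ for every $n$ (this is precisely \eqref{e:ferm_wick_bound}). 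Since a bounded linear map $T$ between Banach spaces commutes with the testing $\mfW\mapsto\mfW(\CS^{\lambda}_{\s,z}\eta)$ that defines the negative-order Hölder--Besov norms, one gets, for any $\alpha<0$,
\begin{equ}
	\bigl\|\bilin{\overline{\psi},\psi}^{\diamond}(\mfW)\bigr\|_{\alpha;n}\;\lesssim_{n}\;\bigl\|\mfW\bigr\|_{\CC_{\s}^{\alpha}(\spacetime;\mfh^{\wedge 2})}\qquad\text{for all }\mfW\in\cD'(\spacetime;\mfh^{\wedge 2})\;.
\end{equ}
No compact set $\K$ enters, because the kernel estimates in Lemma~\ref{lem:conv_fermionic_kernels} are uniform over $\spacetime$ (the relevant kernels being compactly supported), so this pullback bound is uniform over $\spacetime$ as well.

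Taking $\mfW=\mfZ[\<2AF>]$ in the above, the first assertion of Lemma~\ref{lem:conv_fermionic_kernels} immediately yields $\<2AF>=\bilin{\overline{\psi},\psi}^{\diamond}(\mfZ[\<2AF>])\in\CC_{\s}^{-1+2\delta}(\spacetime;\cA\F)$, since $\CC_{\s}^{\alpha}(\spacetime;\cA\F)$ is by definition the space cut out by the seminorms $\|\bigcdot\|_{\alpha;n}$ (equivalently, using $\cA\F=\varprojlim_{n}\cA_{n}$ from Lemma~\ref{lem:proj_limit}). For the quantitative statement, \eqref{e:def_of_fermionic_square} gives $\<2AF>-\<2AF>_{\eps}=\bilin{\overline{\psi},\psi}^{\diamond}\bigl(\mfZ[\<2AF>]-\mfZ_{\eps}[\<2AF>]\bigr)$, so by the displayed bound everything reduces to a quantitative form of the second assertion of Lemma~\ref{lem:conv_fermionic_kernels}, namely
\begin{equ}
	\bigl\|\mfZ[\<2AF>]-\mfZ_{\eps}[\<2AF>]\bigr\|_{\CC_{\s}^{-1+2\delta-\nu}(\spacetime;\mfh^{\wedge 2})}\;\lesssim\;\eps^{\nu/4}\;.
\end{equ}

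To obtain this, I would split $\mfZ[\<2AF>](z)-\mfZ_{\eps}[\<2AF>](z)$ as the sum of the two wedge products in which exactly one of $G^{\snabla}_{z}$, $G^{\overline{\snabla}}_{z}$ is replaced by $G^{\snabla}_{z}-G^{\snabla}_{z,\eps}$, resp.\ $G^{\overline{\snabla}}_{z}-G^{\overline{\snabla}}_{z,\eps}$, bound the $\mfh^{\wedge 2}$-norm of each piece tested against $\CS^{\lambda}_{\s,z}\eta$ by the convolution pairing exactly as in \eqref{eq:kernel_bound_ferm_square}, and control the resulting covariance kernels: each is a pointwise product of a kernel in $\mfN^{-1+2\delta}$ with a ``mollified minus unmollified'' kernel, which by Lemma~\ref{lemma:kernel_conv} lies in $\mfN^{-1+2\delta-\sigma}$ with a gain $\eps^{\sigma}$ for any $\sigma\in(0,1]$, so by Lemma~\ref{lemma:kernel_mult} the products lie in $\mfN^{-2+4\delta-\sigma}$ with gain $\eps^{\sigma}$. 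The criterion already invoked in the proof of Lemma~\ref{lem:conv_fermionic_kernels} (Proposition~\ref{prop:CCalphaCrit}), together with the square root forced by the quadratic $\mfh$-norm, then gives a bound of order $\eps^{\sigma/2}$ in $\CC_{\s}^{-1+2\delta-\sigma/2}(\spacetime;\mfh^{\wedge 2})$; choosing $\sigma=\min(2\nu,1)$ produces something comfortably stronger than $\eps^{\nu/4}$ (no attempt is made to optimise this exponent, which is why the stated rate is $\nu/4$ rather than a larger fraction). Convergence $\<2AF>_{\eps}\to\<2AF>$ in $\CC_{\s}^{-1+2\delta-\nu}(\spacetime;\cA\F)$ is then automatic, since it holds in every seminorm $\|\bigcdot\|_{-1+2\delta-\nu;n}$, and the implicit constant depends only on $n$ because the one in \eqref{e:ferm_wick_bound} does.

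The only point requiring genuine care is the bookkeeping in this last step — how the $\eps$-gain at the level of $\mfN^{\bullet}$-kernels, the halving from the $\mfh^{\wedge 2}$ inner product, and the constraint $\sigma\leqslant1$ combine — together with checking that the compactly supported truncations of $G^{\snabla},G^{\overline{\snabla}}$ (hence of $G^{\snabla,\star 2},G^{\overline{\snabla},\star 2}$) behave well under mollification, which is exactly what Lemma~\ref{lemma:kernel_conv} supplies. There is no new analytic difficulty here: the heavy lifting is done by Proposition~\ref{proposition:Wick_Product} and Lemma~\ref{lem:conv_fermionic_kernels}, and what remains is the functoriality of Hölder--Besov spaces under bounded linear maps plus the projective-limit description of $\cA\F$.
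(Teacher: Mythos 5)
Your proof is correct and follows essentially the same route as the paper's: the paper's argument is precisely to pull the estimate back through the continuity of $\bilin{\overline{\psi},\psi}^{\diamond}$ guaranteed by \eqref{e:ferm_wick_bound} and then invoke Lemma~\ref{lem:conv_fermionic_kernels}. The only difference is that you additionally spell out the quantitative bookkeeping behind the rate $\eps^{\nu/4}$ (splitting the wedge product, Lemmas~\ref{lemma:kernel_mult} and~\ref{lemma:kernel_conv}, the halving from Proposition~\ref{prop:CCalphaCrit}), which the paper leaves implicit in the proof of that lemma; your accounting is consistent and in fact yields a slightly stronger exponent.
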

\begin{proof}
Recalling that $\bilin{ \overline{\psi}, \psi}^{\diamond} \colon \mfh^{\otimes 2} \rightarrow \cA\F$ is continuous  (since the same is true of $\Psi^{\diamond 2}$ thanks to \eqref{e:ferm_wick_bound}) the result follows from \eqref{e:def_of_fermionic_square} and the second statement of Lemma~\ref{lem:conv_fermionic_kernels}. \end{proof}

\begin{remark}\label{rem:topology_renormalisation}
We point out that if $\Gamma$ is chosen so that, for every $b \in \Gamma$, $b$ is spanned by functions of sufficiently good regularity, then the seminorms $\| \cdot \|_{n}$ cannot detect the divergence of $\mathbf{C}_{\<2AF>}^{\eps}$ as $\eps \downarrow 0$. 

In particular, one would be able to find $\gamma > 0$ such that  for any $n \in \mathbb{N}$, 
$\mathbf{C}_{\<2AF>}^{\eps}$ converges in $\CC_{\s}^{-\gamma}(\spacetime ; \cA_n)$ as $\eps \downarrow 0$. 
The control over this limit will not be uniform in $n$ \dash however this is still enough for convergence in $\CC_{\s}^{-\gamma}(\spacetime ; \cA)$.
It follows that this choice of $\Gamma$ gives a topology on $\cA$ where renormalisation is not necessary. 

However, we can show that, regardless of the choice of filtration  $\Gamma$, the choice of a good state imposes we correctly renormalise if we want convergence in a reasonable non-commutative $\CL^2$-space. 
If we use the faithful state $\omega_\rho$ instead of the vacuum state, then we can show that the limit of $\mathbf{C}_{\<2AF>}^{\eps}$ is not a $\CL^{2,\infty}(\cA, \bomega_\rho)$-valued distribution, where this space was defined in Remark~\ref{rem:localL2}.

\end{remark}

\subsubsection{Mixed Product}
\label{sec:Mixed_Tree}

The role of \eqref{e:wick_square} and \eqref{e:ferm_wick_bound} will be played 
by \eqref{e:mixed_product} and \eqref{e:mixed_product_continuity} of Proposition~\ref{ProdFieldOpProp}.

We give details for defining $ \<2F>$, the treatment of $ \<2A>$ is identical.
In analogy with the last section, we start by writing

\begin{equ}[e:mixed_square_moll]
	\<1>_\eps (z) \<1F>_\eps (z)
	=
	(\xi \otimes \Psi) \big( \mfZ_{\eps}[ \<2F>](z) \big)
\end{equ}
where $ \mfZ_{\eps}[ \<2F>] \in \cD'\bigl(\spacetime; \mfb \otimes \mfh \bigr)$ is now given by
\begin{equ}
	\mfZ_{\eps}[ \<2F>](z) \eqdef K_{z,\eps} \otimes (G^{\snabla}_{z,\eps} , 0) \;.
\end{equ}
We then have the following lemma.
\begin{lemma}
\label{lem:kernel_convergence}
	Defining
	\begin{equ}
		\mfZ[ \<2F>](z) \eqdef K_{z} \otimes (G^{\snabla}_{z},0)\;,
	\end{equ}
	one has $\mfZ[ \<2F>] \in \CC_{\s}^{-\frac{1}{2}}(\spacetime ;\mfb \otimes \mfh)$ and $\mfZ_{\eps}[ \<2F>] \rightarrow \mfZ[ \<2F>]$ in $\CC_{\s}^{-\frac{1}{2}-\nu}(\spacetime ;\mfb \otimes \mfh)$ as $\eps \downarrow 0$ for any $\nu \in (0,1]$.
\end{lemma}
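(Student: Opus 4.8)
The strategy is to reduce the statement entirely to the scalar kernel picture of Section~\ref{sec:kernel_not}, exactly as was done for $\mfZ[\<2AF>]$ in Lemma~\ref{lem:conv_fermionic_kernels}. The point is that $\mfZ[\<2F>](z)$ is a pure-tensor element of $\mfb\otimes\mfh$, and for such elements the $\mfb\otimes\mfh$-norm factorises: one has $\|K_z\otimes(G^\snabla_z,0)\|_{\mfb\otimes\mfh} = \|K_z\|_{\mfb}\,\|G^\snabla_z\|_{\mathring\mfh}$. The first factor is controlled by a heat-kernel power count and the second is controlled by the $Q$-weighted count that already appeared in Lemmas~\ref{lem:NoiseEst} and \ref{lem:conv_fermionic_kernels}. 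So the whole statement is really a statement about the composite kernel obtained by squaring and pairing.

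First I would test $\mfZ[\<2F>]$ against $f\in\cD(\R^3)$ and bound $\|\mfZ[\<2F>](f)\|_{\mfb\otimes\mfh}^2$ by an integral of the form
\begin{equ}
	\iint_{\spacetime\times\R^3} \widetilde{(\kappa f)}(z)\, K^{\star 2}_z(y)\, G^{\snabla,\star 2}_z(y)\, \tilde f(y)\,\d y\,\d z
	= \scal{\tilde f,\, \bigl(K^{\star 2}\, G^{\snabla,\star 2}\bigr)\ast \tilde f}\;,
\end{equ}
using that $\mfb$ is the RKHS of $\xi$ (so pairing produces the kernel $K^{\star 2}$ of $\CI_B\CI_B^T$) and that the $\mfh$-inner product contributes the extra $Q$-factors encoded in $G^{\snabla,\star 2}$, exactly as in \eqref{eq:kernel_bound_ferm_square}; the antisymmetrisation is irrelevant here since there is only one fermionic leg. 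Now $K^{\star 2}\in\mfN^{0-}$ and $G^{\snabla,\star 2}\in\mfN^{-1+2\delta}$, so by Lemma~\ref{lemma:kernel_mult} the product $K^{\star 2}\,G^{\snabla,\star 2}\in\mfN^{-1+2\delta-}$. Then $\mfZ[\<2F>]\in\CC_\s^{-1/2+\delta-}(\spacetime;\mfb\otimes\mfh)$ by Proposition~\ref{prop:CCalphaCrit} applied to this composite kernel. (If one only wants the stated exponent $-\frac12$, note $-\frac12+\delta- > -\frac12$ since $\delta>0$, so this is strictly better than claimed and in particular implies $\mfZ[\<2F>]\in\CC_\s^{-1/2}$.) For the convergence statement I would run the same argument on the difference $\mfZ[\<2F>]-\mfZ_\eps[\<2F>]$, writing $K^{\star2}G^{\snabla,\star2} - K_\eps^{\star2}G_\eps^{\snabla,\star2}$ as a telescoping sum and invoking Lemma~\ref{lemma:kernel_conv} to gain a factor $\eps^\nu$ (in the appropriate kernel seminorm, at the cost of one power of regularity), then Proposition~\ref{prop:CCalphaCrit} again to pass to the $\CC_\s^{-1/2-\nu}$ estimate on $\mfb\otimes\mfh$.

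The only genuine subtlety — and the one I would flag in the write-up, mirroring the footnote attached to $G^{\snabla,\star 2}$ in Section~\ref{sec:kernel_not} — is that neither $K^{\star2}$ nor $G^{\snabla,\star2}$, and hence not their product, is literally produced by the hypotheses of Lemma~\ref{lemma:kernel_mult} in a black-box way (there is a $\delta(t)$-type distribution hidden in the $Q$-factor and a square-of-a-heat-kernel structure). But one checks directly that the composite kernel is smooth away from the origin and has the scaling degree $-1+2\delta$ there, by the same elementary convolution bookkeeping used for $G^{\snabla,\star2}$ itself; once that is granted, Proposition~\ref{prop:CCalphaCrit} and Lemma~\ref{lemma:kernel_conv} apply verbatim. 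So the main obstacle is not analytic depth but simply verifying that the kernel-class machinery genuinely covers this particular composite object; everything else is a transcription of the $\mfZ[\<2AF>]$ argument with one fermionic leg instead of two and one bosonic leg in place of the second fermionic one.
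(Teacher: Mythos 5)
Your proposal is correct and follows essentially the same route as the paper: test against $f$, reduce $\|\mfZ[\<2F>](f)\|_{\mfb\otimes\mfh}^2$ to the pairing $\scal{f,(K^{\star2}G^{\snabla,\star2})\ast f}$, and conclude via Lemma~\ref{lemma:kernel_mult}, Proposition~\ref{prop:CCalphaCrit}, and Lemma~\ref{lemma:kernel_conv}. Your observation that the composite kernel actually lies in $\mfN^{-1+2\delta-}$ (so the regularity is slightly better than the stated $-\frac12$) and your caveat about $G^{\snabla,\star2}$ not falling black-box under Lemma~\ref{lemma:kernel_mult} are both consistent with the paper's own treatment.
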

\begin{proof}
For all $f \in \cD(\R^3)$
	\begin{equs}
		\| \mfZ[\<2F>](f) \|_{\mfb \otimes \mfh}^2 &=  \iint\limits_{\spacetime \times \R^3} \widetilde{(\kappa f)}(z) K^{\star 2}_z(y) G_z^{\snabla, \star 2}(y) \tilde{f}(y)\ \d y \d z \\
		&= \scal{f,\bigl( K^{\star 2} G^{\snabla, \star 2} \bigr) \ast f }_{L^2}\;.
	\end{equs}
	Since $K^{\star 2} G^{\snabla, \star 2} \in \mfN^{-1-}$ the first assertion follows from Proposition~\ref{prop:CCalphaCrit}.
	Similarly, the second assertion follows from the same proposition as well as Lemmata~\ref{lemma:kernel_mult} and \ref{lemma:kernel_conv}. \end{proof}
We then define $\<2F>_{\eps}, \<2F> \in \CC_\s^{-\frac{1}{2}} \bigl(\spacetime ;  L^{\infty-}(\Omega,\mu;\mfA_{\infty}) \bigr)$ by setting
\begin{equs}[e:def_of_mixed_product]
	\<2F>_{\eps}(z) &= \<1>_\eps(z) \<1F>_\eps(z) \quad \text{and} \quad \<2F>(z) \eqdef (\xi \otimes \Psi) \big( \mfZ[ \<2F>](z) \big)\;.
\end{equs}
$\<2F>$ is indeed well-defined by the Lemma~\ref{lem:kernel_convergence} as well as Proposition~\ref{ProdFieldOpProp}.


As described in Remarks~\ref{rem:POVSwitching} and \ref{rem:BosFermProb}, we now switch our point of view and instead consider $\<2F>$ as a measurable map $\Omega\to \cD'(\spacetime ; \mfA_\infty \otimes \C^2)$.
Here we use the notation $\|\bigcdot \|_{\alpha; \K, \mfp}$ described in Appendix~\ref{sec:spaces_of_dist} where we choose the seminorm  $\mfp = \infty$, that is we are using the norm $\| \bigcdot \|_{\infty}$ on $\mfA_{\infty}$.

\begin{proposition}
	For all $\kappa > 0$, $\K \subset \spacetime $ compact and for a.e.\ $\mcb{p} \in \Omega$,
	\begin{equ}
		\| \<2F>(\bigcdot)(p) \|_{-\frac{1}{2}+ \delta - \kappa ;\K, \infty} < \infty
	\end{equ}
	and $		\<2F>_\eps \xrightarrow{\eps \downarrow 0} \<2F>$
	in probability as measurable functions $\Omega \to \CC_\s^{- \frac{1}{2}+ \delta - \kappa} (\spacetime ; \mfA_\infty \otimes \C^2) $.
\end{proposition}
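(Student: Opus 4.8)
The plan is to run a Kolmogorov-type argument, exploiting that $\<2F>_{\eps}(z)=(\xi\otimes\Psi)\bigl(\mfZ_{\eps}[\<2F>](z)\bigr)$ and $\<2F>(z)=(\xi\otimes\Psi)\bigl(\mfZ[\<2F>](z)\bigr)$, where $\xi\otimes\Psi$ denotes the continuous extension $\xi^{\diamond 1}\otimes\bPsi^{\diamond 1}$ of Proposition~\ref{ProdFieldOpProp}. Since $\Psi(f)\in\mfA_{\infty}$ with $\|\Psi(f)\|_{\infty}\lesssim\|f\|_{\mfh}$ by Lemma~\ref{lem:NoiseEst}, these objects in fact take values in $L^{\infty-}(\Omega,\mu;\mfA_{\infty}\otimes\C^{2})$, and Proposition~\ref{ProdFieldOpProp} (whose $n=m=1$ case lands in the $C^{*}$-algebra $\mfA_{\infty}$ precisely because $\Psi(\bigcdot)\in\mfA_{\infty}$) supplies, for every $q\in[1,\infty)$, a constant $C_{q}$ such that
\begin{equ}
\E\bigl[\|(\xi\otimes\Psi)(v)\|_{\infty}^{q}\bigr]^{1/q}\leqslant C_{q}\,\|v\|_{\mfb\otimes\mfh}\qquad\text{for all }v\in\mfb\otimes\mfh\;.
\end{equ}
I would note that for this low chaos no non-commutative $\CL^{q}$-theory is used: the bound is just Nelson hypercontractivity for the scalar factor $\xi(\bigcdot)$ combined with the deterministic operator bound $\|\Psi(\bigcdot)\|_{\infty}\lesssim\|\bigcdot\|_{\mfh}$.

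Next I would collect the deterministic kernel estimates. Using $K^{\star 2}\in\mfN^{0-}$, $G^{\snabla,\star 2}\in\mfN^{-1+2\delta}$, Lemmata~\ref{lemma:kernel_mult} and \ref{lemma:kernel_conv}, Proposition~\ref{prop:CCalphaCrit}, and the identity $\|\mfZ[\<2F>](f)\|_{\mfb\otimes\mfh}^{2}=\scal{f,(K^{\star 2}G^{\snabla,\star 2})\ast f}_{L^{2}}$, one gets, for every $\kappa'>0$ and $\nu\in(0,1]$, uniformly over $z$ in a fixed compact and $\eta\in\cD(\R^{3})$ with $\|\eta\|_{\cC^{1}}\leqslant1$ (note $r=-\lfloor-\tfrac12+\delta-\kappa'\rfloor=1$),
\begin{equ}
\|\mfZ[\<2F>](\CS^{\lambda}_{\s,z}\eta)\|_{\mfb\otimes\mfh}\lesssim\lambda^{-\frac12+\delta-\kappa'}\,,\qquad
\|(\mfZ[\<2F>]-\mfZ_{\eps}[\<2F>])(\CS^{\lambda}_{\s,z}\eta)\|_{\mfb\otimes\mfh}\lesssim\eps^{\nu}\lambda^{-\frac12+\delta-\kappa'-\nu}\,,
\end{equ}
the second estimate being (a quantitative form of) Lemma~\ref{lem:kernel_convergence}, and similarly for the spatial/temporal increments $\mfZ[\<2F>](\CS^{\lambda}_{\s,z}\eta)-\mfZ[\<2F>](\CS^{\lambda}_{\s,z'}\eta)$ via the $\CC_{\s}^{-\frac12+\delta-\kappa'}$-modulus in the base point. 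Plugging these into the moment bound above gives, for every $q$,
\begin{equ}
\E\bigl[\|\<2F>(\CS^{\lambda}_{\s,z}\eta)\|_{\infty}^{q}\bigr]^{1/q}\lesssim_{q}\lambda^{-\frac12+\delta-\kappa'}\,,\qquad
\E\bigl[\|(\<2F>-\<2F>_{\eps})(\CS^{\lambda}_{\s,z}\eta)\|_{\infty}^{q}\bigr]^{1/q}\lesssim_{q}\eps^{\nu}\lambda^{-\frac12+\delta-\kappa'-\nu}\,,
\end{equ}
uniformly over $z$ in any fixed compact and $\eta$ as above.

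With these bounds in hand I would invoke a Kolmogorov/wavelet continuity criterion for Banach-space-valued distributions of negative regularity — the version underlying \cite[Thm.~10.7]{Hai14}, whose proof uses only the triangle inequality and therefore applies verbatim with $|\bigcdot|$ replaced by $\|\bigcdot\|_{\infty}$. Taking $q$ large, $\kappa'<\kappa$, and absorbing the loss, this yields $\E\bigl[\|\<2F>\|_{-\frac12+\delta-\kappa;\K,\infty}^{q}\bigr]<\infty$ for every compact $\K\subset\spacetime$, so in particular the seminorm is a.s.\ finite, i.e.\ $\|\<2F>(\bigcdot)(p)\|_{-\frac12+\delta-\kappa;\K,\infty}<\infty$ for a.e.\ $p\in\Omega$; and it yields $\E\bigl[\|\<2F>-\<2F>_{\eps}\|_{-\frac12+\delta-\kappa;\K,\infty}^{q}\bigr]\lesssim_{q}\eps^{q\gamma}$ for some $\gamma=\gamma(\nu,\kappa,\kappa')>0$. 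Markov's inequality converts the latter into convergence in probability (indeed in $L^{q}(\Omega)$) in each of these seminorms, and since the Fréchet topology of $\CC_{\s}^{-\frac12+\delta-\kappa}(\spacetime;\mfA_{\infty}\otimes\C^{2})$ is generated by the countable family of seminorms attached to the exhaustion $\K_{T}=[-T,T]\times\T^{2}$, $T\in\N$, this is exactly convergence in probability as measurable maps $\Omega\to\CC_{\s}^{-\frac12+\delta-\kappa}(\spacetime;\mfA_{\infty}\otimes\C^{2})$ (the $\<2F>_{\eps}$ being genuinely continuous, as they are built from mollified noise, so their seminorms are a.s.\ finite for every $\eps>0$).

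The argument is essentially routine. The only points requiring care are (i) bookkeeping the $\delta$-gain, which enters through the $H^{-(1+2\delta)/2}$-smoothing built into $\|\bigcdot\|_{\mfh}$ — i.e.\ through $G^{\snabla,\star 2}\in\mfN^{-1+2\delta}$ rather than $\mfN^{-1}$ — so that the target exponent $-\tfrac12+\delta-\kappa$ is reached; and (ii) keeping the scale exponent in the moment bounds strictly above the target regularity, which furnishes the room needed both to run the Kolmogorov criterion and to turn the $\eps^{\nu}$ kernel rate into an $\eps^{\gamma}$ rate in probability. Neither is a genuine obstacle.
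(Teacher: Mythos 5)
Your argument is correct and follows essentially the same route as the paper: the moment bound for $\xi\otimes\Psi$ from Proposition~\ref{ProdFieldOpProp} combined with the kernel estimates of Lemma~\ref{lem:kernel_convergence} gives $\<2F>\in\CC_\s^{-\frac12+\delta}(\spacetime;L^{\infty-}(\Omega,\mu;\mfA_\infty\otimes\C^2))$, and the Kolmogorov/wavelet exchange you invoke is exactly what the paper packages as Proposition~\ref{prop:LqExchange} (with loss $\kappa>4/q$). The only difference is presentational: you re-derive that exchange lemma inline rather than citing it.
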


\begin{proof}
	From Proposition~\ref{prop:LqExchange} we find that for all $n \in \N$
	\begin{equ}[eq:BosonicWickProds]
	 	\left\| \|\<2F> \|_{-\frac{1}{2}+\delta - \kappa, \infty} \right\|_{L^q}  \lesssim \left\| \bigl\| \| \<2F> \|_\infty \bigr\|_{L^q} \right\|_{\CC^{-1/2 + \delta}}
	\end{equ}
	as long as $\kappa > \frac{4}{q}$, the claim now follows from Lemma~\ref{lem:kernel_convergence}.
\end{proof}


\subsubsection{Bosonic Products}

We want to define for $\eps > 0$, 
\begin{equ}
	\<2>_\eps  = \<1>^2_\eps - C_{\<2>}^\eps\qquad \text{and} \qquad  \<3>_\eps  = \<1>^3_\eps - 3 C_{\<2>}^\eps \<1> \; ,
\end{equ}
where we write $C_{\<2>}^\eps \eqdef \E[\<1>_\eps(0)^2] = K_{\eps}^{\star 2}(0)$. 
The convergence of $\<2>_\eps$ and $\<1>^3_\eps$ as $\eps \downarrow 0$ is well known, see the following lemma. 

\def\myrefTW{\cite[Theorem~2.1]{TsatsoulisWeber}}
\begin{proposition}[\myrefTW]
	For all $\kappa > 0$, there are random elements  $\<2>,  \<3> \in \CC_\s^{ - \kappa} $ such that one has the convergence in probability 
 $\<2>_\eps \xrightarrow{\eps \downarrow 0} \<2>$ and $\<3>_\eps \xrightarrow{\eps \downarrow 0} \<3>$. 
\end{proposition}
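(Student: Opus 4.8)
The plan is to reduce the statement to a standard bosonic renormalisation fact, since $\<2>_\eps$ and $\<3>_\eps$ are precisely the Wick square and Wick cube of the regularised stochastic heat equation $\<1>_\eps = \CI_B(\xi_\eps)$ on the torus, with counterterm $C^\eps_{\<2>} = K^{\star 2}_\eps(0) = \E[\<1>_\eps(0)^2]$. This is exactly the setting of \cite[Theorem~2.1]{TsatsoulisWeber} (the construction of the $\Phi^4_2$ remainder equation data on $\T^2$), so the cleanest route is to cite it directly, which is what the excerpt does. For a self-contained sketch I would first recall that $\<1>$ is Gaussian, lives in $\CC^{-\kappa}_\s$ for every $\kappa>0$ by Lemma~\ref{prop:bosonic_Noise_Properties}, and that $\<2>_\eps$, $\<3>_\eps$ belong to the second and third homogeneous Wiener chaos respectively, uniformly in $\eps$.

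The key steps would be: (i) write $\<2>_\eps(z) = \xi^{\diamond 2}(K_{z,\eps}\otimes K_{z,\eps})$ and $\<3>_\eps(z) = \xi^{\diamond 3}(K_{z,\eps}^{\otimes 3})$ using the Wick/It\^o--Segal--Wiener formalism of Section~\ref{sec:fock_renorm}, so that the counterterm subtraction is automatically encoded; (ii) compute the covariance kernels, e.g.\ $\E[\<2>_\eps(z)\<2>_{\eps'}(z')] = 2\big(K^{\star 2}_{\eps,\eps'}(z-z')\big)^2$ where $K^{\star 2}_{\eps,\eps'}$ is the mollified resolvent kernel, and show via the kernel bounds of Lemma~\ref{lemma:kernel_mult} (together with Lemma~\ref{lemma:kernel_conv} for the mollification) that $K^{\star 2}\in\mfN^{0-}$, hence $(K^{\star 2})^2\in\mfN^{0-}$ and $(K^{\star 2})^3\in\mfN^{0-}$ as well, giving the requisite local integrability; (iii) run a Kolmogorov-type argument: for the tested increments one bounds $\E\big[|\langle \<2>_\eps - \<2>_{\eps'}, \CS^\lambda_{\s,z}\eta\rangle|^2\big]$ by a power of $\lambda$ times a positive power of $(\eps\vee\eps')$, using equivalence of moments on a fixed chaos to upgrade the $L^2(\Omega)$ bound to $L^q(\Omega)$ for all $q$, and then apply the Besov-space Kolmogorov criterion to conclude convergence in probability in $\CC^{-\kappa}_\s$; (iv) the identical argument with the third chaos handles $\<3>_\eps$, the extra term $-3C^\eps_{\<2>}\<1>_\eps$ being exactly what makes $\<3>_\eps$ purely third-chaos.

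The main obstacle — or rather the only place any real work happens — is step (iii): getting the quantitative rate in $\eps$. This requires splitting $K^{\star 2} - K^{\star 2}_\eps$ and controlling the resulting near-critical kernels, for which one uses Lemma~\ref{lemma:kernel_conv} to trade a factor $\eps^{\theta}$ against a slightly worse singularity exponent, then feeds this through the product estimate of Lemma~\ref{lemma:kernel_mult}; the exponent degrades under squaring/cubing but stays negative, which is all one needs. Since everything here is classical and, more to the point, already proved in \cite{TsatsoulisWeber}, I would simply invoke that reference as the paper does, noting only that the torus setting and the specific truncated kernel $\widetilde\CK$ we use are covered by their result (or differ from it by a smooth, hence harmless, modification). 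No non-commutative input is needed at all for this proposition — these are ordinary scalar random distributions.

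\begin{proof}
All the objects $\<2>_\eps$, $\<3>_\eps$ are elements of a fixed (second, resp.\ third) homogeneous Wiener chaos over the bosonic white noise $\xi$, and the claim is a classical fact about the two-dimensional stochastic heat equation: it is precisely the content of \cite[Theorem~2.1]{TsatsoulisWeber}, whose proof applies verbatim on $\spacetime = \R \times \T^2$ with the truncated heat kernel $\widetilde{\CK}$ of Section~\ref{sec:kernel_not}. Concretely, writing $\<2>_\eps(z) = \xi^{\diamond 2}(K_{z,\eps} \otimes K_{z,\eps})$ and $\<3>_\eps(z) = \xi^{\diamond 3}(K_{z,\eps}^{\otimes 3})$, the covariances are explicit polynomials in the mollified kernel $K^{\star 2}$, which lies in $\mfN^{0-}$ by Lemma~\ref{lemma:kernel_mult}; a Kolmogorov-type estimate in the Besov spaces of Appendix~\ref{sec:Holder_Space}, combined with equivalence of $L^q(\Omega)$-moments on a fixed chaos and the mollification bound of Lemma~\ref{lemma:kernel_conv}, yields convergence in probability in $\CC^{-\kappa}_\s$ for every $\kappa > 0$.
\end{proof}
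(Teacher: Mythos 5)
Your proposal is correct and matches the paper exactly: the paper gives no proof of its own but attributes the statement directly to \cite[Theorem~2.1]{TsatsoulisWeber}, which is precisely what you do. Your additional sketch (chaos decomposition, covariance kernels via Lemma~\ref{lemma:kernel_mult}, Kolmogorov/hypercontractivity argument with the mollification rate from Lemma~\ref{lemma:kernel_conv}) is the standard route and is consistent with how the analogous mixed and fermionic products are handled elsewhere in Section~\ref{RenormSec}.
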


\begin{remark}
	Alternatively and more in line with the previous sections, we could have defined for $\eps \in [0,1]$
	\begin{equ}
		\mfZ_\eps [\<2>] (z) \eqdef K_z \otimes K_z \qquad \text{and} \qquad \mfZ_\eps [\<3>](z) \eqdef K_z \otimes K_z \otimes K_z
	\end{equ}
	and set
	\begin{equ}
		\<2>_\eps = \xi^{\diamond 2}(\mfZ_\eps[\<2>]) \qquad \text{and} \qquad \<3>_\eps = \xi^{\diamond 3}(\mfZ_\eps[\<3>]) \; .
	\end{equ}
\end{remark}

\section{Local Solution Theory}
Now that we have set up our extended CAR algebra and proven the needed convergence results for the stochastic data, we turn to putting this all together along with results on Schauder theory and multiplication on H\"{o}lder--Besov spaces (see Appendix~\ref{sec:spaces_of_dist}) to solve our SPDE. 


\subsection{Solving the Equation in \TitleEquation{\cA\F_n}{cAFn}}

We start by proving the existence of a solution to (a renormalised version of) the equations \eqref{eq:lin_sig_model_DPD1} from Section~\ref{sec:PDTrick} for short enough times and in the ``localised'' Banach space $\cA_n$ for any fixed $n \in \N$. 
We fix sufficiently small $\delta > \kappa > 0$ and solve the SPDE for generic choices of the driving noises
\begin{equs}[2]\label{e:driving_noises}
	\<1> & \in \CC_\s^{-\kappa,-\kappa}(\spacetime_1) \; , & \quad \<2> & \in \CC_\s^{-2\kappa,-2\kappa}(\spacetime_1) \; , \\
	\<3> & \in \CC_\s^{-3\kappa,-3\kappa}(\spacetime_1) \; , & \quad \<1F> & \in \CC_\s^{-\frac{1}{2}+\delta,-\frac{1}{2}+\delta}(\spacetime_1; \cA_n^1 \otimes \C^2) \; , \\
	 \<1A> & \in \CC_\s^{-\frac{1}{2}+\delta,-\frac{1}{2}+\delta}(\spacetime_1; \cA_n^1 \otimes \C^2) \; , & \quad
	\<2F> & \in \CC_\s^{-\frac{1}{2}+\delta-\kappa, -\frac{1}{2}+\delta-\kappa}(\spacetime_1; \cA_n^1 \otimes \C^2) \; ,\\
	 \<2A> & \in \CC_\s^{-\frac{1}{2}+\delta-\kappa,-\frac{1}{2}+\delta-\kappa}(\spacetime_1; \cA_n^1 \otimes \C^2) \; , & \quad \<2AF> & \in \CC_\s^{-1 +2\delta,-1 +2\delta}(\spacetime_1; \cA_n^0) \;.
\end{equs}
The H\"older spaces $\CC^{\alpha,\eta}_\s$ of functions with singularities at time $0$ are defined below in Appendix~\ref{sec:Holder_Sing}.

The Banach space of driving noises, which is just the Cartesian product of all the Banach spaces\footnote{	Since $\spacetime_1$ is compact, all of the above spaces should be viewed as Banach spaces with a single norm where $\K = \spacetime_1$.} appearing in \eqref{e:driving_noises}, is denoted by $\CN_n$.
Note that for a generic choice of driving noises, that is an arbitrary element $\Xi = (\<1> , \<2>, \<3>, \<1F>, \<1A> , \<2F> ,  \<2A> ,  \<2AF>) \in \CN_n$ \dash there is a-priori there no relationship between the different component noises and $\Xi$ need not be built from renormalised polynomials of the bosonic and fermionic noises. The reader should is cautioned that we write $\Xi$ instead of $\Xi_n$ to lighten the notation. Similarly, we have dropped $n$ from the notation for all components of $\Xi$, $u_0$, and its components, as well as the solutions $U$, $Y$, and $\overline{Y}$.

The aim of this section is to define a continuous map mapping driving noises $\Xi \in \CN_{n}$ to the local solution to the SPDE driven by $\Xi$.

\begin{remark}
	From the continuity of the canonical projection $\pi_n \colon \cA\F \to \cA_n$ it follows that $\pi_n\circ f \in \CC^{\alpha,\eta}_{\s}(\spacetime_1 ; \cA_n)$ if $f \in \CC^{\alpha,\eta}_{\s}(\spacetime_1; \cA\F)$, etc.
\end{remark}

\begin{remark}
	The choice of space $\CC_\s^{\alpha,\alpha}$, rather than $\CC^{\alpha}_\s$, was made because we need to multiply the driving noises constructed in Section~\ref{sec:Renormalisation} by the temporal cut-off $\bone_{\{t \geqslant 0\}} \in \CC_{\s}^{\infty,0}(\spacetime)$. Their product then belongs to $\CC^{\alpha,\alpha}_{\s}$ by Theorem~\ref{YngMultBUThm1}. 
\end{remark}

\begin{theorem}
\label{thm:LocSolution}
	Let $\Xi \in \CN_n$. Then for any $\kappa \in (0,\delta)$ and $\varsigma \in (0,\kappa)$  and any initial condition
	\begin{equ}
		u_0  = (U_0, Y_0, \overline{Y}_0) \in \reminit_n \eqdef \CC^{-\kappa+\varsigma}(\T^2; \cA_n^0) \times \CC^{\frac{1}{2} + \delta-\kappa+\varsigma}(\T^2; \cA_{n}^1 \otimes \C^{4} )
	\end{equ}
	the equations
	\begin{equs}\label{eq:lin_sig_model_DPD}
		\partial_t U &= (\Delta-m^2) U - g \Big(  \bilin{(-\overline{\snabla}+M)\overline{Y} , (\snabla+M)Y}  + \\
		& \qquad +  \bilin{(-\overline{\snabla}+M)\overline{Y} ,\<1F> } + \bilin{\<1A>, (\snabla+M)Y} + \<2AF>  \Big) - \\
		& \qquad - \lambda \Bigl( U^3 + 3 U^2 \<1> + 3 U \<2> + \<3> \Bigr)\; ,	\\
		\partial_t Y &=  (\Delta-M^2) Y - g \Big(  U \big( \snabla+M \big)  Y  + \<1> (\snabla + M) Y + U \<1F>  + \<2F> \Big) \; ,\\
		\partial_t \overline{Y} &=  (\Delta-M^2) \overline{Y} - g \Big(  U \big( - \overline{\snabla}+M \big)  \overline{Y}  + \<1> (-\overline{\snabla} + M) \overline{Y} + U \<1A>  + \<2A> \Big) \; ,
	\end{equs}
	have a solution in
	\begin{equ}\label{eq:local_local_solspace}
		\CC_{T,n} \eqdef \CC_\s^{1+2\delta,-\kappa}(\spacetime_T; \cA_n^0) \times \CC_\s^{\frac{3}{2}+\delta-\kappa, \frac{1}{2}+\delta-\kappa}(\spacetime_T ; \cA_n^1 \otimes \C^4)
	\end{equ}
	for $0 < T = T(n, u_0 , \Xi)\leqslant 1$ small enough.

	This solution is unique on the temporal interval $[0,T]$ and for $\delta_1, \delta_2 > 0$ small enough the solution map is jointly continuous on the balls of radii $\delta_1$ and $\delta_2$ around $u_0$ and $\Xi$ respectively
	\begin{equs}
		\CS^T_n \colon \reminit_n \times \CN_n \supset B_{\delta_1}(u_0) \times B_{\delta_2}(\Xi) & \longrightarrow \CC_{T,n}\\
		(u_0', \Xi') & \longmapsto u'
	\end{equs}
\end{theorem}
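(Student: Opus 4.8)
The plan is to treat the system \eqref{eq:lin_sig_model_DPD} as a fixed-point problem in the Banach space $\CC_{T,n}$ and apply the Banach fixed-point theorem with parameters, exactly as in the Da Prato--Debussche scheme. First I would set up the solution map: integrating \eqref{eq:lin_sig_model_DPD} against the (truncated) heat kernels $\CI_B$, $\CI_F$, $\CI_F^\snabla$ introduced in Section~\ref{sec:kernel_not} and incorporating the initial data via $G_B u_0$, $G_F u_0$, one obtains a map $\cM_{T} = \cM_{T,\Xi,u_0}$ on $\CC_{T,n}$ whose fixed points are precisely the mild solutions. The key structural point is that $\cA_n$ is a genuine $C^*$-algebra, hence a Banach algebra with a \emph{submultiplicative} norm $\|\bigcdot\|_n$; this is what makes the pointwise products $U^3$, $U^2\<1>$, $U\<1F>$, $\bilin{(-\overline{\snabla}+M)\overline{Y},(\snabla+M)Y}$, etc.\ behave well. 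One then checks term by term, using the multiplication estimate Theorem~\ref{YngMultThm} (in its vector-valued form over the Banach algebra $\cA_n$, see Appendix~\ref{sec:spaces_of_dist}) together with the regularity exponents assumed in \eqref{e:driving_noises}, that every nonlinear term on the right-hand side lands in a H\"older--Besov space of regularity $> -2$ (resp.\ $> -2$ in the parabolic sense), so that after applying the Schauder estimate Theorem~\ref{SchdrThm} each contribution sits in $\CC_\s^{1+2\delta,-\kappa}$ resp.\ $\CC_\s^{\frac32+\delta-\kappa,\frac12+\delta-\kappa}$. The singular-weight bookkeeping at $t=0$ is handled by the spaces $\CC_\s^{\alpha,\eta}$ of Appendix~\ref{sec:Holder_Sing}, whose multiplication and Schauder properties are recorded there; this is where the condition $\varsigma \in (0,\kappa)$ on the initial data enters.

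Next I would establish the contraction. The standard gain is that the Schauder map $\CI$ on a time interval of length $T$ picks up a positive power $T^{\theta}$ (for some $\theta>0$ depending only on the regularity gap), and that the purely polynomial terms are locally Lipschitz in $\CC_{T,n}$ with constants controlled by the size of the arguments. Concretely, on the ball $\{ \|u\|_{\CC_{T,n}} \leqslant R \}$ with $R$ fixed in terms of $\|u_0\|_{\reminit_n}$ and $\|\Xi\|_{\CN_n}$, one shows $\|\cM_T(u) - \cM_T(u')\|_{\CC_{T,n}} \leqslant C(R,\|\Xi\|) \, T^{\theta}\, \|u-u'\|_{\CC_{T,n}}$ and that $\cM_T$ maps this ball into itself once $T$ is small enough; here the invariance uses that the $t=0$ data contributes an $O(1)$ term while all other terms are $O(T^\theta)$. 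Choosing $T = T(n,u_0,\Xi) \leqslant 1$ so that $C(R,\|\Xi\|)T^{\theta} \leqslant \tfrac12$ gives existence and uniqueness of the fixed point on $[0,T]$ in $\CC_{T,n}$; uniqueness on all of $[0,T]$ (not just in the ball) follows by a standard continuation/bootstrap argument, slicing $[0,T]$ into small subintervals.

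For the continuity of the solution map, I would note that $\cM_{T,\Xi,u_0}(u)$ depends continuously (in fact, analytically, being polynomial in its arguments after the fixed linear operators $\CI,\CS$ are applied) on the triple $(u,\Xi,u_0)$, with all estimates uniform over a ball $B_{\delta_1}(u_0)\times B_{\delta_2}(\Xi)$ provided $\delta_1,\delta_2$ are small enough that the radius $R$ and the contraction time $T$ can be chosen uniformly on that ball. Then the abstract ``uniform contraction principle'' (the implicit-function-type statement that the fixed point of a family of uniform contractions depends continuously, indeed Lipschitz-continuously, on the parameter) yields that $(u_0',\Xi') \mapsto u'$ is continuous (even locally Lipschitz) from $B_{\delta_1}(u_0)\times B_{\delta_2}(\Xi)$ into $\CC_{T,n}$. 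This gives the claimed map $\CS_n^T$.

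The main obstacle is the careful power-counting verification that \emph{every} product appearing in \eqref{eq:lin_sig_model_DPD} is admissible under Theorem~\ref{YngMultThm}, i.e.\ that the sum of the regularities of the two factors is strictly positive in each case --- in particular the terms $\<1>(\snabla+M)Y$, $U(\snabla+M)Y$, and $\bilin{(-\overline{\snabla}+M)\overline Y,(\snabla+M)Y}$, where $(\snabla+M)Y$ only has regularity $\frac12+\delta-\kappa$ (it loses one derivative relative to $Y$) and must be paired against objects of regularity $-\kappa$ or better; this is exactly the place where the regularisation parameter $\delta>0$ is used, and getting the weights at $t=0$ (the $\eta$-exponents) to also close requires the hypothesis $\varsigma>0$. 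Everything else is a routine, if lengthy, application of the Schauder and multiplication estimates together with the Banach-algebra property of $\cA_n$; I would organise it as a lemma bounding the nonlinearity $\Phi(u,\Xi)$ (the right-hand side before applying $\CI$) in the appropriate negative-regularity spaces, then feed that into the abstract contraction argument.
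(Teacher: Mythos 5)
Your proposal is correct and follows essentially the same route as the paper's proof: a Banach fixed-point argument in $\CC_{T,n}$ exploiting the $C^*$-algebra (hence $m$-convex) structure of $\cA_n$, the multiplication and Schauder estimates in the weighted spaces $\CC^{\alpha,\eta}_\s$, and a contractive factor $T^{\theta}$ obtained by trading $\eta$-regularity for a power of $T$, with continuity of $\CS^T_n$ following from the uniform contraction principle. The only cosmetic difference is that the paper extracts the small factor via the embedding $\CC^{\beta,\eta}\hookrightarrow\CC^{\beta,\eta-1-\sigma}$ (Proposition~\ref{prop:TimeImprov}) rather than attributing it directly to the Schauder map, which amounts to the same bookkeeping.
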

\begin{remark}
	The PDE above are ``remainder PDE'' governing the remainder of the solution to the full PDE after subtracting the linear solution \dash see Section~\ref{sec:PDTrick}.
The $\eta$-regularities in \eqref{eq:local_local_solspace}, that is the second exponent in the superscripts on the right side of \eqref{eq:local_local_solspace}, are chosen so that we can accommodate $(\<1>(0), \<1IF>(0), \<1IA>(0) )$ as an initial condition in the remainder equation. 
This means that we can start the full system of PDEs with arbitrary initial data of the same regularity as the linear solution.  
\end{remark}

\begin{remark}
	We note here that the dependence of $T(n,u_0,\Xi)$ only comes from the ambient space $\CC_{T,n}$ within which we are solving the equation. Later on when the initial conditions and noises are chosen as the different projections of single $\cA$ valued distributions $u_0$ and $\Xi$ the dependence will become more direct as $n$ determines the norms of the initial conditions and driving noises via $\pi_n$. 
\end{remark}

\begin{remark}
	In the above theorem one can replace $\cA_n$ by the closure of $\pi_n(\cG\F)$ in $\cA_n$ as per Remark~\ref{remark:Grassmann_Preserved}.
\end{remark}

In what follows, we will denote the norm of $\CC_{T,n}$ by $\VERT \bigcdot \VERT_{T,n}$.

\begin{proof}
	We denote the non-linear part of the right-hand side of \eqref{eq:lin_sig_model_DPD} by $F(u, \Xi)$ for $u = (U,Y,\overline{Y})$ and $\Xi = (\<1>, \<2>, \<3>, \<1F>, \<1A>, \<2F>, \<2A>, \<2AF>)$.

	We also use the abbreviations $Y^{\snabla} \eqdef (\snabla+M) Y$ and $\overline Y^{\snabla} \eqdef (-\overline{\snabla} +M) \overline Y$. Furthermore, we suppress the equation for the ``anti-fermions'' from now on as it is practically the same as the equation for the fermions. We also leave out a detailed treatment of the bosonic self-interaction as the analysis there proceeds by the usual arguments and the regularity of those terms is strictly better than the regularity of the terms containing fermions.

	Integrating \eqref{eq:lin_sig_model_DPD} with respect to $\CI_B$ and $\CI_F$ respectively we arrive at the equation
	\begin{equs}[FxdPntEq]
		U &= G_B U_0 - g \CI_B \Big( \overbrace{\overline{Y}^{\snabla} \bigcdot Y^{\snabla}}^{ \substack{ \frac{1}{2}+\delta - \kappa \\ -1+2\delta-2\kappa } } + \overbrace{\overline{Y}^{\snabla} \bigcdot \<1F> }^{\substack{- \frac{1}{2}+\delta\\ -1 + 2 \delta - \kappa}} + \overbrace{\<1A> \bigcdot Y^{\snabla} }^{\substack{- \frac{1}{2}+\delta\\ -1 + 2 \delta - \kappa}} + \overbrace{ \<2AF> }^{\substack{-1 +2 \delta\\-1 +2 \delta}} \Big) -  \\
		& \qquad \qquad - \lambda \CI_B \Bigl( \overbrace{U^3}^{\substack{1+2\delta \\ - 3 \kappa}} +3 \overbrace{U^2 \<1>}^{\substack{-\kappa \\ - 3 \kappa}}+3 \overbrace{U \<2>}^{\substack{-2\kappa \\ -  \kappa}} + \overbrace{\<3>}^{\substack{-3\kappa \\ - 3 \kappa}} \Bigr)\\
		Y &= G\F Y_0 -g \CI_F \Big( \overbrace{U Y^{\snabla}}^{\substack{\frac{1}{2}+\delta-\kappa\\ -\frac{1}{2}+\delta -2\kappa}} + \overbrace{\<1> Y^{\snabla}}^{\substack{-\kappa \\ - \frac{1}{2}+\delta-2\kappa}} + \overbrace{U\<1F>}^{\substack{-\frac{1}{2}+\delta\\-\frac{1}{2}+\delta- \kappa }} + \overbrace{\<2F>}^{\substack{-\frac{1}{2} + \delta-\kappa\\-\frac{1}{2} + \delta-\kappa}} \Big)
	\end{equs}
	where we have written $\bigcdot$ as shorthand for the bilinear map $\bilin$. 
	We have noted the $\alpha$ and $\eta$ regularity of the products as $\begin{smallmatrix} \alpha\\ \eta \end{smallmatrix}$. 
	All of the products are well-defined and continuous maps by Theorem~\ref{YngMultBUThm1} and Theorem~\ref{YngMultBUThm2}.

	 We denote the right-hand side of \eqref{FxdPntEq} by $\CM_n^\Xi(u)$ and the terms that are being integrated by $\CF(u)$. We begin by showing that $\CM_n^\Xi$ maps $\CC_{T,n}$ into itself for any $T \leqslant 1$. 
	 First note that $G_BU_0 \in \CC_\s^{\gamma, -\kappa+\varsigma} \hookrightarrow \CC_\s^{\gamma, -\kappa}$ for any $\gamma > 0$ because of Proposition~\ref{HKSmootProp} and because the inclusion $\CC_\s^{\gamma, -\kappa+\sigma} \hookrightarrow \CC_\s^{\gamma, -\kappa}$ is continuous.  The analogous statements hold for the other terms depending on the initial condition which we abbreviate as $Gu_0$. The latter also  satisfies uniformly in $T \leqslant 1$ the estimate
	\begin{equ}
		\VERT Gu_0 \VERT_{T,n} \lesssim \|u_0\|_{\reminit_n}
	\end{equ}
	where $\|u_0\|_{\reminit_n}$ is the norm of $u_0$ in its space. 
	Next note that taking spatial derivatives is a continuous map $\CC^{\alpha, \eta}_\s \to \CC_\s^{\alpha-1,\eta-1}$, cf.\ Theorem~\ref{DerivativeBU} By the theorems on the continuity of multiplication, Theorem~\ref{YngMultBUThm1} and Theorem~\ref{YngMultBUThm2}, the map
	\begin{equ}
		\big(
			U , Y , \overline{Y}
\big) \longmapsto \big(
			\overline{Y}^{\snabla} Y^{\snabla}, U Y^{\snabla}, U \overline{Y}^{\snabla}
		\big)
	\end{equ}
	is Lipschitz continuous with constant $\lesssim R$ on the balls of radius $R$ in $\CC_{T,n}$ (independently of $T$), while $U \mapsto U^3$ has constant $\lesssim R^2$. Similarly, the multiplication with the noises is also Lipschitz continuous, and, therefore, $\CF$ as a whole is Lipschitz continuous on the ball of radius $R$. Since both $\CI_F$ and $\CI_B$ are $2$-regularising it follows that $( \CI_B, \CI_F, \CI_F ) \circ \CF$  maps the original space first into $\CC^{\beta,\eta}$, where $\beta,\eta$ are $1+2\delta, 1+2\delta-2\kappa$ and $\frac{3}{2}+\delta-\kappa, \frac{3}{2}+\delta-2\kappa$ for the bosonic and fermionic equation respectively, and then we use the embedding
	\begin{equ}
		\CC^{\beta,\eta} \hooklongrightarrow \CC^{\beta,\eta-1-\sigma}
	\end{equ}
	to return to the original spaces. Here we have set $\sigma = 2\delta-\kappa$ for the bosons and $\sigma = -\kappa$ for the fermions respectively. 
	In particular, this embedding is bounded such that
	\begin{equ}
		\VERT \bigcdot \VERT_{\beta,\eta-1-\sigma; \spacetime_T,\cA_n} \lesssim T^{\frac{1+\sigma}{2}} \VERT \bigcdot \VERT_{\beta,\eta; \spacetime_T,\cA_n} \; ,
	\end{equ}
	cf.\ Proposition~\ref{prop:TimeImprov}. We have thus shown that $\CM_n^\Xi \colon \CC_{T,n} \to \CC_{T,n}$ and that for some constant $C>0$, and for any two $u,v \in \CC_{T,n}$ with $\VERT u \VERT_{T,n}, \VERT v \VERT_{T,n} \leqslant R$
	\begin{equs}
		\VERT \CM_n^\Xi(u) \VERT_{T,n} &\leqslant \VERT Gu_0 \VERT_{T,n} + C T^{\frac{1+\sigma}{2}} R \VERT u \VERT_{T,n} \; , \\
		\VERT \CM_n^\Xi(u)- \CM^\Xi(v) \VERT_{T,n} &\leqslant C T^{\frac{1+\sigma}{2}} R \VERT u- v \VERT_{T,n} \; .
	\end{equs}
	Choosing $R = \VERT Gu_0 \VERT_{T,n}+1$, for $T$ small enough, $\CM_n^\Xi$ therefore maps the ball of radius $R$ around $0$ into itself as a contraction \dash resulting in a map with a fixed point. Furthermore, since $\CF$ is also Lipschitz continuous with respect to our choice of noise $\Xi$ it follows that
	\begin{equ}
		\VERT \CM_n^\Xi(u)- \CM_n^{\Xi'}(u) \VERT_{T,n} \lesssim \| \Xi - \Xi'\|_{\CN_n} \; .
	\end{equ}
	If $\| \Xi - \Xi'\|_{\CN_n}$ is chosen small enough, then one can choose the same $T$ for $\CM_n^\Xi$ and $\CM_n^{\Xi'}$. Thus, the fixed-points are continuous with respect to the choice of (localised) driving noises in a small neighbourhood around a given set of noises $\Xi$.
\end{proof}
\begin{remark}
	For given $R, S > 0$ and $u_0 \in B_{R}(0)$, $\Xi \in B_S(0)$ the choice of $T$ only depends on $R$ and $S$, and not the individual $u_0$ and $\Xi$.
\end{remark}

The next step is to view the solution up to time $T$ as an element of $\cC\bigl( [0,T] ; \treminit_{n} \bigr) \cap \cC\bigl( (0,T] ; \reminit_{n} \bigr) $, where 
\begin{equ}
	\treminit_{n} \eqdef  \CC^{-\kappa}(\T^2; \cA_n^0) \times \CC^{\frac{1}{2} + \delta-\kappa}(\T^2; \cA_{n}^1 \otimes \C^{4} )
\end{equ}
which is of slightly worse regularity than $\reminit_n$. For the term depending on the initial condition, this follows directly from Proposition~\ref{HKSmootProp}. For the non-linearity we argue as follows.

For any $T \in (0,1]$, $t \in (0,T]$, and $\alpha, \eta, \theta > 0$ we have the sequence of inclusions
\begin{equ}
	 \CC_\s^{\alpha,\eta}(\spacetime_T; \cA_n) \hookrightarrow \CC_\s^{\eta,\eta}(\spacetime_T; \cA_n) = \CC_\s^{\eta}(\spacetime_T; \cA_n) \hookrightarrow \cC\bigl([0,T] ; \CC^{\eta-\theta}(\T^2 ; \cA_n)\bigr)
\end{equ}
with
\begin{equ}
	\sup_{t \in [0,T]}\| u(t, \bigcdot) \|_{\alpha; \cA_n } \lesssim \VERT u \VERT_{\alpha,\eta; \spacetime_T,\cA_n} \; ,
\end{equ}
where the constant of proportionality is independent of $n \in \N$. 
The contribution from the non-linearity is actually an element of $\CC_\s^{1+2\delta, 1+2\delta-2\kappa} \times \CC_\s^{\frac{3}{2}+\delta-\kappa , \frac{3}{2}+\delta-2\kappa}$, in particular both $\eta$-regularities are positive and we see that it is in $\cC\bigl( [0,T] ; \treminit_{n} \bigr) \cap \cC\bigl( (0,T] ; \reminit_{n} \bigr)$. 
\begin{corollary}
	In the setting of Theorem~\ref{thm:LocSolution}, the solution map $\CS_n^T$ can also be viewed as a map 
	\begin{equ}
		\CS^T_n \colon B_{\delta_1}(u_0) \times B_{\delta_2}(\Xi) \longrightarrow \cC([0,T]; \treminit) \; .
	\end{equ}
\end{corollary}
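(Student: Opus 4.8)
The set-membership part of the corollary --- that $u = \CS^T_n(u_0,\Xi)$ lies in $\cC([0,T];\treminit_n)$ --- has already been established in the discussion preceding the corollary, so the only new content is the continuity of the map $(u_0,\Xi)\mapsto u$ when the codomain is taken to be $\cC([0,T];\treminit_n)$. The plan is to read this off the fixed-point representation $u = \CM_n^\Xi(u) = Gu_0 + \CI(\CF(u))$ coming from the proof of Theorem~\ref{thm:LocSolution}, where $Gu_0$ collects the initial-data contributions ($G_B U_0$ and the analogous fermionic terms) and $\CI(\CF(u))$ denotes the terms obtained by integrating the nonlinearity $\CF(u)$ against $\CI_B$ and $\CI_F$. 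Since $\cC([0,T];\treminit_n)$ is a Banach space, it suffices to show that each of $(u_0,\Xi)\mapsto Gu_0$ and $(u_0,\Xi)\mapsto\CI(\CF(u))$ is continuous into it; one cannot embed $\CC_{T,n}$ wholesale, since the bosonic factor of $\CC_{T,n}$ carries the negative $\eta$-exponent $-\kappa$, but the two summands are individually well behaved at $t=0$.

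For the first map, Proposition~\ref{HKSmootProp} gives that the relevant heat semigroups send $\CC^{-\kappa+\varsigma}(\T^2;\cA_n^0)$ and $\CC^{\frac12+\delta-\kappa+\varsigma}(\T^2;\cA_n^1\otimes\C^4)$ continuously, and strongly continuously up to $t=0$, into $\cC([0,T];\CC^{-\kappa}(\T^2;\cA_n^0))$ and $\cC([0,T];\CC^{\frac12+\delta-\kappa}(\T^2;\cA_n^1\otimes\C^4))$ respectively; the loss $\varsigma>0$ of spatial regularity is exactly what makes the endpoint $t=0$ admissible, and this loss is already built into the definition of $\treminit_n$. Hence $u_0\mapsto Gu_0$ is continuous and linear from $\reminit_n$ into $\cC([0,T];\treminit_n)$, uniformly over $T\leqslant 1$.

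For the second map, recall from the proof of Theorem~\ref{thm:LocSolution} that $\CI\circ\CF$ maps $\CC_{T,n}$ into $\CC_\s^{1+2\delta,1+2\delta-2\kappa}(\spacetime_T;\cA_n^0)\times\CC_\s^{\frac32+\delta-\kappa,\frac32+\delta-2\kappa}(\spacetime_T;\cA_n^1\otimes\C^4)$ and is Lipschitz on bounded subsets there (this uses that $\CF$ is Lipschitz on bounded sets together with the fact that $\CI_B,\CI_F$ are $2$-regularising). Both $\eta$-exponents above are strictly positive for $\kappa$ small, so one chains the continuous embeddings $\CC_\s^{\alpha,\eta}(\spacetime_T;\cA_n)\hookrightarrow\CC_\s^{\eta,\eta}(\spacetime_T;\cA_n)=\CC_\s^{\eta}(\spacetime_T;\cA_n)\hookrightarrow\cC([0,T];\CC^{\eta-\theta}(\T^2;\cA_n))$ with $\theta>0$ chosen so small that $\eta-\theta$ still exceeds $-\kappa$ (resp.\ $\frac12+\delta-\kappa$) in the two components, and then embeds $\CC^{\eta-\theta}$ into the corresponding factor of $\treminit_n$. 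Precomposing with the continuity of $\CS^T_n$ into $\CC_{T,n}$ from Theorem~\ref{thm:LocSolution} yields the continuity of $(u_0,\Xi)\mapsto\CI(\CF(u))$ into $\cC([0,T];\treminit_n)$, and adding the two maps gives the corollary.

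The argument is essentially bookkeeping; the one step requiring a little care --- the ``main obstacle'', such as it is --- is the endpoint behaviour at $t=0$, since the spaces $\CC_\s^{\alpha,\eta}$ tolerate a blow-up of the $\CC^\alpha$-norm as $t\downarrow 0$ governed by $\eta$, so continuity of $t\mapsto u(t,\bigcdot)$ down to $t=0$ with values in a fixed spatial space is a genuine extra assertion rather than a formal consequence. For $Gu_0$ this is the strong continuity of the (H\"older) heat semigroup with the $\varsigma$-loss; for $\CI(\CF(u))$ it is the positivity of the $\eta$-exponents together with the $T^{(1+\sigma)/2}$ time gain of Proposition~\ref{prop:TimeImprov} already used in the contraction argument. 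Neither is deep, but both should be invoked explicitly rather than absorbed into the $\CC_\s^{\alpha,\eta}$ notation.
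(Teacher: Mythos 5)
Your proposal is correct and follows essentially the same route as the paper: the paper's justification of this corollary is precisely the decomposition into the initial-data term (handled via Proposition~\ref{HKSmootProp} together with the $\varsigma$-loss built into $\reminit_n$) and the integrated nonlinearity (handled via the chain of embeddings $\CC_\s^{\alpha,\eta}\hookrightarrow\CC_\s^{\eta,\eta}\hookrightarrow\cC([0,T];\CC^{\eta-\theta})$ using the positivity of the $\eta$-exponents), given in the paragraph preceding the corollary. Your additional bookkeeping for the continuity of the map, via the Lipschitz properties from the proof of Theorem~\ref{thm:LocSolution}, is the intended (implicit) argument.
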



We can now evaluate the solution $u$ at time $T<1$ giving us an element in $\reminit_n$ and allowing us to restart the equation at that time with noises shifted temporally by $T$ (which does not change their norms) and new initial condition $u(T, \bigcdot)$. By the usual arguments we construct in this way a maximal solution on an interval $[0,1 \wedge T(n,u_0, \Xi)]$ where either $T(n,u_0, \Xi) = \infty$ or $T(n,u_0, \Xi) \in (0,1]$ and 
\begin{equ}
	\lim_{t \uparrow T(n,u_0, \Xi)} \left\| u(t, \bigcdot ) \right\|_{\reminit_n} = \infty \; .
\end{equ}
For $L > 0 $ define
\begin{equ}
	T^L(n,u_0,\Xi) \eqdef  \inf\left\{ t \in [0,1] \, \middle| \, \|  u (t,\bigcdot)\|_{\reminit_n}  \geqslant L \right\} \; .
\end{equ}
For $(u_0, \Xi) \in \reminit_n \times \CN_n$ we let $\CS^L_n(u_0, \Xi)$ denote the solution \eqref{eq:lin_sig_model_DPD} up until $T^L(n,u_0,\Xi)$.

\begin{corollary}
\label{cor:Local_Cont_Prop}
	Let $L> 0$ be fixed, and let $T^L$ and $\CS^L_n$ be as above. Then for every $\eps > 0$ and $C>0$ there exist $\delta_1, \delta_2 > 0$, such that for $T = 1 \wedge T^L(n,u_0, \Xi) \wedge T^L(n,u_0', \Xi') $ one has the bound
	\begin{equ}
		\VERT \CS_n^L(u_0, \Xi) - \CS_n^L(u_0', \Xi') \VERT_{\CC_{T,n}}	\leqslant \eps
	\end{equ}
	for all $u_0, u_0' \in \reminit_n$, and $\Xi, \Xi' \in \CN_n$ such that $\|\Xi\|_{\CN_n}, \|\Xi'\|_{\CN_n} \leqslant C$, $\|\Xi-\Xi'\|_{\CN_n}\leqslant \delta_2$,
	$\|u_0\|_{\reminit_n}, \|u_0'\|_{\reminit_n} \leqslant \frac{L}{2}$, and $\|u_0-u_0'\|_{\reminit_n} \leqslant \delta_1$.
\end{corollary}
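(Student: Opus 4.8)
The plan is to iterate the local solution map of Theorem~\ref{thm:LocSolution} over finitely many time steps of a common length, tracking how the error accumulates at each restart. The crucial input is the Remark following Theorem~\ref{thm:LocSolution}: for initial data in a ball of radius $R$ and noises in a ball of radius $S$, the existence time may be chosen to depend on $R,S$ alone. Fixing $L$ and $C$ as in the statement, this produces a single $\tau = \tau(L,C) \in (0,1]$ for which $\CS^\tau_n$ is defined on the product of closed balls $\bar{B}_L(0) \times \bar{B}_C(0) \subset \reminit_n \times \CN_n$, and revisiting the contraction estimates in the proof of Theorem~\ref{thm:LocSolution} (where the discrepancies in both $u_0$ and $\Xi$ enter linearly) shows that $\CS^\tau_n$ is in fact Lipschitz there, with constant $K = K(L,C)$, both as a map into $\CC_{\tau,n}$ and, after composing with evaluation at time $\tau$ (which costs only a fixed, $\tau$-dependent factor because of the time-zero weight), as a map into $\reminit_n$.

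Next I would set up the restart scheme. Put $N \eqdef \lceil 1/\tau \rceil$, which depends only on $L$ and $C$. For $1 \le k < N$ with $k\tau < T$ we have $k\tau < T^L(n,u_0,\Xi) \wedge T^L(n,u_0',\Xi')$ since $T$ is the minimum of these stopping times, so by the very definition of $T^L$ both $\CS^L_n(u_0,\Xi)(k\tau,\bigcdot)$ and $\CS^L_n(u_0',\Xi')(k\tau,\bigcdot)$ have $\reminit_n$-norm strictly below $L$ (for $k=0$ they are $\le L/2$ by hypothesis). Since the norms of the noise spaces in \eqref{e:driving_noises} are invariant under time translation, the shifted noises $\Xi(k\tau+\bigcdot)$, $\Xi'(k\tau+\bigcdot)$ remain in $\bar{B}_C(0)$. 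Hence $\CS^\tau_n$ applies at every such restart and, by the uniqueness part of Theorem~\ref{thm:LocSolution}, reproduces the restriction of $\CS^L_n$ to $[k\tau,(k+1)\tau]\cap[0,T]$; provided $\delta_1 < L/2$ the primed data also lies in $\bar{B}_L(0)$.

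Then I would propagate the error. Write $e_k$ for the $\reminit_n$-distance of the two solutions at time $k\tau$ (for $k\tau \le T$) and $\delta_2 = \|\Xi-\Xi'\|_{\CN_n}$; the noise discrepancy is the same on every shifted interval. The Lipschitz bound gives $e_k \le K(e_{k-1}+\delta_2)$ and, on $[(k-1)\tau,k\tau]$, a bound $\le K(e_{k-1}+\delta_2)$ for the solution difference in the corresponding (singular at $(k-1)\tau$) H\"older--Besov norm. Since $e_0 \le \delta_1$ and there are at most $N$ steps, a geometric summation yields $e_k \le C_1(L,C)(\delta_1+\delta_2)$ for all relevant $k$, hence each per-step solution difference is $\le C_2(L,C)(\delta_1+\delta_2)$. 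Finally one concatenates these $N$ local bounds into a single bound on $\VERT \CS^L_n(u_0,\Xi) - \CS^L_n(u_0',\Xi')\VERT_{\CC_{T,n}}$ using the gluing properties of the spaces $\CC_\s^{\alpha,\eta}$ from Appendix~\ref{sec:Holder_Sing}, noting that the time-zero singularity is present only on the first interval $[0,\tau]$ while on $[\tau,T]$ the solution is a genuine continuous function of strictly positive regularity, so there the ordinary non-singular norms apply and glue directly. Choosing $\delta_1,\delta_2 < \eps / (2\,C_2(L,C))$ (and $<L/2$) then gives the asserted bound.

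The step I expect to be the main obstacle is this last concatenation: assembling the $N$ local estimates, each formulated in a space carrying its own time-singularity location, into a control of the single norm $\VERT\bigcdot\VERT_{T,n}$ of $\CC_{T,n}$ whose weight sits at $t=0$. The error bookkeeping itself is routine precisely because the step length $\tau$ --- and therefore the number of restarts $N$ --- is uniform over the data and noises under consideration, which is exactly what the Remark after Theorem~\ref{thm:LocSolution} supplies; so nothing probabilistic or algebraic is at stake here, only the regularity-theoretic gluing.
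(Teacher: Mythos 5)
Your proposal is correct and follows essentially the same route as the paper, which simply invokes the argument of \cite[Corollary~7.12]{Hai14}: the key point you identify \dash that the restart step length $\tau$ depends only on $L$ and $C$ and hence the number of iterations is uniform \dash is exactly the fact the paper's proof singles out. The iteration, error propagation, and gluing you describe are the standard details behind that citation.
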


\begin{proof}
	This follows by the same arguments as in Corollary 7.12 in \cite{Hai14}. The argument relies on the fact that, knowing $C$ and that $\|u(t,\bigcdot)\|_{\reminit_n} \leqslant L$, we  can find a lower bound for the interval $\Delta t$ in the contraction argument, which is independent of the explicit noise realisation and initial condition.
\end{proof}

If a solution $u$ blows up at some finite time $T(u)$, then we can add an element $\infty$ to the space of solutions. In particular, let $\CC_n^{\sol} $ be the extension of the space $\cC\bigl([0,1]; \treminit_n \bigr)$, defined in Appendix~\ref{sec:blowup_space}, to include trajectories which blow up in finite time. In this space we can define for arbitrary $u_0 \in \reminit_n$ and $\Xi \in \CN_n$ the solution map
\begin{equ}
	\CS_n \colon \reminit_n \times \CN_n \longrightarrow \CC^\sol_n
\end{equ}
where $\CS_n(u_0, \Xi)\restr {[0,T]} \eqdef \CS^T_n(u_0,\Xi)$ for any $T \in \bigl[0, T(n,u_0,\Xi) \bigr) \cap [0,1]$, while for $t \in \left( T(n, u_0,\Xi),1 \right] $ we set $\CS_n(u_0,\Xi) = \infty$. We have by the above Corollary~\ref{cor:Local_Cont_Prop} and Lemma~\ref{lemma:Csol_cont} the following. 
\begin{proposition}
	For all $n \in \N$, the map $\CS_{n} \colon \reminit_n \times \CN_n \longrightarrow \CC^\sol_n$ is well-defined and continuous.
\end{proposition}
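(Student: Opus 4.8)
The plan is to assemble the continuity of $\CS_n$ from the two ingredients that have just been established: the local (in time) continuity of the truncated solution map $\CS_n^L$ from Corollary~\ref{cor:Local_Cont_Prop}, and the abstract continuity of the ``glueing'' procedure on the blow-up space $\CC_n^\sol$ recorded in Lemma~\ref{lemma:Csol_cont}. First I would recall precisely how $\CS_n$ is built: starting from $(u_0,\Xi)$, Theorem~\ref{thm:LocSolution} produces a solution on $[0,T]$ for $T$ small depending on $\|u_0\|_{\reminit_n}$ and $\|\Xi\|_{\CN_n}$; one then evaluates at time $T$, restarts with initial datum $u(T,\bigcdot) \in \reminit_n$ and with the noises $\Xi$ shifted by $T$ (which leaves their $\CN_n$-norms unchanged since the spaces $\CC_\s^{\alpha,\alpha}(\spacetime_1)$ are translation invariant in the obvious sense), and iterates. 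This yields a maximal solution on $[0, 1 \wedge T(n,u_0,\Xi))$, and by definition $\CS_n(u_0,\Xi)$ is this trajectory, completed by $\infty$ past the blow-up time if $T(n,u_0,\Xi) \le 1$. Well-definedness is therefore immediate once one checks that the restarting procedure is consistent (independence of the chosen intermediate times $T$), which follows from the uniqueness statement in Theorem~\ref{thm:LocSolution}.

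For continuity, I would fix $(u_0,\Xi)$ and a sequence (or net) $(u_0^{(k)}, \Xi^{(k)}) \to (u_0,\Xi)$ in $\reminit_n \times \CN_n$ and show $\CS_n(u_0^{(k)},\Xi^{(k)}) \to \CS_n(u_0,\Xi)$ in $\CC_n^\sol$. By the definition of the topology on $\CC_n^\sol$ (Appendix~\ref{sec:blowup_space}), it suffices to show that for every $\tau < T(n,u_0,\Xi) \wedge 1$ one has $u^{(k)} \to u$ uniformly on $[0,\tau]$ in $\treminit_n$, and, if $T(n,u_0,\Xi) \le 1$, that the solutions $u^{(k)}$ also eventually blow up (or at least leave every bounded set of $\reminit_n$) near time $T(n,u_0,\Xi)$; this last point is exactly what Lemma~\ref{lemma:Csol_cont} is designed to package, so the real content is the convergence on compact time intervals strictly inside the interval of existence. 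To get that, pick $L$ larger than $2\sup_{t \in [0,\tau]} \|u(t,\bigcdot)\|_{\reminit_n}$; then $T^L(n,u_0,\Xi) > \tau$, and by Corollary~\ref{cor:Local_Cont_Prop} applied on the ball of radius $L/2$ and with $C = \|\Xi\|_{\CN_n}+1$, for $k$ large enough one has $\|u_0^{(k)}\|_{\reminit_n} \le L/2$, $\|\Xi^{(k)}\|_{\CN_n} \le C$, and the bound $\VERT \CS_n^L(u_0^{(k)},\Xi^{(k)}) - \CS_n^L(u_0,\Xi)\VERT_{\CC_{T_k,n}} \le \eps$ with $T_k = 1 \wedge T^L(n,u_0,\Xi) \wedge T^L(n,u_0^{(k)},\Xi^{(k)})$. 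A standard continuation argument (walking along a finite chain of restarts on $[0,\tau]$, at each step using the uniform lower bound on the contraction time interval from the remark after Corollary~\ref{cor:Local_Cont_Prop}) then upgrades this to convergence of $u^{(k)}$ to $u$ on all of $[0,\tau]$, with $T^L(n,u_0^{(k)},\Xi^{(k)}) > \tau$ for $k$ large. Feeding this into Lemma~\ref{lemma:Csol_cont} gives convergence in $\CC_n^\sol$.

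The main obstacle — and the only place where genuine care is needed — is handling the interaction between the \emph{a priori unknown} blow-up times: one must rule out the pathology where $T(n,u_0^{(k)},\Xi^{(k)})$ accumulates below $\tau$ even though $\tau < T(n,u_0,\Xi)$. This is precisely controlled by iterating Corollary~\ref{cor:Local_Cont_Prop}: as long as the approximate solution stays in a ball where the limiting solution is bounded by $L/2$, the distance between the two stays $\le \eps$, so the approximate solution stays in a ball of radius $L/2 + \eps < L$ and hence cannot have reached its $T^L$-threshold; a bootstrap over the finitely many restart intervals needed to cover $[0,\tau]$ (finitely many because the contraction-time lower bound depends only on $L$ and $C$, not on the realisation) closes the argument. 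Everything else — consistency of restarts, translation invariance of the noise norms, the embeddings placing $u(T,\bigcdot)$ back in $\reminit_n$ — is routine and has effectively been verified already in the discussion preceding the statement. I would therefore present the proof as: (i) recall the construction and cite uniqueness for well-definedness; (ii) fix $\tau$, choose $L$, run the finite-chain continuation using Corollary~\ref{cor:Local_Cont_Prop} to get uniform-on-$[0,\tau]$ convergence together with a uniform lower bound on the $T^L$ of the approximants; (iii) invoke Lemma~\ref{lemma:Csol_cont} to conclude continuity into $\CC_n^\sol$.
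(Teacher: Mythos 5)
Your proposal is correct and follows exactly the route the paper takes: the paper's proof consists of citing Corollary~\ref{cor:Local_Cont_Prop} and Lemma~\ref{lemma:Csol_cont}, and your argument simply spells out the finite-chain continuation and blow-up-time bookkeeping that those two results are designed to package.
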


\subsection{Solving the Equation in \TitleEquation{\cA\F}{cAF}}

We now wish to do our best to patch together the solution maps $(\CS_n)_{n \in \N}$ into a single solution map giving solutions taking value in $\cA\F$. 

Recalling that in Lemma~\ref{lem:proj_limit} we showed that $\cA\F$ is the projective limit of the spaces $(\cA_{n})_{n \in \N}$, we define the following projective space: 
\begin{equ}
	\CC^\sol_{\cA} \eqdef \left\{ (u_n)_n \in \prod_{n \in \N} \CC^\sol_n \, \middle| \,
\begin{array}{c}
 \forall n, m \in \N \text{ with } n \leqslant m, \\ 
 \pi_{nm} \circ u_m \restr{[0,T_m)} = u_n \restr{[0,T_m)} 
 \end{array}
\right\}
\end{equ}
where $\pi_{nm} \colon \cA_m \to \cA_n$ is the canonical projection, and 
\begin{equ}
	T_m \eqdef \inf\left\{ t \in [0,1] \, \big| \, u_m(t) = \infty \right\}\;.
\end{equ}
The reason for restricting the equality of $u_{m}$ and $u_{n}$ to the interval $[0,T_m)$ is that, although $\lim_{t \uparrow T_m} u_m(t) = \infty$ the same might not be true for $u_n(t)$ \dash we have $T_{n} \geqslant T_{m}$ and cannot rule out a strict inequality. 
Since the maps $\pi_{mn}$ are $C^*$-algebra morphisms, it follows from continuity that 
\begin{equ}
	\pi_{nm} \Bigl(  \CS_m( \pi_m \circ u_0, \pi_m \circ \Xi ) \restr{[0,T_m)} \Bigr) =   \CS_n( \pi_n \circ u_0, \pi_n \circ \Xi ) \restr{[0,T_m)} \; .
\end{equ}
Furthermore, let
\begin{equ}
	\reminit_{\cA} \eqdef \CC^{-\kappa} \left( \T^2 ; \cA^0  \right) \times \CC^{\frac{1}{2}+\delta-\kappa} \left(\T^2 ;  \cA^1 \otimes \C^4\right) \;
\end{equ} 
and analogously define the space $\CN_{\cA}$ by replacing, for $i \in \{1,2\}$, all instances of $\cA^{i}_{n}$ in \eqref{thm:LocSolution} with $\cA^{i}$.

We can then write down the desired solution map for $\cA$-valued solutions as
\begin{equ}
\begin{aligned}
	\CS_{\cA} \colon  \reminit_{\cA} \times \CN_{\cA}    & \longrightarrow \CC^\sol_{\cA} \\
	(u_0,\Xi) & \longmapsto \bigl(   \CS_n \left( \pi_n \circ u_0,  \pi_n \circ \Xi \right) \bigr)_{n \in \N}  
\end{aligned}\; .
\end{equ}

\begin{remark}
	\label{rem:NotGlobSol}
	Note that here we fall short of honestly building local solutions in $\cA$, in particular we cannot rule out that $T_{\cA} \eqdef \inf_{n \in \N} T_n  = 0$. If $T_{\cA} > 0$ however, then we can show that iterates of the fixed-point map converges in the extended CAR algebra $\cA$ on the temporal interval $[0,T_{\cA})$, and the solution makes sense as an element $\cC\bigl( [0,T_{\cA}) ; \reminit_{\cA} \bigr)$.
\end{remark}

\begin{theorem}\label{thm:precise_main_thm}
	The map $\CS_{\cA}$ is well-defined and continuous. In particular there exist $Z(\cA\F)$-valued functions $\big(C_{\eps}\big)_{ \eps \in (0,1] }$, independent of $g$ such that the solutions $u_\eps \eqdef \big(\phi_{\eps},( \upsilon_{\eps},\bar{\upsilon}_{\eps}) \big) \in \CC_\cA^\sol$ to the system of equations \eqref{eq:Yukawa_eps}
	with initial conditions $u_0 \eqdef (\phi_0, \upsilon_0, \bar\upsilon_0) \in \reminit_{\cA}$, converge in probability in
$		\CM\bigl(\Omega, \mu;  \CC^\sol_{\cA} \bigr)$
	as $\eps \downarrow 0$ to
	\begin{equ}
		\CS_{\cA}\left(u_0 -\big( \<1>, \<1IF>, \<1IA> \big)(0) , \Xi\right) + \big( \<1>, \<1IF>, \<1IA> \big) \; ,
	\end{equ}
	where
	\begin{equ}
		\Xi =  (\ \<1>, \<1F>, \<1A>, \<2F>, \<2A>, \<2AF>\ )
	\end{equ}
	are the $\xi$-measurable, $\cA\F$-valued, space-time distributions constructed in Section~\ref{sec:Renormalisation}.
\end{theorem}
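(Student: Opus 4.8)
The plan is to assemble the proof from the pieces already established in the paper, in the following order. First, I would invoke the results of Section~\ref{sec:Renormalisation} to produce the random element $\Xi = (\<1>, \<2>, \<3>, \<1F>, \<1A>, \<2F>, \<2A>, \<2AF>) \in \CM(\Omega,\mu;\CN_\cA)$ built from the mollified noises, along with the renormalisation counterterms $C^1_\eps$ (from the bosonic tree $\<3>$, via $C^\eps_{\<2>}$) and $C^2_\eps$ (the $Z(\cA\F)$-valued constant $\mathbf{C}^\eps_{\<2AF>}$ from \eqref{eq:def_RenormElem}), noting that these do not depend on $g$. The key input is that $\Xi_\eps \to \Xi$ in probability in $\CN_\cA$, which follows by combining Proposition~\ref{prop:fermionic_wick_conv}, the mixed-product convergence proposition, the Tsatsoulis--Weber proposition for $\<2>_\eps, \<3>_\eps$, and Lemmata~\ref{lem:NoiseEst}--\ref{prop:bosonic_Noise_Properties} for $\<1>_\eps, \<1F>_\eps, \<1A>_\eps$. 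Since $\CN_\cA$ is the projective limit whose building blocks are the $\CN_n$, convergence in $\CN_\cA$ is precisely convergence in each $\CN_n$; the bosonic components converge pathwise on $\Omega$ and the fermionic ones in $\|\bigcdot\|_n$ for each fixed $n$.

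Second, I would check that $\CS_\cA$ is well-defined and continuous. Well-definedness amounts to verifying that $\bigl(\CS_n(\pi_n\circ u_0, \pi_n\circ\Xi)\bigr)_{n\in\N}$ lands in $\CC^\sol_\cA$, i.e.\ the compatibility $\pi_{nm}\circ u_m\restr{[0,T_m)} = u_n\restr{[0,T_m)}$; this is exactly the displayed identity before the statement, which holds because the $\pi_{nm}$ are $C^*$-homomorphisms intertwining the fixed-point maps $\CM^\Xi_m$ and $\CM^\Xi_n$ (the nonlinearity $F$ is built from multiplication, scalar multiplication, and the bilinear form $B$, all of which commute with $C^*$-homomorphisms), combined with uniqueness of the fixed point on $[0,T_m)$. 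Continuity of $\CS_\cA$ follows from continuity of each $\CS_n$ (the last Proposition of Section on ``Solving the Equation in $\cA\F_n$'') together with the fact that $\CC^\sol_\cA$ carries the projective-limit topology, so a map into it is continuous iff each component is; the maps $u_0 \mapsto \pi_n\circ u_0$ and $\Xi\mapsto\pi_n\circ\Xi$ are continuous by continuity of $\pi_n$.

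Third, I would translate between the regularised equation \eqref{eq:Yukawa_eps} and the remainder equation \eqref{eq:lin_sig_model_DPD}. Writing $u_\eps = (\phi_\eps, \upsilon_\eps, \bar\upsilon_\eps)$ and substituting the Da Prato--Debussche ansatz $\phi_\eps = U_\eps + \<1>_\eps$, $\upsilon_\eps = Y_\eps + \<1IF>_\eps$, $\bar\upsilon_\eps = \overline{Y}_\eps + \<1IA>_\eps$, the choice of counterterms $C^1_\eps, C^2_\eps$ is exactly what is needed so that $(U_\eps, Y_\eps, \overline{Y}_\eps)$ solves \eqref{eq:lin_sig_model_DPD} with driving noise $\Xi_\eps$ and initial data $u_0 - (\<1>_\eps, \<1IF>_\eps, \<1IA>_\eps)(0)$ (projected via $\pi_n$ in each $\cA_n$) — this is a direct but bookkeeping-heavy computation expanding the polynomial and bilinear terms and matching Wick monomials, which I would only sketch. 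Hence $u_\eps = \CS_\cA(u_0 - (\<1>_\eps,\<1IF>_\eps,\<1IA>_\eps)(0), \Xi_\eps) + (\<1>_\eps, \<1IF>_\eps, \<1IA>_\eps)$ as elements of $\CM(\Omega,\mu;\CC^\sol_\cA)$, for $\eps$ small enough that all objects exist (cf.\ the definition of $\CS_n$ on the blow-up space $\CC^\sol_n$, which makes $\CS_\cA$ defined for all inputs). Finally, I would pass to the limit: $\Xi_\eps \to \Xi$ and $(\<1>_\eps,\<1IF>_\eps,\<1IA>_\eps)(0) \to (\<1>,\<1IF>,\<1IA>)(0)$ in probability, and adding back $(\<1>_\eps, \<1IF>_\eps, \<1IA>_\eps) \to (\<1>, \<1IF>, \<1IA>)$ in $\CC^\sol_\cA$, continuity of $\CS_\cA$ plus continuity of addition on $\CC^\sol_\cA$ gives convergence in probability in $\CM(\Omega,\mu;\CC^\sol_\cA)$ to the claimed limit.

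The main obstacle I anticipate is not any single hard estimate — those are all quarantined in the cited propositions — but rather the careful matching in step three between the two forms of the equation together with the delicate handling of the solution-map topology on $\CC^\sol_\cA$: one must be honest about the fact that blow-up times $T_n$ can differ across $n$, so ``convergence in $\CC^\sol_\cA$'' is convergence in each $\CC^\sol_n$ with possibly different (random) existence times, and one needs Lemma~\ref{lemma:Csol_cont} together with Corollary~\ref{cor:Local_Cont_Prop} to make the joint-in-$n$ continuity statement rigorous. A secondary subtlety is that the convergence $\Xi_\eps \to \Xi$ must be upgraded from ``in each $\CN_n$'' to genuine convergence in the Fréchet/projective-limit sense compatible with the metric on $\CM(\Omega,\mu;\CC^\sol_\cA)$; since only countably many seminorms are involved this is routine but should be stated.
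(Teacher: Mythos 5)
Your proposal is correct and follows essentially the same route as the paper: assemble the convergence of the stochastic objects from Section~\ref{sec:Renormalisation}, use the projective-limit structure and the continuity of each $\CS_n$ to get well-definedness and continuity of $\CS_{\cA}$, identify the counterterms via the remark translating \eqref{eq:Yukawa_eps} into the remainder equation \eqref{eq:lin_sig_model_DPD}, and conclude by continuity of the solution map together with the convergence of the linear solutions. The paper's own proof is terser (deferring the equation-matching to the preceding remark and the compatibility/continuity of $\CS_{\cA}$ to the displayed identities before the theorem), but the content is the same.
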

\begin{remark}
	Before beginning with the proof, we note that one arrives at the regularised version of \eqref{eq:lin_sig_model_DPD} from the regularised version of \eqref{eq:lin_sig_model_DPD1} by subtracting the central element $C^\eps_{\<2AF>}$ as well as $3 \lambda C^\eps_{\<2>} u$, defined in \eqref{eq:def_RenormElem} and \eqref{eq:BosonicWickProds}, from the bosonic equation as
	\begin{equ}
		\bilin{\<1A>, \<1F>} = \<2AF> + C^\eps_{\<2AF>} \; \quad \text{and} \quad 
		u^3 - 3 C^\eps_{\<2>} u = U^3 + 3 U^2 \<1> + 3 U \<2> + \<3> \; ,
	\end{equ}
	where $u = U + \<1>$. Thus, we are indeed solving the renormalised equation \eqref{eq:Yukawa_eps} instead of the original \eqref{eq:lin_sig_model}.
\end{remark}
\begin{proof}
This follows almost immediately from the previous results.
The final ingredients necessary are the regularities of the noises used in the Da Prato--Debussche decomposition. From Proposition~\ref{prop:bosonic_Noise_Properties} and Proposition~\ref{prop:fermionic_Noise_Properties} we have that $\big( \<1>, \<1IF>, \<1IA> \big) \in \mathscr C(\R ; \treminit_{\cA} )$ and it is straighforward to use the $\eps \downarrow 0$ convergence statement in each of the three lemmas of Section~\ref{subsec:noise_reg} to show that $\big( \<1>_\eps, \<1IF>_{\eps}, \<1IA>_{\eps} \big)$ converges to $\big( \<1>, \<1IF>, \<1IA>\big)$ in $\CC^\sol_{\cA}$, this gives us the result.
\end{proof}

\begin{remark}
	The above argument can be easily repeated to extend the solutions beyond time $1$ by suitably shifting the noises in time and modifying the integration kernels correspondingly.
\end{remark}

\subsection{ \TitleEquation{\mfA_\infty}{fA}-Valued and \TitleEquation{\CA\F}{AF}-Valued Solutions}\label{subsec:original_eqn}

Since $\<1F>_\eps,\<1A>_\eps$ are smooth $\mfA_\infty \otimes \C^2$-valued functions, it follows that 
\begin{equ}
	\<2AF>_\eps \in \cC^\infty(\spacetime; \mfA_\infty) \subset \CC^{-1+2\delta}_\s(\spacetime; \mfA_\infty) \; ,
\end{equ}
etc.\ for the other mollified noises. In particular, for $\eps > 0$, we can solve \eqref{eq:lin_sig_model_DPD} in an analogously defined space $\CC^\sol_{\mfA}$, in the same way we solved the equation with values in the spaces $\cA_n$. We call this solution map $\CS_\mfA$. The same is true for $\CA\F$, and we call the corresponding solution map $\CS_{\CA}$ with solution space $\CC^\sol_{\CA}$.

As the maps $\pi_n \restr{\mfA_\infty}$ and $\digamma$ are $C^*$-algebra homomorphisms from $\mfA_\infty$ to $\cA_n$ and $\CA\F$ respectively the same compatibility condition between solutions with values in $\mfA_\infty$ and values in $\cA_n$ or $\CA$ holds. Again the blow-up time in $\mfA_\infty$ might be strictly smaller than  in $\cA_n$ and $\CA$.

We summarise this interrelationship between the different solutions in the following commutative diagram, where we only note the dependence on the mollified noise input denoted by $\CN_\eps$.
\begin{equ}
\tikzexternaldisable
	\begin{tikzcd}
		\CN_\eps \arrow[rd,"\CS_{\CA}"] \arrow[d, "\CS_{\mfA}"] \arrow[dd, bend right = 50, "\CS_n"'] \arrow[rdd, bend left = 80, "\CS_{\cA}"]  \\
		\CC^\sol_{\mfA} \arrow[r, "\digamma"] \arrow[d, "\pi_n"] \arrow[rd, hookrightarrow, "\pi"] & \CC^\sol_{\CA} \\
		\CC^\sol_{n} & \CC^\sol_{\cA} \arrow[l] \arrow[r, dashrightarrow] & \cC([0,1]; \treminit_{\cA})
		\end{tikzcd}
\tikzexternalenable
\end{equ}
If one is able to prove that $T(n, \pi_n \circ u_0, \pi_n \circ \Xi_\eps) = \infty$ for all $n \in \N$ and $\eps > 0$, then the solution will converge in the closure of the image of $\cC([0,1]; \treminit_{\mfA} )$ in $\prod_{n \in \N} \cC([0,1]; \treminit_{\cA_n})$ and thus also in $\cC([0,1]; \treminit_{\cA})$ which we have denoted here by a dotted arrow. 

This commutative diagram tells us that when $\eps > 0$, the solutions that we can construct in $\cA_{n}$, or $\CA$, and $\mfA$ are consistent with each other \dash in particular they are indistinguishable at the level of correlation functions.

%
%

\appendix
\section{Spaces of Distributions}\label{sec:spaces_of_dist}

\subsection{LCTVS-Valued H\"older--Besov Spaces}
\label{sec:Holder_Space}

\begin{definition}
	Let $(E,\mfP)$ be a locally convex topological vector space (LCTVS) with collection of seminorms $\mfP$. We denote by $\cD'(\R^d ; E)$ the set continuous linear maps $\cD(\R^d) \to E$ equipped with the following topology. A net $(\xi_\alpha)_{\alpha} \subset \cD'(\R^d ; E)$ converges to $\xi$ if and only if for every bounded set $A \subset \cD(\R^d)$ and every seminorm $\mfp \in \mfP$
	\begin{equ}
		\lim_{\alpha} \sup_{\phi \in A} \mfp\bigl( \xi_\alpha(\phi) - \xi(\phi) \bigr)  = 0 \; .
	\end{equ}
	A subset $A \subset \cD(\R^d)$ is said to be bounded if and only if there exists some compact $\K \subset \R^d$, s.t.\ $A \subset \cD(\K)$, and, for every seminorm $\| \bigcdot \|_{n, \K}$ of $\cD(\K)$, $\|A\|_{n,\K}$ is a bounded subset of $[0,\infty)$.
\end{definition}

In the following we denote by $B_{\s}(x,r) \eqdef \left\{y \in \R^d \, \big| \, \|x-y\|_{\s} \leqslant r \right\}$.

\begin{definition}[Test Functions $\CB^r_{\s,x}$]
	Let $r \in \N$, $\s$ a scaling on $\R^d$, and $x \in \R^d$. Define
	\begin{equ}
		\CB^r_{\s,x} \eqdef \left\{ \eta \in \cD(\R^d) \, \big| \, \supp \eta \subset B_{\s}(x,1), \; \|\eta\|_{\cC^r} \leqslant 1 \right\}	\; ,
	\end{equ}
	where $\|\bigcdot\|_{\cC^r}$ denotes the usual $\cC^r$-norm.
	Furthermore, for $\alpha \in \R$, let $\CB^{r,\alpha}_{\s, x}$ be the subset of $\CB^r_{\s, x}$ such that for all polynomials $P$ of degree $\deg P \leqslant \alpha$ and all $\phi \in \CB^{r,\alpha}_{\s,x}$
	\begin{equ}
			\int \phi(x) P(x) \d x	= 0 \; .
	\end{equ}
\end{definition}


We have the following definition of H\"older--Besov Spaces on open domains.

\begin{definition}
\label{def:OpnHolderSpace}
	Let $(E,\mfP)$ be a locally convex topological vector space (LCTVS) with a collection of seminorms $\mfP$. Let $U \subset \R^d$ be an open set. We define for $\xi \in \cD' \left( U ;  E \right) $, $\K \subset U$ compact, the seminorm
	\begin{equ}\label{eq:besov_norm}
		\|\xi\|_{\alpha;U,\K,\mfp} \eqdef \sup_{x \in \K} \sup_{\eta \in \CB^{r,\alpha}_{\s,0}} \sup_{\lambda \in (0,\lambda_x]} \lambda^{-\alpha} \mfp \left( \xi \left( \CS^\lambda_{\s,x} \eta \right) \right) +  \sup_{x \in \K} \sup_{\eta \in \CB^{r}_{\s,0}} \mfp \left( \xi \left( \CS^{\lambda_x}_{\s,x} \eta \right) \right) \;   ,
	\end{equ}
	where $\lambda_x \eqdef 1 \wedge \frac{1}{2} \dists(x, \R^d \setminus U)$ with 
	\begin{equ}
		\dists(x, \R^d \setminus U) \eqdef \inf_{y \in \R^d \setminus U} \|x-y\|_\s\;.
	\end{equ}
	Then the space of $E$-valued $\alpha$-H\"older functions is given by
	\begin{equ}
		\CC^\alpha_{\s} \big( U; E\big) \eqdef \left\{ \xi \in  \cD' \left( U ; E \right)  \, \middle| \, \forall \mfp \in \mfP \, \forall \K \subset U \text{ compact }: \|\xi\|_{\alpha; U, \K,\mfp}  < \infty \right\}
	\end{equ}
	which is a complete subspace of $\cD'(U;E)$. If the index $\K$ is dropped, the supremum is taken over all of $U$. When no confusion may arise we often drop the index $U$.

\end{definition}

We can also naturally define H\"older--Besov spaces over closed domains.

\begin{definition}
\label{def:ClsdHolderSpace}
	 In the above setting, let $A \subset \R^d$ be a closed subset, and  $\CU$ the system of open neighbourhoods $U$ of $A$, where in particular $\bigcap \CU = A$, and let $\alpha \in \R$. We set $\CC^{\alpha}_{\s}(A; E)$ to be the space of germs of distributions defined on open sets containing $A$. 

	That is, for $A \subset U, V$ open we say that $f \in \CC^{\alpha}_{\s}(U;E)$ and $g \in \CC^{\alpha}_{\s}(V;E)$ are equivalent if and only if $f(\phi) = g(\phi)$ for all test functions $\phi$ compactly supported in $U \cap V$. Together with the natural restriction $\CC^{\alpha}_{\s}(V;E) \to \CC^{\alpha}_{\s}(U;E)$ for $A \subset U \subset V$, we can thus define the direct limit
	\begin{equ}
		\CC^{\alpha}_{\s}(A;E) \eqdef \varinjlim_{U \supset A} \CC^{\alpha}_{\s}(U;E) \; ,
	\end{equ}
	with canonical continuous restrictions $\pi_U \colon \CC^{\alpha}_{\s}(U;E) \to \CC^{\alpha}_{\s}(A;E)$. 

	This space is topologised by the set of seminorms 
	\begin{equ}
		\|f\|_{\alpha; A ,\K,\mfp} \eqdef \inf_{U \supset A} \inf_{\substack{g \in \CC^{\alpha}_{\s}(U;E)\\ \pi_U(g) = f}} \|g\|_{\alpha;U,\K,\mfp}  \;  ,
	\end{equ}
	where $\mfK \subset A$ compact. If the index $\K$ is dropped, the supremum is taken over all of $A$. When no confusion may arise we often drop the index $A$. 

\end{definition}

\begin{remark}\label{rem:besovtensorproduct}
	By following the argument of \cite[Theorem~44.1]{Trev67} in the case of $\cC^r(U; E)$ for $U \subset \R^d$ open, it is straightforward to prove  that $\CC^\alpha_{\s} \left( U; E\right)$ is isomorphic to the injective tensor product $\CC^\alpha_{\s} \left( U\right) \wotimes_{\eps} E$. 
For a closed subset $A \subset \R^n$ as in Definition~\ref{def:ClsdHolderSpace}, this follows from the universal property of the direct limit. 
\end{remark} 

\begin{remark}
	For Proposition~\ref{prop:LqExchange}, we also need another equivalent set of seminorms on $\CC^{\alpha}_\s(\R^d;E)$. As in \cite{Hai14}, let $\chi \colon \R \to \R$ be a scaling function of regularity $r \in \mathbb{N}_+$. We define the $\s$-scaled lattice to be
	\begin{equ}
		\Lambda^\s_n \eqdef 2^{-n \s} \Z^d \subset \R^d \; .
	\end{equ}
	and as in \cite{Hai14} an $\s$-scaled wavelet basis on $\R^d$ with mother wavelets $\chi_z$ for $z \in \Lambda^{\s}_0$ and father wavelets $\phi^n_z$ for $z \in \Lambda^{\s}_n$, $n \in \N$ and $\phi \in \Phi$ where $\Phi$ is the finite set of generating father wavelets.

	We define the seminorm
	\begin{equ}
		\|\xi\|_{w,\alpha;\K,\mfp} \eqdef\sup_{n \in \N} \sup_{z \in \Lambda_n^{\s} } \sup_{\phi \in \Phi}	2^{\frac{n|\s|}{2}+n \alpha } \mfp \left( \xi(\phi^n_z) \right) \vee \sup_{z \in \Lambda_0^{\s} } \mfp\left( \xi(\chi_z) \right)\;,
	\end{equ}
	which is equivalent to $\| \bigcdot \|_{\alpha; \K, \mfp}$ by a straightforward argument.

\end{remark}

\begin{remark}
	For $\alpha > 0$, $\alpha \notin \N$ the spaces $\mathcal C^\alpha_{\mathfrak s} \left( \R^d; E\right)$ agree with the classical H\"older spaces.
\end{remark}

\begin{definition}
	A kernel $K \colon \R^d \setminus\{0\} \to \C$ smooth except for a singularity at the origin is called $\beta$-regularising if it is compactly supported and for every $k \in \N^d$ there exists a constant $C$ such that for all $x \in \R^d$
	\begin{equ}
		\big| D^k K(x) \big| \leqslant C \|x\|_{\s}^{\beta-|\s|-|k|_{\s}} \; .
	\end{equ}
\end{definition}

\begin{theorem}[Schauder Estimate]
\label{SchdrThm}
	Let $\alpha \in \R$, $\beta >0$ and $K$ a $\beta$-regularising kernel.
	Convolution with $K$ is a continuous map $\mathcal C^\alpha_{\mathfrak s}(\R^d; E) \to \mathcal C^{\alpha+\beta}_{\mathfrak s}(\R^d; E)$.
\end{theorem}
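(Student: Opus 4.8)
The plan is to reduce the vector-valued statement to the scalar one by exploiting the tensor-product description of $\CC^\alpha_{\s}(\R^d;E)$ noted in Remark~\ref{rem:besovtensorproduct}. First I would observe that convolution with $K$ acts only on the $\R^d$-variable, so it is natural to write the operator as $(\,\bigcdot\,\ast K)\otimes \bone_E$ acting on $\CC^\alpha_{\s}(\R^d)\wotimes_\eps E$. Since the scalar Schauder estimate (the case $E=\C$, which is \cite[Theorem~5.1]{Hai14} or rather the proof of \cite[Lemma~10.17]{Hai14}) gives that $\,\bigcdot\,\ast K \colon \CC^\alpha_{\s}(\R^d)\to \CC^{\alpha+\beta}_{\s}(\R^d)$ is continuous, and since the injective tensor product is functorial with respect to continuous linear maps (tensoring a continuous map with $\bone_E$ yields a continuous map on the injective tensor products, see \cite[Ch.~43]{Trev67}), the conclusion follows. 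The only genuinely new point is that one must check the map is well-defined pointwise on distributions — i.e. that the abstractly obtained operator on the completed tensor product agrees with the naive definition $\xi\mapsto \xi\ast K$ on $\cD'(\R^d;E)$ — which follows because the two agree on the dense subspace $\cD(\R^d)\otimes E$ and both are continuous.

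Alternatively, if one prefers a hands-on argument that avoids tensor-product machinery, I would mimic the scalar proof directly, working seminorm by seminorm. Fix $\mfp\in\mfP$ and a compact $\K$. Using the wavelet characterisation from the remark above (or, equivalently, a direct test-function argument), one decomposes $K$ into dyadic pieces $K = \sum_{n\geqslant 0} K_n$ with $K_n$ supported on a ball of $\s$-radius $\sim 2^{-n}$ and satisfying $\|D^k K_n\|_\infty \lesssim 2^{n(|\s|+|k|_\s-\beta)}$. For $\xi\in\CC^\alpha_{\s}(\R^d;E)$, one then estimates $\mfp\big((\xi\ast K_n)(\CS^\lambda_{\s,x}\eta)\big)$ for $\eta\in\CB^{r,\alpha+\beta}_{\s,0}$, splitting into the regimes $2^{-n}\lesssim\lambda$ and $2^{-n}\gtrsim\lambda$ exactly as in \cite[Sec.~5]{Hai14}; the moment conditions on $\eta$ together with the vanishing-moment / Taylor-expansion bookkeeping produce the gain of $\beta$ derivatives, and summing the geometric series in $n$ gives $\|\xi\ast K\|_{\alpha+\beta;\K',\mfp}\lesssim \|\xi\|_{\alpha;\K'',\mfp}$ for slightly enlarged compacts. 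Every step is identical to the scalar case with $|\,\bigcdot\,|$ replaced by $\mfp(\,\bigcdot\,)$, since $\mfp$ is subadditive and absolutely homogeneous — the linearity and continuity of $\xi$ as a map into $E$ is all that is used.

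I would present the tensor-product proof as the main line, since it is short and conceptually clean, and relegate the direct argument to a parenthetical remark. The statement for closed domains $A$ (which is what is actually invoked, e.g. in Lemma~\ref{prop:fermionic_Noise_Properties}) then follows from the open-domain case by passing to the direct limit over neighbourhoods $U\supset A$, using that convolution with the compactly supported $K$ maps $\CC^\alpha_\s(U;E)$ into $\CC^{\alpha+\beta}_\s(U';E)$ for $U'$ a slightly shrunk neighbourhood, compatibly with restrictions.

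The main obstacle — really the only subtlety — is making the identification between the ``abstract'' operator on $\CC^\alpha_\s(\R^d)\wotimes_\eps E$ and the concrete convolution operator fully rigorous, in particular checking that $\CC^\alpha_\s(\R^d)\otimes E$ is dense in $\CC^\alpha_\s(\R^d;E)$ in the relevant topology and that convolution with $K$ is continuous there; once Remark~\ref{rem:besovtensorproduct} is granted, this is routine. If one instead goes the direct route, the ``obstacle'' is purely notational: one must be careful that all the constants in the dyadic estimates are independent of the vector $\xi(\CS^\lambda_{\s,x}\eta)\in E$ and depend on $\xi$ only through $\mfp$, which is immediate from homogeneity of $\mfp$.
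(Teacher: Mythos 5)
Your main line of argument is precisely the paper's proof: the paper reduces to the scalar Schauder estimate via Remark~\ref{rem:besovtensorproduct} and the functoriality of the injective tensor product under continuous linear maps (\cite[Proposition~43.6]{Trev67}), exactly as you propose. The direct dyadic argument and the discussion of closed domains are correct but not needed for what the paper records.
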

\begin{proof}
	Thanks to Remark~\ref{rem:besovtensorproduct} and \cite[Proposition~43.6]{Trev67} the general case follows from the scalar case $E = \mathbb{R}$ proven in the classical Schauder estimate, cf.\ \cite[Theorem~14.17]{Hai20}.
\end{proof}

\begin{theorem}
\label{YngMultThm}
	Let $\CA$ be a locally $m$-convex topological algebra.

	For $\alpha, \beta \in \R$, the map $(f,g) \mapsto f\cdot g$ extends to a continuous bilinear map $\CC_{\s}^\alpha(\R^d; \CA) \times \CC_{\s}^\beta(\R^d; \CA) \to \CC_{\s}^{\alpha \wedge \beta}(\R^d; \CA)$ if and only if $\alpha + \beta > 0$. W.r.t.\ each seminorm $\mfp \in \mfP$ it is Lipschitz continuous with constant $R$ on the ball of radius $R$
	\begin{equ}
		B_R(0) = \left\{ (f,g) \in 	\CC_{\mathfrak s}^\alpha(\R^d; \CA) \times \CC_{\s}^\beta(\R^d; \CA) \,  \middle| \, \|f\|_{\alpha ; \mfp}  \vee \|g\|_{\beta; \mfp} \leqslant R \right\} \; .
	\end{equ}
	The statement above also holds if one replaces $\R^d$ with an open or closed subset of $\R^{d}$.  
\end{theorem}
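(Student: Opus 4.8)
The plan is to fix a single $m$-convex seminorm $\mfp \in \mfP$ at a time and then run the classical (scalar, Bony) paraproduct argument with $\mfp$ in place of the absolute value, using the wavelet description of $\CC^\alpha_\s(\R^d;\CA)$ recorded in the remark above. I would first treat $U = \R^d$. Expand $f \in \CC^\alpha_\s(\R^d;\CA)$ and $g \in \CC^\beta_\s(\R^d;\CA)$ in an $\s$-scaled wavelet basis, writing $f = \sum_{n\ge0}\delta_n f$, $g = \sum_{m\ge 0}\delta_m g$ for the corresponding dyadic projections; the seminorm $\|\cdot\|_{w,\alpha;\K,\mfp}$ controls $\mfp(\delta_n f)$ (as a ``function at scale $2^{-n}$'') on compacts by a multiple of $2^{-n(\alpha\wedge 0)}$, and dually the testing against $\CS^\lambda_{\s,x}\eta$ recovers the $\lambda^{-\alpha}$ scaling. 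Define the candidate product as $fg \eqdef \sum_{n,m}(\delta_n f)(\delta_m g)$, each term being the pointwise product in $\CA$, and split the sum into the two low--high paraproducts ($n \le m-N$ and $n\ge m+N$) and the resonant part ($|n-m| < N$).

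For each paraproduct, $\sum_{n\le m-N}\delta_n f$ is a function smooth at scale $2^{-m}$ with $\mfp$-norm $\lesssim 2^{-m(\alpha\wedge 0)}$; multiplying by $\delta_m g$ and using submultiplicativity $\mfp(ab)\le\mfp(a)\mfp(b)$ gives the $m$-th block a $\mfp$-bound of order $2^{-m\beta}$ (resp.\ $2^{-n\alpha}$), and the geometric series converges unconditionally, producing a distribution in $\CC^\beta_\s$ (resp.\ $\CC^\alpha_\s$). For the resonant part $\sum_n\sum_{|n-m|<N}(\delta_n f)(\delta_m g)$ the $n$-th block has $\mfp$-norm $\lesssim 2^{-n(\alpha+\beta)}$, so that testing against $\CS^\lambda_{\s,x}\eta$ and summing over $n$ yields a bound of order $\lambda^{-(\alpha\wedge\beta)}$ \emph{exactly} when $\alpha+\beta > 0$ --- this is the only place the hypothesis is used, and it is also where one checks that the partial sums form a Cauchy net for each $\|\cdot\|_{\alpha\wedge\beta;\K,\mfp}$, so the limit exists in $\CC^{\alpha\wedge\beta}_\s(\R^d;\CA)$ and is independent of the chosen basis (any two choices give the same algebraic product and the bounds are uniform). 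Bilinearity of the construction, together with these bounds and $fg-f'g'=(f-f')g+f'(g-g')$, gives on $B_R(0)$ the estimate $\|fg-f'g'\|_{\alpha\wedge\beta;\mfp}\lesssim R\,(\|f-f'\|_{\alpha;\mfp}+\|g-g'\|_{\beta;\mfp})$, hence the claimed Lipschitz continuity and in particular joint continuity of the bilinear map.

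For necessity, suppose such a continuous extension existed for some $\alpha,\beta$ with $\alpha+\beta\le 0$. Since $\CA$ is unital (so $\bone\ne 0$) there is $\mfp\in\mfP$ with $\mfp(\bone)>0$; restricting to $\{h\bone : h\in\CC^\gamma_\s(\R^d)\}\subset\CC^\gamma_\s(\R^d;\CA)$, on which the product is the scalar product times $\bone$, would force the scalar multiplication $\CC^\alpha_\s\times\CC^\beta_\s\to\CC^{\alpha\wedge\beta}_\s$ to be bounded, contradicting the classical statement (witnessed by the standard lacunary counterexample). Finally, for an open $U\subset\R^d$ the whole argument goes through verbatim with the $\lambda_x$-truncated local seminorms $\|\cdot\|_{\alpha;U,\K,\mfp}$, all estimates being local; and for a closed $A$ one uses $\CC^\alpha_\s(A;E)=\varinjlim_{U\supset A}\CC^\alpha_\s(U;E)$: the product is defined on each neighbourhood, is compatible with the restriction maps, so descends to germs, and the bounds pass to the infimum defining $\|\cdot\|_{\alpha;A,\K,\mfp}$. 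The main obstacle I anticipate is the resonant term: making rigorous that its partial sums converge in the $\CA$-valued $\CC^{\alpha\wedge\beta}_\s$-topology (not merely weakly) and that the limit is regularisation-independent, while using \emph{only} the single-seminorm submultiplicativity \eqref{eq:m-convexity} --- but this is exactly parallel to the scalar proof once the $\mfp$-bookkeeping is set up.
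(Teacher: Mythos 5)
Your proof is essentially correct, but it follows a genuinely different route from the paper's. The paper disposes of this theorem in two lines by citing the proof of \cite[Proposition~4.14]{Hai14}: one lifts $f$ and $g$ to modelled distributions over the polynomial regularity structure, multiplies them there via \cite[Theorem~4.7]{Hai14}, and reconstructs; the observation that makes the scalar proof carry over verbatim is precisely the one you isolate, namely that the only algebraic input is the single-seminorm submultiplicativity $\mfp(ab)\leqslant\mfp(a)\mfp(b)$, so everything can be run one $\mfp\in\mfP$ at a time. You instead give a self-contained Bony paraproduct argument in the $\s$-scaled wavelet basis. What your route buys is independence from the regularity-structures machinery and a transparent identification of where $\alpha+\beta>0$ enters (the resonant sum); what it costs is that you must set up the $\CA$-valued Littlewood--Paley calculus for $\CC^\alpha_\s(\R^d)\wotimes_\eps\CA$ yourself, and your bookkeeping for the low--high paraproduct is slightly off as stated: the $m$-th block has $\mfp$-size of order $2^{-m(\beta+(\alpha\wedge 0))}$, so the paraproduct lands in $\CC^\beta_\s$ only when $\alpha>0$ and in $\CC^{\alpha+\beta}_\s$ when $\alpha<0$; both embed into $\CC^{\alpha\wedge\beta}_\s$ under the hypothesis $\alpha+\beta>0$, so the conclusion is unaffected, but the case split should be made explicit. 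Your reduction of the necessity direction to the scalar case via $h\mapsto h\bone$ (using a seminorm with $\mfp(\bone)>0$) and your treatment of open and closed domains via the local seminorms and the direct limit both match what the paper implicitly relies on.
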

\begin{proof}
	This can be argued analogously to the proof of the case $\CA = \R$ in \cite[Proposition~4.14]{Hai14}. 
This proof carries over since, for $r > 0$, one has $\|f g\|_{\cC^{r} ; \K, \mfp} \lesssim \|f \|_{\cC^r ; \K, \mfp}\|g\|_{\cC^r ; \K, \mfp}$ on the point-wise product of the bonafide functions $f$ and $g$, and one has similar estimates for the point-wise multiplication of modelled distributions, cf.\ \cite[Theorem~4.7]{Hai14}
\end{proof}

\begin{proposition}
\label{prop:CCalphaCrit}
	Let $\xi \in \cD'(\R^d;E)$. Suppose that there exists a kernel $K \in \mfN^{\zeta}$ with $\zeta \in (-|\s|,0)$, s.t.\ for all $f \in \cD(\R^d)$ and all $\mfp \in \mfP$
	\begin{equ}
		\| \xi(f) \|_{\mfp}^2 \lesssim \left| \scal{f, K \ast f}_{L^2(\R^d)} \right| \; ,
	\end{equ}
	where the constant may depend on $\mfp$ but is independent of $f$. Then $\xi \in \CC^{\zeta/2} (\R^d ; E)$. The same holds true for $\R^d$ replaced with $\R^d \times \T^k$.
\end{proposition}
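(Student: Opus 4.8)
The plan is to reduce this to the scalar wavelet characterisation of H\"older--Besov spaces, using the spectral-type bound $\|\xi(f)\|_{\mfp}^2 \lesssim |\scal{f, K \ast f}_{L^2}|$ to control the wavelet coefficients $\mfp(\xi(\phi^n_z))$ directly. First I would fix $\mfp \in \mfP$ and a compact set $\K$, and work with the equivalent wavelet seminorm $\| \bigcdot \|_{w, \zeta/2; \K, \mfp}$ from the preceding remark. Recall that an $\s$-scaled wavelet $\phi^n_z$ with $z \in \Lambda^\s_n$ is, up to the normalisation, a rescaled translate $\CS^{2^{-n}}_{\s, z}\phi$ of a fixed father wavelet; the relevant feature is $\|\phi^n_z\|_{L^2} \sim 1$ after the standard $2^{n|\s|/2}$ normalisation, and $\phi^n_z$ is supported in a ball $B_\s(z, C2^{-n})$ of $\s$-volume $\sim 2^{-n|\s|}$.

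The key computation is then to estimate $\scal{\phi^n_z, K \ast \phi^n_z}_{L^2}$ using $K \in \mfN^\zeta$. Since $K$ is bounded by $\|z\|_\s^\zeta$ near the origin (and compactly supported), one has for a function $\eta$ supported in a ball of $\s$-radius $r$ and with $\|\eta\|_{L^\infty} \lesssim r^{-|\s|/2}$ (which is the $L^\infty$-size of a wavelet normalised in $L^2$) the bound $|\scal{\eta, K \ast \eta}| \lesssim r^{-|\s|} \int_{B_\s(0, 2r)} |K(w)| \, dw \lesssim r^{-|\s|} \cdot r^{|\s| + \zeta} = r^{\zeta}$, using $\zeta > -|\s|$ for local integrability of $\|w\|_\s^\zeta$. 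Applying this with $r \sim 2^{-n}$ and tracking the $2^{n|\s|/2}$ normalisation of the father wavelet $\phi^n_z$ gives $\mfp(\xi(\phi^n_z))^2 \lesssim 2^{n|\s|} \cdot 2^{-n\zeta} = 2^{n(|\s| - \zeta)}$, hence $2^{n|\s|/2 + n\zeta/2}\mfp(\xi(\phi^n_z)) \lesssim 1$ uniformly in $n$ and in $z \in \Lambda^\s_n$ with $z$ in a fixed compact neighbourhood of $\K$. The coarse-scale coefficients $\mfp(\xi(\chi_z))$ are handled the same way (or more easily, as there is no singularity to worry about at that scale). This shows $\|\xi\|_{w,\zeta/2;\K,\mfp} < \infty$, and since this holds for every $\mfp$ and every compact $\K$, we conclude $\xi \in \CC^{\zeta/2}_\s(\R^d; E)$ by the equivalence of the wavelet seminorms with the $\| \bigcdot \|_{\zeta/2;\K,\mfp}$ seminorms and Remark~\ref{rem:besovtensorproduct} (the completeness part). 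For the case $\R^d \times \T^k$, one works with spatially periodic test functions / wavelets; since $\K$ is compact and the kernel $K$ is compactly supported, only finitely many periodic copies interact and the estimate is unchanged.

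The main obstacle I expect is not the computation itself but making the wavelet argument genuinely work for a \emph{seminorm}-valued (LCTVS-valued) distribution rather than a scalar one: one must be slightly careful that the hypothesis is phrased for test functions $f \in \cD(\R^d)$ while wavelets $\phi^n_z$ are only $\cC^r$, so a routine density/approximation step is needed to extend the bound $\|\xi(f)\|_\mfp^2 \lesssim |\scal{f, K\ast f}|$ from $\cD$ to the wavelets (using that $\xi$ is continuous and $K \ast \bigcdot$ maps boundedly in the relevant topologies). One also needs to recall the precise normalisation conventions for the $\s$-scaled wavelets from \cite[Section~3]{Hai14} so that the exponent bookkeeping $|\s|/2$ versus $\zeta/2$ comes out right; this is the step most likely to hide a sign or factor error, but it is mechanical. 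An alternative to the wavelet route would be to directly estimate $\mfp(\xi(\CS^\lambda_{\s,x}\eta))$ for $\eta \in \CB^r_{\s,0}$ and $\lambda \in (0,1]$ by the same Schur-type bound $|\scal{\CS^\lambda_{\s,x}\eta, K \ast \CS^\lambda_{\s,x}\eta}| \lesssim \lambda^\zeta$, which avoids wavelets entirely and may in fact be cleaner; I would likely present that version, as it uses only Definition~\ref{def:OpnHolderSpace} directly.
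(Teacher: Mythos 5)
Your preferred ``alternative'' route is exactly the paper's proof: one writes
\begin{equ}
	\scal{ \CS^\lambda_{\s ,z } f,\, K \ast ( \CS^\lambda_{\s ,z }f)}_{L^2} = \lambda^{-|\s|} \scal{  f,\, ( \CS^{\lambda^{-1}}_{\s ,0 } K ) \ast f}_{L^2}\;,
\end{equ}
observes that $\lambda^{-|\s|} \bigl| (\CS^{\lambda^{-1}}_{\s ,0 } K)( x ) \bigr| = | K( \lambda^\s x ) | \leqslant \bone_{\supp K}(x)\, \VERT K \VERT_{\zeta}\, \| x \|_{\s}^{\zeta}\, \lambda^{\zeta}$, and uses $\zeta > -|\s|$ for local integrability to conclude $\mfp\bigl(\xi(\CS^\lambda_{\s,z}f)\bigr)^2 \lesssim \lambda^\zeta$ uniformly over $f \in \CB^r_{\s,0}$, which is precisely the definition of $\CC^{\zeta/2}_\s$. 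This also sidesteps the $\cD$-versus-$\cC^r$ density issue you flag, since the defining test functions are smooth.

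The wavelet version would also work, but as written your exponent bookkeeping is wrong in exactly the place you predicted. For an $L^2$-normalised bump $\eta$ supported on a ball of $\s$-radius $r$ one has $\|\eta\|_{L^\infty} \lesssim r^{-|\s|/2}$ \emph{and} $\|\eta\|_{L^1} \lesssim r^{|\s|/2}$, so
\begin{equ}
	|\scal{\eta, K\ast\eta}| \leqslant \|\eta\|_{L^1}\,\|\eta\|_{L^\infty} \int_{B_\s(0,2r)} |K(w)|\,\mrd w \lesssim r^{|\s|+\zeta}\;,
\end{equ}
not $r^{\zeta}$ (the bound $r^\zeta$ is what you get for the $L^1$-normalised $\CS^\lambda_{\s,x}\eta$). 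Consequently $\mfp(\xi(\phi^n_z))^2 \lesssim 2^{-n(|\s|+\zeta)}$ rather than your stated $2^{n(|\s|-\zeta)}$; note that your stated intermediate bound is actually inconsistent with your own conclusion $2^{n|\s|/2+n\zeta/2}\mfp(\xi(\phi^n_z)) \lesssim 1$, whereas the corrected bound yields it. With that correction the wavelet argument goes through, but the direct scaling argument is shorter and is the one the paper uses.
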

\begin{proof}
	This follows straighforwardly from the identity
	\begin{equs}
		\scal{ \CS^\delta_{\s ,z } f, K \ast ( \CS^\delta_{\s ,z }f)}_{L^2} &=\scal{ \CS^\delta_{\s ,0 } f, K \ast ( \CS^\delta_{\s ,0 }f)}_{L^2} = \delta^{-|\s|} \scal{  f, ( \CS^{\delta^{-1}}_{\s ,0 } K ) \ast f}_{L^2}
	\end{equs}
	and the bound
	$
		\delta^{-|\s|} \bigl| (\CS^{\delta^{-1}}_{\s ,0 } K)( x ) \bigr| = | K( \delta^\s x ) | \leqslant \bone_{\supp K}(x) \VERT K \VERT_{\zeta} \| x \|_{\s}^{\zeta} \delta^{\zeta}
	$.
\end{proof}

\begin{proposition}
\label{prop:LqExchange}
	Let $E$ be a Fr\'echet space and $(S,\mu)$ a measure space. Then for any $q \in [1,\infty)$ we have a continuous imbedding
	\begin{equ}
		\CC^{\alpha}_{\s} \left( \R^d ; L^{q}(S,\mu ; E) \right) \subset L^q\left(S, \mu ; \CC^{\alpha - \kappa} (\R^d ; E) \right) \; ,
	\end{equ}
	for any $\kappa > \frac{|\s|}{q}$. The same holds when one replaces $\R^d$ by $\R^d \times \T^{k}$. 	
\end{proposition}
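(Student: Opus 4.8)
The plan is to reduce the vector-valued wavelet characterisation of $\CC^\alpha_{\s}$ to a scalar Besov embedding fibre-by-fibre, exploiting the tensor product structure $\CC^\alpha_{\s}(\R^d;L^q(S,\mu;E)) \simeq \CC^\alpha_{\s}(\R^d)\wotimes_\eps L^q(S,\mu;E)$ from Remark~\ref{rem:besovtensorproduct}. First I would work with the wavelet seminorms $\|\bigcdot\|_{w,\alpha;\K,\mfp}$ introduced above, since they turn the H\"older--Besov bound into a countable supremum over wavelet coefficients rather than an uncountable supremum over scales and base points. The key observation is that for $\xi \in \CC^\alpha_{\s}(\R^d;L^q(S,\mu;E))$, each wavelet coefficient $\xi(\phi^n_z)$ and $\xi(\chi_z)$ is an element of $L^q(S,\mu;E)$, so for a.e.\ $\mcb p \in S$ it makes sense to evaluate $\xi(\phi^n_z)(\mcb p) \in E$; doing this for the (countable) collection of all wavelets simultaneously defines, for a.e.\ $\mcb p$, a candidate distribution $\xi(\bigcdot)(\mcb p)$ by wavelet synthesis.

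The main steps would then be: (1) fix a seminorm $\mfp \in \mfP$ and a compact $\K$, and recall the reconstruction/synthesis bound that controls $\|\eta\|_{w,\alpha-\kappa;\K,\mfp}$ for $\eta \in \cD'(\R^d;E)$ in terms of its wavelet coefficients, with the gain of $\kappa$ coming from summing the geometric series $\sum_n 2^{n(|\s|/2)}2^{-n(\alpha-\kappa)} \cdot 2^{-n(|\s|/2)}2^{n\alpha}$ against the number $\sim 2^{n|\s|}$ of lattice points meeting a fixed compact — no, more carefully, the gain of $\kappa$ is the standard trade-off between $\CC^{\alpha}$ and $\CC^{\alpha-\kappa}$ that lets one pass from $\ell^\infty$ in $n$ to $\ell^q$ in $n$, i.e.\ from $\sup_n a_n \le C$ one gets $\sum_n (2^{-n\kappa'}a_n)^q < \infty$ whenever $\kappa' > 0$; (2) apply Minkowski/Fubini to exchange the $L^q(S,\mu)$ norm with the countable sum over wavelets, using that $\ell^q(\text{countable}) \hookrightarrow \ell^\infty$ and that $L^q$ commutes with countable sums via the triangle inequality when $q\ge 1$; (3) deduce that $\int_S \|\xi(\bigcdot)(\mcb p)\|_{\alpha-\kappa;\K,\mfp}^q\,\mrd\mu(\mcb p) \lesssim \|\xi\|_{w,\alpha;\K,\mfp}^q < \infty$, which after ranging over the countably many $\mfp$ (here we use that $E$, hence $L^q(S,\mu;E)$, is Fr\'echet so $\mfP$ may be taken countable) shows $\xi(\bigcdot)(\mcb p) \in \CC^{\alpha-\kappa}_{\s}(\R^d;E)$ for a.e.\ $\mcb p$ and that the map $\mcb p \mapsto \xi(\bigcdot)(\mcb p)$ lies in $L^q(S,\mu;\CC^{\alpha-\kappa}_{\s}(\R^d;E))$; (4) check this identification is the claimed continuous imbedding, i.e.\ it is injective and the estimate above is precisely continuity.

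The point where I would need to be careful — and what I expect to be the main obstacle — is the measurability and strong-measurability bookkeeping in step (3): one must verify that the pointwise-in-$\mcb p$ synthesised object $\xi(\bigcdot)(\mcb p)$ is a genuine element of $\cD'(\R^d;E)$ (continuity in the test function) for a.e.\ $\mcb p$ and that $\mcb p \mapsto \xi(\bigcdot)(\mcb p)$ is strongly measurable into the Fr\'echet space $\CC^{\alpha-\kappa}_{\s}(\R^d;E)$, not merely that each wavelet coefficient is measurable. This is where the hypothesis that $E$ is Fr\'echet (so that $L^q(S,\mu;E)$ is the completed space of strongly measurable maps, as discussed after \eqref{e:defLq}) is essential: strong measurability of $\xi$ as an $L^q(S,\mu;E)$-valued distribution, combined with the fact that the wavelet synthesis map is continuous on bounded-coefficient sequences, lets one realise $\mcb p\mapsto \xi(\bigcdot)(\mcb p)$ as an a.e.\ limit of simple functions. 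For the periodic case $\R^d\times\T^k$ one simply replaces the wavelet basis on $\R^d$ by a periodised one (equivalently restricts to periodic test functions), and every estimate above goes through verbatim since all sums over the lattice remain locally finite; I would remark that this is the only change needed. The remaining verifications — exchanging finitely many suprema with integrals, the geometric summation producing the $\kappa$-loss, the countability of $\mfP$ — are routine and I would dispatch them briefly.
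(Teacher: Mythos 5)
Your overall route is the same as the paper's: pass to the wavelet seminorms, read $\xi$ as a measurable map $S \to \cD'(\R^d;E)$ through its countably many wavelet coefficients, bound the supremum over coefficients by the sum, exchange that sum with the $L^q(S,\mu)$ integral, and sum the resulting geometric series. The measurability bookkeeping you flag as the main obstacle is handled in the paper exactly as you suggest (the coefficients form a countable family and $E$ is Fr\'echet, so the pointwise synthesis is unproblematic), so that part is fine.

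The one place your reasoning actually goes wrong is where you ``correct'' yourself about the origin of the hypothesis $\kappa > |\s|/q$. Your first instinct --- that the loss comes from the $\sim 2^{k|\s|}$ points of $\Lambda^\s_k$ meeting a fixed compact $\K$ --- is the right one, and the replacement you offer (``from $\sup_n a_n \leqslant C$ one gets $\sum_n (2^{-n\kappa'}a_n)^q < \infty$ whenever $\kappa' > 0$'') is insufficient: it accounts only for the sum over scales $k$, not for the sum over translates $z$. After applying the termwise bound $\bigl\| \|\xi(\phi^k_z)\|_{\mfp} \bigr\|_{L^q} \lesssim 2^{-k|\s|/2 - k\alpha}$, each scale $k$ contributes $\#\bigl(\Lambda^\s_k \cap \K\bigr)\cdot 2^{-k\kappa q} \sim 2^{k|\s| - k\kappa q}$, which is summable exactly when $\kappa q > |\s|$; with only $\kappa > 0$ the series diverges. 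So the lattice count is not a dispensable refinement --- it is the entire reason the proposition carries the threshold $\kappa > |\s|/q$ rather than $\kappa > 0$, and an argument following your ``more careful'' version would purport to prove a stronger statement than the method supports.
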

\begin{proof}
	Let $(\chi_x,\phi^k_y)$ be the above introduced a wavelet basis. 
	Let  $q > \frac{|\s|}{\kappa}$ and $\xi \in \CC^{\alpha}_{\s} \left( \R^d ; L^{\infty -}(S,\mu ; E) \right) $. First we interpret $\xi$ as the measurable function $S \to \cD'(\R^d ; E)$ mapping
	\begin{equ}
		\mcb{p} \longmapsto \left( \phi \mapsto \xi(\phi) (\mcb{p}) \right)
	\end{equ}
	where $\xi(\phi)(\mcb{p})$ is the value in $E$ of $\xi(\phi)$.

	Now using the wavelet $\CC^{\alpha - \kappa}$-seminorms we find that for $\mcb{p} \in S$
	\begin{equs}
		\| \xi( \bigcdot )(\mcb{p}) &\|_{w,\alpha -\kappa ;\K, n}^q \lesssim \\
		& \lesssim \sup_{k \in \N} \sup_{z \in \Lambda_n^{\s} \cap \K} \sup_{\phi \in \Phi}	2^{\frac{k|\s|q}{2}+k (\alpha-\kappa) q} \| \xi(\phi^k_z) (\mcb{p}) \|_{n}^q  \vee \sup_{z \in \Lambda_0^{\s} \cap \K} \| \xi(\chi_z)(\mcb{p}) \|_{n}^q \; .
	\end{equs}
	We will estimate the integral with respect to $\mcb{p}$ of the terms on the right-hand side to conclude that $\xi$ is in $L^q\left(S, \mu ; \CC^{\alpha - \kappa} (\R^d ; E) \right)$. By estimating the suprema with sums we can pass the integral through them to directly estimate
	\begin{equ}
		\left\| \bigl\| \xi ( \phi^k_z ) \bigr\|_n \right\|_{L^q} \lesssim 2^{- \frac{k |\s|}{2} - k \alpha} \; .
	\end{equ}
	Thus,
	\begin{equs}
		\left\| \|\xi(\bigcdot)\|_{w,\alpha-\kappa; \K, n} \right\|^q_{L^{q}} \lesssim \sum_{k \in \N}  \sum_{z \in \Lambda_k^{\mathfrak s} \cap \mathfrak K} \sum_{\phi \in \Phi}  2^{k (\alpha-\kappa) q} 2^{ - k \alpha q} \lesssim \sum_{k \in \N} 2^{|\s|k -\kappa k q}
	\end{equs}
	which converges since we assumed that $\kappa > \frac{|\s|}{q}$.
\end{proof}

Before defining the H\"older spaces on $\T^k$ we note that there is a continuous embedding $\cD'(\R^{d-k} \times \T^k ; E) \hookrightarrow \cD'(\R^d ; E)$ by extending a distribution  $\cD'(\R^{d-k} \times \T^k ; E)$ periodically. We will identify the distribution $\R^{d-k} \times \T^k$ with the corresponding periodic distributions on $\R^d$.

\begin{definition}
	We define for any $\alpha \in \R$ a partially periodic $\alpha$-H{\"o}lder-Besov space given by $\CC^\alpha_{s}(\R^{d-k}\times \T^k;E) \eqdef \CC^\alpha_{s}(\R^d;E) \cap \mathscr D'(\R^{d-k}\times \T^k;E)$.

Note that $\CC^\alpha_{s}(\R^{d-k}\times \T^k;E)$ is a closed subspace of $ \CC^\alpha_{s}(\R^d;E)$ and moreover convolution with a translation invariant kernel preserves the periodicity properties as convolution commutes with translation. In particular, all the results remain true when replacing $\R^d$ with $\R^{d-k} \times \T^k$.
\end{definition}

\subsection{H\"older Spaces with Singularities}
\label{sec:Holder_Sing}
In this section we discuss H\"older--Besov spaces of space-time functions but which can have singular behaviour near the $T=0$ hyperplane. 
This is important in order to allow to take initial data for our system of equations that is as rough as the linear solution and is also convenient for obtaining a contractive factor when formulating our PDE as a fixed-point problem. 

Recall that the theory of regularity structures when applied to regularity structures of polynomials reproduces the classical theory of H\"older--Besov spaces. In what follows we formulate a small tweak of this classical theory in order to allow blow-up at the $T=0$ hyperplane.
We adopt the language of regularity structures to do this but only because it is convenient, more classical methods could also be used here. 

\begin{definition}
\label{def:SingHolerSp}
	For $\alpha, \eta \in \R$ we say that a distribution $\xi \in \CC_\s^{\alpha,\eta}(\R^d; E)$ if and only if $\xi \in \CC^{\eta \wedge 0}_\s(\R^d; E)$ and for every $\K \subset \R^d$ compact and every seminorm $\mfp \in \mfP$ there exists a constant $C > 0$, s.t.\ for all $(t,x)\in \R \times \R^{d-1} \setminus \{t=0\} \cap \K$, all $\psi \in \CB^{r,\alpha}_{\s ,0}$ where $r \eqdef 0 \lor - \lfloor \alpha \rfloor$, and all $\lambda \in [0, 1]$ satisfying  $2 \lambda \leqslant |t|^{1/\s_1}$,
	\begin{equ}
		\mfp \left( \xi \left( \CS^{\lambda}_{\s,z} \psi \right)  \right) \leqslant C (|t| \wedge 1)^{\frac{\eta-\alpha}{\s_1}} \lambda^{\alpha} \; ,
	\end{equ}
	and for all $\phi \in \CB^r_{\s, 0}$
	\begin{equ}
		\mfp \left( \xi \left( \CS_{\s,z}^{\lambda_t} \phi \right)  \right) \leqslant C (|t|\wedge 1)^{\frac{\eta \wedge 0}{\s_1}  } \; ,
	\end{equ}
	where $\lambda_t \eqdef \frac{|t|^{1/\s_1}}{2}\wedge 1$. 

The smallest possible constant for the above inequalities will be denoted by $\VERT \xi \VERT_{\alpha,\eta;\K,\mfp}$.
For $U \subset \R^d$ open and $A \subset \R^d$ closed, we define $\CC^{\alpha,\eta}(U;E)$ and $\CC^{\alpha,\eta}(A;E)$ analogously to Definitions~\ref{def:OpnHolderSpace} and \ref{def:ClsdHolderSpace} respectively.

\end{definition}

\begin{remark}
	From the definition one has that $\CC^{\alpha,\alpha}_{\s} \supset \CC^{\alpha}_{\s}$, for $\alpha > 0$, $\CC^\alpha_\s (\R_+ \times \R^{d-1}; E) = \CC^{\alpha,\alpha}_\s (\R_+ \times \R^{d-1}; E)$, and that for any $\eta \leqslant \eta'$, one has for all $f \in \CC^{\alpha,\eta'}_{\s}$
	\begin{equ}
		\VERT f\VERT_{\alpha,\eta; \K, \mfp} \leqslant \VERT f\VERT_{\alpha,\eta'; \K, \mfp} \; .
	\end{equ}
\end{remark}

We now state a slight modification of the reconstruction theorem of \cite{Hai14}, we defer its proof to the end of the section. 
\begin{theorem}
	\label{thm:EtaRecon}
	Let $V$ be a sector of regularity $\alpha$, let $H = \{t = 0\}\subset \R^d$ be the time zero hyperplane, and let $\gamma,\eta \in \R$, s.t.\ $\eta\wedge \alpha > d-1$ and $\gamma > 0$. The reconstruction operator $\CR \colon \CD^{\gamma,\eta}_H(V) \to \CC_\s^{\alpha \wedge \eta}(\R^d)$ maps into the subspace $\CC_\s^{\alpha,\alpha\wedge \eta}(\R^d)$.	
\end{theorem}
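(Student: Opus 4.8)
\textbf{Proof plan for Theorem~\ref{thm:EtaRecon}.}

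The plan is to adapt the proof of the reconstruction theorem from \cite[Section~6]{Hai14}, keeping track of where the weight $(|t|\wedge 1)^{\frac{\eta-\alpha}{\s_1}}$ enters. First I would recall that the standard reconstruction theorem already gives us a distribution $\CR f \in \CC_\s^{\alpha\wedge\eta}(\R^d)$ with the property that, for every modelled distribution $f \in \CD^{\gamma,\eta}_H(V)$ and every test function localised at a point $z = (t,x)$ at scale $\lambda$ with $2\lambda \leqslant |t|^{1/\s_1}$, one has the local bound
\begin{equ}
	\bigl| \bigl(\CR f - \Pi_z f(z)\bigr)\bigl(\CS^\lambda_{\s,z}\psi\bigr)\bigr| \lesssim (|t|\wedge 1)^{\frac{\eta-\gamma}{\s_1}}\lambda^{\gamma}\;,
\end{equ}
away from the hyperplane $H$; this is exactly the content of the singular reconstruction theorem (cf.\ \cite[Prop.~6.9]{Hai14}). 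What remains is to upgrade this to the statement that $\CR f$ lies in the weighted space $\CC_\s^{\alpha,\alpha\wedge\eta}$, i.e.\ to produce, for $z$ away from $H$ and $2\lambda\leqslant |t|^{1/\s_1}$, the sharper bound $|(\CR f)(\CS^\lambda_{\s,z}\psi)|\lesssim (|t|\wedge 1)^{\frac{(\alpha\wedge\eta)-\alpha}{\s_1}}\lambda^\alpha$ when $\psi$ annihilates polynomials of degree $\leqslant \alpha$.

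The key step is a triangle-inequality decomposition
\begin{equ}
	(\CR f)\bigl(\CS^\lambda_{\s,z}\psi\bigr) = \bigl(\CR f - \Pi_z f(z)\bigr)\bigl(\CS^\lambda_{\s,z}\psi\bigr) + \bigl(\Pi_z f(z)\bigr)\bigl(\CS^\lambda_{\s,z}\psi\bigr)\;.
\end{equ}
For the first term I would use the local reconstruction bound above; since $\gamma > 0 \geqslant$ anything we need and $\lambda \leqslant 1$, the factor $\lambda^\gamma$ is at least as good as $\lambda^\alpha$, and the weight exponent $\frac{\eta-\gamma}{\s_1}$ must be compared with $\frac{(\alpha\wedge\eta)-\alpha}{\s_1}$ — here one has to be slightly careful and may need to trade a power of $\lambda$ for a power of $|t|$ using $\lambda \lesssim |t|^{1/\s_1}$, exactly as in the interpolation arguments of \cite[Section~6]{Hai14}, to land on the correct weight. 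For the second term, $\Pi_z f(z)$ is a (finite) linear combination of the model acting on the basis vectors $\tau$ of the sector $V$ of homogeneities $|\tau|\geqslant\alpha$; using the bound $|(\Pi_z\tau)(\CS^\lambda_{\s,z}\psi)|\lesssim \lambda^{|\tau|}$ together with the bound on $f(z)$ in $\CD^{\gamma,\eta}_H$, namely $\|f(z)\|_{|\tau|}\lesssim (|t|\wedge 1)^{\frac{(\eta-|\tau|)\wedge 0}{\s_1}}$, one reads off a bound $\lambda^\alpha (|t|\wedge 1)^{\frac{(\eta-\alpha)\wedge 0}{\s_1}}$, after noting that $\lambda^{|\tau|}\leqslant \lambda^\alpha$ (using $\lambda\leqslant 1$) and checking that the weight is as required since $(\eta-\alpha)\wedge 0 = (\alpha\wedge\eta)-\alpha$ when $\alpha\geqslant\eta$ and $\geqslant (\alpha\wedge\eta)-\alpha = 0$ when $\eta\geqslant\alpha$. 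The large-scale bound (the $\lambda_t$ bound in Definition~\ref{def:SingHolerSp}) is obtained the same way without the requirement that $\psi$ kill polynomials, directly from $\CR f \in \CC_\s^{\alpha\wedge\eta}$ and the definition of the weighted space.

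The genuine vector-valued statement (with $E$-valued coefficients) is not really needed here — the theorem as stated is scalar — but if one wants it, it follows via Remark~\ref{rem:besovtensorproduct} and \cite[Prop.~43.6]{Trev67} from the scalar case, as in the proof of Theorem~\ref{SchdrThm}. The main obstacle I anticipate is purely bookkeeping: making sure that at every scale $\lambda\in[0,1]$ with $2\lambda\leqslant|t|^{1/\s_1}$ the combination of the reconstruction error term and the ``Taylor remainder'' term $\Pi_z f(z)$ genuinely produces the \emph{single} weight exponent $\frac{(\alpha\wedge\eta)-\alpha}{\s_1}$ rather than two different ones, which forces the case distinction $\alpha\lessgtr\eta$ and a careful use of $\lambda\lesssim|t|^{1/\s_1}$ to convert between $\lambda$-powers and $|t|$-powers. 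Since $\eta\wedge\alpha > d-1$ guarantees that the reconstruction operator is well-defined and unique, no new analytic input beyond \cite{Hai14} is required; the proof is essentially a re-localisation of the existing one.
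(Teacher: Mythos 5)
Your proposal is correct and follows essentially the same route as the paper's proof: the same splitting of $\CR f$ into $(\CR f - \Pi_x f(x)) + \Pi_x f(x)$, the same use of the local reconstruction bound weighted by the $\CD^{\gamma,\eta}_H$ norm for the first term, the same $\lambda^{|\tau|}\,\|f(x)\|_{|\tau|}$ estimate for the second, and the same trading of $\lambda$-powers for powers of $\|x\|_H$ using $\lambda \lesssim |t|^{1/\s_1}$. The paper simply disposes of the case $\eta \geqslant \alpha$ as trivial at the outset rather than carrying the case distinction through, and leaves the large-scale ($\lambda_t$) bound implicit, which you correctly note follows from $\CR f \in \CC_\s^{\alpha\wedge\eta}$.
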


\begin{remark}
	We need this modification of the original theorem as we are running the fixed-point argument at the level of the H\"older--Besov spaces directly rather than in a space of singular modelled distributions. If we were to use only \cite[Proposition~6.9]{Hai14}, the resulting regularities would be suboptimal and would not allow us to close the fixed-point argument.

	For the definition of concepts in this theorem please refer to \cite{Hai14}, in particular Definitions~2.1 \& 2.5 for sectors, Definitions~3.1 \& 6.2 for the space $\CD^{\gamma,\eta}_H(V)$, and Theorem~3.10 \& Proposition~6.9 for the reconstruction operator $\CR$.
\end{remark}

The next statements in this section are analogues of
Theorems~\ref{SchdrThm} and~\ref{YngMultThm} and they can be proven using the same arguments in conjunction with Theorem~\ref{thm:EtaRecon}. 


\begin{theorem}
	\label{SchauderEstBU}
	Let $\alpha, \eta \in \R$ and $K$ be a $\beta$-regularising kernel. Convolution with $K$ is a continuous map
	\begin{equ}
		\CC_\s^{\alpha,\eta}(\R^d ; E) \longrightarrow \CC_\s^{\alpha+\beta,\eta+\beta} (\R^d ; E) \; .
	\end{equ}
	
	In addition, if $K$ is non-anticipative, i.e.\ $K(t,x) \equiv 0$ on $\{t < 0\}$, then the same is true for $\R^d$ replaced with $\spacetime_T$ for any $T>0$.

\end{theorem}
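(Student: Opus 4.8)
\textbf{Proof proposal for Theorem~\ref{SchauderEstBU}.}

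The plan is to reduce the statement to the scalar case $E = \R$ exactly as in Theorem~\ref{SchdrThm}, and then to reduce the weighted scalar statement to the standard Schauder theory of regularity structures via the reconstruction theorem modified in Theorem~\ref{thm:EtaRecon}. First I would observe that, by Remark~\ref{rem:besovtensorproduct}, $\CC^{\alpha,\eta}_\s(\R^d;E)$ is isomorphic to the completed injective tensor product $\CC^{\alpha,\eta}_\s(\R^d) \wotimes_\eps E$ (the argument of \cite[Theorem~44.1]{Trev67} goes through verbatim for the weighted spaces, since the only extra condition imposed is a family of scaling estimates near $\{t=0\}$, which is stable under the injective tensor product by \cite[Proposition~43.6]{Trev67}); convolution with $K$ acts only on the first factor, so it suffices to treat $E = \R$.

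For the scalar statement, the approach is to lift to the polynomial regularity structure. Given $f \in \CC^{\alpha,\eta}_\s(\R^d)$, one builds the canonical modelled distribution $\CF f \in \CD^{\gamma,\eta'}_H(V)$ (with $V$ the polynomial sector of regularity $\alpha$, $H = \{t=0\}$, and $\gamma$ chosen large) whose reconstruction is $f$; this is the standard correspondence between classical weighted H\"older--Besov functions and singular modelled distributions, cf.\ \cite[Section~6]{Hai14}. One then applies the abstract integration operator $\CK_\gamma$ associated to the $\beta$-regularising kernel $K$, which by \cite[Theorem~5.12 \& Proposition~6.16]{Hai14} maps $\CD^{\gamma,\eta'}_H(V)$ into $\CD^{\gamma+\beta,(\eta'\wedge\alpha)+\beta}_H$, and invokes the modified reconstruction Theorem~\ref{thm:EtaRecon} to conclude that $K * f = \CR \CK_\gamma \CF f$ lies in $\CC^{\alpha+\beta,\eta+\beta}_\s(\R^d)$, with the corresponding bound on $\VERT \bigcdot \VERT_{\alpha+\beta,\eta+\beta;\K,\mfp}$ following from the continuity of each of $\CF$, $\CK_\gamma$, and $\CR$. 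Here the point of Theorem~\ref{thm:EtaRecon} over the bare \cite[Proposition~6.9]{Hai14} is precisely that it returns the optimal first index $\alpha+\beta$ rather than $(\alpha\wedge\eta)+\beta$. For the non-anticipative case, one replaces $\R^d$ by $\spacetime_T$ and uses that $K(t,x)\equiv 0$ for $t<0$ means the convolution at a point $(t,x)$ with $t\le T$ only sees the restriction of $f$ to $\spacetime_T$, so the whole argument localises; the periodicity in space is preserved since convolution commutes with spatial translations, as already noted before Definition~\ref{def:SingHolerSp}.

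The main obstacle is bookkeeping rather than any genuine difficulty: one must check that the tensor-product identification of Remark~\ref{rem:besovtensorproduct} is compatible with the action of convolution (i.e.\ that $K*(\bigcdot) \otimes \id_E$ really is the continuous extension one wants), and one must verify that the singular-modelled-distribution machinery of \cite{Hai14} applies with the weight exponent $\eta$ possibly negative and possibly small, which is exactly the regime Theorem~\ref{thm:EtaRecon} was stated to handle. I would also take care that the estimate $\VERT K*f\VERT_{\alpha+\beta,\eta+\beta;\K,\mfp} \lesssim \VERT f\VERT_{\alpha,\eta;\K',\mfp}$ holds with $\K'$ a fixed compact neighbourhood of $\K$ depending only on $\supp K$, which is needed so that the map is continuous between the Fr\'echet (or, on $\spacetime_T$, Banach) topologies; this is automatic from the locality of convolution against a compactly supported kernel.
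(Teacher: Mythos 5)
Your proposal is correct and follows essentially the route the paper intends: the paper gives no explicit proof beyond remarking that these statements follow from "the same arguments" as Theorem~\ref{SchdrThm} (i.e.\ the injective-tensor-product reduction to the scalar case of Remark~\ref{rem:besovtensorproduct}) "in conjunction with Theorem~\ref{thm:EtaRecon}", which is exactly the combination you spell out (lift to a singular modelled distribution, apply abstract integration via \cite[Theorem~5.12, Proposition~6.16]{Hai14}, and reconstruct with the improved first index). Your write-up is a faithful and somewhat more detailed expansion of the paper's one-line argument.
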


\begin{theorem}
	\label{YngMultBUThm1}
	Let $\alpha > 0$, $\beta, \eta \in \R$, $\eta \leqslant \alpha$, and $D = \spacetime_{T}$ or $D = \R^{d}$.

	The map $(f,g) \mapsto f\cdot g$ extends to a continuous bilinear map $\CC_{\s}^{\alpha,\eta}(D; \CA) \times \CC_{\s}^\beta(D; \CA) \to \CC_{\s}^{\beta,  \beta + \eta }(\D; \CA) $ if $\alpha + \beta > 0$ and $(\beta + \eta) \wedge \beta > \mfm \eqdef - |\s|+ \s_1 $.
\end{theorem}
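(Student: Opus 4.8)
The plan is to reduce Theorem~\ref{YngMultBUThm1} to the corresponding scalar-valued, blow-up-free statement (Theorem~\ref{YngMultThm}) together with the $\eta$-reconstruction theorem, Theorem~\ref{thm:EtaRecon}, exactly as the proof of Theorem~\ref{YngMultThm} itself is reduced to the case $\CA = \R$ via the injective tensor product description of Remark~\ref{rem:besovtensorproduct}. Concretely, first I would lift the two inputs $f \in \CC^{\alpha,\eta}_{\s}(D;\CA)$ and $g \in \CC^{\beta}_{\s}(D;\CA)$ to modelled distributions over the polynomial regularity structure: $g$ is lifted to its Taylor jet (a constant, since $\beta$ can be negative, this is just $g\cdot\mathbf 1$ plus the usual treatment of the negative part via reconstruction) and $f$ is lifted to an element of $\CD^{\gamma,\eta}_H(V)$ for $H = \{t=0\}$ and a suitable $\gamma$. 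The pointwise product of modelled distributions (\cite[Theorem~4.7]{Hai14}, in its singular-near-$H$ version, \cite[Proposition~6.12]{Hai14}) lands in $\CD^{\bar\gamma,\bar\eta}_H$ with $\bar\eta = \eta + \beta$ and $\bar\gamma$ governed by $\beta$; applying Theorem~\ref{thm:EtaRecon} to reconstruct this product then yields an element of $\CC^{\beta,\beta+\eta}_{\s}(D;\CA)$, which is exactly the claimed codomain, and the constraint $(\beta+\eta)\wedge\beta > \mfm = -|\s| + \s_1$ is precisely the hypothesis $\bar\eta \wedge \bar\gamma' > d-1$ needed to apply Theorem~\ref{thm:EtaRecon} (with $d-1$ being the spatial dimension, here $= -\mfm$ after accounting for the parabolic scaling weight $\s_1 = 2$ on the time variable — this is the bookkeeping I would need to check carefully).

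The $\CA$-valued case is then handled by the same device as in the proof of Theorem~\ref{YngMultThm}: one has $\|fg\|_{\cC^r;\K,\mfp} \lesssim \|f\|_{\cC^r;\K,\mfp}\|g\|_{\cC^r;\K,\mfp}$ for the pointwise product of genuine $\cC^r$ functions valued in the locally $m$-convex algebra $\CA$ (using $\mfp(ab)\leqslant\mfp(a)\mfp(b)$), and the corresponding submultiplicative estimates hold for the multiplication of $\CA$-valued modelled distributions, so the scalar arguments carry over verbatim seminorm by seminorm. For the $D = \spacetime_T$ variant, I would note that all the operations involved — lifting, pointwise multiplication of modelled distributions, reconstruction — are local, so restricting to an open neighbourhood of $\spacetime_T$ and using the germ definition (Definition~\ref{def:ClsdHolderSpace}) of $\CC^{\alpha,\eta}_{\s}$ on closed sets poses no difficulty; alternatively one can extend $f,g$ off $\spacetime_T$ and argue on $\R^d$.

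The only genuinely new input beyond Theorem~\ref{YngMultThm} is Theorem~\ref{thm:EtaRecon}, whose proof is deferred in the paper; granting it, the argument above is essentially a transcription. I would also record, as in Theorem~\ref{YngMultThm}, the Lipschitz bound: on the ball of radius $R$ in $\CC^{\alpha,\eta}_{\s}(D;\CA)\times\CC^{\beta}_{\s}(D;\CA)$ the product map is Lipschitz with constant $\lesssim R$ in each seminorm $\VERT\bigcdot\VERT_{\cdot;\K,\mfp}$, which follows from bilinearity together with the continuity bound; this is what is actually used in the fixed-point argument of Theorem~\ref{thm:LocSolution}.

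The main obstacle I anticipate is not conceptual but a matter of getting the index arithmetic exactly right: matching the $\eta$-exponent of the modelled-distribution product and of $\CR$ against the parabolic scaling (so that the numerical threshold $\mfm = -|\s|+\s_1$ comes out correctly, and verifying that $\eta\leqslant\alpha$ is the hypothesis that makes $f$ genuinely lie in $\CD^{\gamma,\eta}_H$ with the polynomial lift), and confirming that the negative-regularity input $g$ (when $\beta<0$) is correctly incorporated — one multiplies $f$'s modelled-distribution lift by $g$ as a coefficient, uses the product theorem, and reconstructs, rather than lifting $g$ itself. Once those identifications are pinned down the rest is routine.
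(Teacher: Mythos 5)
Your proposal follows essentially the same route as the paper, which proves this theorem only by the remark that it follows from ``the same arguments'' as Theorem~\ref{YngMultThm} (lifting to the polynomial regularity structure, multiplying modelled distributions, using $m$-convexity of $\CA$ to carry the scalar estimates over seminorm by seminorm) ``in conjunction with Theorem~\ref{thm:EtaRecon}'', i.e.\ replacing plain reconstruction by the singular reconstruction on $\CD^{\gamma,\eta}_H$. Your write-up is an accurate expansion of that sketch, including the correct identification of where the threshold $\mfm$ and the exponent $\beta+\eta$ come from.
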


\begin{theorem}
	\label{YngMultBUThm2}
	Let $\alpha, \beta > 0$, $\eta \leqslant \alpha$, $ \eta'\leqslant \beta$, and $D = \spacetime_{T}$ or $D = \R^{d}$.

	The map $(f,g) \mapsto f\cdot g$ extends to a continuous bilinear map $\CC_{\s}^{\alpha,\eta}(D; \CA) \times \CC_{\s}^{\beta, \eta'}(D; \CA) \to \CC_{\s}^{\alpha \wedge \beta , \eta + \eta' }(D; \CA) $ if $\eta + \eta' > - \mfm$.
\end{theorem}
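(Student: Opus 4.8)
\textbf{Plan for the proof of Theorem~\ref{YngMultBUThm2}.} The strategy is to reduce the statement to a combination of the reconstruction theorem with $\eta$-regularity (Theorem~\ref{thm:EtaRecon}) and the pointwise-product estimates for singular modelled distributions from \cite{Hai14}, exactly as in the proof of Theorem~\ref{YngMultThm} but now tracking the behaviour at the $\{t=0\}$ hyperplane. As usual, by the tensor-product characterisation of Remark~\ref{rem:besovtensorproduct} and \cite[Proposition~43.6]{Trev67} (together with local $m$-convexity of $\CA$ to handle the multiplication), it suffices to prove the estimate seminorm-by-seminorm, so we may assume $\CA = \R$; the general statement then follows by the same bootstrapping as in Theorem~\ref{YngMultThm}. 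Also, since the claim is local in space-time and the product is bilinear, it is enough to establish the bound $\VERT fg \VERT_{\alpha\wedge\beta,\eta+\eta';\K,\mfp} \lesssim \VERT f\VERT_{\alpha,\eta;\K',\mfp}\VERT g\VERT_{\beta,\eta';\K',\mfp}$ on compacts, with Lipschitz dependence following from bilinearity as before.

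First I would lift $f$ and $g$ to the polynomial regularity structure: set $F(z) \eqdef \sum_{|k|_\s < \alpha} \tfrac{1}{k!}D^k f(z)\, X^k$ and similarly $G(z)$, which (since $\alpha,\beta>0$ so $f,g$ are genuine functions with the stated H\"older behaviour) define elements of the singular modelled-distribution spaces $\CD^{\gamma,\eta}_H$ and $\CD^{\gamma',\eta'}_H$ for $\gamma$ slightly above $\alpha$, $\gamma'$ slightly above $\beta$, with $\CR F = f$, $\CR G = g$. Next I would form the product $F\star G$ in the polynomial structure (truncating at level $\gamma\wedge\gamma'$); the relevant algebra here is \cite[Theorem~4.7]{Hai14} for ordinary $\CD^\gamma$ together with its singular refinement, which gives $F\star G \in \CD^{\gamma\wedge\gamma',\, \eta+\eta'}_H$ provided the $\eta$-exponents add in the natural way, which is exactly where the hypothesis $\eta\le\alpha$, $\eta'\le\beta$ enters (it guarantees that the "worst" contributions in the singular bound are the cross terms of total homogeneity $\eta+\eta'$ rather than something lower). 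I would then invoke Theorem~\ref{thm:EtaRecon}: since $\gamma\wedge\gamma' > 0$ and $(\eta+\eta')\wedge(\alpha\wedge\beta)$ needs to exceed $d-1$ for the reconstruction theorem to apply, this is precisely the role of the condition $\eta+\eta' > -\mfm = |\s| - \s_1 = d-1$ (here $d=d_\s$ with the parabolic scaling, so $|\s| - \s_1 = 4-2 = 2$, matching the "$>d-1$" threshold after accounting for the convention). The reconstruction operator then produces $\CR(F\star G) \in \CC_\s^{\alpha\wedge\beta,\, \alpha\wedge\beta \wedge(\eta+\eta')}$, and one checks $\CR(F\star G) = fg$ by uniqueness of reconstruction on the overlap; since $\eta\le\alpha$ and $\eta'\le\beta$ force $\eta+\eta' \le \alpha\wedge\beta$ unless trivial, the resulting $\eta$-exponent is exactly $\eta+\eta'$, giving the claimed target space.

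The main obstacle I anticipate is \emph{not} the reconstruction step itself but the bookkeeping in the singular product estimate near $\{t=0\}$: one must verify that when multiplying the local Taylor jets, the genuinely singular scales $\lambda \le |t|^{1/\s_1}/2$ interact correctly, i.e.\ that $\mfp\big((fg)(\CS^\lambda_{\s,z}\psi)\big) \lesssim (|t|\wedge 1)^{(\eta+\eta' - \alpha\wedge\beta)/\s_1}\lambda^{\alpha\wedge\beta}$, and separately that the "large-scale" bound at $\lambda = \lambda_t$ behaves like $(|t|\wedge 1)^{(\eta+\eta')\wedge 0/\s_1}$. This requires decomposing $fg = (f - f(z))(g-g(z)) + f(z)g + f g(z) - f(z)g(z)$ (or the appropriate jet version) and estimating each piece, using the singular H\"older bounds on $f$ and $g$ term by term; the cross term $(f-f(z))(g-g(z))$ carries total exponent $\alpha+\beta$ in $\lambda$ which is better than needed, while the "constant $\times$ rough" terms carry the worst weight $(|t|\wedge 1)^{(\eta+\eta')/\s_1}$ because the multiplying constant $f(z)$ or $g(z)$ is itself only bounded by $(|t|\wedge 1)^{\eta/\s_1}$ (from $\CC^{\alpha,\eta}_\s$ with $\eta\le\alpha$ one cannot do better near $t=0$). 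Assembling these is routine but delicate, and I would organise it by first proving the $\CA=\R$, $D=\R^d$ case in full detail and then noting that non-anticipativity of the reconstruction and the locality of all estimates give the $D = \spacetime_T$ version verbatim, as in Theorem~\ref{SchauderEstBU}.
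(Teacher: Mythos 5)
Your proposal is correct and follows essentially the same route as the paper: the authors' entire argument is the remark that these singular multiplication theorems follow from the proof of Theorem~\ref{YngMultThm} (lifting to polynomial modelled distributions and invoking the product estimates of \cite{Hai14}, here in their singular $\CD^{\gamma,\eta}_H$ form) combined with the weighted reconstruction statement of Theorem~\ref{thm:EtaRecon}, which is exactly your plan. Your reading of the threshold $\eta+\eta' > -\mfm$ and of the exponent arithmetic simply takes the paper's stated conventions at face value (note that in the regime actually used, $\eta,\eta'\leqslant 0$, the minimum $\eta\wedge\eta'\wedge(\eta+\eta')$ from the singular product estimate does equal $\eta+\eta'$, as your decomposition of $fg$ also shows).
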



We also have the following versions of \cite[Proposition~6.15]{Hai14} and \cite[Theorem~7.1]{Hai14}. 
\begin{theorem}
	\label{DerivativeBU}
	Let $\alpha, \eta \in \R$, $i \in \{1,\dots, d\}$ and $D = \spacetime_{T}$ or $D = \R^{d}$. Then the derivative $\partial_i$ is a continuous map
	\begin{equ}
		\CC_\s^{\alpha,\eta}(D ; E) \longrightarrow \CC_\s^{\alpha-\s_i,\eta-\s_i} (D ; E) \; .
	\end{equ}
\end{theorem}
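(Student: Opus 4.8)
The statement to prove is Theorem~\ref{DerivativeBU}: for $\alpha,\eta \in \R$, $i \in \{1,\dots,d\}$ and $D = \spacetime_T$ or $D = \R^d$, the partial derivative $\partial_i$ is continuous from $\CC^{\alpha,\eta}_\s(D;E)$ to $\CC^{\alpha-\s_i,\eta-\s_i}_\s(D;E)$.

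The plan is to reduce to the two defining bounds in Definition~\ref{def:SingHolerSp} and use that differentiation of a distribution against a rescaled test function is testing the original distribution against the derivative of that rescaled test function, with a scaling factor $\lambda^{-\s_i}$. First I would record the elementary identity $(\partial_i \xi)(\CS^\lambda_{\s,z}\psi) = -\xi(\partial_i(\CS^\lambda_{\s,z}\psi)) = -\lambda^{-\s_i}\xi(\CS^\lambda_{\s,z}(\partial_i\psi))$, which follows directly from the definition of $\CS^\lambda_{\s,z}$ and the chain rule applied to the $i$-th coordinate (recall $(\CS^\lambda_{\s,z}\eta)(w) = \lambda^{-d_\s}\eta((\lambda^{-1})^\s(w-z))$, so differentiating in $w_i$ pulls out a factor $\lambda^{-\s_i}$). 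The same identity holds after composing with any seminorm $\mfp \in \mfP$, since $\mfp$ is applied to the $E$-valued quantity and the scalar $\lambda^{-\s_i}$ factors out.

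Next I would check the two scaling conditions. For the first bound: given $\psi \in \CB^{r',\alpha-\s_i}_{\s,0}$ with $r' = 0 \lor -\lfloor \alpha-\s_i\rfloor$, the function $\partial_i\psi$ is supported in the same ball, has $\cC^{r'}$-norm controlled (one loses a derivative but the relevant test-function class only requires $\cC^{r''}$ with $r'' \geqslant r'$ — here one must take a little care to test against $\CB^{r,\alpha}_{\s,0}$-type functions, so $\partial_i\psi$ should be renormalised by its $\cC^{r+1}$-norm and one invokes that testing against a fixed bounded family of test functions suffices up to a constant), and $\partial_i\psi$ still annihilates polynomials of degree $\leqslant \alpha - \s_i$ if $\psi$ annihilated those of degree $\leqslant \alpha$. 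Hence $\mfp((\partial_i\xi)(\CS^\lambda_{\s,z}\psi)) \lesssim \lambda^{-\s_i}\mfp(\xi(\CS^\lambda_{\s,z}(\partial_i\psi))) \lesssim \lambda^{-\s_i}\,(|t|\wedge 1)^{(\eta-\alpha)/\s_1}\lambda^\alpha = (|t|\wedge 1)^{((\eta-\s_i)-(\alpha-\s_i))/\s_1}\lambda^{\alpha-\s_i}$, which is exactly the required bound with exponents $(\alpha - \s_i, \eta - \s_i)$ since $(\eta-\s_i)-(\alpha-\s_i) = \eta-\alpha$. For the second bound at scale $\lambda_t$, the same computation gives an extra $\lambda_t^{-\s_i}$, and since $\lambda_t = \tfrac{|t|^{1/\s_1}}{2}\wedge 1$ one has $\lambda_t^{-\s_i} \lesssim (|t|\wedge 1)^{-\s_i/\s_1}$, producing the factor $(|t|\wedge 1)^{((\eta-\s_i)\wedge 0)/\s_1}$ up to verifying the arithmetic $((\eta\wedge 0) - \s_i\cdot[\text{something}])$ matches $(\eta-\s_i)\wedge 0$ — this needs a short case distinction on the signs of $\eta$ and $\eta-\s_i$, but since $\s_i \geqslant 1 > 0$ one has $(\eta-\s_i)\wedge 0 \leqslant \eta\wedge 0 - (\text{bounded amount})$, and absorbing constants into $C$ handles it. Finally I would note that $\partial_i\xi \in \CC^{(\eta-\s_i)\wedge 0}_\s$ follows from the corresponding classical fact (or from Theorem~\ref{SchauderEstBU}/the reconstruction theorem reasoning), and that these arguments are purely local so they transfer verbatim from $\R^d$ to $\spacetime_T$ and to open/closed subsets via Definitions~\ref{def:OpnHolderSpace} and~\ref{def:ClsdHolderSpace}; continuity (linearity plus the bound) is immediate from the estimates being of the form $\VERT\partial_i\xi\VERT_{\alpha-\s_i,\eta-\s_i;\K,\mfp} \lesssim \VERT\xi\VERT_{\alpha,\eta;\K',\mfp}$ for a slightly enlarged compact $\K'$.

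The main obstacle is the bookkeeping around test-function regularity classes: $\partial_i\psi$ for $\psi\in\CB^{r,\alpha}_{\s,0}$ is not literally in $\CB^{r,\alpha-\s_i}_{\s,0}$ because it has one fewer controlled derivative and its $\cC^r$-norm need not be $\leqslant 1$. The clean fix, which I would spell out, is the standard observation (as in \cite[Chapter~14]{Hai20} or the analogous step in \cite{Hai14}) that the H\"older--Besov seminorms are unchanged up to a multiplicative constant if one tests against $\cC^{r'}$-bounded families for any $r' \geqslant r$, so one simply works with $\CB^{r+1}$-type test functions throughout and normalises $\partial_i\psi$ by a fixed constant. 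Everything else is routine scaling algebra.
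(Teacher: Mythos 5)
Your route is a direct duality/scaling argument at the level of Definition~\ref{def:SingHolerSp}, whereas the paper obtains this statement by citing \cite[Proposition~6.15]{Hai14} (differentiation of modelled distributions) in conjunction with the modified reconstruction Theorem~\ref{thm:EtaRecon}; that difference would be fine if your argument closed, and indeed your treatment of the \emph{first} defining bound is correct: the identity $(\partial_i\xi)(\CS^\lambda_{\s,z}\psi)=-\lambda^{-\s_i}\xi(\CS^\lambda_{\s,z}\partial_i\psi)$ together with $(\eta-\s_i)-(\alpha-\s_i)=\eta-\alpha$ gives exactly the required weight, and the test-function bookkeeping (testing against $\cC^{r+1}$-normalised families) is a legitimate standard reduction. (Two small slips there: the moment-condition implication you need is that $\psi$ annihilating polynomials of scaled degree $\le\alpha-\s_i$ forces $\partial_i\psi$ to annihilate those of scaled degree $\le\alpha$ — you stated the converse — and this only works for the \emph{scaled} degree.)

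The genuine gap is in the second defining bound and in the global membership $\partial_i\xi\in\CC^{(\eta-\s_i)\wedge 0}_\s$. Your crude scaling gives
\begin{equ}
\mfp\bigl((\partial_i\xi)(\CS^{\lambda_t}_{\s,z}\phi)\bigr)\lesssim \lambda_t^{-\s_i}\,(|t|\wedge 1)^{(\eta\wedge 0)/\s_1}\sim(|t|\wedge 1)^{((\eta\wedge 0)-\s_i)/\s_1}\;,
\end{equ}
and you claim the discrepancy with the required exponent $((\eta-\s_i)\wedge 0)/\s_1$ can be "absorbed into $C$". It cannot: since $|t|\wedge 1\le 1$, you need the exponent you obtain to be \emph{at least} the one required, i.e.\ $(\eta\wedge 0)-\s_i\ge(\eta-\s_i)\wedge 0$, and this fails precisely when $\eta>0$ (e.g.\ $\eta\ge\s_i$ requires a bounded quantity while you produce $(|t|\wedge 1)^{-\s_i/\s_1}\to\infty$). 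The case $\eta>0$ is exactly the one used in the paper (e.g.\ applying the theorem to $Y\in\CC^{3/2+\delta-\kappa,\,1/2+\delta-\kappa}_\s$). The same issue recurs for the unweighted condition: the classical fact only yields $\partial_i\xi\in\CC^{(\eta\wedge 0)-\s_i}_\s$, which is strictly weaker than the required $\CC^{(\eta-\s_i)\wedge 0}_\s$ when $\eta>0$. To repair this one must use that $\partial_i\phi$ has a vanishing zeroth moment (which handles $\alpha<1$ by feeding $\partial_i\phi$ into the \emph{first} bound at scale $\lambda_t$), and for $\alpha\ge 1$ one needs a Taylor-subtraction near the hyperplane $\{t=0\}$ — at which point one is essentially reproving the singular reconstruction estimate, i.e.\ the route through $\CD^{\gamma,\eta}_H$ and Theorem~\ref{thm:EtaRecon} that the paper takes.
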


\begin{proposition}
	\label{prop:TimeImprov}

	Let $\alpha > 0$, $\eta \leqslant \alpha$, $0 < T \leqslant 1$, $\K \subset \R^{d-1}$ compact, and $\K_T \eqdef (-\infty, T] \times \K$. Suppose that $f \in \CC^{\alpha,\eta}_{\s}$ is supported in $[0,\infty) \times \R^{d-1}$, then for any $\kappa \geqslant 0$
	\begin{equ}
		\VERT f \VERT_{\alpha, \eta - \kappa; \K_T,\mfp} \lesssim T^{\frac{\kappa}{\s_1}} \VERT f \VERT_{\alpha, \eta; \K_T, \mfp} \; .
	\end{equ}
\end{proposition}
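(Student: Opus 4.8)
\textbf{Proof plan for Proposition~\ref{prop:TimeImprov}.}

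The plan is to exploit the defining inequalities of $\CC^{\alpha,\eta}_{\s}$ (Definition~\ref{def:SingHolerSp}) together with the support assumption $\supp f \subset [0,\infty)\times\R^{d-1}$, which means that the estimates only need to be checked at points $z = (t,x)$ with $t\in(0,T]$. First I would fix $z=(t,x)\in \K_T$ with $t>0$ (the case $t\leqslant 0$ being vacuous by the support condition), a test function $\psi\in\CB^{r,\alpha}_{\s,0}$, and a scale $\lambda\in[0,1]$ with $2\lambda\leqslant t^{1/\s_1}$. The key observation is that, since $0<T\leqslant 1$ and $0<t\leqslant T$, one has $|t|\wedge 1 = t$, and moreover $t \leqslant T$, so that $t^{\beta}\leqslant T^{\beta'} t^{\beta-\beta'}$ whenever $\beta\geqslant\beta'\geqslant 0$ — this is precisely the trade of a power of $t$ for a power of $T$ that produces the contractive factor. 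Concretely, the bound defining $\VERT f\VERT_{\alpha,\eta;\K_T,\mfp}$ gives
\begin{equ}
	\mfp\bigl(f(\CS^\lambda_{\s,z}\psi)\bigr) \leqslant \VERT f\VERT_{\alpha,\eta;\K_T,\mfp}\, t^{\frac{\eta-\alpha}{\s_1}}\lambda^{\alpha}\;,
\end{equ}
and since $\kappa\geqslant 0$ and $\eta\leqslant\alpha$ (so $\eta-\alpha\leqslant 0$), I would write $t^{\frac{\eta-\kappa-\alpha}{\s_1}} = t^{-\frac{\kappa}{\s_1}}\, t^{\frac{\eta-\alpha}{\s_1}}$ and bound $t^{-\frac{\kappa}{\s_1}}\leqslant$ nothing useful directly; instead the correct manipulation is $t^{\frac{\eta-\alpha}{\s_1}} = t^{\frac{\kappa}{\s_1}}\, t^{\frac{(\eta-\kappa)-\alpha}{\s_1}} \leqslant T^{\frac{\kappa}{\s_1}}\, t^{\frac{(\eta-\kappa)-\alpha}{\s_1}}$, using $t\leqslant T\leqslant 1$ and $\kappa/\s_1\geqslant 0$. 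This yields exactly $\mfp(f(\CS^\lambda_{\s,z}\psi)) \leqslant T^{\frac{\kappa}{\s_1}}\VERT f\VERT_{\alpha,\eta;\K_T,\mfp}\, t^{\frac{(\eta-\kappa)-\alpha}{\s_1}}\lambda^\alpha$, which is the first bound in Definition~\ref{def:SingHolerSp} for $\CC^{\alpha,\eta-\kappa}_{\s}$ with constant $T^{\kappa/\s_1}\VERT f\VERT_{\alpha,\eta;\K_T,\mfp}$.

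Next I would treat the second family of bounds in Definition~\ref{def:SingHolerSp}, namely the estimate at the borderline scale $\lambda_t = \tfrac12 t^{1/\s_1}\wedge 1$ against $\phi\in\CB^r_{\s,0}$, which reads $\mfp(f(\CS^{\lambda_t}_{\s,z}\phi))\leqslant \VERT f\VERT_{\alpha,\eta;\K_T,\mfp}\, (t\wedge 1)^{\frac{\eta\wedge 0}{\s_1}}$. Here I need to compare $(t\wedge 1)^{\frac{(\eta-\kappa)\wedge 0}{\s_1}}$ with $(t\wedge 1)^{\frac{\eta\wedge 0}{\s_1}}$; since $t\leqslant 1$ one has $t\wedge 1 = t$, and $(\eta-\kappa)\wedge 0 \leqslant \eta\wedge 0$, so the same trick applies: $t^{\frac{\eta\wedge 0}{\s_1}} = t^{\frac{(\eta\wedge 0)-((\eta-\kappa)\wedge 0)}{\s_1}}\, t^{\frac{(\eta-\kappa)\wedge 0}{\s_1}}$, and the exponent $(\eta\wedge 0) - ((\eta-\kappa)\wedge 0)$ is non-negative and bounded above by $\kappa$, so $t$ raised to it is $\leqslant T$ raised to it $\leqslant T^{\kappa/\s_1}$ when... — actually one must be slightly careful, since $(\eta\wedge 0)-((\eta-\kappa)\wedge 0)\in[0,\kappa]$ and $t\leqslant T\leqslant 1$ gives $t^{c/\s_1}\leqslant T^{c/\s_1}$ only when we can further absorb into $T^{\kappa/\s_1}$; but $T\leqslant 1$ and $c\leqslant\kappa$ give $T^{c/\s_1}\geqslant T^{\kappa/\s_1}$, so this direction needs the bound $t^{c/\s_1}\leqslant T^{c/\s_1}\cdot 1$ combined with $T^{c/\s_1}\leqslant 1$, which is not quite $T^{\kappa/\s_1}$. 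The clean fix is to note that $f\in\CC^{\alpha,\eta}_\s\subset\CC^{\alpha,\eta'}_\s$ for $\eta'\leqslant\eta$ with $\VERT f\VERT_{\alpha,\eta';\K,\mfp}\leqslant\VERT f\VERT_{\alpha,\eta;\K,\mfp}$ (the monotonicity remark after Definition~\ref{def:SingHolerSp}), so without loss of generality $\kappa$ is small enough that $\eta-\kappa\leqslant 0$ as well, whence both $(\eta-\kappa)\wedge 0 = \eta-\kappa$ and the gap exponents simplify; the general $\kappa$ then follows by splitting $\kappa = \kappa_1+\kappa_2$ appropriately or by directly observing that the statement is only non-trivial when the exponent actually decreases. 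I expect this bookkeeping around the min-with-zero in the second bound to be the only genuinely fiddly point; everything else is the one-line inequality $t^{a}\leqslant T^{\kappa/\s_1} t^{a-\kappa/\s_1}$ valid for $0<t\leqslant T\leqslant 1$.

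Finally I would assemble the two families of bounds: the first gives the interior (small-scale) estimate for $\CC^{\alpha,\eta-\kappa}_\s$ and the second gives the borderline-scale estimate, and together with the observation that $f\in\CC^{(\eta-\kappa)\wedge 0}_\s$ follows from $f\in\CC^{\eta\wedge 0}_\s$ by Schauder-free inclusion of H\"older–Besov spaces (plus the support localisation, so that only $t\in(0,T]$ matters), this shows $f\in\CC^{\alpha,\eta-\kappa}_\s(\K_T)$ with $\VERT f\VERT_{\alpha,\eta-\kappa;\K_T,\mfp}\lesssim T^{\kappa/\s_1}\VERT f\VERT_{\alpha,\eta;\K_T,\mfp}$, which is the claim. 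The main obstacle, such as it is, is purely notational: making sure the exponents involving $\eta\wedge 0$ versus $(\eta-\kappa)\wedge 0$ are handled so that a clean factor $T^{\kappa/\s_1}$ (rather than some $T$ to a smaller power) is extracted; this is resolved by first reducing to $\eta-\kappa\leqslant 0$ via the monotonicity of the spaces in the second index.
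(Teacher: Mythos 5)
The paper states Proposition~\ref{prop:TimeImprov} without proof, so there is nothing to compare against; judging your argument on its own terms, the first half is fine but the second half has a real gap. Your treatment of the first bound in Definition~\ref{def:SingHolerSp} is correct: the support condition reduces everything to base points with $t\in(0,T]$, and the identity $t^{(\eta-\alpha)/\s_1}=t^{\kappa/\s_1}\,t^{(\eta-\kappa-\alpha)/\s_1}\leqslant T^{\kappa/\s_1}\,t^{(\eta-\kappa-\alpha)/\s_1}$ does exactly what is needed there. The problem is the second bound. You correctly sense that this is where the difficulty sits, but neither of your proposed fixes closes it: reducing to $\eta-\kappa\leqslant 0$ by the monotonicity of the spaces in the second index lowers $\eta$ on \emph{both} sides of the inequality you are trying to prove (in particular it shrinks the right-hand side $\VERT f\VERT_{\alpha,\eta;\K_T,\mfp}$ to $\VERT f\VERT_{\alpha,\eta';\K_T,\mfp}$ and only controls $\VERT f\VERT_{\alpha,\eta'-\kappa;\K_T,\mfp}$, which is smaller than the quantity you need to bound), and splitting $\kappa=\kappa_1+\kappa_2$ does not help because the problematic regime ($\eta>0$ and $\eta-\kappa\geqslant 0$, where $(\eta\wedge 0)=((\eta-\kappa)\wedge 0)=0$ and no power of $t$ is gained) persists at every intermediate step.

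The gap is not bookkeeping. When $\eta>0$ and $\eta-\kappa\geqslant 0$, the second bound for $\VERT f\VERT_{\alpha,\eta-\kappa;\K_T,\mfp}$ demands $\mfp\bigl(f(\CS^{\lambda_t}_{\s,z}\phi)\bigr)\lesssim T^{\kappa/\s_1}\VERT f\VERT_{\alpha,\eta;\K_T,\mfp}$ uniformly over $t\in(0,T]$, whereas the definition of $\VERT f\VERT_{\alpha,\eta;\K_T,\mfp}$ only supplies this bound \emph{without} the factor $T^{\kappa/\s_1}$, and no manipulation of exponents can manufacture that factor from the stated hypotheses alone: $f=\bone_{\{t\geqslant 0\}}$ is supported in $[0,\infty)\times\R^{d-1}$ and satisfies both defining inequalities of $\CC^{\alpha,\eta}_{\s}$ for every $0<\eta\leqslant\alpha$ (the first with constant $0$, since for $2\lambda\leqslant |t|^{1/\s_1}$ the test function does not see the jump and $\psi$ integrates to zero; the second with a constant of order one), yet $\mfp\bigl(f(\CS^{\lambda_t}_{\s,z}\phi)\bigr)\approx 1$ at $t=T/2$ for every $T$. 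What is actually needed is the improved vanishing $\mfp\bigl(f(\CS^{\lambda_t}_{\s,z}\phi)\bigr)\lesssim t^{(\eta\wedge\kappa)/\s_1}\VERT f\VERT_{\alpha,\eta;\K_T,\mfp}$ near the hyperplane $\{t=0\}$. For the functions to which the proposition is applied (non-anticipative convolutions of distributions supported in $\{t\geqslant 0\}$, which vanish as $t\downarrow 0$) this can be proved by telescoping $f(\CS^{\lambda_t}_{\s,z}\phi)$ over the dyadic points $z_k=(2^{-k}t,x)$, estimating each increment with the first bound after re-expanding the mean-zero differences of rescaled test functions (for $\alpha\geqslant 1$ this requires the machinery of Theorem~\ref{thm:EtaRecon} rather than a bare re-expansion over $\CB^{r,\alpha}_{\s,0}$), and using the vanishing at the hyperplane to kill the boundary term $\lim_{k}f(\CS^{\lambda_{2^{-k}t}}_{\s,z_k}\phi)$ \dash the term that survives for $\bone_{\{t\geqslant 0\}}$. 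Your proposal supplies neither this step nor the strengthened hypothesis it requires.
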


\begin{proof}[of Theorem~\ref{thm:EtaRecon}]
	We assume that $\eta \leqslant \alpha$ for otherwise the assertion is trivial. Let $\K \subset \R^d$ be compact, $(t, x') = x \in \K \setminus H$, and $\psi \in \CB^r_{\s,0}$. 

	First, we split
	\begin{equ}
		\left| \CR f \bigl( \psi^\lambda_t \bigr) \right| \leqslant \left| \left(\CR f  - \Pi_x f(x) \right) \bigl( \psi^\lambda_x \bigr) \right|+  \left| \left( \Pi_x f(x) \right) \bigl( \psi^\lambda_x \bigr) \right| \; .
	\end{equ}
	From the properties of the reconstruction operator it follows that for all $\lambda \in (0, \lambda_t]$
	\begin{equs}
		\left| \left(\CR f  - \Pi_x f(x) \right) \bigl( \psi^\lambda_x \bigr) \right| & \lesssim \lambda^\gamma \sup_{\substack{y,z \in \supp \psi^\lambda_x  \\ y \neq z }} \sup_{\ell < \gamma} \frac{\| f(y) - \Gamma_{yz}f(z)\|_\ell}{\|y-z\|_\s^{\gamma-\ell}}  \leqslant \\
		& \leqslant \lambda^\gamma \sup_{\substack{y,z \in \supp \psi^\lambda_x  \\ y \neq z }} \sup_{\ell < \gamma} \frac{\| f(y) - \Gamma_{yz}f(z)\|_\ell}{\|y-z\|_\s^{\gamma-\ell} \|y,z\|_H^{\eta-\gamma} }  \|y,z\|_H^{\eta-\gamma}  \; .
	\end{equs}
	From $\lambda \leqslant \lambda_t$, it follows that $\|y,z\|_H \geqslant \frac{\|x\|_H}{2}$ and thus 
	\begin{align*}
		&\lesssim \lambda^\gamma \|x\|_H^{\eta-\gamma} \sup_{\substack{y,z \in \supp \psi^\lambda_x  \\ y \neq z }} \sup_{\ell < \gamma} \frac{\| f(y) - \Gamma_{yz}f(z)\|_\ell}{\|y-z\|_\s^{\gamma-\ell} \|y,z\|_H^{\eta-\gamma} }  \leqslant \\
		& \leqslant \lambda^\gamma \|x\|_H^{\eta-\gamma}  \VERT f \VERT_{\gamma , \eta; \overline{\mathfrak{K}}} = \lambda^\gamma \|x\|_H^{\alpha-\gamma} \|x\|_H^{\eta-\alpha}   \VERT f \VERT_{\gamma , \eta; \overline{\mathfrak{K}}} \lesssim \\
		& \lesssim \lambda^\gamma \lambda^{\alpha-\gamma} \|x\|_H^{\eta-\alpha}   \VERT f \VERT_{\gamma , \eta; \overline{\mathfrak{K}}} = \lambda^{\alpha} \|x\|_H^{\eta-\alpha}   \VERT f \VERT_{\gamma , \eta; \overline{\mathfrak{K}}}   \; ,
	\end{align*}
	where we used that $\lambda \leqslant \frac{\|x\|_H}{2}$ and $\alpha - \gamma < 0$, and $\overline{\K}$ is the 1-fattening of $\K$.

	For the second term, noting that $\eta - \ell \leqslant 0$ for all $\ell \in A$, we have 
	\begin{equs}
		\left| \left( \Pi_x f(x) \right) \bigl( \psi^\lambda_x \bigr) \right| & \lesssim \sup_{\ell < \gamma} \lambda^\ell \| f(x) \|_\ell = \\
		& = \sup_{\ell < \gamma} \lambda^\ell \|x\|_H^{\eta-\ell} \frac{\| f(x) \|_\ell}{\|x\|_H^{(\eta-\ell)\wedge 0}} \leqslant \\
		& \leqslant \|x\|_H^\eta \VERT f \VERT_{\gamma,\eta; \overline{\K}} \sup_{\ell < \gamma} \left( \frac{\lambda}{\|x\|_H} \right)^\ell \lesssim \\
		& \lesssim  \|x\|_H^\eta \VERT f \VERT_{\gamma,\eta; \overline{\K}} \frac{\lambda^\alpha}{\|x\|_H^\alpha} = \lambda^{\alpha} \|x\|_H^{\eta-\alpha}   \VERT f \VERT_{\gamma , \eta; \overline{\K}} \; ,
	\end{equs}
	where we used that $\lambda \leqslant \frac{\|x\|_H}{2}$.
\end{proof}

\subsection{Spaces of Local Solutions}
\label{sec:blowup_space}

In order to describe solutions that might blow up at a finite time we will introduce the following spaces which include a point at infinity. Within this section we assume that $E$ is a Banach space.
We write $\reminit$ for a Banach space of distributions on $\T^d$ with values in $E$ denote its norm by $\| \bigcdot \|_{\reminit}$. 

We add a point $\infty$ to $\reminit$ and define the space $\widehat{\CC}_0 \eqdef \reminit \sqcup \{\infty\}$ with the topology of $\reminit$ extended by the system of neighbourhoods of $\infty$ given by
\begin{equ}
	\left\{ g \in \reminit \, \middle| \, \|g\|_{\reminit} > N \right\}
\end{equ}
for any $N > 0$. 
We will also use the convention that $\|\infty\|_{\reminit} = \infty$.

For $f \in \cC \bigl([0,1]; \widehat{\CC}_0\bigr)$,  let $T[f] \eqdef \inf \left\{ t \geqslant 0 \, \big| \, f(t) = \infty \right\}$. Then we define the space
\begin{equ}
	\CC^\sol \eqdef \left\{ f \in \cC \big([0,1]; \widehat{\CC}_0\big) \, \middle| \,f(t) = \infty\quad \forall t > T[f]  \right\} \; .
\end{equ}
For $f \in \CC^\sol$ and $t \leqslant 1$ let
\begin{equ}
	S_f(t) \eqdef \sup_{s \leqslant t} \|f(s) \|_{\reminit} \in [0,\infty] \; .
\end{equ}
For a fixed smooth $\phi  \colon \R \to [0,\infty)$ with support in $[0,1]$ and $\int \phi = 1$ we define the mollified version of $S_f$
\begin{equ}
	S^L_f(t) \eqdef \tan \left( \int\limits_0^1 \arctan \left( S_f\left( t+ \frac{s}{L} \right) \right) \phi(s) \d s \right)
\end{equ}
which is increasing, and satisfies $S_f^L(t) \geqslant S_f(t)$ as well as \begin{equ}
	\inf \left\{ t \in [0,1] \, \middle| \, S^L_f(t) = \infty \right\} =	\inf \left\{ t \in [0,1] \, \middle| \, S_f(t) = \infty \right\} \; .
\end{equ}
We also fix a smooth function $\psi \colon \R \to [0,1]$ that is decreasing and such that  $\psi\restr (-\infty,1] \equiv 1$ and $\psi\restr [2,\infty) \equiv 0$. We combine $\psi$ and $S_f^L$ to cut off $f$ when its norm
reaches some threshold of order $\CO(L)$:
\begin{equ}
	\CC^\sol \ni f \longmapsto \Theta_L(f)\eqdef \psi\Big( \tfrac{S_f^L(\bigcdot)}{L} \Big) f \in \cC \left( [0,1] ; \reminit \right)\;,
\end{equ}
where we have adopted the convention that $0 \cdot \infty = 0$.

The cut-off function is smooth in $t$ since $S_f^L$ is smooth on $\big\{t \in [0,1] \, \big| \, S_f^L(t) < 3L \big\}$. 
We also have the bound $\sup_{t \in [0,1]} \left\| \Theta_L(f)(t) \right\|_{\reminit} \leqslant 2 L$. With this in place we can define the  metric $d(\bigcdot, \bigcdot) \eqdef \sum_{L = 1}^\infty 2^{-L} d_L(\bigcdot,\bigcdot)$ on $\CC^\sol$ where for $f,g \in \CC^\sol$
\begin{equ}
	d_L(f,g) \eqdef 1 \wedge \sup_{t \in [0,1]} \left\| \Theta_L(f)(t) -  \Theta_L(g)(t) \right\|_{\reminit} \; .
\end{equ}
The importance of this metric lies within the following lemma which tells us that the ``local'' continuity property of the solution map $\CS$ carries over to global continuity in the space $\CC^\sol$.
\begin{lemma}
\label{lemma:Csol_cont}
	A sequence $\left( f_n \right)_n \subset \CC^\sol$ converges to $f \in \CC^\sol$ if
	and only if, for every $L \in \N$, one has
	\begin{equ}
		\lim_{n \to \infty }  \sup_{t \in [0,T(L,n)]} \| f(t)-f_n(t) \|_{\reminit} = 0
	\end{equ}
	where
	\begin{equ}
		T(L,n)  =  \inf\left\{ t \in [0,1] \,	\middle| \, \|f(t) \|_{\reminit} \geqslant L \;\text{or}\; \|f_n(t) \|_{\reminit} \geqslant L \right\} \; .
	\end{equ}
\end{lemma}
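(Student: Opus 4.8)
\textbf{Proof plan for Lemma~\ref{lemma:Csol_cont}.}

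The plan is to establish the two implications separately, working directly from the definition of the metric $d = \sum_{L} 2^{-L} d_L$ and the structure of the cut-off maps $\Theta_L$. The key preliminary observation I would record is a comparison between the ``hard'' stopping time $T(L,n)$ appearing in the statement and the ``soft'' threshold encoded by $\Theta_L$, which uses the mollified sup-norm $S_f^L$. Since $S_f^L(t) \geqslant S_f(t) \geqslant \|f(t)\|_{\reminit}$ and $S_f^L$ agrees with the genuine blow-up time of $S_f$, one has $\psi(S_f^L(t)/L) = 1$ whenever $S_f^L(t) \leqslant L$, hence in particular on a slightly smaller interval than $[0,T(L,n)]$; conversely $\Theta_L(f)$ is supported where $S_f^L < 2L$, so $\Theta_{2L}$ already ``sees'' all of $f$ up to time $T(L,n)$. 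The point is that the family $\bigl(\Theta_L\bigr)_{L}$ and the family of stopped trajectories $f\restr{[0,T(L,n)]}$ are cofinal in each other up to doubling $L$, so convergence against one family is equivalent to convergence against the other.

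First I would prove the ``only if'' direction: suppose $d(f_n,f)\to 0$, fix $L$, and show $\sup_{t\le T(L,n)}\|f(t)-f_n(t)\|_{\reminit}\to 0$. From $d(f_n,f)\to 0$ we get $d_{L'}(f_n,f)\to 0$ for every fixed $L'$, i.e. $\sup_t \|\Theta_{L'}(f_n)(t)-\Theta_{L'}(f)(t)\|_{\reminit}\to 0$. Choosing $L' = 2L$ (say), one notes that on $[0,T(L,n)]$ both $S_f^{L'}$ and $S_{f_n}^{L'}$ are below the threshold where $\psi$ starts to decay, because on that interval $\|f(s)\|,\|f_n(s)\|< L$ so that $S_f(s), S_{f_n}(s) \le L$ and, choosing the mollification scale appropriately (shrinking $t$ slightly, or using that $S^{L'}_\bullet$ only averages over a window of width $1/L'$ which one can absorb), $S_f^{L'}(s)/L' , S^{L'}_{f_n}(s)/L' \le 1$, hence $\Theta_{L'}(f)= f$ and $\Theta_{L'}(f_n) = f_n$ there. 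Some care is needed near the right endpoint because of the mollification window in the definition of $S_f^L$; I would handle this by replacing $T(L,n)$ with $T(L,n)$ and comparing against $\Theta_{L'}$ for $L'$ large enough that the averaging does not reach the blow-up, which is where I expect the only genuinely fiddly estimate to live. This gives the claimed uniform convergence on $[0,T(L,n)]$.

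Next I would prove the ``if'' direction: assuming the stated uniform convergence on each $[0,T(L,n)]$, show $d_L(f_n,f)\to 0$ for each fixed $L$, which by dominated convergence over the sum (each $d_L\le 1$) yields $d(f_n,f)\to 0$. Fix $L$; we must bound $\sup_t \|\Theta_L(f_n)(t)-\Theta_L(f)(t)\|_{\reminit}$. Split at $T(2L,n)$ (or at $T(L,n)$ enlarged to absorb the mollifier window). On $[0,T(2L,n)]$ the hypothesis gives $\sup\|f-f_n\|_{\reminit}\to 0$, and since $\Theta_L$ is built by multiplying by a scalar function $\psi(S^L_\bullet(t)/L)\in[0,1]$, it suffices to control the difference of these scalar weights; the weight depends on $f$ only through $S_f^L$, which is a $1$-Lipschitz-in-a-suitable-sense (monotone, built from sup and $\arctan$) functional of the trajectory, so $\|f-f_n\|_{\infty,[0,t]}\to 0$ forces $S_{f_n}^L(t)\to S_f^L(t)$ locally uniformly and hence the weights converge uniformly; combined with the uniform bound $\|\Theta_L(f)(t)\|_{\reminit}\le 2L$ this controls the product. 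On $(T(2L,n),1]$, by definition of $T(2L,n)$ one of the trajectories has norm $\ge 2L$ there, so $S^L$ of that trajectory exceeds $2L$, hence $\psi(S^L_\bullet/L)=0$, so $\Theta_L$ of that trajectory vanishes; meanwhile $\Theta_L$ of the other is still bounded by $2L$ but — using monotonicity of $S^L$ and the fact that it has already crossed $2L$ — a short argument shows the other weight has also reached $0$ by slightly past $T(L,n)$, so both terms vanish on (most of) this tail and the boundary layer is handled again by the mollifier-window estimate.

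The main obstacle I anticipate is entirely bookkeeping around the temporal mollification in $S_f^L$: the averaging window of width $1/L$ means $\Theta_L(f)(t)$ depends on $f(s)$ for $s$ slightly larger than $t$, so the naive matching of ``$\Theta_L$ active'' with ``norm below $L$ at time $t$'' is off by that window, and one must either pass to $\Theta_{L'}$ with $L' $ a fixed multiple of $L$, or shrink the comparison interval by $1/L$, to make the cofinality argument airtight. Once that comparison is set up cleanly, both implications reduce to: (i) $\|f_n-f\|_{\infty}\to0$ on an interval implies convergence of the scalar cut-off weights there, and (ii) outside that interval both cut-off trajectories are identically $\infty$-truncated to $0$. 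Neither step involves anything deeper than continuity of $\sup$, $\arctan$, $\tan$, $\psi$ and the triangle inequality, so I would present it compactly rather than grinding through every $\varepsilon$.
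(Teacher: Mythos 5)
The paper offers no argument of its own here (the proof is a one-line citation to Lemma~2.19 of the arXiv version of \cite{BCCH21}), so a self-contained proof would be welcome; but your proposal has a genuine gap, concentrated exactly at the point you flag and then defer as ``fiddly bookkeeping''. The weight in $\Theta_{L'}(g)(t)=\psi\bigl(S_g^{L'}(t)/L'\bigr)g(t)$ depends on $g$ on the \emph{future} window $[t,t+1/L']$, and because $\tan$ is unbounded near $\pi/2$, $S_g^{L'}(t)$ can exceed $2L'$ (so the weight vanishes) at times where $\|g(t)\|_{\reminit}$ is still small, provided $g$ blows up shortly after $t$. No choice of $L'$ depending on $L$ and $f$ alone repairs this: the window always has length $1/L'>0$, while the gap between $T(L,n)$ and the blow-up time of $f_n$ may shrink to $0$ with $n$. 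Concretely, take $f\equiv 0$ and $f_n$ vanishing on $[0,\tfrac12-\tfrac1n]$, rising continuously to norm $L/2$ at $t=\tfrac12$ and blowing up at $\tfrac12+\tfrac1n$. For each fixed $L'$ and all $t$ in the set where $f_n\neq 0$ one has $t+s/L'\geqslant \tfrac12+\tfrac1n$ as soon as $s\geqslant 2L'/n$, whence $S^{L'}_{f_n}(t)\geqslant \tan\bigl(\tfrac{\pi}{2}(1-2L'\|\phi\|_\infty/n)\bigr)\to\infty$; so $\Theta_{L'}(f_n)\equiv 0$ for $n$ large and $d(f_n,0)\to 0$, yet $\sup_{t\leqslant T(L,n)}\|f_n(t)\|_{\reminit}=L$ for every $n$. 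So the ``only if'' implication cannot be reached by your cofinality argument \dash for the metric as defined here it fails outright for such sequences \dash and you should restrict yourself to the ``if'' direction, which is the only one the paper uses (via Corollary~\ref{cor:Local_Cont_Prop}).

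Your ``if'' direction also rests on a false step, namely that $S^L_{\bigcdot}(t)$ is a Lipschitz functional of the trajectory on $[0,t]$: it is not a functional of $f\restr{[0,t]}$ at all, and $\tan$ destroys any Lipschitz bound. The repair is to control the weights $w=\psi(S^L_f/L)$ and $w_n=\psi(S^L_{f_n}/L)$ directly. Write $w(t)=\Phi_L\bigl(\int_0^1\arctan S_f(t+s/L)\,\phi(s)\,\mrd s\bigr)$ with $\Phi_L(a)\eqdef\psi(\tan(a)/L)$, which \emph{is} globally Lipschitz on $[0,\pi/2]$ because $\psi$ vanishes where $\tan$ blows up. Then fix an auxiliary level $M\gg L$: for $u\leqslant T(M,n)$ the hypothesis gives $|\arctan S_f(u)-\arctan S_{f_n}(u)|\leqslant \eps_n^{(M)}\eqdef\sup_{s\leqslant T(M,n)}\|f(s)-f_n(s)\|_{\reminit}$, while for $u\geqslant T(M,n)$ both running suprema are $\geqslant M-\eps_n^{(M)}$ (at $T(M,n)$ one of the two norms equals $M$ and the other is within $\eps_n^{(M)}$ of it, and both suprema are nondecreasing), so both arctangents lie within $1/(M-1)$ of $\pi/2$. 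Sending $n\to\infty$ and then $M\to\infty$ yields $\sup_t|w(t)-w_n(t)|\to 0$. Combined with your splitting at $T(2L,n)$ \dash on $[0,T(2L,n)]$ use $\|wf-w_nf_n\|\leqslant w\|f-f_n\|+|w-w_n|\,\|f_n\|$ with $\|f_n\|\leqslant 2L$ there, and on the tail both weights are $\leqslant\psi\bigl(2-\eps_n^{(2L)}/L\bigr)\to\psi(2)=0$ while each function has norm $\leqslant 2L$ wherever its own weight is positive \dash this closes the ``if'' direction, which is all the paper needs.
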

\begin{proof}
	See Lemma~2.19 of the arXiv version of \cite{BCCH21}.
\end{proof}

\subsection{Heat Kernel Smoothing Property}

\begin{proposition}
\label{HKSmootProp}
	Let $E$ be a Fr\'{e}chet space. For all $\eta \in \R \setminus \N$, $\xi \in \CC^\eta(\T^d;E)$ and $\gamma > 0$ one has for all $\kappa > 0$
	\begin{equ}
		 G\xi \in \CC^{\gamma,\eta}_{\s} \bigl( \R \times \T^d; E \bigr) \cap \cC\bigl([0,\infty) ; \CC^{\eta-\kappa}_\s(\T^d;E)\bigr)
	\end{equ}
	where
	\begin{equ}
		G\xi(t,x) = \left(e^{t \Delta} \xi \right) (x) \;	.
	\end{equ}
\end{proposition}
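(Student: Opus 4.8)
\textbf{Proof strategy for Proposition~\ref{HKSmootProp}.}

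The plan is to reduce everything to the scalar case $E = \C$ and then lift it. By Remark~\ref{rem:besovtensorproduct} the spaces $\CC^\alpha_\s(\R^d;E)$ are injective tensor products $\CC^\alpha_\s(\R^d)\wotimes_\eps E$, and the heat semigroup acts as $G\otimes \bone_E$; the same structure carries over to the singular H\"older spaces $\CC^{\alpha,\eta}_\s$ and to the spaces $\cC([0,\infty);\CC^{\eta-\kappa}_\s(\T^d;E))$ since all of these are defined through a countable family of seminorms (recall $E$ is Fr\'echet). Hence, following the argument of \cite[Proposition~43.6]{Trev67} as in the proofs of Theorem~\ref{SchdrThm} and Theorem~\ref{YngMultThm}, it suffices to prove the two claimed inclusions when $E = \C$, with norm bounds uniform in the relevant seminorm of $E$.

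For the scalar case, first I would observe that the spatial heat kernel $p_t(x) = (4\pi t)^{-d/2} e^{-|x|^2/4t}$ on $\T^d$, viewed as a space-time kernel on $\R\times\T^d$ restricted to $t>0$ and cut off for $t>1$, is a combination of the building blocks handled in Section~\ref{sec:kernel_not}: up to the truncation and periodisation it equals $\widetilde{\mcK}$ (for $m=0$), which lies in $\mfN^{-2}$. The statement $G\xi \in \CC^{\gamma,\eta}_\s$ for all $\gamma > 0$ is then precisely the content of the singular Schauder estimate: one writes $G\xi$ as (a piece of) the reconstruction of a singular modelled distribution built from $\xi \in \CC^\eta(\T^d)$ lifted to a constant-in-time element of $\CD^{\gamma',\eta}_H$ for suitable $\gamma' > \gamma$, and applies Theorem~\ref{thm:EtaRecon} together with Theorem~\ref{SchauderEstBU} (the non-anticipative, $\beta$-regularising Schauder estimate with $\beta = 2$), noting $G\xi$ solves $\partial_t (G\xi) = \Delta (G\xi)$ with initial data $\xi$, so its regularity is only limited by the integration order and not by the data. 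The genuinely classical fact here is the well-known smoothing property $\|e^{t\Delta}\xi\|_{\CC^{\gamma}(\T^d)} \lesssim t^{-(\gamma-\eta)/2}\|\xi\|_{\CC^\eta(\T^d)}$ for $\gamma \geqslant \eta$, from which the $\eta$-weighted bound near $t=0$ in Definition~\ref{def:SingHolerSp} follows by a direct testing argument against the rescaled test functions $\CS^\lambda_{\s,z}\psi$, splitting into the regimes $\lambda^2 \lesssim t$ and $\lambda^2 \gtrsim t$.

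For the second inclusion, $G\xi \in \cC([0,\infty);\CC^{\eta-\kappa}_\s(\T^d))$, I would use the same smoothing bound together with the strong continuity of the heat semigroup: for $\xi \in \CC^\eta(\T^d)$ with $\eta \notin \N$, the map $t \mapsto e^{t\Delta}\xi$ is norm-continuous into $\CC^{\eta-\kappa}(\T^d)$ for every $\kappa > 0$, continuity at $t=0$ following from density of smooth functions and the uniform bound $\sup_t \|e^{t\Delta}\xi\|_{\CC^{\eta}} \lesssim \|\xi\|_{\CC^\eta}$ combined with the elementary estimate $\|e^{t\Delta}\phi - \phi\|_{\CC^{\eta-\kappa}} \lesssim t^{\kappa/2}\|\phi\|_{\CC^\eta}$ for smooth $\phi$, then interpolating. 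Finally I would lift to Fr\'echet-valued $E$ through the tensor-product identification as above.

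The main obstacle I anticipate is purely bookkeeping rather than conceptual: making sure the singular ($\eta$-weighted) part of the H\"older norm in Definition~\ref{def:SingHolerSp} is controlled with the correct power of $|t|$, in particular handling the test functions at the threshold scale $\lambda_t = \tfrac12|t|^{1/\s_1}\wedge 1$ and checking that the truncation and spatial periodisation of the heat kernel (which introduce only smooth, hence harmless, corrections, exactly as arranged in Section~\ref{sec:kernel_not}) do not spoil the estimates. Everything else is a routine transcription of the classical parabolic Schauder theory, already packaged for us in Theorems~\ref{thm:EtaRecon} and \ref{SchauderEstBU}.
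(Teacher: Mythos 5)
Your proposal is correct in substance, but it is worth comparing it with what the paper actually does, since the routes differ in both halves. For the first inclusion the paper simply cites \cite[Lemma~7.5]{Hai14}; what you sketch \dash the semigroup smoothing bound $\|e^{t\Delta}\xi\|_{\CC^{\gamma}(\T^d)} \lesssim t^{-(\gamma-\eta)/2}\|\xi\|_{\CC^\eta(\T^d)}$ combined with a direct testing argument against $\CS^{\lambda}_{\s,z}\psi$ split into the regimes $\lambda^2\lesssim t$ and $\lambda^2\gtrsim t$ \dash is essentially the proof of that lemma, so you are reproving the cited result rather than deviating from it. One caveat: the detour through Theorem~\ref{SchauderEstBU}\slash Theorem~\ref{thm:EtaRecon} does not work as literally stated, because $G\xi$ is not a space-time convolution of $\widetilde{\mcK}$ with a $\CC^\eta$ space-time distribution; writing it as $\widetilde{\mcK}\ast(\delta_0\otimes\xi)$ costs the parabolic order of $\delta_0$ in time, and a $2$-regularising Schauder estimate would then only return regularity $\eta$, not arbitrary $\gamma>0$. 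The arbitrarily high interior regularity really does come from the semigroup smoothing estimate, i.e.\ from the part of your argument that does not pass through the kernel calculus of Section~\ref{sec:kernel_not}, so that is the part you should make rigorous.

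For the second inclusion your route (approximate $\xi$ by smooth functions in $\CC^{\eta-\kappa/2}$, use uniform boundedness of $e^{t\Delta}$ on $\CC^{\eta-\kappa}$ and the elementary rate $\|e^{t\Delta}\phi-\phi\|_{\CC^{\eta-\kappa}}\lesssim t^{\kappa/2}\|\phi\|_{\CC^\eta}$) is valid, but the paper argues differently: it combines weak continuity of $t\mapsto e^{t\Delta}\xi$, the uniform bound on $\|e^{t\Delta}\xi\|_{\eta;\T^d,\mfp}$, and compactness of the embedding $\CC^\eta_\s(\T^d;E)\hookrightarrow\CC^{\eta-\kappa}_\s(\T^d;E)$. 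The compactness argument avoids any density discussion (H\"older spaces are not separable, so one must be slightly careful that smooth functions are dense in $\CC^\eta$ only for the $\CC^{\eta-\kappa}$ topology \dash a point your write-up implicitly uses but does not flag), while your argument has the small advantage of producing an explicit rate $t^{\kappa/2}$. The reduction to scalar $E$ via the injective tensor product of Remark~\ref{rem:besovtensorproduct} matches the paper's handling of vector-valued statements elsewhere and is fine.
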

\begin{proof}
	See Lemma~7.5 of \cite{Hai14} for $G\xi \in \CC^{\gamma,\eta}_{\s} \bigl( \R_+ \times \T^d; E \bigr)$. The second claim follows from the weak continuity of $t \mapsto e^{t \Delta} \xi$, the boundedness of $\| e^{t \Delta} \xi \|_{\eta; \T^d, \mfp}$ for all $\mfp \in \mfP$ and $t \in [0,\infty)$, and compactness of the embedding $\CC^\eta_\s(\T^d; E) \hookrightarrow \CC^{\eta-\kappa}_\s(\T^d; E)$ for all $\kappa >0$.
\end{proof}

	\section{\TitleEquation{N}{N}-Contractive Estimates}
	\label{sec:UltrConProof}

	We prove Theorem~\ref{thm:LocalUltraContr} in several steps, adapting \cite[Proposition~1.2.3]{GJ71}.

	Let $\mfH$ be a separable complex Hilbert space with antiunitary involution $\kappa$ and let
	\begin{equ}
		\CF(\mfH) \eqdef \bigoplus_{n \in \N} \mfH^{\otimes n}\;,\qquad \|h\|^2 = \sum_{n \in \N}\|h_n\|^2\;,
	\end{equ}
	denote the full Fock space of $\mfH$, where here and in the following $\otimes$ always denotes the usual Hilbert space tensor product. Let $P_- \colon \CF(\mfH) \to \CF_a(\mfH)$ be the projection operator where $P_- \restr{\mfH^{\otimes n}} \eqdef S_n^-$ as defined in \eqref{eq:Def_Asym_Prod}. 


	We can define a second set of ``creation'' and ``annihilation'' operators $\beta^\dagger$, $\beta$ on $\CF_a(\mfH)$
	which, for $f \in \mfH$ and $F \in \CF_a(\mfH)$, are given by
	\begin{equ}
		\beta^\dagger(f) F = f \wedge F\;,
	\end{equ}
	with $\beta(f)$ being the adjoint. For any $G \in \mfH^{\otimes n}$ one has
	\begin{equ}
		\bigl( \bigl(\beta^\dagger\bigr)^{\otimes n}(G) \bigl) F = P_- (G \otimes F) = (P_- G) \wedge F \; .
	\end{equ}
	Thus, $ \left( \beta^\dagger \right)^{\otimes n}$ can be viewed as a bounded operator $\mfH^{\otimes n} \otimes \CF_a(\mfH) \to \CF_a(\mfH)$ (and therefore $\beta^{\otimes n} \colon \CF_a(\mfH) \to (\mfH^*)^{\otimes n} \otimes \CF_a(\mfH)$ where $\mfH^*$ is the dual space of $\mfH$), and both
	are contractions.

	We note that $\alpha^\dagger = \sqrt{N+1} \beta^\dagger = \beta^\dagger \sqrt{N}$ and $\alpha = \sqrt{N} \beta = \beta \sqrt{N-1}$. More generally for $n \in \N$ we define the operators
	\begin{equs}
		(N+k)^{(n)} &\eqdef (N+k) \cdots (N+k+n-1)\\
		(N+k)_{(n)} &\eqdef (N+k)(N+k-1) \cdots (N+k-n+1)
	\end{equs}
	imitating the usual notation for the Pochhammer symbols which allows us to write down the identities
	\begin{equs}
		(\alpha^\dagger)^{\otimes n} &= (\beta^\dagger)^{\otimes n}  \sqrt{(N+1)^{(n)}} =\sqrt{(N)_{(n)}} (\beta^\dagger)^{\otimes n}\;, \\
		\alpha^{\otimes n} & =\beta^{\otimes n} \sqrt{(N)_{(n)}} = \sqrt{(N+1)^{(n)}} \beta^{\otimes n}\;.
	\end{equs}

	In the following we will interpret $\left((N)_{(n)}\right)^{-1}$ as the pseudo-inverse,
	i.e.\ it vanishes on $\{N < n\}$.
	From this discussion the following lemma follows.

	\begin{lemma}
	\label{lemma:NmbOpComRel}
		For all $n \in \N$ the maps
		\begin{equs}
			\bigl(\alpha^\dagger\bigr)^{\otimes n} \circ \left((N+1)^{(n)}\right)^{-\frac{1}{2}}  =  \left((N)_{(n)}\right)^{-\frac{1}{2}}\circ \bigl(\alpha^\dagger\bigr)^{\otimes n} &\colon \mfH^{\otimes n} \otimes  \CF_{a}(\mfH) \longrightarrow \CF_a(\mfH) \\
			\alpha^{\otimes n} \circ \left((N)_{(n)}\right)^{-\frac{1}{2}} = \left((N+1)^{(n)}\right)^{-\frac{1}{2}} \circ \alpha^{\otimes n} &\colon \CF_{a}(\mfH) \longrightarrow \mfH^{\otimes n} \otimes \CF_a(\mfH)
		\end{equs}
		are bounded operators.
	\end{lemma}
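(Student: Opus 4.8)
\textbf{Proof plan for Lemma~\ref{lemma:NmbOpComRel}.}

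The plan is to reduce everything to the two identities recorded just before the lemma statement, namely
$(\alpha^\dagger)^{\otimes n} = (\beta^\dagger)^{\otimes n}\sqrt{(N+1)^{(n)}} = \sqrt{(N)_{(n)}}\,(\beta^\dagger)^{\otimes n}$
and their adjoint counterparts for $\alpha^{\otimes n}$, together with the fact that $(\beta^\dagger)^{\otimes n}$ and $\beta^{\otimes n}$ are contractions. First I would work on the core $\bigoplus_{n} \Lambda_n(\mfH)\subset \CF_a(\mfH)$ of finitely supported vectors, on which all the operators involved (powers of $\alpha^\dagger$, $\alpha$, and functions of $N$ via the spectral calculus, interpreting $((N)_{(n)})^{-1/2}$ as the pseudo-inverse vanishing on $\{N<n\}$) are genuinely defined without domain subtleties, and verify the claimed operator identities there. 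Concretely, on $\mfH^{\wedge m}$ the operator $N$ acts as multiplication by $m$, so $(N+1)^{(n)}$ acts as the scalar $(m+1)(m+2)\cdots(m+n)$ and $(N)_{(n)}$ on $\mfH^{\wedge(m+n)}$ acts as $(m+n)(m+n-1)\cdots(m+1)$, i.e. the same scalar; this is exactly why $(\alpha^\dagger)^{\otimes n}$ intertwines the two expressions, and it explains the identity $(\alpha^\dagger)^{\otimes n}\circ((N+1)^{(n)})^{-1/2} = ((N)_{(n)})^{-1/2}\circ(\alpha^\dagger)^{\otimes n}$ as an identity of operators $\mfH^{\otimes n}\otimes\CF_a(\mfH)\to\CF_a(\mfH)$.

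Next I would establish boundedness. For the creation side, write
$(\alpha^\dagger)^{\otimes n}\circ((N+1)^{(n)})^{-1/2} = (\beta^\dagger)^{\otimes n}\sqrt{(N+1)^{(n)}}\,\circ ((N+1)^{(n)})^{-1/2} = (\beta^\dagger)^{\otimes n}$
on the subspace where $(N+1)^{(n)}$ is invertible, which is all of $\CF_a(\mfH)$ since $N+k\geqslant 1$ for $k\geqslant 1$; hence the composite equals the contraction $(\beta^\dagger)^{\otimes n}$ and in particular extends to a bounded operator of norm $\leqslant 1$. For the annihilation side, using $\alpha^{\otimes n} = \beta^{\otimes n}\sqrt{(N)_{(n)}}$ one gets $\alpha^{\otimes n}\circ((N)_{(n)})^{-1/2} = \beta^{\otimes n}$ after noting that $\sqrt{(N)_{(n)}}\,((N)_{(n)})^{-1/2}$ is the orthogonal projection onto $\{N\geqslant n\}$ and that $\beta^{\otimes n}$ already annihilates $\{N<n\}$ (there are not enough factors to wedge out), so composing with that projection changes nothing. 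Thus this composite equals the contraction $\beta^{\otimes n}$. The two ``$=$'' signs in each displayed line of the lemma are then checked by passing adjoints through: $((N+1)^{(n)})^{-1/2}\circ\alpha^{\otimes n}$ is the adjoint of $(\alpha^\dagger)^{\otimes n}\circ((N+1)^{(n)})^{-1/2}$ once one is careful about the identification $\mfH\simeq\mfH^*$ via $\kappa$, and similarly on the other side.

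The only genuinely delicate point — and the step I expect to be the main obstacle — is the bookkeeping of \emph{which} power of $N$ appears on \emph{which} side, i.e. getting the Pochhammer shifts exactly right and confirming that the rising factorial $(N+1)^{(n)}$ evaluated before applying $(\alpha^\dagger)^{\otimes n}$ really matches the falling factorial $(N)_{(n)}$ evaluated after; this is a finite but error-prone index computation, best done by testing on a homogeneous vector $f_1\wedge\cdots\wedge f_m$ and tracking the $\sqrt{\phantom{x}}$ normalisation constants in \eqref{e:defaa*}. Once that combinatorial identity is nailed down on the algebraic core, boundedness is automatic from the contraction property of $\beta^\dagger,\beta$ and the fact that the core is dense, so the operators extend uniquely to the stated bounded maps.
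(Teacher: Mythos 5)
Your proposal is correct and follows the same route the paper intends: the lemma is deduced from the identities $(\alpha^\dagger)^{\otimes n} = (\beta^\dagger)^{\otimes n}\sqrt{(N+1)^{(n)}} = \sqrt{(N)_{(n)}}(\beta^\dagger)^{\otimes n}$ (and their adjoints) stated just before it, together with the contraction property of $\beta^{\otimes n}$ and $(\beta^\dagger)^{\otimes n}$ and the pseudo-inverse convention on $\{N<n\}$. Your explicit checks — invertibility of $(N+1)^{(n)}$, the vanishing of $\beta^{\otimes n}$ on $\{N<n\}$, and the matching of the rising and falling factorials under the shift $N\mapsto N+n$ — fill in exactly the bookkeeping the paper leaves implicit with ``from this discussion the following lemma follows.''
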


	Given a bounded operator $ A \colon  (\mfH^*)^{\otimes s}\otimes \CF_a(\mfH) \longrightarrow \mfH^{\otimes r} \otimes \CF_a(\mfH)$, we can combine $\beta^{\otimes s}$ and $\bigl( \beta^{\dagger} \bigr)^{\otimes r}$ to a single map
	\begin{equ}
		\widetilde W_{A}^{r,s} \eqdef \bigl(\beta^\dagger\bigr)^{\otimes r} \circ A \circ \beta^{\otimes s} \colon \CF_a(\mfH) \longrightarrow \CF_a(\mfH)
	\end{equ}
	with operator norm $\|\widetilde{W}_{A}^{r,s}\| \leqslant \|A\|$, as well as the potentially unbounded operators
	\begin{equ}
		W_{A}^{r,s} \eqdef \bigl(\alpha^\dagger\bigr)^{\otimes r} \circ A \circ \alpha^{\otimes s}
	\end{equ}
	with the quadratic form domain containing at least $\CD\bigl( (N)_{(s)} \bigr) \times \CD\bigl( (N)_{(r)} \bigr)$.

	If the operator $A$ is of the form $B \otimes I$ where $B$ is a bounded operator $(\mfH^*)^{\otimes s} \to \mfH^{\otimes r}$, then
	we can write
	\begin{equ}
		\widetilde{W}_A^{r,s} = \left((N)_{(r)}\right)^{-\frac{1}{2}} \circ W_{A}^{r,s}  \circ \left((N)_{(s)}\right)^{-\frac{1}{2}} = W_{A}^{r,s} \circ \left((N+1-r)^{(r)}\right)^{-\frac{1}{2}} \circ \left((N)_{(s)}\right)^{-\frac{1}{2}}
	\end{equ}
	using Lemma~\ref{lemma:NmbOpComRel}, that $\left((N+1)^{(r)}\right)^{-\frac{1}{2}}$ commutes with $A$, and
	\begin{equ}
		\left((N+1)^{(r)}\right)^{-\frac{1}{2}} \circ \alpha^{\otimes s} \circ \left((N)_{(s)}\right)^{-\frac{1}{2}} = \alpha^{\otimes s} \circ\left((N+1-s)^{(r)}\right)^{-\frac{1}{2}} \circ  \left((N)_{(s)}\right)^{-\frac{1}{2}}.
	\end{equ}
	We conclude that in this case $W_{A}^{r,s} \circ (1+N)^{-\frac{r+s}{2}}$ is a bounded operator as we have the estimate for all $k \in \Z$
	\begin{equ}
		\left\| \frac{N+k}{N+1} \right\| \leqslant |k|+1
	\end{equ}
	giving us the constant in Theorem~\ref{thm:LocalUltraContr} as $|k| \leqslant |r-s|$ for all $k$ appearing in $\left((N+1-s)^{(r)}\right)^{-\frac{1}{2}} \circ  \left((N)_{(s)}\right)^{-\frac{1}{2}}$.

	We can summarise this as follows:
	\begin{lemma}
	\label{lemma:ClosableOp}
		For any bounded operator $B \colon (\mfH^*)^{\otimes s} \to \mfH^{\otimes r}$, the operator $W_{B\otimes \bone}^{r,s} \circ (N+1)^{-\frac{r+s}{2}}$ is bounded with norm satisfying
		\begin{equ}
			\bigl\| W_{B\otimes \bone}^{r,s} \circ (N+1)^{-\frac{r+s}{2}} \bigr\| \lesssim (|r-s|+1)^{\frac{r+s}{2}} \| B \| .
		\end{equ}
		Furthermore, $W_{B\otimes \bone}^{r,s}$ is a closable operator with core $\CD \big( (N+1)^{-\frac{r+s}{2}} \big) $.
	\end{lemma}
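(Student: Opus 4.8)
The plan is to deduce Lemma~\ref{lemma:ClosableOp} directly from the algebraic identities assembled in the preceding discussion, essentially by bookkeeping with the pseudo-inverted Pochhammer operators. First I would recall that for $A = B\otimes\bone$ one has, by the two displayed identities just above the statement,
\begin{equ}
	\widetilde{W}_{B\otimes\bone}^{r,s} = W_{B\otimes\bone}^{r,s} \circ \bigl((N+1-s)^{(r)}\bigr)^{-\frac{1}{2}} \circ \bigl((N)_{(s)}\bigr)^{-\frac{1}{2}}\;,
\end{equ}
and that $\|\widetilde{W}_{B\otimes\bone}^{r,s}\| \leqslant \|A\| = \|B\|$ since $\beta^{\otimes s}$ and $(\beta^\dagger)^{\otimes r}$ are contractions (as noted in Section~\ref{sec:UltrConProof}, using $\|G\otimes F\|\le\|G\|\|F\|$ and $P_-$ a projection). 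To get from here to a bound on $W_{B\otimes\bone}^{r,s}\circ(N+1)^{-\frac{r+s}{2}}$, I would invert the relation above: on the core $\CD\bigl((N+1)^{-\frac{r+s}{2}}\bigr)$ (which contains the algebraic direct sum $\oplus_n \mfH^{\wedge n}$, a core for all polynomials in $N$) we may write
\begin{equ}
	W_{B\otimes\bone}^{r,s}\circ(N+1)^{-\frac{r+s}{2}} = \widetilde{W}_{B\otimes\bone}^{r,s}\circ \Bigl( \bigl((N+1-s)^{(r)}\bigr)^{\frac{1}{2}} \circ \bigl((N)_{(s)}\bigr)^{\frac{1}{2}} \circ (N+1)^{-\frac{r+s}{2}}\Bigr)\;,
\end{equ}
the point being that $(N+1-s)^{(r)}(N)_{(s)}$ is a product of exactly $r+s$ factors each of the form $N+k$ with $|k|$ controlled. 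On $\{N\geqslant s\}$ no pseudo-inverse issue arises, and on $\{N<s\}$ the operator $((N)_{(s)})^{1/2}$ already vanishes, so the composite operator in parentheses is a genuine bounded diagonal operator.

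The key estimate is then the elementary operator bound stated in the excerpt: for each $k\in\Z$ appearing in $(N+1-s)^{(r)}(N)_{(s)}$,
\begin{equ}
	\Bigl\| \frac{N+k}{N+1}\Bigr\| \leqslant |k|+1\;,
\end{equ}
which follows from $\bigl|\frac{n+k}{n+1}\bigr|\le |k|+1$ for all integers $n\geqslant 0$. Multiplying these over the $r+s$ factors (and using that the running index $k$ satisfies $|k|\leqslant |r-s|$ throughout — this is exactly the cancellation between the "down" factors from $\alpha^{\otimes s}$ and the "up" factors from $(\alpha^\dagger)^{\otimes r}$, whose ranges of shifts overlap, so the extreme shift is $|r-s|$ rather than $r$ or $s$ separately) yields
\begin{equ}
	\Bigl\| \bigl((N+1-s)^{(r)}\bigr)^{\frac{1}{2}}\circ\bigl((N)_{(s)}\bigr)^{\frac{1}{2}}\circ(N+1)^{-\frac{r+s}{2}}\Bigr\| \leqslant (|r-s|+1)^{\frac{r+s}{2}}\;.
\end{equ}
Combining with $\|\widetilde{W}_{B\otimes\bone}^{r,s}\|\leqslant\|B\|$ gives the asserted norm bound. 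Finally, closability: since $W_{B\otimes\bone}^{r,s}\circ(N+1)^{-\frac{r+s}{2}}$ is bounded and everywhere defined, $W_{B\otimes\bone}^{r,s}$ agrees with the bounded operator $\bigl(W_{B\otimes\bone}^{r,s}\circ(N+1)^{-\frac{r+s}{2}}\bigr)\circ(N+1)^{\frac{r+s}{2}}$, i.e.\ it is the composition of a bounded operator with the (closed, self-adjoint) operator $(N+1)^{\frac{r+s}{2}}$; such a composition is always closable, with $\CD\bigl((N+1)^{\frac{r+s}{2}}\bigr)=\CD\bigl((N+1)^{-\frac{r+s}{2}}\cdot\text{range}\bigr)$ — more precisely $\CD\bigl((N+1)^{(r+s)/2}\bigr)$ — serving as a core for its closure, since $(N+1)^{\frac{r+s}{2}}$ is essentially self-adjoint on the algebraic direct sum.

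I expect the only genuinely delicate point to be the second, combinatorial one: verifying carefully that, after commuting all the $\alpha$'s and $\alpha^\dagger$'s into the normal-ordered form used to define $\widetilde W$ versus $W$, every integer shift $k$ that appears does satisfy $|k|\le|r-s|$, so that one really gets the constant $(|r-s|+1)^{(r+s)/2}$ rather than something like $(r+s+1)^{(r+s)/2}$. This is exactly the content of the identities for $\alpha^{\otimes s}$, $(\alpha^\dagger)^{\otimes n}$ and $(N+k)^{(n)}$, $(N+k)_{(n)}$ collected before Lemma~\ref{lemma:NmbOpComRel}; the rest is routine. Everything else — the contraction property of $\beta,\beta^\dagger$, the pseudo-inverse conventions, essential self-adjointness of powers of $N$ on the finite-particle subspace — is either stated in the excerpt or standard.
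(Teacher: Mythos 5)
Your proposal is correct and follows essentially the same route as the paper: the lemma there is explicitly presented as a summary of the computation expressing $\widetilde{W}_{B\otimes\bone}^{r,s}$ as $W_{B\otimes\bone}^{r,s}$ composed with the pseudo-inverted Pochhammer operators, inverting that relation against $(N+1)^{-\frac{r+s}{2}}$, and bounding each factor $\|(N+k)/(N+1)\|$ — exactly what you do, including the same slightly loose phrasing of the shift bound (on the support of the pseudo-inverse the negative-shift factors are simply $\leqslant 1$, and only the positive shifts, which are at most $r-s$, contribute the factor $|r-s|+1$). Your closability argument via composition of a bounded operator with the self-adjoint $(N+1)^{\frac{r+s}{2}}$ is also the intended one.
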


	\begin{remark}
		We note that everything up to this point works verbatim for bosonic creation and annihilation operators.
	\end{remark}

	We will be mainly interested in the case where $B$ is a Hilbert--Schmidt operator, i.e.\
	\begin{equ}
		\Trace\bigl(B^\dagger B\bigr) = \Trace\bigl(BB^\dagger \bigr) < \infty \; .
	\end{equ}
	Recall that the space of Hilbert--Schmidt operators $\mathrm{HS}(\mfH, \mfH)$ is 
	canonically isomorphic to the Hilbert space tensor product $\mfH \otimes \mfH$ via
	$(f \otimes g)(h) = f \scal{\kappa g, h}$ (the appearance of $\kappa$ is necessary since otherwise 
	the right-hand side would be antilinear in $g$), and similarly for $\mathrm{HS}((\mfH^*)^{\otimes s}, \mfH^{\otimes r})$. In particular for
	\begin{equ}
		B = \sum_j f_1^j \otimes \cdots \otimes f_{r+s}^j \in \mfH^{\otimes(r+ s)}
	\end{equ}
	we have that
	\begin{equ}
		W_{B \otimes I}^{r,s} = \sum_{j} \alpha^\dagger(f_1^j) \cdots \alpha^\dagger(f_r^j) \alpha(\kappa f_{r+1}^j) \cdots \alpha(\kappa f_{r+s}^j)
	\end{equ}
	is a well-defined, closable operator.

	To obtain the estimate with bound $(1+N)^{\frac{r+s-1}{2}}$ instead of $(1+N)^{\frac{r+s}{2}}$ we have to interpret one of the $\alpha$'s appearing in $W^{r,s}_A$ instead as part of the bounded operator $(\mfH^*)^{\otimes s}\otimes \CF_a(\mfH) \longrightarrow \mfH^{\otimes r} \otimes \CF_a(\mfH)$ as we can still commute $\left((N+1)^{(r)}\right)^{-\frac{1}{2}} $ past it without changing the final expression. In particular, if
	\begin{equ}
		B = \sum_j f_1^j \otimes \cdots \otimes f_{r+s}^j \in \mfH^{\otimes(r+ s)}
	\end{equ}
	is Hilbert-Schmidt operator, then
	\begin{equ}
		A = \sum_j f_1^j \otimes \cdots \otimes f_{r+s-1}^j \otimes \alpha(\kappa f_{r+s})
	\end{equ}
	is a bounded operator. Performing the same commutation calculation with the number operators finally proves Theorem~\ref{thm:LocalUltraContr}.

\section{Proof of Theorem~\ref{thm:ContinuityRep}}\label{appendix:theoremproof}

We start by reviewing some results of operator algebra theory we need, for further details cf.\ \cite{BR87}.

	Let $\mfH$ be a Hilbert space, and $\CB(\mfH)$ be the $C^*$-algebra of bounded operators on $\mfH$. This space is the continuous dual space of the Banach space of trace class operators
	\begin{equ}
		\CL^1(\CB(\mfH), \Trace ) \eqdef \left\{ A \in \CB(\mfH) \, \middle| \, \Trace  \sqrt{A^\dagger A}  < \infty \right\}
	\end{equ}
 with the pairing $\scal{\bigcdot, \bigcdot} \colon \CB(\mfH) \times \CL^1(\CB(\mfH), \Trace ) \to \C$
	\begin{equ}
		\scal{A,B} \eqdef \Trace(AB) \; .
	\end{equ}
This allows us to conclude from the Banach-Alaoglu theorem that closed balls of finite radius in $\CB(\mfH)$ are compact in the weak$^*$ topology. 

We say that a net $(A_\alpha)_\alpha \subset \CB(\mfH)$ converges strongly to an operator $A \in \CB(\mfH)$ if and only if for all $v \in \mfH$
	\begin{equ}
		\lim_{\alpha} A_\alpha v = A v \; .
	\end{equ}

	We now introduce an order on $\CB(\mfH)$ along with a corresponding positive cone. 
For two operators $A,B \in \CB(\mfH)$ we say that $A \leqslant B$ if and only if $B-A$ is a non-negative operator. Furthermore this order satisfies the property \begin{equ}
		A \leqslant B \; \implies \; \|A\| \leqslant \|B\| \; .
	\end{equ}
	The positive cone of $\CB(\mfH)$ is the set
	\begin{equ}
		\CB(\mfH)_+ \eqdef \left\{ A \in \CB(\mfH) \, \big| \, A \geqslant 0 \right\} \; .
	\end{equ}
	For a subset $U \subset \CB(\mfH)$ we say that $A \in \CB(\mfH)$ is an upper bound if and only if $A \geqslant B$ for all $B \in U$. If a least such upper bound of $U$ exists, we denote it by $\sup U$. Note that any least upper bound is necessarily unique, but it need not exist in general.

	\begin{theorem}\label{thm:strong_conv}
		Let $(A_\alpha)_\alpha \subset \CB(\mfH)_+$ be an increasing net of operators, i.e.\
		\begin{equ}
			\alpha \leqslant \beta \; \implies \; A_\alpha \leqslant A_\beta \; .
		\end{equ}
		If $(A_\alpha)_\alpha $ has an upper bound $B \in \CB(\mfH)_+$, $A \eqdef \sup_\alpha A_\alpha$ exists and the net converges strongly to $A$.
	\end{theorem}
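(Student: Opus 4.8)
\textbf{Proof strategy for Theorem~\ref{thm:strong_conv}.}
The plan is to exploit the duality between $\CB(\mfH)$ and the trace-class operators $\CL^1(\CB(\mfH),\Trace)$ together with the fact that a norm-bounded monotone net of reals converges. First I would observe that the net $(A_\alpha)_\alpha$ is norm-bounded: since $A_\alpha \leqslant B$ for all $\alpha$, the order property $A \leqslant B \implies \|A\| \leqslant \|B\|$ (applied to $A_\alpha$, using $A_\alpha \geqslant 0$) gives $\|A_\alpha\| \leqslant \|B\|$. Hence the net lies in a closed ball of radius $\|B\|$ in $\CB(\mfH)$, which is weak$^*$-compact by Banach--Alaoglu. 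To pin down the candidate limit, fix $v \in \mfH$ and consider the net of real numbers $\scal{v, A_\alpha v}$; by monotonicity of $(A_\alpha)$ this net is increasing, and it is bounded above by $\scal{v, B v}$, so it converges. By polarisation, $\scal{v, A_\alpha w}$ converges for all $v,w \in \mfH$; call the limit $Q(v,w)$. One checks $Q$ is a bounded sesquilinear form (bound $\|B\|$), hence $Q(v,w) = \scal{v, A w}$ for a unique $A \in \CB(\mfH)$ with $0 \leqslant A$, $\|A\|\leqslant\|B\|$.

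Next I would verify that this $A$ is in fact $\sup_\alpha A_\alpha$. It is an upper bound: for fixed $\alpha$ and any $\beta \geqslant \alpha$ we have $\scal{v,(A_\beta - A_\alpha)v}\geqslant 0$, and passing to the limit over $\beta$ gives $\scal{v,(A-A_\alpha)v}\geqslant 0$ for all $v$, i.e.\ $A_\alpha \leqslant A$. It is the least upper bound: if $C \geqslant A_\alpha$ for all $\alpha$, then $\scal{v, C v} \geqslant \scal{v, A_\alpha v}$ for all $\alpha$, and taking the limit yields $\scal{v, Cv} \geqslant \scal{v, A v}$, so $C \geqslant A$. This establishes existence and uniqueness of $\sup_\alpha A_\alpha = A$.

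Finally I would prove strong convergence $A_\alpha v \to A v$ for each $v$. The key computation is
\begin{equ}
	\|(A - A_\alpha) v\|^2 = \bigl\|(A-A_\alpha)^{1/2}(A-A_\alpha)^{1/2} v\bigr\|^2 \leqslant \|(A-A_\alpha)^{1/2}\|^2 \,\scal{v,(A-A_\alpha)v}\;,
\end{equ}
valid because $A - A_\alpha \geqslant 0$ so it has a positive square root and $\|(A-A_\alpha)^{1/2} w\|^2 = \scal{w,(A-A_\alpha)w}$. Since $0 \leqslant A - A_\alpha \leqslant A \leqslant B$, the order property bounds $\|(A-A_\alpha)^{1/2}\|^2 = \|A - A_\alpha\| \leqslant \|B\|$ uniformly in $\alpha$, while $\scal{v,(A-A_\alpha)v} \to 0$ by construction of $A$. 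Hence $\|(A-A_\alpha)v\| \to 0$, which is exactly strong convergence.

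\textbf{Main obstacle.} The only genuinely delicate point is the passage from weak convergence of the quadratic forms $\scal{v, A_\alpha v}$ to \emph{strong} operator convergence; this is where positivity is essential and is handled by the square-root inequality above (a standard fact, sometimes attributed to Vigier). Everything else — boundedness of the net, identification of the limit as a bounded operator via polarisation, and verification of the supremum property — is routine once Banach--Alaoglu and the order/norm compatibility are invoked. I would cite \cite[Chapter~2]{BR87} for the square-root trick rather than reprove it.
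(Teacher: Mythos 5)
Your proposal is correct, and for the existence of the limit it takes a genuinely different (and arguably more elementary) route than the paper. The paper extracts $A$ as a weak$^*$ cluster point: it uses Banach--Alaoglu to get compactness of the ball of radius $\|B\|$, applies the finite intersection property to the weak$^*$-closures of the tails of the net, and then identifies the cluster point as the least upper bound by showing that the upper sets $\{C : C \geqslant A_\alpha\}$ are weak$^*$-closed. You instead build $A$ directly: the diagonal nets $\scal{v, A_\alpha v}$ are increasing and bounded, hence convergent, and polarisation plus the uniform bound $\|A_\alpha\|\leqslant\|B\|$ produces a bounded sesquilinear form and hence the operator $A$; the supremum property then follows by passing to limits in the quadratic forms. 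Your construction avoids compactness entirely \dash indeed your invocation of Banach--Alaoglu in the first paragraph does no work and could be deleted \dash at the cost of a routine polarisation/Riesz-representation verification that the paper's argument sidesteps. The final step, upgrading weak convergence of the quadratic forms to strong operator convergence via the square-root inequality $\|(A-A_\alpha)v\|^2 \leqslant \|A-A_\alpha\|\,\scal{v,(A-A_\alpha)v}$, is essentially identical in both proofs (the paper writes the constant as $2\|A\|$, you as $\|B\|$; both are valid uniform bounds). Both approaches are standard proofs of Vigier's theorem and either would serve the paper's purposes equally well.
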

	\begin{proof}
		Since the net has upper bound $B$ it is contained in the ball of radius $\|B\|$ which is a weak$^*$-compact set. Thus let $C_\alpha$ be the weak$^*$-closure of the set $(A_\beta)_{\beta\geqslant \alpha}$, which as closed subsets of a compactum are themselves compact. It follows that $\bigcap_{\alpha} C_\alpha \neq \emptyset$ for otherwise some finite intersection of these sets would be empty. Let $A$ be an element of $\bigcap_{\alpha} C_\alpha$.

Next we show that $A$ is $\sup_\alpha A_\alpha$. The sets $\mfC_\alpha \eqdef \left\{ B \in \CB(\mfH) \, \big| \, B \geqslant A_{\alpha} \right\}$ are weak$^*$ closed, as for any $v \in \mfH$, the following functional is per definitionem weak$^*$-continuous
		\begin{equ}
			\ell_v (B) = \scal{v,Bv} = \Trace(P_v B)
		\end{equ}
		where $P_v$ is the projection onto the one dimensional space spanned by $v$, so
		\begin{equ}
			\mfC_\alpha = \bigcap_{v \in \mfH} \ell_v^{-1} \left( [\ell_v(A_\alpha), \infty) \right)
		\end{equ}
		is weak$^*$-closed. Since the net is increasing it follows that $(A_\beta)_{\beta \geqslant \alpha} \subset \mfC_\alpha$ and therefore also $C_\alpha \subset \mfC_\alpha$. On the other hand, $\bigcap_{\alpha} \mfC_\alpha$ is the set of upper bounds of $(A_\alpha)_\alpha$, thus $A$ is an upper bound of the net. Since the functionals $\ell_v$ are weak$^*$-continuous it follows that if $B$ is an upper bound for the net, i.e.\ $\ell_v(B - A_\alpha) \geqslant 0$ for all $\alpha$ and $v$, that is also an upper bound for any element in the weak$^*$-closure of the net, in particular $A$. Therefore, $A$ is a least upper bound and the weak$^*$-limit of the net $(A_\alpha)_\alpha$. 

		To show strong convergence, we write
		\begin{equs}
			\| (A-A_\alpha) v\|^2 &= \scal{v,(A-A_\alpha)^2 v} =  \scal{v,\sqrt{A-A_\alpha}(A-A_\alpha)\sqrt{A-A_\alpha} v} \leqslant \\
			&\leqslant  \scal{v,\sqrt{A-A_\alpha}\|A-A_\alpha\|\sqrt{A-A_\alpha} v} \leqslant 2 \| A \| \scal{v,(A-A_\alpha)v} =\\
			&= 2 \| A \| \ell_v(A-A_\alpha) \rightarrow 0\;,
		\end{equs}
		which we used that $A_{\alpha} \rightarrow A$ in the weak$^*$-topology.
	\end{proof}

Next we note the following criterion for whether an element belongs to the center of a topological algebra.

	\begin{lemma}
	\label{lemma:ComCondition}
		Let $\CA$ be a topological algebra and $C \subset \CA$ a set whose algebraic closure, i.e.\ the set of finite linear combinations of finite products of elements in $C$, is dense in $\CA$. Then $A \in Z(\CA)$ if and only if it commutes with all elements of $C$.
	\end{lemma}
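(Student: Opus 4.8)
The statement is a routine continuity argument, so I would present it directly. First I would establish the easy direction: if $A \in Z(\CA)$ then $A$ commutes with everything, in particular with all elements of $C$. The content is in the converse, so assume $A$ commutes with each element of $C$. The plan is to show that the set $C' \eqdef \{ B \in \CA \mid [A,B]_- = 0 \}$ is a closed subalgebra of $\CA$ containing $C$, hence containing the algebraic closure of $C$, hence (by density and closedness) equal to all of $\CA$, which means $A \in Z(\CA)$.

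To carry this out, first I would note that $C'$ is a linear subspace and is closed under multiplication: if $B_1, B_2 \in C'$ then $A B_1 B_2 = B_1 A B_2 = B_1 B_2 A$, so $B_1 B_2 \in C'$; linearity is immediate. Thus $C'$ is a subalgebra, and by hypothesis $C \subset C'$, so the algebraic closure of $C$ (finite linear combinations of finite products of elements of $C$) is contained in $C'$. Second, I would show $C'$ is closed in the topology of $\CA$: fix $B \in \overline{C'}$ and a net $(B_\gamma)_\gamma \subset C'$ with $B_\gamma \to B$. Since multiplication in a topological algebra is jointly continuous (this is part of the definition of a topological algebra recalled in Section~\ref{section:AlgebraDefn}), one has $A B_\gamma \to A B$ and $B_\gamma A \to B A$; but $A B_\gamma = B_\gamma A$ for all $\gamma$, so taking limits and using that $\CA$ is Hausdorff (hence limits are unique) gives $A B = B A$, i.e.\ $B \in C'$. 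Therefore $C'$ is a closed subalgebra containing the algebraic closure of $C$, which is dense by hypothesis, so $C' = \CA$ and $A \in Z(\CA)$.

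There is essentially no obstacle here; the only points requiring mild care are that the argument uses joint continuity of the multiplication (rather than merely separate continuity, though separate continuity in each variable would also suffice for this particular net argument) and the Hausdorff property to pass to the limit in the identity $A B_\gamma = B_\gamma A$. Both are available under the standing conventions of Section~\ref{section:AlgebraDefn}, where topological algebras are taken to be Hausdorff with jointly continuous multiplication. If one wanted to be fully explicit one could replace the net $(B_\gamma)$ argument by observing directly that the map $B \mapsto [A,B]_- = AB - BA$ is continuous $\CA \to \CA$, so its kernel $C'$ is closed as the preimage of the closed set $\{0\}$; I would probably phrase it this way to keep the proof to a few lines.
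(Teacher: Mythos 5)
Your proof is correct and follows essentially the same route as the paper's: the paper establishes the algebraic step via bilinearity and the derivation identity $[A,BB']_- = [A,B]_-B' + B[A,B']_-$ (which is exactly your observation that the commutant of $A$ is a subalgebra) and then concludes by density and continuity. Your phrasing via the closed commutant $C'$ just makes explicit the continuity/Hausdorff point that the paper leaves implicit.
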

	\begin{proof}
		The necessity is obvious. Concerning the sufficiency, note that since the commutator of two elements is bilinear and satisfies for $A,B,B' \in \CA$
		\begin{equ}[eq:DerivPropCmttr]
			[A, BB']_- = [A,B]_-B' + B[A,B']_-
		\end{equ}
		the assumption implies that $A$ commutes with the algebraic closure of $C$ and therefore by density all elements of $\CA$.
	\end{proof}

Next, we prove some lemmas regarding bounded operators on $\mathcal{F}_{a}(\mfH)$. 

	\begin{lemma}\label{lemma:center}
		Suppose that $A \in \CB(\CF_a(\mfH))$ commutes with  $\alpha^\dagger(f)$ and $\alpha(f)$ for all $f \in \widetilde\Gamma$, where $\widetilde{\Gamma} \subset \mfH$ is dense. Then $A = \lambda \bone$ for some $\lambda \in \C$.
\end{lemma}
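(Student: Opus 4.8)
The goal is to show that any $A \in \CB(\CF_a(\mfH))$ commuting with $\alpha^\dagger(f)$ and $\alpha(f)$ for $f$ in a dense subset $\widetilde\Gamma \subset \mfH$ must be a scalar multiple of the identity. The plan is to first upgrade from $\widetilde\Gamma$ to all of $\mfH$, then exploit the irreducibility of the CAR representation on Fock space, and finally identify the scalar by testing against the vacuum.

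First I would observe that $A$ actually commutes with $\alpha^\dagger(f)$ and $\alpha(f)$ for \emph{every} $f \in \mfH$, not just $f \in \widetilde\Gamma$. Indeed, $f \mapsto \alpha^\dagger(f)$ is linear and norm-continuous (recall $\|\alpha^\dagger(f)\| = \|f\|_{\mfH}$ from Section~\ref{sec:CAR}), and $f \mapsto \alpha(f)$ is antilinear and norm-continuous, so the maps $f \mapsto [A, \alpha^\dagger(f)]_-$ and $f \mapsto [A, \alpha(f)]_-$ are (anti)linear and continuous; vanishing on a dense set forces them to vanish identically. Hence $A$ commutes with the whole set $C = \{\alpha^\dagger(f), \alpha(f) \mid f \in \mfH\}$. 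Since the algebraic closure of $C$ is (by definition) the $\star$-algebra $\mathring{\mathcal A}\F(\mfH)$, whose norm-closure is $\CA\F(\mfH)$ acting on $\CF_a(\mfH)$, Lemma~\ref{lemma:ComCondition} tells us that $A$ lies in the center of the norm-closure of $C$ inside $\CB(\CF_a(\mfH))$, i.e.\ $A$ commutes with all of $\CA\F(\mfH)$ (as operators on Fock space). Equivalently, $A$ belongs to the commutant $\CA\F(\mfH)'$.

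The key remaining input is that the Fock representation of $\CA\F(\mfH)$ on $\CF_a(\mfH)$ is \emph{irreducible}, so that its commutant is $\C \bone$. I would prove irreducibility directly: the vacuum $\1$ is cyclic since the vectors $\alpha^\dagger(f_1)\cdots\alpha^\dagger(f_n)\1$ span a dense subspace (in fact, by \eqref{e:defaa*} they span $\mfH^{\wedge n}$, up to normalization, as the $f_i$ range over $\mfH$), and moreover $\1$ is separating-up-to-scalars for the commutant in the following sense: if $A \in \CA\F(\mfH)'$ then $A\1 = \lambda \1$ for some $\lambda \in \C$, because $\alpha(f) A \1 = A \alpha(f)\1 = 0$ for all $f$, and the only vectors annihilated by every annihilation operator are the scalar multiples of the vacuum (this last fact follows by decomposing an arbitrary vector into its components in each $\mfH^{\wedge n}$ and noting that $\alpha(f)$ maps $\mfH^{\wedge n}$ onto a dense subspace of $\mfH^{\wedge(n-1)}$, so a vector with a nonzero component in degree $n \geqslant 1$ cannot be killed by all $\alpha(f)$). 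Then for any monomial $m = \alpha^\dagger(f_1)\cdots\alpha^\dagger(f_n)$ we get $A m \1 = m A \1 = \lambda m \1$, and since such vectors are dense and $A$ is bounded, $A = \lambda\bone$.

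The main obstacle is really just assembling the irreducibility argument cleanly; there is no deep difficulty, but one must be careful about the two separate density/approximation facts being used (density of the span of $\alpha^\dagger$-monomials applied to $\1$, and the characterization of $\ker\bigcap_f \alpha(f)$ as $\C\1$), and about invoking norm-continuity of $f \mapsto \alpha^\dagger(f)$, $\alpha(f)$ to pass from $\widetilde\Gamma$ to $\mfH$. Once $A = \lambda\bone$ is established, applying $\omega\F$ (or just pairing with $\1$) gives $\lambda = \scal{\1, A\1}$, which is all that is needed.
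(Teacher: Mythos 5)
Your proposal is correct and follows essentially the same route as the paper: invoke Lemma~\ref{lemma:ComCondition} to get $A$ into the commutant of $\CA\F(\mfH)$, use cyclicity of the vacuum, and deduce $A\1 \in \C\1$ from $\alpha(f)A\1 = A\alpha(f)\1 = 0$ together with the fact that $\C\1$ is the joint kernel of the annihilation operators. Your extra step of upgrading from $\widetilde\Gamma$ to all of $\mfH$ via norm-continuity of $f\mapsto\alpha^{\dagger}(f)$, $\alpha(f)$ just makes explicit what the paper's appeal to Lemma~\ref{lemma:ComCondition} uses implicitly.
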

	\begin{proof}
		By Lemma~\ref{lemma:ComCondition} the assumption implies that $A$ commutes with all of $\CA\F(\mfH)$. Now as $\CA\F(\mfH) \1$ is dense in $\CF_a(\mfH)$ this implies that $A$ is uniquely determined by its action on $\1$. Since, for all $f \in \widetilde{\Gamma}$, 
		\begin{equs}
			\alpha(f) A \1 = A \alpha(f) \1 = 0
		\end{equs}
		it follows that $A \1$ must be multiple of $\1$, because $\scal{\1}$ is the unique subspace contained in the kernel of all $\alpha(f)$. Therefore, there exists some $\lambda \in \C$
		such that
			$A  = \lambda \bone$ as claimed.
	\end{proof}
	
In the next lemma, we take limits over nets in $\Gr(\mfH)$ and associated projections. 

\begin{remark}
Note that  $b \mapsto \widehat{P}_b$ is a monotonously increasing net as
	\begin{equ}
		b \leqslant b' \; \iff \; \widehat{P}_b \leqslant \widehat{P}_{b'} \; ,
	\end{equ}
	where $\widehat{P}_b \leqslant \widehat{P}_{b'}$ means that the operator $\widehat{P}_{b'}-\widehat{P}_b$ is positive.
\end{remark}
\begin{lemma}\label{lem:proj_converge}
		Let $(\Gamma_n)_n$ be as in Definition~\ref{def:filtration} and define $\Gamma_\infty \eqdef \bigcup_{n \in \N} \Gamma_n$.
		The net $(\widehat{P}_b)_{b \in \Gamma_\infty}$ converges strongly to the identity $\bone$ in the von Neumann algebra $\CB(\CF_a(\mfH))$. 
\end{lemma}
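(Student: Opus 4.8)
The plan is to prove that $(\widehat{P}_b)_{b\in\Gamma_\infty}$ is a monotone increasing net of projections whose least upper bound is $\bone$, and then invoke Theorem~\ref{thm:strong_conv} to upgrade the weak$^*$-convergence to strong convergence. First I would observe that the net is increasing: for $b\leqslant b'$ we have $\CF_a^{(b)}\subset\CF_a^{(b')}$ (since any polynomial in $\{\alpha^\dagger(f)\mid f\in b\}$ applied to $\1$ is also a polynomial in $\{\alpha^\dagger(f)\mid f\in b'\}$ applied to $\1$), so $\widehat P_{b'}-\widehat P_b$ is again an orthogonal projection, in particular positive. The net is bounded above by $\bone\in\CB(\CF_a(\mfH))_+$, so Theorem~\ref{thm:strong_conv} applies and tells us that $P_\infty\eqdef\sup_{b\in\Gamma_\infty}\widehat P_b$ exists and the net converges strongly to it. It remains to identify $P_\infty$ with $\bone$.

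Next I would show $P_\infty=\bone$. Since each $\widehat P_b$ is a projection and the net is increasing and strongly convergent, $P_\infty$ is itself the orthogonal projection onto the closed subspace $\overline{\bigcup_{b\in\Gamma_\infty}\CF_a^{(b)}}$. So it suffices to prove that this union is dense in $\CF_a(\mfH)$. For this I would use condition (4) in Definition~\ref{def:filtration}, which guarantees that $\mfK\eqdef\bigcup_{n}\bigcup_{b\in\Gamma_n} b = \bigcup_{b\in\Gamma_\infty} b$ is dense in $\mfH$. The antisymmetric Fock space $\CF_a(\mfH)=\bigoplus_n\mfH^{\wedge n}$ has a dense subspace spanned by vectors of the form $f_1\wedge\cdots\wedge f_n$ with $f_i\in\mfH$; by density of $\mfK$ in $\mfH$ and continuity of the wedge product (the map $(f_1,\dots,f_n)\mapsto f_1\wedge\cdots\wedge f_n$ is jointly continuous, being bounded multilinear), such vectors can be approximated by $g_1\wedge\cdots\wedge g_n$ with all $g_i\in\mfK$. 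Finally, for finitely many $g_1,\dots,g_n\in\mfK$, directedness of $\Gr(\mfH)$ under inclusion together with cofinality of $\Gr^U(\mfH)$ in $\Gr(\mfH)$ (noted in the text) and the fact that $\Gamma_\infty=\bigcup_n\Gamma_n$ exhausts a cofinal family — more precisely, by condition (4) there is for each $g_i$ some $b_i\in\Gamma_\infty$ with $g_i\in b_i$ up to arbitrarily small error, and taking $b\in\Gamma_\infty$ with $b\supseteq b_1+\cdots+b_n$ (using condition (2), that the $\Gamma_n$ are nested and increasing in dimension) — one gets $g_1\wedge\cdots\wedge g_n\in\CF_a^{(b)}$. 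Hence $\bigcup_{b\in\Gamma_\infty}\CF_a^{(b)}$ is dense, so $P_\infty=\bone$.

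The one point requiring slight care is the last approximation step: condition (4) only gives density of $\bigcup_{b\in\Gamma_\infty} b$ in $\mfH$, not that every finite subset of $\mfH$ lies exactly in some single $b\in\Gamma_\infty$; but combined with the fact that each element of $\bigcup_{b\in\Gamma_\infty}b$ does lie in some $b\in\Gamma_\infty$, and the observation that for $b,b'\in\Gamma_\infty$ one can find $b''\in\Gamma_\infty$ with $b''\supseteq b\cup b'$ (this uses that $\Gamma_\infty=\bigcup_n\Gamma_n$ with the $\Gamma_n$ nested and $\sup_{b\in\Gamma_n}\dim b=n$, so in particular $\Gamma_\infty$ contains subspaces of every finite dimension built from the dense family), one reduces to approximating each $f_i$ within a fixed $b$ and uses continuity of the wedge product to conclude. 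I expect this bookkeeping around the directedness of $\Gamma_\infty$ to be the main (though minor) obstacle; everything else is a direct application of Theorem~\ref{thm:strong_conv} and the density hypothesis in Definition~\ref{def:filtration}.

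Alternatively, and perhaps more cleanly, once strong convergence of $(\widehat P_b)_b$ to its supremum $P_\infty$ is established, one can identify $P_\infty$ by testing against the dense set $\CA\F(\mfH)\1$: for $A\in\widehat{\mfA}\F(\mfH)$ a polynomial in creation and annihilation operators associated to vectors in some finite collection, one has $\digamma(A)\1\in\CF_a^{(b)}$ for $b\in\Gamma_\infty$ large enough (again by directedness), hence $\widehat P_b\,\digamma(A)\1=\digamma(A)\1$ for all such $b$, so $P_\infty\,\digamma(A)\1=\digamma(A)\1$; since $\widehat{\mfA}\F(\mfH)$ maps onto a dense subalgebra and $\CA\F(\mfH)\1$ is dense in $\CF_a(\mfH)$, we conclude $P_\infty=\bone$.
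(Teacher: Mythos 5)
Your proof is correct, and its second half takes a genuinely different route from the paper's. Both arguments open the same way: the net is increasing and bounded above by $\bone$, so Theorem~\ref{thm:strong_conv} gives strong convergence to $\widehat P \eqdef \sup_{b}\widehat P_b$. To identify $\widehat P$ with $\bone$, the paper shows that $\widehat P$ commutes with $\alpha(f)$ and $\alpha^\dagger(f)$ for all $f$ in the dense set $\widetilde\Gamma \eqdef \bigcup_{b\in\Gamma_\infty}b$ (checking the commutators on the dense subspace $\mathring{\mathcal{A}}\F(\widetilde\Gamma)\1$ and passing to the strong limit) and then invokes Lemma~\ref{lemma:center}, i.e.\ irreducibility of the Fock representation, to get $\widehat P=\lambda\bone$ and hence $\lambda=1$. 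You instead recognise $\widehat P$ as the orthogonal projection onto $\overline{\bigcup_{b\in\Gamma_\infty}\CF_a^{(b)}}$ and prove this union is dense via condition~(4) of Definition~\ref{def:filtration} and continuity of the wedge product; your closing ``alternative'' (testing $\widehat P$ on the dense set $\mathring{\mathcal{A}}\F(\widetilde\Gamma)\1$) is the same density argument in a different guise. Your route is more elementary, bypassing Lemma~\ref{lemma:center} and the commutator computation entirely; the paper's version has the mild advantage of reusing machinery it needs anyway for Theorem~\ref{thm:ContinuityRep}. One caveat applies equally to both proofs: reducing finitely many vectors of $\widetilde\Gamma$ to a single $b\in\Gamma_\infty$ uses that $\Gamma_\infty$ is directed under inclusion, which Definition~\ref{def:filtration} does not literally guarantee (your appeal to conditions~(2) and~(3) does not yield it either); since the lemma statement itself treats $(\widehat P_b)_{b\in\Gamma_\infty}$ as a net, this is an implicit standing assumption rather than a gap specific to your argument, and you were right to flag it as the delicate point.
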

\begin{proof}
	Since $P_b \leqslant \bone$ for all $b \in \Gamma_\infty$ it follows that the net converges strongly to some projection
	\begin{equ}
		\widehat{P} \eqdef \sup_{b \in \Gamma_\infty}  \widehat{P}_{b} \; ,
	\end{equ}
	by Theorem~\ref{thm:strong_conv}.
		
	We prove that for all $f \in \widetilde{\Gamma} \eqdef \bigcup_{b \in \Gamma_\infty} b$
	\begin{equ}[eq:IrredCommRel]
		[\widehat{P}, \alpha(f)]_- = 0 \qquad \text{and} \qquad [\widehat{P}, \alpha^\dagger(f)]_- = 0\;,
	\end{equ}
	it will then follow from Lemma~\ref{lemma:center} that $\widehat{P} = \lambda \bone$ for some $\lambda \in \C$ and since $\widehat{P}$ is a (non-zero) projection it will have to be the case that  $\lambda = 1$.
	
	Since $\widehat{P}$, $\alpha(f)$, and $\alpha^\dagger(f)$ are bounded operators, it is enough to show \eqref{eq:IrredCommRel} on a dense subset, e.g.\ $\mathring{\mathcal{A}}\F(\widetilde{\Gamma}) \1$, i.e.\ the $\star$-algebra generated just by elements of $\widetilde{\Gamma}$ instead of all of $\mfH$. Its closure is still $\CA(\mfH)$ by density.
	Let $v = A \1$ for some $A \in \mathring{\mathcal{A}}\F(\widetilde{\Gamma})$ and fix some $f \in \widetilde{\Gamma}$. 
	One can write $A$ as a polynomial in $\alpha^{\dagger}(g)$ and $\alpha(g)$ for elements $g \in \widetilde{\Gamma}$. 
	Fix some $b \in \Gamma_\infty$ that contains all the vectors $ g \in \widetilde{\Gamma}$ that  appear in this polynomial expression for $A$ and also contains the vector $f$.
		
	It follows that for any $b' \in \Gamma_\infty$ with $b \leqslant b'$, one has $v, \alpha(f)v, \alpha^{\dagger}(f)v \in \widehat{P}_{b'} \CF_a(\mfH)$. 
	Therefore, $\bigl[ \widehat{P}_{b'}, \alpha(f) \bigr]_- v = 0$ and $\bigl[\widehat{P}_{b'}, \alpha^\dagger(f)\bigr]_- v = 0$, and by taking limits we see that 
	\begin{equ}
		\bigl[ \widehat{P}, \alpha(f) \bigr]_- v = 0 \qquad \text{and} \qquad \bigl[\widehat{P}, \alpha^\dagger(f)\bigr]_- v = 0\;.
	\end{equ}
	Since $v$ and $f$ were arbitrary we have proven the desired statement. 
\end{proof}

We can finally give the main proof of this section. 

\begin{proof}[of Theorem~\ref{thm:ContinuityRep}]
	We first establish that $\digamma$ descends to the quotient $\star$-algebra $\widehat\mfA\F(\mfH)$. 
	Suppose that for some $A \in \mfA(\mfH)$ we have $\pi_b(A) =  0$ for all $b \in \Gr^{U}(\mfH)$. 
	Then, for any finite-dimensional subspace $b$ containing all the vectors of $\mfH$ appearing in the expression for $A$, we have
	\begin{equ}[e:largeb]
		\digamma(A)\widehat{P}_b = \pi_b(A) = 0 \; .
	\end{equ}
	
Using Lemma~\ref{lem:proj_converge}, we see that $\digamma(A)$ is the strong limit of $\digamma(A)\widehat{P}_b$ and must therefore itself be zero. 

To show that $\digamma$ extends to a $C^*$-homomorphism on $\mfA_{\infty}$, it suffices to show that $\digamma(\bigcdot)$ is bounded with respect to the norm $\| \bigcdot \|_{\infty}$ on $\mfA_{\infty}(\mfH)$. 
In particular, we show that for all $A \in \widehat{\mfA}\F(\mfH)$
		\begin{equ}[e:boundF]
			\| \digamma(A) \| \leqslant \| A \|_\infty \; ,
		\end{equ}

Fix $A$ and let $b \in \Gamma_\infty$ such that \eqref{e:largeb} holds. 
Then,
		\begin{equ}
			\pi_b(A) = \digamma(A) \widehat{P}_b \xrightarrow{b \in \Gamma_\infty}	\digamma(A)
		\end{equ}
		in the topology of strong convergence, as $\widehat{P}_b \uparrow \bone$ by the above lemma.

It follows that
		\begin{equs}
			\| \digamma(A) \| &=  \sup_{\substack{v\in \CF_a(\mfH) \\ \|v\| = 1}} \| \digamma(A) v\| = \sup_{\substack{v\in\CF_a(\mfH) \\ \|v\| = 1}} \lim_{b \in \Gamma_\infty} \| \pi_b(A) v\|  \leqslant \\
			& \leqslant \sup_{\substack{v\in\CF_a(\mfH) \\ \|v\| = 1}} \sup_{b \in \Gamma_\infty} \| \pi_b(A) v\| = \sup_{b \in \Gamma_\infty} \sup_{\substack{v\in\CF_a(\mfH) \\ \|v\| = 1}} \| \pi_b(A) v\| =\\
			&=\sup_{b \in \Gamma_\infty} \sup_{\substack{v\in \CF_a^{(b)} \\ \|v\| = 1}} \| \pi_b(A) v\| = \sup_{b \in \Gamma_\infty} \|A\|_b = \|A\|_{\infty} \;,
		\end{equs}
		which is indeed \eqref{e:boundF}.
		
Regarding surjectivity, recall that the image of a $C^*$-algebra under a $C^*$-homomorphism is itself always a $C^*$-algebra \dash this means the image $\Im(\digamma) \subset \CA$ is a $C^*$-subalgebra of $\CA$. This follows directly from the standard result that a $C^*$-isomorphism is an isometry, cf.\ \cite[Proposition~2.3.3]{BR87}, applied to $\CA/\ker(\digamma)$. On the other hand, $\Im(\digamma)$ contains the all the generators $\big\{\alpha(f), \alpha^{\dagger}(g) \, \big| \, f,g \in \mfH \big\}$ of $\CA$ so we must have $\Im(\digamma) = \CA$. 
\end{proof}

\section{Glossary}
\label{app:notation}

Here, we list various constants, norms and other objects used in this article together 
with their meanings and references to definitions.

\begin{center}
\begin{longtable}{p{.12\textwidth}p{.65\textwidth}p{.12\textwidth}}
\toprule
Object & Meaning & Ref. \\
\midrule
\endhead
\bottomrule
\endfoot
$\alpha(f)$ & Annihilation operator & p.~\pageref{e:defaa*}\\
$\alpha^\dagger(f)$ & Creation operator & p.~\pageref{e:defaa*}\\
$\CA\F(\mfH)$ & CAR $C^*$-algebra generated by $\mfH$ & p.~\pageref{p:CAR}\\
$\mfA\F(\mfH)$ & Free $\star$-algebra generated by $\mfH$ & p.~\pageref{p:free}\\
$\widehat\mfA\F(\mfH)$ & Quotient of $\mfA\F(\mfH)$ by $\mfI_\Gamma$ & p.~\pageref{p:quotient}\\
$\cA\F(\mfH)$ & Completion of $\widehat\mfA\F(\mfH)$ under collection of seminorms & p.~\pageref{p:quotientComplete}\\
$\mfA_\infty(\mfH)$ & Uniformly bounded elements in $\cA\F(\mfH)$ & p.~\pageref{e:boundedOps}\\
$\mfB^{\otimes_s n}$ & Symmetric Hilbert tensor power & p.~\pageref{p:symPow} \\
$\digamma$ & Morphism $\mfA_\infty(\mfH) \to \CA\F(\mfH)$ & p.~\pageref{p:digamma}\\
$\CF_{a}(\mfH)$ & Fermionic Fock space generated by $\mfH$ & p.~\pageref{e:Fock}\\
$\CF_{s}(\mfB)$ & Bosonic Fock space generated by $\mfB$ & p.~\pageref{p:FockB}\\
$\mfG\F(\mfH)$ & Subalgebra of $\CA\F(\mfH)$ generated by fields & p.~\pageref{p:fields}\\
$\mfH$, $\mfB$ & Fermionic / bosonic single particle Hilbert spaces & p.~\pageref{p:mfH}, \pageref{p:boson} \\
$\mfH^{\wedge n}$ & Antisymmetric Hilbert tensor power & p.~\pageref{e:Fock} \\
$\mfI_\Gamma$ & Intersection of all the $\ker \pi_b$ & p.~\pageref{e:defmfI}\\
$\Lambda_n(\mfH)$ & Antisymmetric algebraic tensor power & p.~\pageref{e:defPsiBasic} \\
$\Lambda(\mfH)$ & Grassmann algebra generated by $\mfH$ & p.~\pageref{p:Grassmann} \\
\end{longtable}
\end{center}

\endappendix

\bibliographystyle{Martin}
\bibliography{./refs}

\end{document}